\numberwithin{equation}{section}
\newcommand{\emptybox}{\mbox{ }}
\DeclareMathOperator{\Tr}{Tr}
\DeclareMathOperator{\End}{End}
\newtheorem{theorem}{Theorem}
\newtheorem{coro}[theorem]{Corollary}
\newtheorem{defi}[theorem]{Definition}
\newtheorem{exam}[theorem]{Example}
\newtheorem{lemma}[theorem]{Lemma}
\newtheorem{prop}[theorem]{Proposition}
\newtheorem{rema}[theorem]{Remark}
\numberwithin{equation}{section}
\numberwithin{theorem}{section}
\newcommand{\cH}{\mathcal{H}}
\newcommand{\cS}{\mathcal{S}}
\newcommand{\cT}{\mathcal{T}}
\newcommand{\cN}{\mathcal{N}}
\newcommand{\cP}{\mathsf{P}}
\newcommand{\cQ}{\mathsf{Q}}
\newcommand{\mb}[1]{\mathbb{#1}}
\newcommand{\mc}[1]{\mathcal{#1}}
\newcommand{\mf}[1]{\mathfrak{#1}}
\newcommand{\bs}[1]{\boldsymbol{#1}}
\newcommand{\op}[1]{\operatorname{#1}}
\newcommand{\g}{\mathfrak{gl}}
\newcommand{\h}{\mathfrak{h}}
\newcommand{\s}{\mathfrak{sl}}
\newcommand{\gh}{\mathfrak{\widehat{gl}}}
\newcommand{\sh}{\mathfrak{\widehat{sl}}}
\newcommand{\V}{\mathcal{V}}
\newcommand{\bV}{\mathcal{V}}
\newcommand{\R}{R}
\newcommand{\RR}{R'}
\newcommand{\rS}{\mathsf{S}}
\newcommand{\GS}{G(n,1,k)}
\newcommand{\mL}{L}
\newcommand{\alc}{\mathcal{A}^+_{k}(n)}
\newcommand{\balc}{\mathcal{B}_{k}(n)}
\newcommand{\salc}{\mathcal{A}^{++}_{k}(n)}
\newcommand{\z}{{\rm z}}
\newcommand{\e}{{\rm e}}
\newcommand{\rot}{\operatorname{rot}}
\newcommand{\veps}{\varepsilon}
\author{Christian Korff and David Palazzo}
\address{School of Mathematics and Statistics, University of Glasgow, Glasgow G12 8QQ, UK}
\email{christian.korff@glasgow.ac.uk}
\email{david.pala89@gmail.com}
\begin{document}
\title[Cylindric symmetric functions]{Cylindric symmetric functions and positivity}
\begin{abstract}
We introduce new families of cylindric symmetric functions as subcoalgebras in the ring of symmetric functions $\Lambda$ (viewed as a Hopf algebra) which have non-negative structure constants. Combinatorially these cylindric symmetric functions are defined as weighted sums over cylindric reverse plane partitions or - alternatively - in terms of sets of affine permutations. We relate their combinatorial definition to an algebraic construction in terms of the principal Heisenberg subalgebra of the affine Lie algebra $\sh_n$ and a specialised cyclotomic Hecke algebra. Using Schur-Weyl duality we show that the new cylindric symmetric functions arise as matrix elements of Lie algebra elements in the subspace of symmetric tensors of a particular level-0 module which can be identified with the small quantum cohomology ring of the $k$-fold product of projective space. The analogous construction in the subspace of alternating tensors gives the known set of cylindric Schur functions which are related to the small quantum cohomology ring of Grassmannians.  We prove that cylindric Schur functions form a subcoalgebra in $\Lambda$  whose structure constants are the 3-point genus 0 Gromov-Witten invariants. We show that the new families of cylindric functions obtained from the subspace of symmetric tensors also share the structure constants of a symmetric Frobenius algebra, which we define in terms of tensor multiplicities of the generalised symmetric group $G(n,1,k)$. 
\end{abstract}
\maketitle

\tableofcontents
\addtocontents{toc}{\protect\setcounter{tocdepth}{2}}
\vspace{-0.5cm}

\section{Introduction}
The ring of symmetric functions $\Lambda=\underleftarrow{\lim}\, \Lambda_k$ with $\Lambda_k=\mb{C}[x_1,\ldots,x_k]^{\rS_k}$ lies in the intersection of representation theory, algebraic combinatorics and geometry. In order to motivate our results and set the scene for our discussion we briefly recall a classic result for the cohomology of Grassmannians, which showcases the interplay between the mentioned areas based on symmetric functions.

\subsection{Schur functions and cohomology}
A distinguished $\mb{Z}$-basis of $\Lambda$ is given by Schur functions $\{s_\lambda~|~\lambda\in\cP^+\}$ with $\cP^+$ the set of integer partitions. In the context of Schur-Weyl duality the associated Schur polynomials, the projections of $s_\lambda$ onto $\Lambda_k$, play a prominent role as characters of irreducible polynomial representations of $GL(k)$. In particular, the product expansion $s_\lambda s_\mu=\sum_{\nu\in\cP^+} c_{\lambda\mu}^\nu s_\nu$ of two Schur functions yields the Littlewood-Richardson coefficients $c_{\lambda\mu}^\nu\in\mb{Z}_{\geq 0}$, which describe the tensor product multiplicities of the mentioned $GL(k)$-representations. There is a purely combinatorial rule how to compute these coefficients in terms of so-called Littlewood-Richardson tableaux, which are a particular subclass of reverse plane partitions; see e.g. \cite{fulton1997}. 

The positivity of Littlewood-Richardson coefficients can also be geometrically explained: let $\mc{I}_{n,k}$ denote the ideal generated by those $s_\lambda$, where the Young diagram of the partition $\lambda$ does not fit inside $(n-k)^k$ (the bounding box of height $k$ and width $n-k$). Then the quotient $\Lambda/\mc{I}_{n,k}$ is known to be isomorphic to the cohomology ring $H^*(\op{Gr}(k,n))$ of the Grassmannian $\op{Gr}(k,n)$, the variety of $k$-dimensional hyperplanes in $\mb{C}^n$. Under this isomorphism Schur functions are mapped to Schubert classes and, thus, the Littlewood-Richardson coefficients are the intersection numbers of Schubert varieties. 

Alternatively, one can obtain the same coefficients via so-called skew Schur functions $s_{\lambda/\mu}$, which are the characters of reducible $GL(k)$-re\-pre\-sen\-tations. Noting that $\Lambda$ carries the structure of a (positive self-dual) Hopf algebra \cite{zelevinsky1981representations}, the image of a Schur function under the coproduct $\Delta:\Lambda\to\Lambda\otimes\Lambda$ can be used to define skew Schur functions via
\begin{equation}\label{skewSchur}
\Delta(s_\lambda)=\sum_{\mu\in\cP^+} s_{\lambda/\mu}\otimes s_\mu,\qquad
s_{\lambda/\mu}=\sum_{\nu\in\cP^+} c_{\mu\nu}^\lambda s_\nu\;. 
\end{equation}
Here the second expansion is a direct consequence of the fact that $\Lambda$ viewed as a Hopf algebra is self-dual with respect to the Hall inner product; see Appendix A. Recall that $H^*(\op{Gr}(k,n))$ carries the structure of a symmetric Frobenius algebra with respect to the non-degenerate bilinear form induced via Poincar\'e duality (see e.g. \cite{abrams2000quantum}). In particular, $H^*(\op{Gr}(k,n))$ is also endowed with a coproduct. It follows from $s_{\lambda/\mu}=0$ if $\mu\not\subset\lambda$ and \eqref{skewSchur} that the (finite-dimensional) subspace in $\Lambda$ spanned by the Schur functions $\{s_\lambda~|~\lambda\subset (n-k)^k\}$ viewed as a {\em coalgebra} is isomorphic to $H^*(\op{Gr}(k,n))$. In particular, there is no quotient involved, the additional relations are directly encoded in the combinatorial definition of skew Schur functions as sums over skew tableaux. 

In this article, we generalise this result and identify what we call {\em positive subcoalgebras} of $\Lambda$: subspaces $M\subset\Lambda$ that possess a distinguished basis $\{f_{\lambda}\}\subset M$ satisfying $\Delta(f_\lambda)=\sum_{\mu,\nu}n^{\lambda}_{\mu\nu}f_\mu\otimes f_\nu$ with $n^{\lambda}_{\mu\nu}\in\mb{Z}_{\geq 0}$. So, in particular, $\Delta(M)\subset M\otimes M$ and $M$ is a coalgebra. One of the examples we will consider is the (infinite-dimensional) subspace of cylindric Schur functions whose positive structure constants $n^{\lambda}_{\mu\nu}$ are the Gromov-Witten invariants of the small quantum cohomology of Grassmannians.

\subsection{Quantum cohomology and cylindric Schur functions}
Based on the works of Gepner \cite{gepner1991fusion}, Intriligator \cite{intriligator1991fusion}, Vafa \cite{vafa1991topological} and Witten \cite{witten1993verlinde} on fusion rings, ordinary (intersection) cohomology was extended to (small) quantum cohomology. The latter also possesses an interpretation in enumerative geometry \cite{fulton1996notes}. The Grassmannians were among the first varieties whose quantum cohomology ring $qH^*(\op{Gr}(k,n))$ was explicitly computed \cite{agnihotri1995quantum, bertram1997quantum}. The latter can also be realised as a quotient of $\Lambda\otimes\mb{C}[q]$ \cite{siebert1997quantum,bertram1999quantum} and Postnikov introduced in \cite{postnikov2005affine} a generalisation of skew Schur polynomials, so-called {\em toric Schur polynomials}, which are Schur positive and whose expansion coefficients are the 3-point genus zero Gromov-Witten invariants $C_{\mu\nu}^{\lambda,d}$. The latter are the structure constants of $qH^*(\op{Gr}(k,n))$, where $d\in\mb{Z}_{\geq 0}$ is the degree of the rational curves intersecting three Schubert varieties in general position labelled by the partitions $\lambda,\mu,\nu\subset (n-k)^k$. 

Toric Schur polynomials are finite variable restrictions of cylindric skew Schur functions $s_{\lambda/d/\mu}\in\Lambda$ which have a purely combinatorial definition in terms of sums over cylindric tableaux, i.e. column strict cylindric reverse plane partitions. Cylindric plane partitions were first considered by Gessel and Krattenthaler in \cite{gessel1997cylindric}. There has been subsequent work \cite{mcnamara2006cylindric,lam2006affine,lee2019positivity} on cylindric skew Schur functions exploring their combinatorial and algebraic structure. In particular, Lam showed that they are a special case of affine Stanley symmetric functions \cite{lam2006affine}. While cylindric Schur functions are in general {\em not} Schur-positive, McNamara conjectured in \cite{mcnamara2006cylindric} that their skew versions have non-negative expansions in terms of cylindric non-skew Schur functions $s_{\lambda/d/\emptyset}$. A proof of this conjecture was recently presented in \cite{lee2019positivity}. 

In this article we shall give an alternative proof and, moreover, show that the functions $\{s_{\lambda/d/\emptyset}~|~\lambda\subset(n-k)^k,\;d\in\mb{Z}_{\geq 0}\}$ span a positive subcoalgebra of $\Lambda$ whose structure constants are the Gromov-Witten invariants of $qH^*(\op{Gr}(k,n))$; see Corollaries \ref{cor:mcnamara_conj} and \ref{cor:cylschurcoalg}.

\subsection{The Verlinde algebra and TQFT} 
The quantum cohomology ring $qH^*(\op{Gr}(k,n))$ has long been known to be isomorphic to the $\gh_n$-Verlinde algebra $\V_k(\gh_n)$ at level $k$  when $q=1$ \cite{witten1993verlinde}. Here we will also be interested in the fusion ring $\V_k(\sh_n)$ whose precise relationship with $qH^*(\op{Gr}(k,n))$ was investigated in \cite{korffstroppel2010}: despite being closely related, the two Verlinde algebras $\V_k(\sh_n)$ and $\V_k(\gh_n)$ exhibit different combinatorial descriptions in terms of bosons and fermions.

Verlinde algebras \cite{verlinde1988fusion} or {\em fusion rings} arise in the context of conformal field theory (CFT) and, thus, vertex operator algebras, where they describe the operator product expansion of two primary fields modulo some descendant fields. There is an entire class of rational CFTs, called Wess-Zumino-Witten models, which are constructed from the integrable highest weight representations of Kac-Moody algebras $\mf{\hat g}$ with the level $k$ fixing the value of the central element and the primary fields being in one-to-one correspondence with the highest weight vectors; see e.g. the textbook \cite{francesco2012conformal} and references therein. 

Geometrically the Verlinde algebras $\V_k(\mf{\hat g})$ have attracted interest because their structure constants $\mc{N}_{\lambda\mu}^{\nu}$, called {\em fusion coefficients} in the physics literature, equal the dimensions of moduli spaces of generalised $\theta$-functions, so-called conformal blocks \cite{beauville9conformal, faltings1994proof}. Here the partitions $\lambda,\mu,\nu$ label the primary fields or highest weight vectors. The celebrated Verlinde formula \cite{verlinde1988fusion} %
\begin{equation}\label{Verlinde0}
\mc{N}_{\lambda\mu}^{\nu}=\sum_{\sigma}\frac{\cS_{\lambda\sigma}\cS_{\mu\sigma}\cS_{\sigma\nu}^{-1}}{\cS_{0\sigma}}
\end{equation}
expresses these dimensions in terms of the modular $\cS$-matrix, a generator of the group $PSL(2,\mb{Z})$ describing the modular transformation properties of the characters of the integrable highest weight representations of $\mf{\hat g}$. The representation of $PSL(2,\mb{Z})$ is part of the data of a Verlinde algebra, in particular the $\cS$-matrix encodes the idempotents of the Verlinde algebra as it diagonalises the fusion matrices $(\mc{N}_\lambda)_{\mu\nu}=\mc{N}_{\lambda\mu}^{\nu}$. More recently, the work of Freed, Hopkins and Teleman has also linked the Verlinde algebras to twisted K-theory \cite{freed2011loop,freed2013loop}.

The Verlinde formula and the existence of the modular group representation are a `fingerprint' of a richer structure: a three-dimensional topological quantum field theory (TQFT) or modular tensor category, of which the Verlinde algebra is the Grothendieck ring. In fact, the Verlinde algebra itself can be seen as a TQFT, but a two-dimensional one. Based on work of Atiyah \cite{atiyah1988topological}, the class of 2D TQFT is known to be categorically equivalent to symmetric Frobenius algebras. 

Exploiting the construction from \cite{korffstroppel2010} using quantum integrable systems, a $q$-deformation of the $\sh_n$-Verlinde algebra was constructed in \cite{korff2013cylindric} which (1) carries the structure of a symmetric Frobenius algebra and (2) whose structure constants or fusion coefficients $N_{\lambda\mu}^{\nu}(q)\in\mb{Z}[q]$ are related to cylindric versions of Hall-Littlewood and $q$-Whittaker polynomials. Both types of polynomials occur (in the non-cylindric case) as specialisation of Macdonald polynomials \cite{macdonald1998symmetric}. Setting $q=0$ one recovers the non-deformed Verlinde algebra $\V_k(\sh_n)$ and cylindric Schur polynomials that are different from Postnikov's toric Schur polynomials as their expansion coefficients yield the fusion coefficients $\mc{N}_{\lambda\mu}^{\nu}=N_{\lambda\mu}^{\nu}(0)$ of $\V_k(\sh_n)$ rather than $\V_k(\gh_n)$. Geometrically, the $q$-deformed Verlinde algebra has been conjectured \cite[Section 8]{korff2013cylindric} to be related to the deformation of the Verlinde algebra discussed in   \cite{teleman2004k,teleman2009index}. 

In this article we investigate the combinatorial structure of the $q$-deformed Verlinde algebra  from \cite{korff2013cylindric} in the limit $q\to 1$: we construct two families of positive subcoalgebras of $\Lambda$ whose structure constants are given by $N_{\lambda\mu}^{\nu}(1)$; see Corollary \ref{cor:coalgebra}.
 
\subsection{The main results in this article}
We summarise the main steps in our construction of positive subcoalgebras in $\Lambda$. Recall that the $GL(k)$-characters of tensor products of symmetric $\rS^\mu V=\rS^{\mu_1}V\otimes\cdots\otimes \rS^{\mu_r}V$ and alternating powers $\bigwedge\nolimits^{\!\mu} V=\bigwedge\nolimits^{\!\mu_1} V\otimes\cdots\otimes \bigwedge\nolimits^{\!\mu_r} V$ with $V$ the natural or vector representation of $GL(k)$ are respectively the homogeneous $h_\mu$ and elementary symmetric polynomials $e_\mu$; see Appendix A.2 for their definitions. Similar as in the case of Schur functions we introduce {\em skew complete symmetric} and {\em skew elementary symmetric functions} in $\Lambda$ via the coproduct of their associated symmetric functions,
\begin{equation}\label{skewhe}
\Delta h_\lambda = \sum_{\mu\in\cP^+}h_{\lambda/\mu}\otimes h_\mu \ \qquad\text{ and }\qquad
\Delta e_\lambda = \sum_{\mu\in\cP^+}e_{\lambda/\mu}\otimes e_\mu  \;.
\end{equation}
The latter exhibit interesting combinatorics associated with weighted sums over reverse plane partitions (RPP); see our discussion in Appendix A.4. In light of the generalisation of skew Schur functions \eqref{skewSchur} to cylindric Schur functions in connection with quantum cohomology, one might ask if there exist analogous cylindric generalisations of the functions \eqref{skewhe} and if these define a positive infinite-dimensional subcoalgebra of $\Lambda$.

\subsubsection{Satake correspondence and quantum cohomology}
In order to motivate our approach we first discuss the case of quantum cohomology. It has been long known that the ring $qH^*(\op{Gr}(k,n))$  can be described in terms of the (much simpler) quantum cohomology ring $qH^*(\mb{P}^{n-1})$ of projective space $\mb{P}^{n-1}=\op{Gr}(1,n)$; see e.g. \cite{hori2000mirror,bertram2005two,kim2008quantum}. Here we shall follow the point-of-view put forward in \cite{golyshev2011quantum} concentrating on the simplest case of the Grassmannian only: it follows from the Satake isomorphism of Ginzburg \cite{ginzburg1995perverse} that when identifying the cohomology of $\op{Gr}(k,n)$ as a minuscule Schubert cell in the affine Grassmannian of $GL(n)$ the latter corresponds to the $k$-fold exterior power of the cohomology of $\mb{P}^{n-1}$ under the same identification. In particular, the Satake correspondence identifies $H^*(\op{Gr}(k,n))$ with the $\g_n$-module $\bigwedge^kV$ and the multiplication by the first Chern class corresponds to the action by the principal nilpotent element of $\g_n$. This picture has been extended to quantum cohomology in \cite{golyshev2011quantum} by replacing the principal nilpotent element with the cyclic element in $\g_n(\mb{C})$, which then describes the quantum Pieri rule in $qH^*(\op{Gr}(k,n))$ provided one sets $q=1$, i.e. one considers the Verlinde algebra $\V_k(\gh_n)$.

In our article we shall work instead with the loop algebra $\g_n[z,z^{-1}]$ and identify the quantum parameter with the loop variable via $q=(-1)^{k-1}z^{-1}$. This will allow us to identify the two distinguished bases of $qH^*(\op{Gr}(k,n))$ from a purely Lie-algebraic point of view. Fix a Cartan subalgebra $\h\subset\g_n$. Then under the Satake correspondence the associated (one-dimensional) weight spaces of $\h$ in $\V^-_k=\bigwedge^kV\otimes\mb{C}[z,z^{-1}]$ are mapped onto {\em Schubert classes}. The other, algebraically distinguished, basis of $qH^*(\op{Gr}(k,n))$ is given by the set of its {\em idempotents}. The latter only exist if we introduce the $n$th roots $t=z^{1/n}$ and under the Satake correspondence they are mapped to the weight spaces of the Cartan algebra $\h'$ in apposition to $\h$ \cite{kostant1959principal}, i.e. $\h'$ is the centraliser of the cyclic element. The basis transformation between idempotents and Schubert classes is described in terms of a $t$-deformed modular $\cS$-matrix which encodes an isomorphism $\g_n[z,z^{-1}]\cong\g_n^{\Omega}[t,t^{-1}]$ with the twisted loop algebra $\g_n^{\Omega}[t,t^{-1}]=\bigoplus_{m\in\mb{Z}}t^m\otimes\g_n^{(m)}$, where $\g_n^{(m)}\subset\g_n$ is the subalgebra of principal degree $m\!\!\mod n$. If $t=1$ and $k$ odd we recover the modular $\cS$-matrix of the Verlinde algebra $\V_k(\gh_n)$; see e.g. \cite{naculich2007level}. 

However, we believe the extension to the loop algebra important, not only for the reasons already outlined, but because it allows us to identify the multiplication operators with Schubert classes and, thus, $qH^*(\op{Gr}(k,n))$ itself, with the image of the {\em principal Heisenberg subalgebra} $\mf{\hat h}'_0\subset\gh_n$ \cite{kac1994infinite} in the endomorphisms over $\V^-_k$. Because the latter is a level-0 module we are dealing in this article only with the projection $\h'_0[z,z^{-1}]$ of $\mf{\hat h}'_0$ in the loop algebra $\g_n[z,z^{-1}]$. From a representation theoretic point of  of view it is then natural to consider also the image of the principal Heisenberg subalgebra in the endomorphisms of other subspaces, especially the symmetric tensor product $\V^+_k=\rS^kV\otimes\mb{C}[z,z^{-1}]$.

\subsubsection{Schur-Weyl duality and skew group rings}
A description of our construction in terms of Schur-Weyl duality is as follows: let $\R=\mb{C}[z,z^{-1}]$ be the ring of Laurent polynomials and consider the $\R$-module $\bV_k=\R\otimes V^{\otimes k}$. The latter carries a natural left $U(\g_n[z,z^{-1}])$-action and a right action of the following skew group ring: set $\cH_k=\R[x^{\pm 1}_1,\ldots,x^{\pm 1}_k]\otimes_{\R}\R[\rS_k]$ with $1\otimes_{\R}\R[\rS_k]\cong\R[\rS_k]$ being the group ring of the symmetric group in $k$-letters and impose the additional relations $\sigma_ix_i=x_{i+1}\sigma_i$,  $\sigma_ix_j=x_j\sigma_i$ for $|i-j|>1$ where $\{\sigma_i\}_{i=1}^{k-1}$ are the elementary transpositions in $\rS_k$. The action of the ring $\cH_k$ on $\bV_k$ is fixed by permuting factors and letting each $x_i$ act by the cyclic element of $\g_n[z,z^{-1}]$ in the $i$th factor and trivially everywhere else. This action is not faithful, but factors through the quotient $\cH_k(n)=\cH_k/\langle x_1^n-z\rangle$. Considering $\cH'_k(n)=\cH_k(n)\otimes_\R\RR$ with $\RR=\R[t]/(z-t^n)$  we obtain a (semi-simple) algebra which can be seen as some sort of specialisation or ``classial limit'' of a Ariki-Koike (or cyclotomic Hecke) algebra \cite{ariki1994hecke}.

In this construction the image of the centre $\mc{Z}(\cH_k)\cong\R[x^{\pm 1}_1,\ldots,x^{\pm 1}_k]^{\rS_k}$ in $\End_{\R}\bV_k$ is isomorphic to the ring $\R[x^{\pm 1}_1,\ldots,x^{\pm 1}_k]^{\rS_k}/\langle x_i^n-z\rangle$, which as a Frobenius algebra can be identified with the extension of $qH^*(\mb{P})$ over the ring of Laurent polynomials in $q$, where $\mb{P}=\mb{P}^{n-1}\times\cdots\times\mb{P}^{n-1}$ are $k$ copies of projective space and $q=z^{-1}$. Schur-Weyl duality then tells us that each class in that latter ring must correspond to an element in the image of $U=U(\g_n[z,z^{-1}])$. In fact, we show equality between the images of $\mc{Z}(\cH_k)$ and the principal Heisenberg subalgebra in $\End_{\R}\bV_k$. Restricting to the subspace $\V^-_k\subset \bV_k$ of alternating tensors, we recover the quantum cohomology of Grassmannians $qH^*(\op{Gr}(k,n))$ via the Satake correspondence: the Schur polynomials $s_\lambda(x_1,\ldots,x_k)\in \mc{Z}(\cH_k)$ are mapped to operators in $\End_\R \V^-_k$ which correspond to multiplication by Schubert classes in $qH^*(\op{Gr}(k,n))$. 

Theorem \ref{thm:CylSchurExpansion} then shows how cylindric Schur functions occur in our construction: let $u_i$, $i\in\mb{N}$ be a set of commuting indeterminates and consider in $\End_{\R}\bV_k$ the Cauchy identity
\[
\prod_{i\geq 0}\prod_{j=1}^k(1+u_ix_j)=\sum_{\lambda\in\cP^+}s_{\lambda}(x_1,\ldots,x_k)s_{\lambda'}(u_1,u_2,\ldots)\;.
\]
Taking matrix elements in the above identity with alternating tensors from $\V^-_k$ we obtain formal power series in the quantum deformation parameter $q=(-1)^{k+1}z^{-1}$ whose coefficients are cylindric Schur functions in the $u_i$. The proof of McNamara's conjecture is then an easy corollary; see Corollary \ref{cor:mcnamara_conj}.

\begin{figure}
\centering
\includegraphics[width=.99\textwidth]{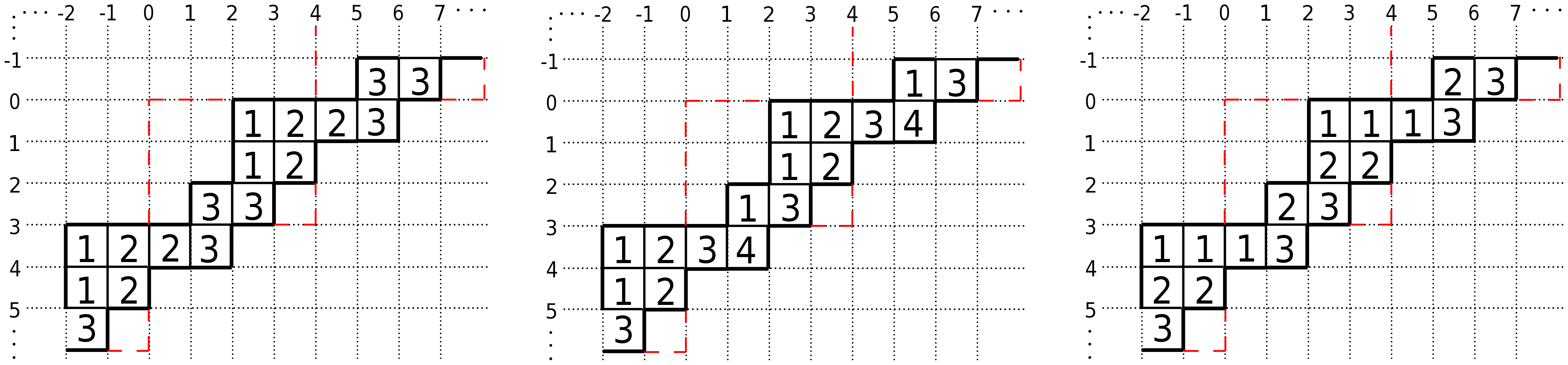} 
\caption{From left to right: a cylindric reverse plane partition (CRPP), a row strict  CRPP and an adjacent column CRPP when $n=4$ and $k=3$; see Section 5 for their definitions.}
\label{fig:CRPPexamples}
\end{figure}

\subsubsection{The subspace of symmetric tensors}
In complete analogy with the previous case we consider the image of $\mc{Z}(\cH_k)$ in the endomorphisms over the subspace of symmetric tensors $\V^+_k=\rS^kV\otimes\mb{C}[z,z^{-1}]\subset \bV_k$. The latter defines again a symmetric Frobenius algebra, which is the $q\to 1$ limit of the $q$-deformed Verlinde algebra discussed in \cite{korff2013cylindric}. Since the corresponding module $\V^+_k$ is no longer minuscule one does not expect that this Frobenius algebra describes the quantum cohomology of a {\em smooth} projective variety. In the context of Frobenius manifolds the symmetric tensor product has also been considered in \cite[Section 2.b]{kim2008quantum}. 

Albeit a direct geometric interpretation is currently missing, there is interesting combinatorics to discover: we consider the following alternative Cauchy identity in $\End_{\R}\bV_k$,
\[
\prod_{i\geq 0}\prod_{j=1}^k(1-u_ix_j)^{-1}=\sum_{\lambda\in\cP^+}m_{\lambda}(x_1,\ldots,x_k)h_{\lambda}(u_1,u_2,\ldots),
\]
where the $m_\lambda$ are the monomial symmetric functions, and now take matrix elements with symmetric tensors in $\V^+_k$. In Theorem \ref{thm:ExpansionCylindricH} we show that as coefficients in the resulting power series in the loop variable $z$, one obtains cylindric analogues $h_{\lambda/d/\mu}$ of the skew complete symmetric functions in \eqref{skewhe}. Similar to cylindric Schur functions, the latter have a completely independent combinatorial definition as weighted sums over cylindric reverse plane partitions (see Figure \ref{fig:CRPPexamples} for examples). Their non-skew versions have a particularly simple expansion in $\Lambda$ (c.f. Lemma \ref{lem:nscylh}),
\begin{equation}
h_{\lambda/d/\emptyset}=\sum_{\mu}\frac{|\rS_\lambda|}{|\rS_\mu|}\,h_\mu\;,
\end{equation}
where $\lambda$ is an element in the fundamental alcove of the $\g_k$ weight lattice under the level-$n$ action of the extended affine symmetric group $\hat \rS_k$, $d\in\mb{Z}$ a sort of winding number around the cylinder and the sum runs over all weights $\mu$ in the $\hat \rS_k$-orbit of $\lambda$. The expansion coefficients are given by the ratio of the cardinalities of the stabiliser subgroups $\rS_\lambda,\rS_\mu\subset \rS_k$ of the weights $\lambda,\mu$ and, despite appearances, are integers; see Lemma \ref{lem:Nreduction}. 

We show that the subspace spanned by these non-skew cylindric complete symmetric functions is a positive subcoalgebra of $\Lambda$, whose non-negative integer structure constants $N_{\mu\nu}^{\lambda}$ coincide with those of the Frobenius algebra $\V^+_k$ and which we express in terms of tensor multiplicities of the generalised symmetric group $G(n,1,k)$. 

Cylindric elementary symmetric functions $e_{\lambda/d/\mu}$ enter naturally by considering the image of the $h_{\lambda/d/\mu}$ under the antipode which is part of the Hopf algebra structure on $\Lambda$. Their combinatorial definition involves row strict cylindric reverse plane partitions; see Figure \ref{fig:CRPPexamples}. We unify all three families of cylindric symmetric functions (elementary, complete and Schur) by relating their combinatorial definitions in terms of cylindric reverse plane partitions to the same combinatorial realisation of the affine symmetric group $\tilde \rS_k$ in terms of `infinite permutations', bijections $w:\mb{Z}\to\mb{Z}$, considered by several authors \cite{lusztig1983some,bjorner1996affine,eriksson1998affine,shi2006kazhdan}. The new aspect in our work is that we link this combinatorial realisation of the {\em extended} affine symmetric group $\hat \rS_k$ to cylindric loops and reverse plane partitions by considering the level-$n$ action for $h_{\lambda/d/\mu}$ and $e_{\lambda/d/\mu}$, while for the cylindric Schur functions we consider the {\em shifted} level-$n$ action.

\section{The principal Heisenberg subalgebra}
Our main reference for the following discussion of the principal Heisenberg subalgebra is \cite[Chapter 14]{kac1994infinite}. While the latter can be introduced for arbitrary simple complex Lie algebras $\mathfrak{g}$, we shall here focus on the simplest case $\mathfrak{g}=\mathfrak{gl}_{n}(\mathbb{C})$, the general linear algebra of the Lie group $GL(V)$ with %
$V=\mathbb{C}v_1\oplus\cdots\oplus\mathbb{C}v_n\cong \mathbb{C}^{n}$. %
A basis of $\mathfrak{gl}_{n}(\mathbb{C})$ are the unit matrices $\{e_{ij}|1\leq i,j\leq n\}$ whose matrix elements are zero except in the $i$th row and $j$th column where the element is 1. The Lie bracket of these basis elements is found to be 
\begin{equation}
\lbrack e_{ij},e_{kl}]=\delta _{jk}e_{il}-\delta _{il}e_{kj}\,.
\label{eijbracket}
\end{equation}%
Note that by choosing a basis we have also fixed a Cartan subalgebra,  
\begin{equation}\label{Cartan}
\mathfrak{h}=\bigoplus_{i=1}^n\mathbb{C} e_{ii}\,.
\end{equation} 
For $i=1,\ldots,n-1$ set $e_{i}=e_{i,i+1}$, $f_{i}=e_{i+1,i}$ and $%
h_{i}=e_{ii}-e_{i+1,i+1}$. These matrices are the Chevalley generators of the subalgebra $%
\mathfrak{sl}_{n}\subset \mathfrak{gl}_{n}$ and we denote by $\mathfrak{h}_0%
\subset\s_n$ the Cartan subalgebra, which is spanned by the $h_{i}$.

\subsection{The affine Lie algebra}
We now turn our attention to the affine Lie algebra of $\g_n$. Let $\mathfrak{gl}_{n}[z,z^{-1}]=\mathbb{C}[z,z^{-1}]\otimes \mathfrak{gl}_{n}$ be the {\em loop algebra} with Lie bracket, $[f(z)x,g(z)y]:=f(z)g(z)[x,y]$, where $f,g\in\mathbb{C}[z,z^{-1}]$ are Laurent polynomials in some variable $z$ and 
$x,y\in\mathfrak{gl}_n$. Then the {\em affine Lie algebra} is the unique central extension of the loop algebra 
\begin{equation}
\mathfrak{\widehat{gl}}_n=\mathfrak{gl}_n[z,z^{-1}]\oplus\mathbb{C}\underline{k}
\end{equation}
together with the Lie bracket
\begin{equation}
[x(z)\oplus \alpha\underline{k},y(z)\oplus \beta\underline{k}]=[x(z),y(z)]\oplus \operatorname{Res}\langle x'(z)|y(z)\rangle\underline{k}\,,
\end{equation}
where $x(z)=f(z)x,y(z)=g(x)y\in\mathfrak{gl}_n[z,z^{-1}]$, $\alpha,\beta\in\mathbb{C}$ and 
\[
\operatorname{Res}\langle x'(z)|y(z)\rangle =\operatorname{Res}(f'(z)g(z)) \langle x|y\rangle\,,
\]
is the 2-cocycle fixed by the Killing form $\langle x|y\rangle=\operatorname{tr}(xy)$ and the
linear map $\operatorname{Res}:\mathbb{C}[z,z^{-1}]\rightarrow\mathbb{C}$ with 
$\operatorname{Res}(z^r)=\delta_{r,-1}$. 
 The set of Chevalley generators for the affine algebra 
$\mathfrak{\widehat{sl}}_n$ contains the additional elements
\begin{equation}\label{aff_efh}
e_n=z\otimes e_{n1},\qquad f_n=z^{-1}\otimes e_{1n},\qquad h_n=(e_{nn}-e_{11})\oplus\underline{k}\;.
\end{equation}

For our discussion we will only need the loop algebra $\mathfrak{sl}_n[z,z^{-1}]\subset\mathfrak{gl}_n[z,z^{-1}]$ but it is instructive to briefly recall the definition of the principal Heisenberg subalgebra in the affine Lie algebra $\sh_n\subset\mathfrak{\widehat{gl}}_n$ first.

For $r=1,\ldots ,n-1$ define the following elements in $\mathfrak{sl}_{n}[z,z^{-1}]$,
\begin{equation}
P _{r}=\sum_{j-i=r}e_{ij}+z\sum_{i-j=n-r}e_{ij}\; \label{P_r}
\end{equation}%
and set $ P_{r+n}=z P_{r}$ for all other $r\in \mathbb{Z}\backslash n\mathbb{Z}$. 
\begin{lemma} 
In the affine algebra $\mathfrak{\widehat{sl}}_n$ we have the Lie bracket relations
\begin{equation}
[P_m,P_{m^\prime}]=\delta_{m+m^\prime,0}\,\underline{k}\,,\qquad \forall m,m^\prime\in\mathbb{Z}\backslash
n\mathbb{Z}\,.
\end{equation}
\end{lemma}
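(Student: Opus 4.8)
The plan is to verify the bracket relation $[P_m,P_{m'}]=\delta_{m+m',0}\,\underline{k}$ by direct computation using the explicit formula \eqref{P_r} together with the structure constants \eqref{eijbracket} and the central extension 2-cocycle. Since $P_{r+n}=zP_r$, it suffices to handle representatives $m,m'$ with $1\le m,m'\le n-1$ and then propagate the result to all of $\mathbb{Z}\backslash n\mathbb{Z}$ by tracking the accompanying powers of $z$ and their effect on the residue pairing; the central term only arises when the total $z$-degree of the product is $z^{-1}\cdot(\text{d}/\text{d}z)$-compatible, i.e. exactly when $m+m'\equiv 0$.

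First I would compute the loop-algebra part $[P_m,P_{m'}]$ inside $\mathfrak{sl}_n[z,z^{-1}]$ by expanding each $P_r$ as its two sums (the degree-$0$ part $\sum_{j-i=r}e_{ij}$ and the degree-$1$ part $z\sum_{i-j=n-r}e_{ij}$) and applying \eqref{eijbracket} term by term. The key bookkeeping observation is that $P_r$ is a sum of matrix units lying on a single ``cyclic diagonal'': in the principal grading, $P_r$ has homogeneous principal degree $r$. Consequently $[P_m,P_{m'}]$ is homogeneous of principal degree $m+m'$, and all the noncentral contributions reorganise into a scalar multiple of $P_{m+m'}$ (where the index is read mod $n$, with the appropriate power of $z$ absorbed as in $P_{r+n}=zP_r$). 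I expect these scalar coefficients to cancel completely whenever $m+m'\not\equiv 0 \pmod n$, because the two orderings $[e_{ij},e_{kl}]=\delta_{jk}e_{il}-\delta_{il}e_{kj}$ contribute equal and opposite terms along the shared diagonal; this is the standard mechanism by which the $P_r$ form a commuting (up to centre) family.

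Next I would isolate the central contribution, which comes entirely from the $\operatorname{Res}\langle x'(z)\,|\,y(z)\rangle\,\underline{k}$ term in the affine bracket. Writing $P_m=x(z)$ and $P_{m'}=y(z)$ as above, I compute $\langle x'(z)\mid y(z)\rangle=\operatorname{tr}(x'(z)y(z))$ using $\langle e_{ij}\mid e_{kl}\rangle=\delta_{il}\delta_{jk}$, so the trace is nonzero only for pairs of matrix units that are mutually transposed. Combined with the constraint $\operatorname{Res}(z^r)=\delta_{r,-1}$, this forces the total $z$-degree to be exactly $-1$ after differentiation, which pins down $m+m'=0$ (taking $m,m'$ in a symmetric range of representatives, or equivalently $m+m'\equiv 0\pmod n$ with matching $z$-powers). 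A short count of how many matrix units in $P_m$ pair off against those in $P_{-m}=P_m^{\mathsf{t}}$-type terms, weighted by the residue, should yield the coefficient $1$, giving exactly $\delta_{m+m',0}\,\underline{k}$.

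The main obstacle is the careful treatment of the ``wrap-around'' terms and the powers of $z$: because $P_r$ mixes a degree-$0$ block with a degree-$1$ block, the products $[P_m,P_{m'}]$ generate cross terms at several $z$-degrees, and one must check that after using $P_{r+n}=zP_r$ to normalise indices into $\{1,\dots,n-1\}$, the residue picks out precisely one $z^{-1}$ contribution and the noncentral pieces genuinely vanish rather than merely rearrange. I would organise this by separating the four products (degree $0\times 0$, $0\times 1$, $1\times 0$, $1\times 1$), showing that the degree-$0$ and degree-$2$ pieces are pure (noncentral) loop-algebra elements that recombine into $P_{m+m'}$ with cancelling coefficients, while only the degree-$1$ cross terms can feed the residue; the differentiation $x'(z)$ then selects the $z^1$-components, and the trace pairing enforces the $m+m'\equiv 0$ condition. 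Once the index accounting is set up consistently, the remaining computation is routine.
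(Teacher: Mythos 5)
Your overall strategy is the right one --- the paper gives no proof of this lemma (it is imported from Kac, Ch.~14), so a direct computation splitting each $P_r$, $1\le r\le n-1$, into its degree-$0$ block $A_r=\sum_{j-i=r}e_{ij}$ and degree-$1$ block $zB_r=z\sum_{i-j=n-r}e_{ij}$ is exactly what is called for. Your treatment of the loop-algebra part (pairwise cancellation of the $\delta_{jk}e_{il}$ terms against the $\delta_{il}e_{kj}$ terms along the diagonal $m+m'$; equivalently, all $P_r$ are powers of the single cyclic matrix $X$, cf.\ Lemma \ref{lem:XP}, so they commute in $\g_n[z,z^{-1}]$) and your identification of which $z$-degrees can feed the residue are both correct.

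The gap is the one step you do not carry out: ``a short count \dots should yield the coefficient $1$.'' It does not. For $1\le m\le n-1$ the only contribution to the cocycle in $[P_m,P_{-m}]$ comes from pairing $zB_m$ against the $z^{-1}A_{n-m}$ part of $P_{-m}=z^{-1}P_{n-m}$, and with the paper's conventions one gets $\operatorname{Res}(z^{-1})\,\operatorname{tr}(B_mA_{n-m})=\operatorname{tr}\big(\sum_{j=1}^{m}e_{j+n-m,j}\sum_{i=1}^{m}e_{i,i+n-m}\big)=m$, because the $m$ matrix units of $B_m$ pair off bijectively with the $m$ matrix units of $A_{n-m}$ under $\langle e_{ij}|e_{kl}\rangle=\delta_{il}\delta_{jk}$. (Check $n=3$, $m=2$: $\operatorname{tr}\big((e_{21}+e_{32})(e_{12}+e_{23})\big)=\operatorname{tr}(e_{22}+e_{33})=2$.) Tracking the extra powers of $z$ for general $m\in\mathbb{Z}\setminus n\mathbb{Z}$ in the same way yields $[P_m,P_{m'}]=m\,\delta_{m+m',0}\,\underline{k}$, which is the standard Heisenberg relation. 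So either the lemma as printed is missing the factor $m$, or the $P_r$ must be renormalised; in either case the computation you outline cannot terminate with the constant $1$, and the unverified assertion that the count ``should yield the coefficient $1$'' is precisely where the argument fails.
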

The $P_r$ together with the central element $\underline{k}$ span the so-called {\em principal Heisenberg subalgebra} $\mathfrak{\hat h}'_0\subset \mathfrak{\widehat{sl}}_n$. 
\footnote{Note that this principal Heisenberg subalgebra is different from the {\em homogeneous Heisenberg subalgebra} 
$\mathfrak{\hat h}_0=\mathfrak{h}_0[z,z^{-1}]\oplus\mathbb{C}\underline{k}$ 
which is generated by $h_i(m)=z^mh_i$ with $[h_i(m),h_j(m')]=\delta_{m+m',0}\langle h_i|h_j\rangle\underline{k}$ and 
$m,m'\in\mathbb{Z}$.}
Consider the exact sequence
\begin{equation}\label{pi}
0\rightarrow\mathbb{C}\underline{k}\rightarrow \mathfrak{\widehat{gl}}_n \overset{\pi}{\twoheadrightarrow} \mathfrak{gl}_n[z,z^{-1}]\rightarrow 0\,.
\end{equation}
Then $\mathfrak{\hat h}'_0=\pi^{-1}(\mathfrak{z}(P_1))$ is the pre-image of the centraliser $\mathfrak{z}(P_1)=\{x(z)\in\mathfrak{sl}_n[z,z^{-1}]~|~[P_1,x(z)]=0\}$ under the projection $\pi$. We denote by 
\begin{equation}\label{preHeisenberg}
\mathfrak{h}_0'[z,z^{-1}]=\pi(\mathfrak{\hat h}'_0)\subset \mathfrak{sl}_{n}[z,z^{-1}]\,.
\end{equation}
the projection of the principal Heisenberg subalgebra onto the loop algebra in \eqref{pi}. It follows that $[P_{r}, P_{r^{\prime }}]=0$ in the loop algebra $\mathfrak{sl}[z,z^{-1}]$ and, thus, $\mathfrak{h}'_0[z,z^{-1}]$ is an (infinite-dimensional) commutative Lie subalgebra in $\mathfrak{sl}_n[z,z^{-1}]$. It is the latter which is in the focus of our discussion for the remainder of this article.

Note that both `halves' of the principal subalgebra, the negative and positive indexed elements $P_r$ are related by the following anti-linear $*$-involution on $\mathfrak{gl}_{n}[z,z^{-1}]$, 
\begin{equation}\label{star}
z^*=\bar z=z^{-1}\quad\text{ and }\quad x^{\ast }=\bar{x}^{T},\quad\forall x\in\mathfrak{gl}_n,
\end{equation}
where $\bar x$ is the matrix obtained by complex conjugation and $x^T$ denotes the transpose. That is, $x^*$ is the hermitian conjugate of $x$. One immediately verifies from the definition \eqref{P_r} that $P_r^*=P_{-r}$. 

Denote by $\Gamma:\g_n\to\g_n$ the (finite) Dynkin diagram automorphism induced by exchanging the $i$th Dynkin node with the $(n-i)$th node for $i=1,\ldots,n-1$, $\Gamma(e_{i,j})=e_{n-i,n-j}$. The latter is clearly an involution. Consider now the affine Dynkin diagram and the map $\hat\Gamma:\g_n\to\g_n$ of order $n$ that corresponds to a cyclic permutation of all nodes, $\hat\Gamma(e_{ij})=e_{i+1,j+1}$ where indices are understood modulo $n$. Note that the latter also induces a (finite) Lie algebra automorphism and that the relation 
$\Gamma\circ\hat\Gamma^{-1}=\hat\Gamma\circ\Gamma$ holds. We extend both automorphisms (viewed as $\mb{C}[z,z^{-1}]$-linear maps)  to the loop algebra $\g_n[z,z^{-1}]$ by setting $\Gamma(f(z)x)=f(z)\Gamma(x)$ and $\hat\Gamma(f(z)x)=f(z)\hat\Gamma(x)$ for all $x\in\g_n$ and $f\in\mb{C}[z,z^{-1}]$.
\begin{lemma}\label{lem:Pi}
Let $\Pi=\hat\Gamma\circ\Gamma$, then $\Pi(P_r)=\overline{P}_{-r}$ for all $r\in \mathbb{Z}\backslash
n\mathbb{Z}$.
\end{lemma}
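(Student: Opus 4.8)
The plan is to reduce the whole family of identities to a single base case by exploiting the fact that, in the associative matrix algebra $\mb{C}[z,z^{-1}]\otimes\g_n$, the elements $P_r$ are just powers of the cyclic element $P_1$. First I would record the observation that $P_1=\sum_{i=1}^{n-1}e_{i,i+1}+z\,e_{n,1}$ is the weighted cyclic shift $C:=P_1$ with $C^n=z\,\mathrm{id}$, and that a one-line check against the definition \eqref{P_r} gives $P_r=C^r$ for every $r\in\mb{Z}\setminus n\mb{Z}$ (the relation $C^n=z\,\mathrm{id}$ being exactly the convention $P_{r+n}=zP_r$, and $P_{-r}=z^{-1}P_{n-r}=C^{-r}$). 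Next I would note that $\Gamma$ and $\hat\Gamma$, being $e_{ij}\mapsto e_{n-i,n-j}$ and $e_{ij}\mapsto e_{i+1,j+1}$ (indices mod $n$), are conjugations by permutation matrices; hence, after the stated $z$-linear extension, they are automorphisms of the \emph{associative} algebra, not merely Lie automorphisms. Consequently $\Pi=\hat\Gamma\circ\Gamma$ is a $z$-linear associative-algebra automorphism with $\Pi(e_{ij})=e_{1-i,1-j}$, and in particular it commutes with taking powers: $\Pi(P_r)=\Pi(C)^r$.

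The second step is to treat the bar operation symmetrically. The anti-linear part of the $*$-involution \eqref{star}, sending $z\mapsto z^{-1}$ and conjugating matrix entries, is a $\mb{C}$-antilinear automorphism of the associative algebra $\mb{C}[z,z^{-1}]\otimes\g_n$, so it too commutes with powers and inverses: $\overline{P_{-r}}=\overline{C^{-r}}=(\overline{C^{-1}})^{r}$. Comparing the two displayed expressions, the assertion $\Pi(P_r)=\overline{P_{-r}}$ for all $r$ follows once the single base case $\Pi(C)=\overline{C^{-1}}$, i.e. $\Pi(P_1)=\overline{P_{-1}}$, is established.

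For the base case I would simply compute both sides. Applying $\Pi(e_{ij})=e_{1-i,1-j}$ to $P_1$ yields $\Pi(P_1)=\sum_{i=2}^{n}e_{i,i-1}+z\,e_{1,n}$, while $P_{-1}=z^{-1}P_{n-1}=\sum_{i=2}^{n}e_{i,i-1}+z^{-1}e_{1,n}$, and applying the bar turns the wrap-around coefficient $z^{-1}$ into $z$, producing exactly the same element. (If one prefers to bypass the reduction, the identical index bookkeeping carried out directly on the two terms of $P_r$ for $r\in\{1,\dots,n-1\}$, then extended by $P_{r+n}=zP_r$, proves the general statement in one pass.) I expect the only real subtlety to be bookkeeping: keeping the indices straight modulo $n$ and, above all, tracking the single factor of $z$ attached to the wrap-around term. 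The point is that $\Pi$ is $z$-\emph{linear} and so leaves that $z$ in place, whereas the bar is \emph{anti}-linear and flips the $z^{-1}$ of $P_{-r}$ up to $z$; it is precisely this interplay of the two conventions that makes the two sides coincide.
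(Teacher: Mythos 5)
Your proposal is correct. The paper disposes of this lemma with the one-line remark that it is ``a straightforward computation using the definition \eqref{P_r}'', i.e.\ a direct index check on the two sums defining $P_r$ for each $r=1,\dots,n-1$, extended by the convention $P_{r+n}=zP_r$ --- which is exactly the fallback you offer in your parenthetical. Your primary route is organised differently: you observe that in the associative algebra $\mb{C}[z,z^{-1}]\otimes\g_n$ one has $P_r=P_1^r$ (an identity the paper itself only records later, as Lemma \ref{lem:XP}, where $P_1$ is the matrix $X$ of \eqref{X}), that $\Gamma$ and $\hat\Gamma$ are conjugations by the permutation matrices $\mc{C}$ and $\mc{\hat C}$ of \eqref{C1} and hence multiplicative, and that the bar of \eqref{star} is an antilinear ring automorphism; this collapses the whole family of identities to the single base case $\Pi(P_1)=\overline{P_{-1}}$, which you verify explicitly and correctly. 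What your version buys is a cleaner bookkeeping (one base case instead of $n-1$ congruence checks) and an early appearance of the structural fact $P_r=X^r$ that the paper uses repeatedly afterwards; what it costs is the need to justify that $\Pi$ and the bar respect the associative product and not merely the Lie bracket, which you do. Your remark that the $z$-linearity of $\Pi$ versus the antilinearity of the bar is the one point where the two sides could fail to match is exactly the right thing to flag.
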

\begin{proof}
A straightforward computation using the definition \eqref{P_r}.
\end{proof}

\subsection{Cartan subalgebras in apposition}
We now consider the projection of the principal Heisenberg subalgebra onto the finite Lie algebra $\mathfrak{gl}_n$. Let $\pi_1:\mathfrak{gl}_n[z,z^{-1}]\rightarrow\mathfrak{gl}_n$ be the projection obtained by specialising to $z=1$, that is $\pi_1:f(z)\otimes g\mapsto f(1) g$. Set 
$h'_r=\pi_1(P_r)$ for $r=1,\ldots,n-1$. Then $h'_1$ is known as {\em cyclic element}
and the centraliser
\begin{equation}
\mathfrak{h}'=\{g\in \mathfrak{gl}_{n}~|~[h'_1 ,g]=0\}
\label{Cartan'}
\end{equation}%
is another Cartan subalgebra, called {\em in apposition} to the Cartan algebra $\mathfrak{h}$ defined in \eqref{Cartan}; see \cite{kostant1959principal} for details.

We recall the construction of the root vectors with respect to the Cartan algebra $\mathfrak{h}'$. Let $\zeta=\exp(2\pi\iota/n)$ and define for$\ i,j=1,\ldots ,n$ the matrices%
\begin{equation}
e'_{ij}=\mathcal{S}^{-1}e_{ij}\mathcal{S}=\frac{1}{n}%
\sum_{a,b=1}^{n}\zeta ^{-ia+jb}e_{ab},\quad \mathcal{S}=(\zeta ^{ab}/\sqrt{n}%
)_{1\leq a,b\leq n}\;. \label{modS}
\end{equation}%
Obviously, $\mathcal{S}$ induces a Lie algebra automorphism, i.e. the
matrices $e'_{ij}$ also satisfy (\ref{eijbracket}). 
\begin{lemma}
The matrices $\{e'_{ij}\}$ give the root space decomposition with respect to the Cartan algebra $\mathfrak{h}'$. That is, we have
\begin{equation}
[h'_r,e'_{ij}]=(\zeta^{-ir}-\zeta^{-jr})e'_{ij},\quad r=1,\ldots,n-1,\quad i,j=1,\ldots,n
\end{equation}
Moreover,
\begin{equation}\label{diagonalh}
\mathcal{S}h'_r \mathcal{S}^{-1}=\sum_{j=1}^{n}\zeta ^{-jr}e_{jj}\;.
\end{equation}
and the map $e_{ij}\mapsto e'_{ij}$ is a Lie algebra automorphism.
\end{lemma}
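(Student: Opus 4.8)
The plan is to reduce everything to the elementary fact that $\mathcal{S}$ is, up to normalisation, the discrete Fourier matrix, which diagonalises cyclic shift matrices. First I would rewrite $h'_r=\pi_1(P_r)$ in a transparent form. Specialising \eqref{P_r} to $z=1$ and combining the two sums --- the first contributes $e_{i,i+r}$ for $1\le i\le n-r$, while the second, after reindexing so that the row is $i$ and the column equals $i-(n-r)$, contributes $e_{i,i+r-n}$ for $n-r<i\le n$ --- shows that
\[
h'_r=\sum_{i=1}^n e_{i,\,\overline{i+r}},
\]
where $\overline{i+r}$ denotes the representative of $i+r$ modulo $n$ in $\{1,\ldots,n\}$. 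Thus $h'_r$ is simply the $r$-fold cyclic shift matrix, and this reformulation is the only genuinely new ingredient needed.

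Claim (3) requires no computation: the map $e_{ij}\mapsto e'_{ij}=\mathcal{S}^{-1}e_{ij}\mathcal{S}$ is conjugation by the invertible matrix $\mathcal{S}$, and conjugation is always a Lie algebra automorphism of $\mathfrak{gl}_n$ --- this is exactly the remark already recorded after \eqref{modS} that the $e'_{ij}$ satisfy \eqref{eijbracket}. For the diagonalisation \eqref{diagonalh} I would compute $\mathcal{S}h'_r\mathcal{S}^{-1}$ entrywise. Since $\mathcal{S}$ is unitary and symmetric, $\mathcal{S}^{-1}=\overline{\mathcal{S}}$ has entries $(\mathcal{S}^{-1})_{ab}=\zeta^{-ab}/\sqrt n$; using the cyclic-shift form of $h'_r$ the computation reads
\[
(\mathcal{S}h'_r\mathcal{S}^{-1})_{ab}=\sum_{i=1}^n \mathcal{S}_{ai}\,(\mathcal{S}^{-1})_{\overline{i+r},b}=\frac1n\sum_{i=1}^n\zeta^{ai-(i+r)b}=\frac{\zeta^{-rb}}{n}\sum_{i=1}^n\zeta^{i(a-b)}=\zeta^{-rb}\delta_{ab},
\]
the last step being the orthogonality relation for $n$th roots of unity. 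This yields $\mathcal{S}h'_r\mathcal{S}^{-1}=\sum_{j}\zeta^{-jr}e_{jj}$.

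Finally, the root space relation follows by conjugating the bracket back. Writing $\tilde h_r:=\mathcal{S}h'_r\mathcal{S}^{-1}=\sum_l\zeta^{-lr}e_{ll}$, so that $h'_r=\mathcal{S}^{-1}\tilde h_r\mathcal{S}$, and invoking again that conjugation is an automorphism, I would obtain
\[
[h'_r,e'_{ij}]=\mathcal{S}^{-1}[\tilde h_r,e_{ij}]\,\mathcal{S}=\mathcal{S}^{-1}(\zeta^{-ir}-\zeta^{-jr})e_{ij}\,\mathcal{S}=(\zeta^{-ir}-\zeta^{-jr})e'_{ij},
\]
where the middle bracket is evaluated from \eqref{eijbracket} via $[e_{ll},e_{ij}]=\delta_{li}e_{lj}-\delta_{lj}e_{il}$. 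I expect no serious obstacle: the only points demanding care are the index-modulo-$n$ bookkeeping in rewriting $h'_r$ and tracking whether one conjugates by $\mathcal{S}$ or $\mathcal{S}^{-1}$ --- the convention $e'_{ij}=\mathcal{S}^{-1}e_{ij}\mathcal{S}$ forces \eqref{diagonalh} to read $\mathcal{S}h'_r\mathcal{S}^{-1}$, with the opposite ordering. Everything else reduces to the orthogonality of roots of unity and the $\mathfrak{gl}_n$ bracket \eqref{eijbracket}.
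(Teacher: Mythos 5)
Your proposal is correct and fills in exactly the computation the paper leaves implicit (the paper's proof is one line, citing the rotation relation $[h'_r,e_{ij}]=e_{i-r,j}-e_{i,j+r}$ with indices mod $n$). The only organisational difference is that you establish the diagonalisation \eqref{diagonalh} first via Fourier orthogonality and then transport the bracket by conjugation, whereas the paper computes $[h'_r,e'_{ij}]$ directly from the rotation relation; both are the same straightforward calculation.
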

\begin{proof} A straightforward computation using the Lie bracket relations
\begin{equation}\label{rotation}
\lbrack h'_r ,e_{i,j}]=e_{i-r,j}-e_{i,j+r}\,,
\end{equation}
where indices are understood modulo $n$.
\end{proof}
Note that the matrices $\{h'_1,h'_2,\ldots,h'_{n-1}\}$ only span the $\mathfrak{sl}_n$ Cartan subalgebra $\mathfrak{h}'_0=(\mathfrak{h}'\cap\mathfrak{sl}_n)\subset\mathfrak{h}'$. A basis of $\mathfrak{h}'$ is given by 
the matrices $e'_{ii}=\mathcal{S}^{-1} e_{ii}\mathcal{S}$.

We now take a closer look at the matrix 
\begin{equation}\label{modS1}
\mathcal{S}=\frac{1}{\sqrt{n}}\sum_{i,j=1}^n \zeta^{ij}e_{ij}
\end{equation}
which diagonalises the Cartan subalgebra $\mathfrak{h}'$. If we introduce in addition the matrix 
\begin{equation}\label{modT1}
\mathcal{T}=\zeta^{-\frac{n(n-1)}{24}}\sum_{i=1}^n \zeta^{\frac{i(n-i)}{2}}e_{ii}
\end{equation}
then we obtain a representation of the group $PSL_2(\mathbb{Z})$. Let $\mathcal{C}=(\delta_{i+j,0\mod n})_{1\leq i,j\leq n}$ and $\mc{\hat C}=(\delta_{i,j+1\mod n})_{1\leq i,j\leq n}$ be the matrices which implement the Dynkin diagram automorphisms $\Gamma$ and $\hat\Gamma$, %
\begin{equation}\label{C1}
\mathcal{C}e_{ij}\mathcal{C}=\Gamma(e_{ij})=e_{n-i,n-j}\quad\text{and}\quad
\mathcal{\hat C}e_{ij}\mathcal{\hat C}^{-1}=\hat\Gamma(e_{ij})=e_{i+1,j+1}\,,
\end{equation}
where indices are understood modulo $n$. Then $\mc{C}h'_r=h'_{n-r}\mc{C}$ and $\mc{\hat C}h'_r=h'_{r}\mc{\hat C}$ for $r=1,\ldots,n-1$. Moreover, we have the following proposition.
\begin{prop}[Kac-Peterson \cite{kavc1984infinite}]\label{STC}
The $\mathcal{S}$ and $\mathcal{T}$-matrix satisfy 
\begin{equation}\label{mod_relations}
\mathcal{S}^2=(\mathcal{S}\mathcal{T})^3=\mathcal{C}
\end{equation}
and
\begin{equation}\label{mod_relations2}
\mathcal{CS}=\mathcal{\bar S}=\mathcal{S}^{\ast}=\mathcal{S}^{-1},\qquad
\mathcal{CT}=\mathcal{TC},\qquad \mathcal{T}^{\ast}=\mathcal{T}^{-1}\,.
\end{equation}
Moreover, $(\mc{\hat C}\mc{S})_{ab}=\mc{S}_{(a-1)b}=\zeta^{-b} \mc{S}_{ab}$ and $(\mc{S}\mc{\hat C})_{ab}=\mc{S}_{a(b+1)}=\zeta^{a} \mc{S}_{ab}$ with indices taken modulo $n$.
\end{prop}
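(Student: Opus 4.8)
The plan is to reduce every assertion to an entrywise computation, since $\mathcal{S}$, $\mathcal{T}$, $\mathcal{C}$ and $\mc{\hat C}$ are all explicit matrices: $\mathcal{S}_{ij}=n^{-1/2}\zeta^{ij}$, the matrix $\mathcal{T}$ is diagonal with $\mathcal{T}_{ii}=\zeta^{-n(n-1)/24}\zeta^{i(n-i)/2}$, while $\mathcal{C}$ and $\mc{\hat C}$ are the permutation matrices implementing $i\mapsto -i$ and $i\mapsto i-1$ (indices mod $n$). Throughout I would set $q=\zeta^{1/2}=\exp(\pi\iota/n)$, so that $q^{2n}=1$, $q^{n}=-1$, and $\mathcal{T}_{ii}=\omega\,q^{\,i(n-i)}$ with $\omega=\zeta^{-n(n-1)/24}=q^{-n(n-1)/12}$.

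First I would dispose of everything that does not involve $(\mathcal{S}\mathcal{T})^3$. For $\mathcal{S}^2=\mathcal{C}$ one has $(\mathcal{S}^2)_{ik}=\tfrac1n\sum_{j}\zeta^{j(i+k)}=\delta_{i+k\equiv 0}$ by orthogonality of the characters of $\mathbb{Z}/n\mathbb{Z}$, which is $\mathcal{C}_{ik}$. Since $\mathcal{S}$ is symmetric, $\mathcal{S}^{*}=\bar{\mathcal{S}}$, and $(\mathcal{C}\mathcal{S})_{ik}=\mathcal{S}_{(-i)k}=n^{-1/2}\zeta^{-ik}=\bar{\mathcal{S}}_{ik}$, giving $\mathcal{C}\mathcal{S}=\bar{\mathcal{S}}=\mathcal{S}^{*}$; as $\mathcal{C}^{2}=I$ and $\mathcal{S}^{2}=\mathcal{C}$ one gets $\mathcal{S}^{-1}=\mathcal{C}\mathcal{S}$, which finishes that chain and shows in passing that $\mathcal{C}$ commutes with $\mathcal{S}$. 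The relation $\mathcal{C}\mathcal{T}=\mathcal{T}\mathcal{C}$ holds because $\mathcal{T}$ is diagonal and $\mathcal{T}_{ii}$ is invariant under $i\mapsto n-i$ (as $i(n-i)=(n-i)i$), while $\mathcal{T}^{*}=\mathcal{T}^{-1}$ is immediate from $\mathcal{T}$ being diagonal with unimodular entries. Finally the two $\mc{\hat C}$-identities follow by reading off the shifted row and column: $(\mc{\hat C}\mathcal{S})_{ab}=\mathcal{S}_{(a-1)b}=\zeta^{-b}\mathcal{S}_{ab}$ and $(\mathcal{S}\mc{\hat C})_{ab}=\mathcal{S}_{a(b+1)}=\zeta^{a}\mathcal{S}_{ab}$.

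For $(\mathcal{S}\mathcal{T})^3=\mathcal{C}$ I would not expand the triple product directly but isolate the single nontrivial ingredient, the braid-type identity $\mathcal{S}\mathcal{T}\mathcal{S}=\mathcal{T}^{-1}\mathcal{S}\mathcal{T}^{-1}$. Granting this, and using $\mathcal{S}^2=\mathcal{C}$ together with $\mathcal{C}\mathcal{T}=\mathcal{T}\mathcal{C}$ from the previous step, one regroups
\begin{equation*}
(\mathcal{S}\mathcal{T})^3=(\mathcal{S}\mathcal{T}\mathcal{S})\,\mathcal{T}\mathcal{S}\mathcal{T}=\mathcal{T}^{-1}\mathcal{S}\mathcal{T}^{-1}\mathcal{T}\mathcal{S}\mathcal{T}=\mathcal{T}^{-1}\mathcal{S}^{2}\mathcal{T}=\mathcal{T}^{-1}\mathcal{C}\mathcal{T}=\mathcal{C},
\end{equation*}
so the whole proposition reduces to the braid identity.

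The main obstacle is exactly this identity, which is a genuine Gauss sum evaluation. Since $\mathcal{T}$ is diagonal,
\begin{equation*}
(\mathcal{S}\mathcal{T}\mathcal{S})_{il}=\frac{\omega}{n}\sum_{j\bmod n}q^{-j^{2}+(n+2i+2l)j},
\end{equation*}
a quadratic Gauss sum in $j$ with linear coefficient $b=n+2(i+l)$; one first checks the summand is genuinely periodic mod $n$ (the half-integer exponents in $q$ must cancel). Applying reciprocity for generalized quadratic Gauss sums (the parity condition $ac+b$ even holds with $a=-1$, $c=n$) collapses it to $\sum_{j}q^{-j^{2}+bj}=\sqrt{n}\,\exp\!\big(\pi\iota(b^{2}-n)/(4n)\big)$, and substituting $b=n+2(i+l)$ gives $(\mathcal{S}\mathcal{T}\mathcal{S})_{il}=\omega\,n^{-1/2}\zeta_8^{\,n-1}(-1)^{i+l}q^{(i+l)^2}$ with $\zeta_8=\exp(\pi\iota/4)$. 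On the other side one computes directly $(\mathcal{T}^{-1}\mathcal{S}\mathcal{T}^{-1})_{il}=\omega^{-2}n^{-1/2}(-1)^{i+l}q^{(i+l)^2}$. The two agree precisely when $\omega^{3}=\zeta_8^{\,1-n}$, and this is exactly what the normalising prefactor $\zeta^{-n(n-1)/24}$ in $\mathcal{T}$ guarantees, since $\omega^{3}=q^{-n(n-1)/4}=\zeta_8^{\,1-n}$. The delicate points I expect to require care are the periodicity of the summand modulo $n$, the correct branch and phase in the Gauss sum reciprocity, and the bookkeeping that matches that phase against the fractional prefactor of $\mathcal{T}$; everything else is routine.
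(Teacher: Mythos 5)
Your argument is correct, but it is worth noting that the paper does not actually prove this proposition at all: it declares the relations to be ``the familiar representation of the $\mathcal{S}$ and $\mathcal{T}$-matrix from $\widehat{su}(n)_k$-WZNW conformal field theory for $k=1$'' and omits the proof, deferring to Kac--Peterson. What you supply is therefore a genuinely self-contained verification rather than a variant of the paper's route. Your treatment of the easy relations ($\mathcal{S}^2=\mathcal{C}$ by character orthogonality, $\mathcal{CS}=\bar{\mathcal{S}}=\mathcal{S}^*=\mathcal{S}^{-1}$ from symmetry of $\mathcal{S}$ and $\mathcal{C}=\mathcal{S}^2$, the diagonal/permutation computations for $\mathcal{T}$ and $\hat{\mathcal{C}}$) is exactly right, and the reduction of $(\mathcal{ST})^3=\mathcal{C}$ to the single identity $\mathcal{STS}=\mathcal{T}^{-1}\mathcal{S}\mathcal{T}^{-1}$ is the standard and efficient move. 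I checked the Gauss-sum step: with $q=e^{\pi\iota/n}$ the summand $q^{-j^2+bj}$, $b=n+2(i+l)$, is indeed $n$-periodic (both $q^{-2jn}$ and $q^{n(b-n)}$ are $1$), the parity condition for reciprocity holds since $ac+b=2(i+l)$, and the resulting phase $\zeta_8^{\,n-1}$ is absorbed precisely by $\omega^3=q^{-n(n-1)/4}=\zeta_8^{\,1-n}$; the right-hand side $(\mathcal{T}^{-1}\mathcal{S}\mathcal{T}^{-1})_{il}=\omega^{-2}n^{-1/2}(-1)^{i+l}q^{(i+l)^2}$ also checks out using $q^{-n}=-1$. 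The one point to make fully explicit if you write this up is the choice of branch: the entries $\zeta^{i(n-i)/2}$ and the prefactor $\zeta^{-n(n-1)/24}$ involve fractional powers of $\zeta$, so one should fix $\zeta^{x}:=e^{2\pi\iota x/n}$ throughout, which is what your convention $q=\zeta^{1/2}=e^{\pi\iota/n}$ amounts to. What your approach buys is a transparent explanation of \emph{why} the seemingly arbitrary normalisation $\zeta^{-n(n-1)/24}$ appears in \eqref{modT1}: it is exactly the factor needed to match the eighth-root-of-unity phase coming out of Gauss-sum reciprocity; the cost is that the argument is specific to $k=1$, whereas the cited Kac--Peterson result covers general level via theta functions.
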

This is the the familiar representation of the $\mathcal{S}$ and $\mathcal{T}$-matrix from $\widehat{su}(n)_k$-WZNW conformal field theory for $k=1$; see \cite{kavc1984infinite} for the case of general $k$. We therefore omit the proof.

\subsection{The twisted loop algebra}
We recall from \cite{kac1994infinite} the twisted realisation of the loop algebra with respect to the {\em principal gradation}. Denote by $\rho^\vee\in\mathfrak{h}$ the dual Weyl vector, which is the sum over the fundamental co-weights with respect to the original Cartan subalgebra $\mathfrak{h}$. In the case of $\mathfrak{gl}_n$ it reads explicitly, 
\begin{equation}\label{dual_Weyl}
\rho^\vee=\sum_{i=1}^n\frac{n-2i+1}{2}\, e_{ii} \;.
\end{equation}
One easily verifies that 
$[\rho^\vee,e_{ij}]=(j-i)e_{ij}$ and, hence, the group element $\omega=\exp[2\pi \imath\rho^\vee/n]$ %
induces a $\mathbb{Z}_n$-gradation on the Lie algebra 
$\mathfrak{gl}_n$ via the automorphism $\Omega(g)=\omega g\omega^{-1}$,
\[
\mathfrak{gl}_n
=\bigoplus_{r=0}^{n-1}\mathfrak{gl}_n^{(r)},\qquad \mathfrak{gl}_n^{(r)}=\{g~|~\Omega (g)=
e^{\frac{2\pi \imath}{n} r} g\}\;.
\]
In particular, we have that $\mathfrak{h}=\mathfrak{gl}^{(0)}_n$ and $\Omega(h'_r)=\exp(2\pi\imath r/n)h'_r$. Thus, $\Omega(\mathfrak{h}\rq{})=\mathfrak{h}\rq{}$, and Kostant has shown in \cite{kostant1959principal} that the automorphism $\Omega$ induces a Coxeter transformation in the Weyl group $S_n\cong N(T')/T'$, where $T'\subset GL(V)$ is the maximal torus corresponding to the Cartan algebra in apposition $\mathfrak{h}'$ and $N(T')$ its normaliser.

For $r\in\mathbb{Z}$ define $\bar r\in\{0,1,2,\ldots,n-1\}$ by $r=\bar r+m n$ with $m\in\mathbb{Z}$. Then the twisted loop algebra is defined as 
\begin{equation}\label{twistedg}
\mathfrak{gl}^\Omega_n[t,t^{-1}]=\bigoplus_{r\in\mathbb{Z}}t^r\otimes\mathfrak{gl}_n^{(\bar r)}\,,
\end{equation}
with Lie bracket $[f(t)x,g(t)y]:=f(t)g(t)[x,y]$ where $f,g\in\mathbb{C}[t,t^{-1}]$ and $x,y\in\mathfrak{gl}_n$. The following result is an immediate consequence of the analogous isomorphism for the corresponding affine Lie algebras in  \cite[Chapter 14]{kavc1984infinite} and we therefore omit the proof.
\begin{prop}\label{prop:twisted} 
The linear map $\phi:\mathfrak{gl}_n[z,z^{-1}]\rightarrow \mathfrak{gl}^\Omega_n[t,t^{-1}]$ defined by 
\[
z^m\otimes e_{ij}\mapsto\phi(z^m\otimes e_{ij})=t^{j-i+nm}\otimes e_{ij}
\]
is a Lie algebra isomorphism. In particular, we have for $r\in\mb{N}\backslash n\mb{N}$ that
\begin{equation}
\phi(P_r)=t^{r} h'_{\bar r} \qquad\text{and}\qquad\phi( P_{-r})=t^{-r} h'_{n-\bar r}\,,
\end{equation}
where $h'_m\in\mathfrak{h}'$ with $m=1,2,\ldots,n-1$ is the image of $P_m$ under the projection $\pi_1:\mathfrak{gl}_n[z,z^{-1}]\twoheadrightarrow
\mathfrak{gl}_n$ obtained by setting $z=1$. 
\end{prop}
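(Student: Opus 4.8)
The plan is to verify directly that $\phi$ is a graded linear bijection which intertwines the two Lie brackets, and then to read off the images of the $P_r$ from their explicit form \eqref{P_r}. Since $\phi$ is prescribed on the $\mb{C}[z,z^{-1}]$-basis $\{z^m\otimes e_{ij}\}$ of $\g_n[z,z^{-1}]$, the only substantive points are (i) that each image lands in the correct graded component of $\g^\Omega_n[t,t^{-1}]$, and (ii) that brackets are preserved; everything else is bookkeeping. Alternatively one could simply transport the corresponding statement for the affine algebras recalled in \cite[Chapter 14]{kavc1984infinite} along the projection $\pi$ of \eqref{pi}, which is the route the reference suggests; I will instead indicate the self-contained computation.

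First I would settle well-definedness and bijectivity. From $[\rho^\vee,e_{ij}]=(j-i)e_{ij}$ one gets $\Omega(e_{ij})=\omega e_{ij}\omega^{-1}=\zeta^{\,j-i}e_{ij}$, so $e_{ij}\in\g_n^{(\overline{j-i})}$. Since $\overline{j-i+nm}=\overline{j-i}$, the element $t^{\,j-i+nm}\otimes e_{ij}$ indeed lies in $t^{\,j-i+nm}\otimes\g_n^{(\overline{j-i+nm})}\subset\g^\Omega_n[t,t^{-1}]$, so $\phi$ is well defined. Conversely, a general basis vector $t^r\otimes e_{ij}$ of the twisted loop algebra satisfies $r\equiv j-i\pmod n$, and as $m$ runs over $\mb{Z}$ the exponents $j-i+nm$ exhaust exactly this residue class; hence $\phi$ restricts to a bijection of bases and is a linear isomorphism.

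Next I would check the homomorphism property on basis elements, which by bilinearity suffices. This is the routine but central step: using \eqref{eijbracket},
\[
\phi\big([z^m\otimes e_{ij},z^{m'}\otimes e_{kl}]\big)=\delta_{jk}\,t^{\,l-i+n(m+m')}\otimes e_{il}-\delta_{il}\,t^{\,j-k+n(m+m')}\otimes e_{kj},
\]
while the bracket of the images reads
\[
[t^{\,j-i+nm}\otimes e_{ij},\,t^{\,l-k+nm'}\otimes e_{kl}]=t^{\,(j-i)+(l-k)+n(m+m')}\otimes(\delta_{jk}e_{il}-\delta_{il}e_{kj}).
\]
Comparing term by term, on the summand with $j=k$ the exponent $(j-i)+(l-k)+n(m+m')$ collapses to $l-i+n(m+m')$, and on the summand with $i=l$ it collapses to $j-k+n(m+m')$; the two displays agree, so $\phi$ is a Lie algebra isomorphism.

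Finally I would compute the images of the $P_r$. For $r\in\{1,\dots,n-1\}$ apply $\phi$ to \eqref{P_r}: in the first sum $j-i=r$ gives $t^{r}$, and in the second sum $i-j=n-r$ together with the factor $z$ gives $t^{(r-n)+n}=t^{r}$, so both sums land on $t^r$ and $\phi(P_r)=t^{r}\otimes\big(\sum_{j-i=r}e_{ij}+\sum_{i-j=n-r}e_{ij}\big)=t^{r}h'_r$, where $h'_r=\pi_1(P_r)$ is exactly the $z=1$ specialisation appearing here and lies in $\mathfrak{h}'$ because $\Omega(h'_r)=\zeta^{r}h'_r$. To reach general $r\in\mb{N}\backslash n\mb{N}$ I would write $r=\bar r+nm$ with $\bar r\in\{1,\dots,n-1\}$ and combine the recursion $P_{r}=z^mP_{\bar r}$ with the immediate identity $\phi(z^mX)=t^{nm}\phi(X)$, giving $\phi(P_r)=t^{nm}t^{\bar r}h'_{\bar r}=t^{r}h'_{\bar r}$; the negative case follows identically from $P_{-\bar r}=z^{-1}P_{n-\bar r}$ (so that $\overline{-r}=n-\bar r$), yielding $\phi(P_{-r})=t^{-r}h'_{n-\bar r}$. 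The only thing to be careful about is the index bookkeeping modulo $n$ and the case split between $P_r$ and $P_{-r}$ forced by $P_{r+n}=zP_r$; once the grading convention $e_{ij}\in\g_n^{(\overline{j-i})}$ is fixed there is no genuine obstacle, and this is precisely why the authors regard the statement as an immediate consequence of the affine version.
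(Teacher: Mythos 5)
Your proposal is correct and complete. The paper itself omits the proof, asserting that the statement is ``an immediate consequence'' of the analogous isomorphism for the affine Lie algebras in Kac's book, and then remarks right after the proposition that, upon setting $z=t^n$, the map $\phi$ is realised by conjugation with the diagonal matrix $\mathcal{D}(t)=\sum_i t^{-i}e_{ii}$, since $\mathcal{D}(t)e_{ij}\mathcal{D}(t^{-1})=t^{j-i}e_{ij}$. That similarity transformation packages your entire bracket verification into one line: conjugation is automatically a Lie algebra automorphism, so only the grading bookkeeping remains. Your route instead checks everything by hand on the basis $\{z^m\otimes e_{ij}\}$ — well-definedness via $\Omega(e_{ij})=\zeta^{j-i}e_{ij}$, bijectivity via exhaustion of the residue class $r\equiv j-i\pmod n$, the bracket computation using \eqref{eijbracket}, and the reduction of general $r$ to $\bar r$ via $P_{r+n}=zP_r$ and $\phi(z^mX)=t^{nm}\phi(X)$ — and all of these steps are accurate; in particular the index collapse $(j-i)+(l-k)=l-i$ when $j=k$ and $=j-k$ when $i=l$ is exactly right, as is the treatment of $P_{-r}$ via $P_{-\bar r}=z^{-1}P_{n-\bar r}$. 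What your version buys is a self-contained argument not relying on the affine-algebra reference; what it costs is length relative to the one-line conjugation argument. One small imprecision: you say $h'_r$ ``lies in $\mathfrak{h}'$ because $\Omega(h'_r)=\zeta^r h'_r$'' — that eigenvalue equation is what places $h'_r$ in the graded piece $\g_n^{(\bar r)}$ (which is what well-definedness of $t^r\otimes h'_r$ actually requires); membership in the Cartan algebra in apposition $\mathfrak{h}'$ is a separate fact, following from $[P_1,P_r]=0$ in the loop algebra, and is established earlier in the paper.
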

Setting formally $z=t^n$ the loop algebra isomorphism $\phi:\mathfrak{gl}_n[z,z^{-1}]\rightarrow \mathfrak{gl}^\Omega_n[t,t^{-1}]$ corresponds to the following similarity transformation with the diagonal matrix
\begin{equation}
\mathcal{D}(t)=\sum_{i=1}^n t^{-i}e_{ii}\;.
\end{equation}
That is, we have the straightforward identities $\mathcal{D}(t)e_{ij}\mathcal{D}(t^{-1})=t^{j-i}e_{ij}$ from which the result is immediate. This allows us to combine the $\mathfrak{gl}_n$ Lie algebra automorphism induced by the $\mathcal{S}$-matrix with the loop algebra isomorphism $\phi$ via introducing the following deformed $\mathcal{S}$-matrix,
\begin{equation}\label{modSt}
\mathcal{S}(t)=\mathcal{S}\mathcal{D}(t)=\frac{1}{\sqrt{n}}\sum_{i,j=1}^n t^{-j}\zeta^{ij}e_{ij}
\end{equation}
while leaving the $\mathcal{T}$-matrix unchanged.
\begin{lemma}\label{lem:twistedHeisenberg}
Let $z=t^n$. Then %
$\mathcal{S}(t)P_r\mathcal{S}^{-1}(t)=t^{r}\sum_{j=1}^n \zeta^{-jr}e_{jj}$.
\end{lemma}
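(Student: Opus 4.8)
The plan is to combine the two transformations whose individual effects have already been recorded. By the definition \eqref{modSt} we have $\mathcal{S}(t)=\mathcal{S}\mathcal{D}(t)$, so I would first write
\[
\mathcal{S}(t)P_r\mathcal{S}^{-1}(t)=\mathcal{S}\bigl(\mathcal{D}(t)P_r\mathcal{D}(t^{-1})\bigr)\mathcal{S}^{-1},
\]
using $\mathcal{D}(t)^{-1}=\mathcal{D}(t^{-1})$. The strategy is then to evaluate the inner conjugation by $\mathcal{D}(t)$ first and the outer conjugation by $\mathcal{S}$ second.

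For the inner step, recall from Proposition \ref{prop:twisted} and the remark following it that conjugation by $\mathcal{D}(t)$ realises the isomorphism $\phi$ with $z=t^n$, via $\mathcal{D}(t)e_{ij}\mathcal{D}(t^{-1})=t^{j-i}e_{ij}$. Applying this termwise to the definition \eqref{P_r} of $P_r$, with the convention $z=t^n$, I expect to obtain $\mathcal{D}(t)P_r\mathcal{D}(t^{-1})=t^{r}\,h'_{\bar r}$; this is exactly the content $\phi(P_r)=t^{r}h'_{\bar r}$ recorded in Proposition \ref{prop:twisted}, since conjugation by $\mathcal{D}(t)$ is the matrix incarnation of $\phi$ once $z$ is set to $t^n$. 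Here the scalar $t^r$ factors out, leaving the finite matrix $h'_{\bar r}=\pi_1(P_r)\in\mathfrak{h}'$.

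For the outer step I would invoke \eqref{diagonalh}, which states that $\mathcal{S}$ diagonalises the cyclic-type elements: $\mathcal{S}h'_r\mathcal{S}^{-1}=\sum_{j=1}^n\zeta^{-jr}e_{jj}$. Combining the two steps gives
\[
\mathcal{S}(t)P_r\mathcal{S}^{-1}(t)=t^{r}\,\mathcal{S}h'_{\bar r}\mathcal{S}^{-1}=t^{r}\sum_{j=1}^n\zeta^{-j\bar r}e_{jj}=t^{r}\sum_{j=1}^n\zeta^{-jr}e_{jj},
\]
where the last equality uses $\zeta^n=1$ so that $\zeta^{-j\bar r}=\zeta^{-jr}$. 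This is the claimed formula.

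The only genuine subtlety is bookkeeping with the index $r$ modulo $n$: the matrix $h'_{\bar r}$ is indexed by the residue $\bar r\in\{1,\dots,n-1\}$ while the prefactor $t^r$ carries the full integer $r$, and one must check that the $z$-dependence hidden in $P_r$ for $r\notin\{1,\dots,n-1\}$ (where $P_{r+n}=zP_r$) is correctly absorbed into the power $t^r$ under $z=t^n$. Thus the main thing to verify carefully is that the two conventions — $P_{r+n}=zP_r$ on the loop-algebra side and $z=t^n$ on the twisted side — are compatible, so that the single clean exponent $t^r$ emerges for all $r\in\mathbb{Z}\backslash n\mathbb{Z}$ rather than just for $r\in\{1,\dots,n-1\}$. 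Once this compatibility is confirmed, everything else is the routine composition of \eqref{modSt}, the $\mathcal{D}(t)$-conjugation rule, and \eqref{diagonalh}.
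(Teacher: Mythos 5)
Your proof is correct and is essentially the route the paper intends: the paper omits the computation, but the surrounding text (the remark after the lemma) explicitly describes the result as the composition of the $\mathcal{D}(t)$-conjugation realising $\phi$ from Proposition \ref{prop:twisted} with the diagonalisation \eqref{diagonalh} by $\mathcal{S}$, which is exactly your two-step argument. Your closing check that $P_{r+n}=zP_r$ together with $z=t^n$ yields the uniform prefactor $t^r$, and that $\zeta^{-j\bar r}=\zeta^{-jr}$, correctly handles the only bookkeeping subtlety.
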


\begin{proof}
A somewhat tedious but straightforward computation using the definition \eqref{modSt}. We therefore omit the details.
\end{proof}
The last lemma shows that the similarity transformation with the deformed $\cS$-matrix \eqref{modSt} describes the Lie algebra isomorphism from Proposition \ref{prop:twisted} together with a change of basis in which the principal subalgebra is diagonal. 

\subsection{Universal enveloping algebras and PBW basis} 
Consider the universal enveloping algebra 
$U(\mathfrak{\widehat{gl}}_n)$. According to the Poincare-Birkhoff-Witt (PBW) Theorem the latter has the triangular decomposition
\[
U(\mathfrak{\widehat{gl}}_n)\cong U(\mathfrak{\hat n}_+)\oplus 
U(\mathfrak{\hat h})\oplus U(\mathfrak{\hat n}_-)\,,
\]
with $\mathfrak{\hat n}_+$ and $\mathfrak{\hat n}_-$ being the Lie subalgebras generated by $\{e_i\}_{i=1}^{n}$ and $\{f_i\}_{i=1}^{n}$, respectively. Here the affine Cartan subalgebra is given by $\mathfrak{\hat h}=\mathfrak{h}[z,z^{-1}]\oplus\mathbb{C}\underline{k}$, but in what follows we are only interested in level 0 representations and, hence, focus on the loop algebra $\mathfrak{h}[z,z^{-1}]$ and the universal enveloping algebra
$U(\mathfrak{gl}_n[z,z^{-1}])\cong U(\mathfrak{\hat n}_+)\oplus 
U(\mathfrak{h}[z,z^{-1}])\oplus U(\mathfrak{\hat n}_-)$ instead. To unburden the notation we will henceforth write $U$ for $U(\mathfrak{gl}_n[z,z^{-1}])$ and $U_{\pm}$ for the Borel subalgebras $U(\mathfrak{h}[z,z^{-1}])\oplus U(\mathfrak{\hat n}_\pm)$. Similarly, we abbreviate the universal enveloping algebra of the twisted loop algebra, $U(\mathfrak{gl}^\Omega_n[t,t^{-1}])$, by $U^\Omega$.

\begin{lemma}\label{lem:phiHopf}
The Lie algebra isomorphism $\phi:\mathfrak{gl}_n[z,z^{-1}]\rightarrow \mathfrak{gl}^\Omega_n[t,t^{-1}]$ extends to a Hopf algebra isomorphism $\phi:U\rightarrow U^\Omega$.
\end{lemma}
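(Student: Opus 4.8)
The statement asks us to upgrade the Lie algebra isomorphism $\phi$ from Proposition \ref{prop:twisted} to a Hopf algebra isomorphism of universal enveloping algebras. The plan is to invoke the universal property of $U$ together with the standard fact that the Hopf algebra structure on any universal enveloping algebra is determined canonically by its Lie algebra. First I would recall that for any Lie algebra $\mf{g}$, the enveloping algebra $U(\mf{g})$ carries a canonical Hopf algebra structure in which every $x\in\mf{g}$ is primitive, i.e.\ $\Delta(x)=x\otimes 1+1\otimes x$, $\varepsilon(x)=0$, and $S(x)=-x$, with the structure maps extended as algebra (anti)homomorphisms. Since $\phi:\mf{gl}_n[z,z^{-1}]\to\mf{gl}^\Omega_n[t,t^{-1}]$ is a Lie algebra isomorphism, the universal property of $U$ guarantees that it extends uniquely to an algebra isomorphism $\phi:U\to U^\Omega$, and it only remains to check compatibility with the coproduct, counit and antipode.

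The key steps, in order, are as follows. First I would establish the algebra isomorphism: $\phi$ maps $\mf{gl}_n[z,z^{-1}]$ into $U^\Omega$ as a Lie algebra homomorphism (indeed an isomorphism onto its image $\mf{gl}^\Omega_n[t,t^{-1}]\subset U^\Omega$), so by the universal property of the enveloping algebra it factors through a unique unital algebra homomorphism $\phi:U\to U^\Omega$; applying the same argument to $\phi^{-1}$ yields a two-sided inverse, so $\phi$ is an algebra isomorphism. Second, I would verify the coalgebra compatibility on the generating Lie algebra. Both $\Delta_U$ and $(\phi\otimes\phi)^{-1}\circ\Delta_{U^\Omega}\circ\phi$ are algebra homomorphisms $U\to U\otimes U$ that agree on $\mf{gl}_n[z,z^{-1}]$, because for $x\in\mf{gl}_n[z,z^{-1}]$ the element $\phi(x)$ is again primitive in $U^\Omega$ and hence $\Delta_{U^\Omega}(\phi(x))=\phi(x)\otimes 1+1\otimes\phi(x)=(\phi\otimes\phi)(\Delta_U(x))$. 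Since $\mf{gl}_n[z,z^{-1}]$ generates $U$ as an algebra and both sides are algebra maps, they coincide on all of $U$, giving $\Delta_{U^\Omega}\circ\phi=(\phi\otimes\phi)\circ\Delta_U$. The identical argument applied to the counit $\varepsilon$ and the antipode $S$ (using that $\varepsilon(\phi(x))=0=\varepsilon(x)$ and $S(\phi(x))=-\phi(x)=\phi(S(x))$ on primitives) completes the verification.

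The content of the argument is almost entirely formal and rests on two facts that I would treat as background: the universal property of $U(\mf{g})$, and the characterisation of the Hopf structure via primitivity of Lie algebra elements. The only point requiring any care is the second step, where one must be careful to phrase the comparison of coproducts as an identity between two algebra homomorphisms that is checked on a generating set, rather than attempting to verify it element by element on a PBW basis. I expect no genuine obstacle here: the main (minor) subtlety is simply ensuring that one uses the generating property of $\mf{gl}_n[z,z^{-1}]$ inside $U$ correctly, so that agreement of the two algebra maps on the generators propagates to all of $U$.
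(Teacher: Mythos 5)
Your proposal is correct and follows essentially the same route as the paper: both arguments rest on the facts that $\g_n[z,z^{-1}]$ generates $U$ (and likewise for $U^\Omega$), that the Hopf structure maps are determined by primitivity of Lie algebra elements, and that $\phi$ is a Lie algebra isomorphism. Your write-up simply spells out the universal-property and generator-checking steps that the paper's proof leaves implicit.
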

\begin{proof}
Recall that $\mf{g}=\g_n[z,z^{-1}]$ naturally embeds into its tensor algebra $T(\mf{g})$. Identifying $\mf{g}$ with its image under the projection $T(\mf{g})\twoheadrightarrow U$ it generates $U$. The same is true for $U^{\Omega}$. As the definition of coproduct $\Delta:U\to U\otimes U$, $\Delta(x)=x\otimes 1+1\otimes x$, antipode $\gamma:U\to U$, $\gamma(x)=-x$ and co-unit $\veps(x)=0$ for all $x\neq 1$, are the same for both $U$ and $U^\Omega$ the assertion follows from $\phi$ being a Lie algebra isomorphism.
\end{proof}
Let $\cP^+_{\neq n}$ be the 
set of partitions $\lambda$ where each part $\lambda_i\neq \ell n$, $\ell\in\mathbb{N}$ and set 
$ P_{\pm\lambda}= P_{\pm\lambda_1} P_{\pm\lambda_2}\cdots$ Then $\{ P_{\pm\lambda}~|~\lambda\in\cP^+_{\neq n}\}\subset U(\mathfrak{\hat n}_\pm)$ and according to the PBW theorem $\{P_{\lambda} P_{-\mu}~|~\lambda,\mu\in\cP^+_{\neq n}\}$ forms a basis of $U(\mathfrak{h}'_0[z,z^{-1}])$.

\subsection{Power sums and the ring of symmetric functions}
Recall the definition of the ring of symmetric functions $\Lambda=\mb{C}[p_1,p_2,\ldots]$, which is freely generated by the power sums $p_r$, and can be equipped with the structure of a Hopf algebra; see Appendix A for details and references. 
\begin{lemma} \label{lem:Lambda2U}
The maps $\Phi_{\pm}:\Lambda\to U_{\pm}$ fixed by 
\begin{equation}\label{allPs}
p_{mn}\mapsto P_{\pm mn}=z^{\pm m}\sum_{i=1}^ne_{ii}\quad\text{and}\quad 
p_r\mapsto P_{\pm r},\;r\neq mn,\;m,r\in\mb{N}
\end{equation}
are Hopf algebra homomorphisms. 
\end{lemma}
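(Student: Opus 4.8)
The plan is to use that $\Lambda=\mb{C}[p_1,p_2,\ldots]$ is a free commutative algebra on the power sums, so that $\Phi_\pm$ is completely determined by \eqref{allPs} and extends to a unique algebra homomorphism the moment its images commute; the remaining Hopf-algebraic data then reduce to the single observation that both the $p_r$ and the $P_{\pm r}$ are primitive. Throughout I would use that $U_\pm$ is a Hopf subalgebra of $U$, since $\mathfrak{h}[z,z^{-1}]\oplus\mathfrak{\hat n}_\pm$ is a Lie subalgebra of $\mathfrak{gl}_n[z,z^{-1}]$ and enveloping algebras of Lie subalgebras are Hopf subalgebras; in particular the coproduct, counit and antipode of $U$ restrict to $U_\pm$.

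First I would check that $\Phi_\pm$ is a well-defined algebra homomorphism with image in $U_\pm$. Each $P_{\pm r}$ with $r\notin n\mb{N}$ lies in $U(\mathfrak{\hat n}_\pm)$ and each $P_{\pm mn}=z^{\pm m}\sum_i e_{ii}$ lies in $U(\mathfrak{h}[z,z^{-1}])$, so all images sit inside the Borel $U_\pm$. To extend \eqref{allPs} to an algebra homomorphism I only need the images to commute pairwise: the $P_{\pm r}$ with $r\notin n\mb{N}$ span the abelian Lie algebra $\mathfrak{h}_0'[z,z^{-1}]$ of \eqref{preHeisenberg}, whose commutativity in the loop algebra was recorded in the discussion following that equation, while $P_{\pm mn}$ is a scalar multiple of the identity matrix $\sum_i e_{ii}$, which is central in $\mathfrak{gl}_n$ and hence commutes with every element of the loop algebra. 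By the universal property of the polynomial ring $\Lambda$, the assignment \eqref{allPs} therefore extends uniquely to an algebra homomorphism $\Phi_\pm:\Lambda\to U_\pm$.

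Next I would verify the coalgebra compatibility. In $\Lambda$ the power sums are primitive, $\Delta(p_r)=p_r\otimes 1+1\otimes p_r$ and $\varepsilon(p_r)=0$ (Appendix A), while in $U$ the coproduct and counit fixed in the proof of Lemma \ref{lem:phiHopf} make every Lie algebra element primitive, so $\Delta(P_{\pm r})=P_{\pm r}\otimes 1+1\otimes P_{\pm r}$ and $\varepsilon(P_{\pm r})=0$. Consequently $\Delta\circ\Phi_\pm$ and $(\Phi_\pm\otimes\Phi_\pm)\circ\Delta$ are two algebra homomorphisms $\Lambda\to U_\pm\otimes U_\pm$ that agree on each generator $p_r$, and similarly $\varepsilon\circ\Phi_\pm=\varepsilon$ on generators; since the $p_r$ generate $\Lambda$ as an algebra, both identities hold on all of $\Lambda$, so $\Phi_\pm$ is a bialgebra homomorphism. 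The antipode compatibility $\gamma\circ\Phi_\pm=\Phi_\pm\circ S$ is then automatic for a bialgebra map between Hopf algebras, but may also be read off directly from $S(p_r)=-p_r$ and $\gamma(P_{\pm r})=-P_{\pm r}$.

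The only substantive point, and the one I would be most careful to spell out, is the pairwise commutativity of the images, since this is exactly what makes $\Phi_\pm$ well-defined at all; every other property is a formal consequence of primitivity together with the freeness of $\Lambda$. In particular I would emphasise that the generators indexed by multiples of $n$ require no new relation: they commute with the remaining $P_{\pm r}$ simply because $\sum_i e_{ii}$ is central in $\mathfrak{gl}_n$, so commutativity of the full image follows from the abelianness of $\mathfrak{h}_0'[z,z^{-1}]$ alone.
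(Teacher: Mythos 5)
Your proof is correct and follows the same route as the paper, which simply records that the Hopf-algebra relations of the primitive, freely generating power sums match those of the Lie-algebra elements $P_{\pm r}$ in $U$; you have merely made explicit the two ingredients the paper leaves implicit, namely the pairwise commutativity of the images (needed for well-definedness) and the formal consequences of primitivity.
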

We shall henceforth set $P_{\pm mn}=z^{\pm m}\sum_{i=1}^ne_{ii}$ and $P_0=\sum_{i=1}^ne_{ii}$, such that the relation $P_{r+n}=zP_r$ holds for all $r\in\mb{Z}$. Denote by $U'$ the subalgebra in $U$ generated by $\{P_r\}_{r\in\mb{Z}}$. 
\begin{proof}
This is immediate upon noting that the Hopf algebra relations for power sums in $\Lambda$ match the relations of the standard Hopf algebra structure on $U$; compare with the proof of Lemma \ref{lem:phiHopf} and Appendix A.
\end{proof}
Using the maps $\Phi_{\pm}$ we now introduce the analogue of various symmetric functions (see Appendix A for their definitions and references) as elements in the upper (lower) Borel algebras and denote these by capital letters. For example, we define for any partition $\lambda\in\cP^+$ in $U_+$ the elements
\begin{equation}\label{defS}
S_\lambda=\sum_\mu \frac{\chi_\lambda(\mu)}{\z_\mu}P_\mu,\qquad \z_\lambda = 
\prod_{i \ge 1} i^{m_i(\lambda)} m_i(\lambda)!\,,
\end{equation} 
where $\chi_\lambda(\mu)$ is the character value of an element of cycle type $\mu$ in the symmetric group. Thus, the element $S_\lambda$ is the image of the Schur function $s_\lambda\in\Lambda$ under $\Phi_+$. If the partition $\lambda$ in the definition of $S_\lambda$ consists of a single vertical or horizontal strip of length $r>0$ we obtain the following elements in $U_+$,
\begin{equation}\label{defE&H}
E_r = \sum_{\lambda \vdash r} \epsilon_\lambda \z_\lambda^{-1} P_\lambda 
\qquad\text{and}\qquad
H_r =\sum_{\lambda \vdash r} \z_\lambda^{-1} P_\lambda 
\end{equation}
where $\epsilon_\lambda = (-1)^{|\lambda|-\ell(\lambda)}$. %
These generators are solutions to Newton\rq{}s formulae and can be identified with the elementary $e_r$ and complete symmetric functions $h_r$ in the ring of symmetric functions. Below we will also make use of the `generating functions' 
\begin{equation}\label{E}
E(u)=\sum_{r\geq 0} u^r E_r=\exp\left(-\sum_{r\geq 1}\frac{(-u)^r}{r}\,P_r\right)
\end{equation}
and
\begin{equation}\label{H}
H(u)=\sum_{r\geq 0} u^r H_r=\exp\left(\sum_{r\geq 1}\frac{u^r}{r}\,P_r\right)\,.
\end{equation}
Their matrix elements should be understood as formal power series in the indeterminate $u$. Besides the image of Schur functions we will also need the image of monomial symmetric functions $m_\lambda$ under $\Phi_+$; see Appendix A. We recall from $\Lambda$ the following `straightening rules' for elements $S_{v}$ which are not indexed by partitions but by some $v=(\dots ,a,b,\dots )\in\mathbb{Z}_{\geq 0}^\ell$,
\begin{equation}
S_{(\dots ,a,b,\dots )}=-S_{(\dots ,b-1,a+1,\dots
)}\qquad \text{and}\qquad S_{(\dots ,a,a+1,\dots )}=0
\label{straightenSchur}
\end{equation}
In $\Lambda$ these relations are a direct consequence of the known relations for determinants. The need for these straightening rules arises when applying Macdonald's raising operator \cite[Chapter I.1]{macdonald1998symmetric}. Recall that the raising operator $R_{ij}$ acts on partitions as $R_{ij}\lambda =(\lambda _{1},\dots
,\lambda _{i}+1,\dots ,\lambda _{j}-1,\dots )$. Given $S_\lambda$ we set $R_{ij}S_\lambda=
S_{R_{ij}\lambda}$ and extend this action linearly. 

We are now in the position to introduce $M_\lambda=\Phi_+(m_\lambda)$. Given a partition $\lambda\in\cP^+$ define
\begin{equation}\label{M}
M_{\lambda }=\prod_{\lambda _{i}>\lambda _{j}}(1-R_{ji})%
S_{\lambda }\,.
\end{equation}%
Compare with \cite[Example III.3]{macdonald1998symmetric} when $t=1$. The definition \eqref{M} is best explained on an example.
\begin{exam}
Consider the partition $\lambda=(2,1)$. Then we have: 
\[
M_{(2,1)}=(1-R_{21})\prod_{j>2}(1-R_{j1})(1-R_{j2})S_{(2,1)}\;.
\]%
From the above product we need to extract all operators $R=\prod R_{ji}$ for which $S_{R\lambda}$ is nonzero. Employing the straightening rules \eqref{straightenSchur}, we find that the only non-zero terms  are given by $M_{(2,1)}=S_{(2,1)}-2S_{(1,1,1)}$.
\end{exam}
Making the substitution $P_r\rightarrow P_{-r}$ we denote by $\{E_{-r},H_{-r}\}_{r\in\mathbb{N}}$ the images of the elementary and symmetric functions under $\Phi_-$. Using the anti-linear involution \eqref{star} and the fact that the power sums are a $\mb{Q}$-basis it follows at once that $\Phi_+^*=\Phi_-$. Similarly, taking the combined Dynkin diagram automorphism $\Pi$ from \ref{lem:Pi} that $\Phi_-=\overline{\Pi\circ \Phi}_+$.
\begin{lemma}\label{lem:adjoint}
Let $f\in\Lambda$ then $F^*=\Phi_+(f)^*=\overline{\Pi(F)}=\Phi_-(f)$, %
where $*$ is the anti-linear involution \eqref{star} and $\Pi$ the combined Dynkin diagram automorphism from Lemma \ref{lem:Pi}.
\end{lemma}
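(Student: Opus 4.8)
The plan is to reduce the three claimed identities to the generators $p_r$ of $\Lambda$, where they follow at once from the definition \eqref{allPs} of $\Phi_\pm$ together with Lemma~\ref{lem:Pi} and the elementary relation $P_r^*=P_{-r}$ recorded after \eqref{star}.

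First I would record how the three relevant maps extend from the loop algebra $\g_n[z,z^{-1}]$ to its universal enveloping algebra $U$. The involution \eqref{star} is an anti-linear \emph{anti}-automorphism $*:U\to U$, since $x^*=\bar{x}^T$ is the Hermitian conjugate and hence reverses products; complex conjugation $\overline{(\cdot)}$ extends to an anti-linear algebra homomorphism; and $\Pi=\hat\Gamma\circ\Gamma$ extends to a $\mb{C}$-linear algebra homomorphism, being a Lie algebra automorphism that fixes $z$. The key simplification is that all three preserve the \emph{commutative} subalgebra $U'\subset U$ generated by $\{P_r\}_{r\in\mb{Z}}$, whose commutativity was established when identifying $\mathfrak{h}'_0[z,z^{-1}]$ as an abelian subalgebra. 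On $U'$ the order of factors is immaterial, so the anti-automorphism $*$ acts there as an honest anti-linear algebra homomorphism, exactly like $\overline{(\cdot)}\circ\Pi$.

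Next I would check the identities on a single power sum. For $r\notin n\mb{N}$ we have $\Phi_+(p_r)=P_r$, and the computation after \eqref{star} gives $P_r^*=P_{-r}$, while Lemma~\ref{lem:Pi} gives $\Pi(P_r)=\overline{P}_{-r}$ and hence $\overline{\Pi(P_r)}=P_{-r}$ because complex conjugation is an involution; finally $\Phi_-(p_r)=P_{-r}$ by definition. The degenerate case $r\in n\mb{N}$, where $P_{\pm r}=z^{\pm m}\sum_i e_{ii}$, is verified the same way using that $\Gamma$ and $\hat\Gamma$ fix $\sum_i e_{ii}$ and $z^{\pm m}$. Thus all three maps agree with $\Phi_-$ on every generator $p_r$. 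Extending multiplicatively, for $f=p_\lambda$ one has $\Phi_+(p_\lambda)=P_\lambda$, and since $*$ and $\overline{(\cdot)}\circ\Pi$ are (anti-linear) homomorphisms on the commutative algebra $U'$, each sends $P_\lambda$ to $\prod_i P_{-\lambda_i}=P_{-\lambda}=\Phi_-(p_\lambda)$.

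The only point needing care—the would-be obstacle—is bookkeeping the anti-linearity: $*$ and $\overline{(\cdot)}\circ\Pi$ are conjugate-linear whereas $\Phi_-$ is $\mb{C}$-linear, so $\Phi_+(f)^*=\Phi_-(f)$ is an honest identity of maps only after restricting to the real span of the $p_\lambda$. This is precisely the content of the remark that the power sums form a $\mb{Q}$-basis: on symmetric functions with rational power-sum coefficients—which include all of $s_\lambda,e_\lambda,h_\lambda,m_\lambda$ used in the sequel—conjugation of coefficients is trivial and the three-fold equality $F^*=\overline{\Pi(F)}=\Phi_-(f)$ holds verbatim.
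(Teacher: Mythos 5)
Your proof is correct and follows essentially the same route as the paper: the paper's own argument is precisely to check the identities on the generators $p_r$, invoking the definitions \eqref{P_r}, \eqref{star} and Lemma \ref{lem:Pi}, and then to extend using that the power sums freely generate $\Lambda$. Your additional care about the anti-automorphism property of $*$ on the commutative subalgebra $U'$ and about anti-linearity versus $\mb{C}$-linearity (resolved by the $\mb{Q}$-basis remark) makes explicit points the paper leaves implicit, but does not change the argument.
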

So, in particuar, if $S_\lambda=\Phi_+(s_\lambda)$ and $M_\lambda=\Phi_+(m_\lambda)$ are the images of the Schur function $s_\lambda$ and monomial symmetric function $m_\lambda$ under the homomorphism from Lemma \ref{lem:Lambda2U}, then %
\begin{equation}
S_\lambda^*=\overline{\Pi(S_\lambda)}=\Phi_-(s_\lambda)\quad\text{and}\quad M_\lambda^*=\overline{\Pi(M_\lambda)}=\Phi_-(m_\lambda)\;.
\end{equation}
\begin{proof}
As the power sums freely generate $\Lambda$, it suffices to check the relations $\Phi_+^*=\Phi_-$ and $\Phi_-=\overline{\Pi\circ \Phi}_+$ on the $p_r$. The first relation is a consequence of the definitions \eqref{P_r} and \eqref{star}, the second from Lemma \ref{lem:Pi}.
\end{proof}

\section{The affine symmetric group and wreath products}
Let $\rS_{k}$ be the symmetric group of the set $[k]=\{1,\ldots,k\}$ and denote by $\{\sigma _{1},\ldots ,\sigma _{k-1}\}$ the group's generators. %
Set $\cP_k = \bigoplus_{i=1}^k \mathbb{Z} \epsilon_i$, the $\mathfrak{gl}_k$ weight lattice with standard basis $\epsilon_1,\dots,\epsilon_k$ and inner product $(\epsilon_i,\epsilon_j)=\delta_{ij}$. Define the root lattice $\cQ_k\subset\cP_k$ as the sub-lattice generated by $\{\alpha_i=\epsilon_i-\epsilon_{i+1}\}_{i=1}^{k-1}$ and set $\alpha_0=\epsilon_n-\epsilon_1$. Denote by $\cP_k^+\subset\cP_k$ the {\em positive dominant weights} which we identify with the set of partitions, $\cP_k^+\cong\{\lambda~|~ \lambda_1\geq
\cdots\geq\lambda_k\geq 0\}$ which have at most $k$ parts. We use the notation $\cP_k^{++}=\{\lambda~|~ \lambda_1>
\cdots>\lambda_k\geq 0\}$ for the subset of {\em strict} dominant weights/partitions. 

\subsection{Action on the weight lattice}
Each $\lambda\in\cP_k$ defines a map $\lambda:[k]\to\mathbb{Z}$ in the obvious manner and we shall consider the right action $\cP_k\times \rS_k\to\cP_k$ given by $(\lambda,w)\mapsto \lambda\circ w=(\lambda_{w(1)},\ldots,\lambda_{w(k)})$. For a fixed weight $\lambda$ denote by $\rS_\lambda\subset \rS_k$ its stabiliser group. The latter is isomorphic to the Young subgroup 
\begin{equation}\label{stabS}
\rS_\lambda\cong\cdots\times \rS_{m_1(\lambda)}\times \rS_{m_0(\lambda)}\times \rS_{m_{-1}(\lambda)}\times\cdots 
\end{equation}
with $m_i(\lambda)$ being the multiplicity of the part $i$ in $\lambda$. Note that $|\rS_\lambda|=\prod_{i\in\mb{Z}} m_i(\lambda)!$ and we shall make repeatedly use of the multinomial coefficients
\begin{equation}\label{qdim}
d_\lambda=\frac{|\rS_k|}{|\rS_\lambda|}=\frac{k!}{\prod_{i\in\mb{Z}}m_i(\lambda)!}=\binom{k}{m(\lambda)}\,,
\end{equation}
which we call the {\em quantum dimensions} for reasons that will become clear in the following sections. Given any permutation $w\in \rS_k$ there exists a unique decomposition $w=w_\lambda w^\lambda$ with $w_\lambda\in \rS_\lambda$ and $w^\lambda$ a minimal length representative of the right coset $\rS_\lambda w$. Denote by $\rS^\lambda\subset \rS_k$ the set of all minimal length coset representatives in $\rS_\lambda\backslash \rS_k$.

\subsection{The extended affine symmetric group}
Let $\cP_k$ act on itself by translations. Then the {\em extended affine symmetric group} is defined as $\hat \rS_k=\rS_k\ltimes\cP_k$. In terms of generators and relations $\hat \rS_k$ is defined as the group generated by $\langle\tau,\sigma_0, \sigma_1, \ldots,\sigma_{k-1} \rangle$ subject to the identities (all indices are understood mod $k$)
\begin{eqnarray}\label{sigma}
  \sigma_i^2=1, \quad  \sigma_i\sigma_{i+1}\sigma_i = \sigma_{i+1}\sigma_i\sigma_{i+1}, \quad
  \sigma_i\sigma_j = \sigma_j\sigma_i,\;|i-j|>1\,,
 \end{eqnarray}
 and 
\begin{equation}\label{tau}
\tau\sigma_{i+1}=\sigma_i\tau\;.
\end{equation}
For our discussion it will be convenient to use instead of $\tau$ and $\sigma_0$ the generators
\begin{equation}\label{x}
x_k=\tau\sigma_1\sigma_2\cdots\sigma_{k-1}
\quad\text{ and }\quad x_i=\sigma_i x_{i+1}\sigma_i,\quad i=1,2,\ldots,k-1\,.
\end{equation}
Then any element $\hat w\in\hat \rS_k$ can be written as $\hat w=w x^\lambda=w x_1^{\lambda_1}\cdots x_k^{\lambda_k}$ for some $\lambda\in\cP_k$ and $w\in \rS_k$.

Fix as ground ring $\R=\mb{C}[z,z^{-1}]$ and consider the $\R$-module,
\begin{equation}\label{affHecke}
\cH_k=\R[x^{\pm 1}_1,\ldots,x^{\pm 1}_k]\otimes_{\R}\R[\rS_k ]\,.
\end{equation}
Define an $\R$-algebra by identifying (as subalgebras) $\R[x^{\pm 1}_1,\ldots,x^{\pm 1}_k]\otimes 1$ with the polynomial algebra $\R[x^{\pm 1}_1,\ldots,x^{\pm 1}_k]$ and $1\otimes\R[\rS_k]$ with the group algebra $\R[\rS_k]$, and in addition impose the relations 
\begin{equation}\label{affHecke_rel}
\sigma _{i}x_{i}\sigma _{i}=x_{i+1},\qquad
x_{i}\sigma _{j}=\sigma _{j}x_{i}\;\;\text{for\ }\;j\neq i,i-1\;.
\end{equation}
The latter algebra is a skew group ring or semi-direct product algebra which is some sort of ``classical limit'' of the affine Hecke algebra. The following facts about $\cH_k$ are known:

\begin{lemma}\label{lem:hecke} 
(i) The set $\{wx^{\lambda }~|~x^{\lambda }=x_1^{\lambda_1}\cdots x_k^{\lambda_k},\;\lambda\in\cP_k,\;w\in \rS_{k}\}$ is a basis of $\cH_k$.
(ii) The centre of $\cH_k$ is $\mathcal{Z}(\cH_k)=\R[x^{\pm 1}_1,\ldots,x^{\pm 1}_k]^{\rS_k}$. 
\end{lemma}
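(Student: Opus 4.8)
The plan is to recognise $\cH_k$ as a skew group algebra and read both statements off that structure. Write $A=\R[x^{\pm 1}_1,\ldots,x^{\pm 1}_k]$. The relations \eqref{affHecke_rel} say precisely that $\rS_k$ acts on $A$ by permuting the variables: conjugation by $\sigma_i$ interchanges $x_i$ and $x_{i+1}$ and fixes the other generators, so in $\cH_k$ one has $w\,x_j=x_{w(j)}\,w$, and hence $w\,f=(w\cdot f)\,w$ for every $f\in A$ and $w\in\rS_k$, where $w\cdot f$ denotes the substitution $x_j\mapsto x_{w(j)}$. For part (i) the spanning statement is then immediate: using $w\,f=(w\cdot f)\,w$ repeatedly one normal-orders any word in the generators into the form $w\,x^\lambda$, so $\{w\,x^\lambda\mid w\in\rS_k,\ \lambda\in\cP_k\}$ spans $\cH_k$ over $\R$.

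The substance of (i) is linear independence, which I would obtain by comparison with an explicit model. Let $B$ be the free $\R$-module $\R[\rS_k]\otimes_\R A$ equipped with the multiplication $(v\otimes f)(w\otimes g)=vw\otimes(w^{-1}\cdot f)\,g$; a direct check shows this is associative, that $A\cong 1_{\rS_k}\otimes A$ and $\R[\rS_k]\cong\R[\rS_k]\otimes 1$ are subalgebras, and that the elements $\sigma_i\otimes 1$ and $1\otimes x_j$ satisfy the relations \eqref{affHecke_rel} (e.g. $(\sigma_i\otimes 1)(1\otimes x_i)(\sigma_i\otimes 1)=1\otimes x_{i+1}$). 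By the defining presentation of $\cH_k$ in \eqref{affHecke}--\eqref{affHecke_rel} there is therefore an $\R$-algebra homomorphism $\cH_k\to B$ sending $w\,x^\lambda\mapsto w\otimes x^\lambda$. Since $\{w\otimes x^\lambda\}$ is manifestly an $\R$-basis of $B=A\rtimes\rS_k$, the spanning set of $\cH_k$ maps to linearly independent elements and is hence itself $\R$-linearly independent; this proves (i) and simultaneously gives the isomorphism $\cH_k\cong A\rtimes\rS_k$.

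Granting (i), I would prove (ii) by comparing coefficients in this basis. The inclusion $\R[x^{\pm 1}_1,\ldots,x^{\pm 1}_k]^{\rS_k}\subseteq\mathcal{Z}(\cH_k)$ is routine: an $\rS_k$-invariant Laurent polynomial $f$ lies in the commutative subalgebra $A$, so commutes with each $x_j$, while $\sigma_i f=(\sigma_i\cdot f)\,\sigma_i=f\,\sigma_i$ by invariance. For the reverse inclusion, write a central element as $\zeta=\sum_{w\in\rS_k}c_w\,w$ with $c_w\in A$. Commuting with $x_i$ and using $w\,x_i=x_{w(i)}\,w$ together with the commutativity of $A$ gives, upon comparing coefficients of $w$, the identity $c_w\,(x_{w(i)}-x_i)=0$ for every $i$. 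Since $A=\mathbb{C}[z^{\pm 1},x^{\pm 1}_1,\ldots,x^{\pm 1}_k]$ is an integral domain, choosing $i$ with $w(i)\neq i$ forces $c_w=0$ whenever $w\neq e$, so $\zeta=c_e\in A$; finally commuting with each $\sigma_i$ yields $\sigma_i\cdot c_e=c_e$, i.e. $c_e\in A^{\rS_k}$.

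I expect the only genuine difficulty to lie in the linear independence in (i): unlike the spanning statement it cannot be read directly off the relations and requires the faithful model $B$ (equivalently, a PBW-type argument for the skew group ring). Once the identification $\cH_k\cong A\rtimes\rS_k$ is established, part (ii) is a short computation whose single essential input is that $A$ is an integral domain.
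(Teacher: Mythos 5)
Your proof is correct and follows essentially the same route as the paper: part (i) is the standard skew-group-ring statement, and part (ii) is the identical coefficient-comparison argument using $w\,x_i=x_{w(i)}\,w$ together with the fact that $\R[x^{\pm 1}_1,\ldots,x^{\pm 1}_k]$ is an integral domain. The only difference is one of emphasis: the paper defines $\cH_k$ as the $\R$-module $\R[x^{\pm 1}_1,\ldots,x^{\pm 1}_k]\otimes_{\R}\R[\rS_k]$ from the outset, so the linear independence in (i), which you carefully verify via the explicit model $B\cong A\rtimes\rS_k$, is there absorbed into the definition rather than proved from a presentation.
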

\begin{proof}
Claim (i) follows from the definition of $\cH_k$ and because the monomials $\{x^\lambda\}_{\lambda\in\cP_k}$ form a basis of $\R[x^{\pm 1}_1,\ldots,x^{\pm 1}_k]$. In particular, the skew group ring $\cH_k$ is a free module over $\R[x^{\pm 1}_1,\ldots,x^{\pm 1}_k]$. Assume that $f=\sum_{w}f_w w$ with $f_w\in\R[x^{\pm 1}_1,\ldots,x^{\pm 1}_k]$ is central, then it follows from  $x_i f=f x_i$ with $i=1,\ldots,k$ that $f_w=0$ for all $w\neq 1$. Thus, $f\in \R[x^{\pm 1}_1,\ldots,x^{\pm 1}_k]$, but since in addition $w f=f w$ for all $w\in \rS_k$ we must have $f\in \R[x^{\pm 1}_1,\ldots,x^{\pm 1}_k]^{\rS_k}$. Hence, $\mathcal{Z}(\cH_k)\subset\R[x^{\pm 1}_1,\ldots,x^{\pm 1}_k]^{\rS_k}$. The converse relation, $\R[x^{\pm 1}_1,\ldots,x^{\pm 1}_k]^{\rS_k}\subset\mathcal{Z}(\cH_k)$ is obvious. This proves (ii).
\end{proof}

\subsection{Representations in terms of the cyclic element} 
Let $V$ be the vector representation of $\g_n$ introduced earlier and recall from \eqref{P_r} the definition of the elements $P_{\pm 1}$ in the loop algebra $\s_n[z,z^{-1}]$ which we interpret as endomorphisms over $V[z,z^{-1}]=V\otimes\mb{C}[z,z^{-1}]$ corresponding to the matrices
\begin{equation}\label{X}
X=\left(
\begin{array}{cccc}
0 & 1 & & 0 \\
\vdots & \ddots& \ddots & \\
0 & & 0& 1 \\
z & 0 & \cdots & 0 
\end{array}
\right)
\qquad\text{and}\qquad
X^{-1}=\left(
\begin{array}{cccc}
0 & \cdots & 0 & z^{-1} \\
1 & 0 & & 0 \\
 & \ddots& \ddots&  \vdots \\
0 & & 1 & 0 
\end{array}
\right)\;.
\end{equation}
As the notation suggests, both matrices are the inverse of each other with matrix multiplication defined over the ring $\R=\mb{C}[z,z^{-1}]$. In fact, the following matrix identities in $\End_{\R} V[z,z^{-1}]$ hold true:
\begin{lemma}\label{lem:XP}
The Lie algebra elements $P_{r}\in\g_n[z,z^{-1}]$ with $r\in\mb{Z}$ defined in \eqref{P_r} and \eqref{allPs} are powers of the matrices \eqref{X}, 
\begin{equation}\label{XP}
P_r=X^r\;,\qquad\forall r\in\mb{Z}\;.
\end{equation}
In particular, $X^{mn}=z^{m}\bs{1}_n$, where $\bs{1}_n$ is the $n\times n$ identity matrix and $m\in\mb{Z}$. 
\end{lemma}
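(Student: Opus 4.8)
The plan is to verify the claimed identity $P_r = X^r$ by direct matrix computation, treating the $P_r$ as endomorphisms of $V[z,z^{-1}]$ and comparing them against powers of the cyclic matrix $X$. First I would record the action of $X$ on the standard basis $v_1,\dots,v_n$ of $V$. Reading off the matrix \eqref{X}, we have $X v_j = v_{j-1}$ for $j=2,\dots,n$ and $X v_1 = z\, v_n$, where I use the convention that $e_{ij}$ sends $v_j \mapsto v_i$; equivalently $P_1 = \sum_{j-i=1} e_{ij} + z\, e_{1n}$ acts this way, which matches the $r=1$ case of \eqref{P_r} directly. The key observation is that $X$ simply shifts basis indices down by one, picking up a factor of $z$ whenever the index wraps around from $1$ to $n$.

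\medskip

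Next I would compute $X^r$ by iterating this shift. Applying $X$ a total of $r$ times sends $v_j$ to $z^{\lfloor \cdot \rfloor}\, v_{j-r}$ with the index $j-r$ reduced modulo $n$ into $\{1,\dots,n\}$ and a compensating power of $z$ counting how many times the wraparound occurred. Concretely, for $1 \le r \le n-1$ this yields $X^r v_j = v_{j-r}$ when $j > r$ and $X^r v_j = z\, v_{j-r+n}$ when $j \le r$. Rewriting in terms of unit matrices, the first case contributes $\sum_{j-i = r} e_{ij}$ and the second contributes $z \sum_{i-j = n-r} e_{ij}$, which is exactly the definition \eqref{P_r} of $P_r$. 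This establishes $P_r = X^r$ for $r = 1,\dots,n-1$. The periodicity relation $P_{r+n} = z P_r$ from the text then matches $X^{r+n} = z X^r$, and the extension to all $r \in \mb{Z}$ (including the convention $P_{mn} = z^m \sum_i e_{ii}$ from \eqref{allPs}) follows by the same bookkeeping; for negative $r$ one checks the analogous shift-up formula against the explicit $X^{-1}$ in \eqref{X}, or simply invokes that $X$ is invertible over $\R$ so $X^{-r} = (X^r)^{-1} = (X^{-1})^r$.

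\medskip

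For the final assertion, once $P_r = X^r$ is known for all $r$, the identity $X^{mn} = z^m \bs{1}_n$ is immediate from the definition $P_{mn} = z^m \sum_{i=1}^n e_{ii} = z^m \bs{1}_n$ recorded in the paragraph following Lemma \ref{lem:Lambda2U}. Alternatively, and perhaps more transparently, one sees it directly: applying $X$ exactly $n$ times returns every basis vector $v_j$ to itself while cycling once around the index set and thereby collecting a single factor of $z$, so $X^n = z\, \bs{1}_n$, and $X^{mn} = z^m \bs{1}_n$ follows by taking the $m$th power.

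\medskip

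I do not anticipate a genuine obstacle here, as the statement is essentially a bookkeeping identity; the only point requiring mild care is the index arithmetic modulo $n$ and the correct placement of the factor $z$ in the wraparound terms, so that the two pieces $\sum_{j-i=r} e_{ij}$ and $z\sum_{i-j=n-r} e_{ij}$ in \eqref{P_r} are reproduced with the right coefficients. Keeping the basis-vector action $X v_j = v_{j-1 \bmod n}$ (with the $z$ appearing precisely on the $v_1 \mapsto z v_n$ step) as the organising principle makes all cases uniform and avoids sign or index errors.
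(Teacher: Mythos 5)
Your proof is correct and follows the same route as the paper, which simply declares the identity immediate from the definition \eqref{P_r} of the $P_r$ as sums of unit matrices $e_{ij}$. Your explicit tracking of the shift action $Xv_j=v_{j-1}$ (with $Xv_1=zv_n$) is just the fully written-out version of that observation, and the index bookkeeping and placement of the $z$ factors are all accurate.
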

\begin{proof}
This is immediate from the definition \eqref{P_r} noting that the $e_{ij}$ are the unit matrices whose only nonzero entry is in the $i$th row and $j$th column.
\end{proof}

Consider the tensor product $(V[z,z^{-1}])^{\otimes k}\cong \bV_k=\R\otimes V^{\otimes k}$, where we identify both tensor products in the obvious manner via 
$$(f_1(z)v_{i_1})\otimes\cdots\otimes(f_k(z)v_{i_k})\mapsto (f_1(z)\cdots f_k(z))\otimes v_{i_1}\otimes\cdots\otimes v_{i_k}.$$ 
Let $\rS_k$ act on the right by permuting factors in $V^{\otimes k}$. Denote by $X_i^{\pm 1}$ the matrix which acts by multiplication with $X^{\pm 1}$ in the $i$th factor and trivially everywhere else. Define a $\hat \rS_k$-action and, hence, a $\cH_k$-action on $\bV_k$ by setting for $w\in \rS_k$, $\lambda\in\cP_k$,
\begin{equation}\label{Haction}
v_{i_{1}}\otimes \cdots \otimes v_{i_{k}} .w x^\lambda=X^{-\lambda_1}v_{i_{w(1)}}\otimes \cdots \otimes X^{-\lambda_k}v_{i_{w(k)}}\,.
\end{equation}
N.B. this $\cH_k$-action is not faithful, since we infer from \eqref{XP} that for any $\lambda\in\cP_k$ and $w\in \rS_k$ the element $w(x^\lambda-x^{\lambda+nm\alpha})$ with $m\in\mb{Z},\alpha\in\mc{Q}_k$ is sent to zero.

\begin{lemma} 
The action \eqref{Haction} factors through the quotient
\begin{equation}\label{Hquotient}
\cH_k(n):=\cH_k/\langle x_1^n-z\rangle\;.
\end{equation}
\end{lemma}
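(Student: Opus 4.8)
The plan is to read the prescription \eqref{Haction} as an $\R$-linear representation $\rho\colon\cH_k\to\End_\R\bV_k$ and to check that $x_1^n-z$ is annihilated; since the kernel (equivalently, the annihilator of the module) is a two-sided ideal, it will then automatically contain the two-sided ideal $\langle x_1^n-z\rangle$, so that $\rho$ descends to the quotient \eqref{Hquotient}.

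First I would record $\rho$ on generators. By \eqref{Haction} each $x_i$ is represented by a power of the matrix $X$ of \eqref{X} acting in the $i$th tensor slot and by the identity on the remaining slots, while $w\in\rS_k$ permutes slots. That $\rho$ is well defined, i.e.\ respects \eqref{affHecke_rel}, is the only structural point to settle: operators supported on a single slot commute pairwise, $X_i$ commutes with $\sigma_j$ whenever $j\neq i,i-1$ (then $\sigma_j$ moves two slots distinct from $i$), and conjugating the slot-$i$ operator by the transposition $\sigma_i$ transports it to slot $i+1$, which is precisely $\sigma_iX_i\sigma_i=X_{i+1}$.

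The heart of the argument is the single identity supplied by Lemma \ref{lem:XP}: $X^n=z\bs{1}_n$. Consequently $x_1^n$ is represented on the first slot by a power of $X$ which, by Lemma \ref{lem:XP}, equals the central scalar $z\,\bs{1}_n$; thus $\rho(x_1^n)=z\,\mathrm{id}_{\bV_k}=\rho(z)$ and hence $\rho(x_1^n-z)=0$. As $\ker\rho$ is a two-sided ideal of $\cH_k$ containing $x_1^n-z$, it contains $\langle x_1^n-z\rangle$, and the universal property of the quotient yields a factorisation $\cH_k\twoheadrightarrow\cH_k(n)\to\End_\R\bV_k$, which is the assertion.

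I do not anticipate a genuine obstacle: all the weight rests on $X^n=z\bs{1}_n$ from Lemma \ref{lem:XP}, and the remainder is bookkeeping — verifying that $\rho$ is an algebra map before invoking the universal property, matching the inverse/sign convention of \eqref{Haction} with the scalar in \eqref{Hquotient} so that $x_1^n$ and $z$ really act by the same element of $\R$, and keeping $\langle x_1^n-z\rangle$ two-sided so that $\cH_k(n)$ is a ring. As a consistency check one sees that every $x_i^n-z$ is killed as well, being an $\rS_k$-conjugate of $x_1^n-z$ (using that $z$ is central); this is exactly the non-faithfulness noted after the statement, since the elements $w(x^\lambda-x^{\lambda+nm\alpha})$ with $m\in\mb{Z}$, $\alpha\in\mc{Q}_k$ then all lie in $\langle x_1^n-z\rangle$ and act by zero.
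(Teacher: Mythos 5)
Your proof is correct and follows essentially the same route as the paper's one-line argument: the kernel of the representation is a two-sided ideal, and Lemma \ref{lem:XP} (i.e.\ $X^{mn}=z^m\bs{1}_n$) shows it contains the generator $x_1^n-z$, so the universal property of the quotient does the rest. The only point of substance is the one you flag yourself as bookkeeping: taking \eqref{Haction} literally, $x_1$ acts by $X^{-1}$ in the first slot, so $x_1^n$ acts by the scalar $z^{-1}$ rather than $z$ — this is an inverse/sign mismatch internal to the paper (one must either read the action with $X^{+\lambda_i}$ or replace $z$ by $z^{-1}$ in \eqref{Hquotient}), not a defect of your argument.
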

\begin{proof}
This is a direct consequence of the definition of the action and that $\{x^\lambda w~|~\lambda\in\cP_k,w\in \rS_k\}$ is a basis of $\cH_k$. 
\end{proof}
Besides the right $\cH_k$-action we also have a natural left action of the enveloping algebra $U=U(\mathfrak{gl}_n[z,z^{-1}])$ via the coproduct  $\Delta :U\rightarrow U\otimes U$. %
This left action of the loop algebra commutes with the right action of the symmetric group $\rS_k$ on $\bV_k$, which permutes the factors in the tensor product. Schur-Weyl duality states that the images of $\R[\rS_k]$ and $U$ in $\End_{\R} \bV_k$ are centralisers of each other. Therefore, given any element in the centre $\mc{Z}(\cH_k)\cong\R[x^{\pm}_1,\ldots,x_k^{\pm 1}]^{\rS_k}$ its image must coincide with the image of an element in $U$. We will now show that the ring of symmetric polynomials in the $X^{\pm 1}_i$ coincides with the image of $U'$ (defined after Lemma \ref{lem:Lambda2U}). 

\begin{lemma}\label{lem:P2X} 
We have the following identities in $\End_{\R} \bV_k$,
\begin{equation}
\Delta^{k-1}(P_r)=\sum_{i=1}^kX_i^r,\qquad\forall r\in\mb{Z},
\end{equation}
where the $P_r$ are defined in \eqref{P_r} and \eqref{allPs}. In particular, $\Delta^{k-1}(P_{mn})=z^m k\op{Id}_{\bV_k}$.
\end{lemma}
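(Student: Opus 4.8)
```latex
The plan is to prove the operator identity $\Delta^{k-1}(P_r)=\sum_{i=1}^k X_i^r$ by reducing everything to the primitivity of the $P_r$ and the definition of the left $U$-action. First I would recall that $P_r$ is a primitive element: by the standard Hopf algebra structure on $U$ recorded in Lemma \ref{lem:phiHopf}, every Lie algebra element $x$ satisfies $\Delta(x)=x\otimes 1+1\otimes x$. For a primitive element the iterated coproduct is given by the familiar ``Leibniz'' expansion
\begin{equation}
\Delta^{k-1}(x)=\sum_{i=1}^k 1^{\otimes(i-1)}\otimes x\otimes 1^{\otimes(k-i)}\,,
\end{equation}
which one proves by an easy induction on $k$ using coassociativity. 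Applying this to $x=P_r$ shows that $\Delta^{k-1}(P_r)$ acts on $\bV_k$ as the sum, over the $k$ tensor factors, of $P_r$ acting in a single factor and the identity elsewhere.

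The second ingredient is the translation of this single-factor action into the matrices $X_i$. By Lemma \ref{lem:XP} the element $P_r\in\g_n[z,z^{-1}]$ acts on $V[z,z^{-1}]$ exactly as the matrix $X^r$, for every $r\in\mb{Z}$ (including the cases $r\in n\mb{Z}$ handled via \eqref{allPs}). Therefore $P_r$ acting in the $i$th factor and trivially elsewhere is, by the very definition of $X_i$ given just before Lemma \ref{lem:P2X}, the operator $X_i^r$. Combining this identification with the primitive-coproduct expansion of the previous paragraph yields
\begin{equation}
\Delta^{k-1}(P_r)=\sum_{i=1}^k X_i^r\,,
\end{equation}
as endomorphisms of $\bV_k$, which is the first assertion.

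For the special case, I would simply specialise $r=mn$. By the convention fixed after Lemma \ref{lem:Lambda2U} and the matrix identity $X^{mn}=z^m\bs{1}_n$ in Lemma \ref{lem:XP}, each $X_i^{mn}$ acts as multiplication by $z^m$ on the whole of $\bV_k$ (the $i$th-factor identity scalar $z^m$ being absorbed into the shared ground ring $\R$ under the identification of $(V[z,z^{-1}])^{\otimes k}$ with $\R\otimes V^{\otimes k}$). Summing over the $k$ factors then gives $\Delta^{k-1}(P_{mn})=z^m k\op{Id}_{\bV_k}$.

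The computation is essentially routine; the only point requiring care is bookkeeping rather than any genuine obstacle. The subtle step is ensuring that the scalar $z^m$ produced by $X_i^{mn}$ in a single factor is correctly interpreted as a \emph{global} scalar on $\bV_k$ under the identification $(f_1(z)v_{i_1})\otimes\cdots\mapsto(f_1\cdots f_k)\otimes v_{i_1}\otimes\cdots$; this is exactly why the $k$ summands each contribute the same $z^m$ and why the factor of $k$ appears. Verifying the base case and inductive step of the primitive-coproduct formula, and confirming that the $U$-action on $\bV_k$ really is defined through $\Delta^{k-1}$ so that the formula transfers from $U$ to $\End_{\R}\bV_k$, completes the argument.
```
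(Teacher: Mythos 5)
Your proof is correct and follows essentially the same route as the paper, which simply cites Lemma \ref{lem:XP} together with the primitivity of Lie algebra elements under the coproduct; you have merely written out the iterated-coproduct expansion and the $r=mn$ bookkeeping that the paper leaves implicit.
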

\begin{proof}
The assertion follows from Lemma \ref{lem:XP} and the definition of the coproduct $\Delta:U\to U\otimes U$.
\end{proof}
It follows that for $\lambda\in\cP_k^+$ the images of $S_\lambda$ and $M_\lambda$ in $\End_{\R} \bV_k$ can be written in terms of the variables $\{X_1,\ldots, X_k\}$ using the familiar definitions of Schur and monomial symmetric functions from $\Lambda_k=\mathbb{C}[x_1,\ldots,x_k]^{\rS_k}$,
\begin{equation}
\Delta^{k-1}(S_\lambda)=\sum_{|T|=\lambda}X^T\quad\text{ and }\quad \Delta^{k-1}(M_\lambda)=\sum_{\mu\sim\lambda}X^\mu,
\end{equation}
where the first sum runs over all semi-standard tableaux $T$ of shape $\lambda$ and the second sum runs over all distinct permutations $\mu$ of $\lambda$. The analogous expressions apply to $\Delta^{k-1}(S_\lambda^*)$ and $\Delta^{k-1}(M_\lambda^*)$ by replacing $X_i$ with $X^*_{i}=X_i^{-1}$. In what follows, we will drop the coproduct $\Delta^{k-1}$ from the notation to unburden formulae and it will always be understood that the elements $M_\lambda$, $S_\lambda$ and their adjoints act on tensor products via the natural action given by the coproduct.

Since the power sums $\{p_\lambda\}_{\lambda\in\cP^+_k}$ with $p_\lambda=p_{\lambda_1}\ldots p_{\lambda_k}$ form a basis of $\Lambda_k$, we have as an immediate consequence the following corollary.
\begin{coro}
Let $A$ be the image of $\mathcal{Z}(\cH_k)$ and $B$ the image of $U'$ %
in $\End_{\R} \bV_k$, then $A=B$. In particular, $B\subset \End_{\cH_k}\bV_k$.
\end{coro}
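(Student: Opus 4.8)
The plan is to establish the two asserted inclusions $A\subseteq B$ and $B\subseteq A$ separately, and then deduce the centraliser statement. Recall that $A$ is the image of $\mathcal{Z}(\cH_k)\cong\R[x_1^{\pm1},\dots,x_k^{\pm1}]^{\rS_k}$ and $B$ is the image of $U'$, the subalgebra of $U$ generated by $\{P_r\}_{r\in\mb{Z}}$, both taken in $\End_{\R}\bV_k$ via the actions defined in \eqref{Haction} and the coproduct. The key computational input is already supplied by Lemma \ref{lem:P2X}, which identifies $\Delta^{k-1}(P_r)=\sum_{i=1}^k X_i^r$, that is, the image of the power sum $P_r$ acts as the $r$th power sum in the commuting operators $X_1,\dots,X_k$.

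First I would prove $B\subseteq A$. Every element of $U'$ is a noncommutative polynomial in the $P_r$, but their images $\Delta^{k-1}(P_r)=\sum_i X_i^r$ all commute (since distinct $X_i$ act on distinct tensor factors), so the image of $U'$ is the commutative subalgebra of $\End_{\R}\bV_k$ generated by the power sums $\sum_i X_i^r$ for $r\in\mb{Z}$. Each such power sum is manifestly a symmetric Laurent polynomial in $X_1,\dots,X_k$, hence lies in the image of $\R[x_1^{\pm1},\dots,x_k^{\pm1}]^{\rS_k}=\mathcal{Z}(\cH_k)$; therefore $B\subseteq A$. For the reverse inclusion $A\subseteq B$, I would use the fact, recalled just before the corollary, that the power sums $\{p_\lambda\}_{\lambda\in\cP^+_k}$ form a $\mb{C}$-basis of $\Lambda_k=\R[x_1^{\pm1},\dots,x_k^{\pm1}]^{\rS_k}$ (more precisely, the power sums generate the ring of symmetric functions, and their restrictions span the symmetric Laurent polynomials over $\R$). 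Thus any element of $\mathcal{Z}(\cH_k)$ is a polynomial in the power sums $\sum_i X_i^r$, which by Lemma \ref{lem:P2X} are exactly the images of the $P_r$; hence its image lies in $B$, giving $A\subseteq B$ and the equality $A=B$.

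For the final clause $B\subset\End_{\cH_k}\bV_k$, I would invoke that the left $U$-action (through $\Delta^{k-1}$) commutes with the right $\rS_k$-action permuting tensor factors, a point noted in the text preceding Lemma \ref{lem:P2X}. Since $B$ is contained in the image of $U$, every element of $B$ commutes with the $\rS_k$-action; it remains to check commutation with the full $\cH_k$-action, i.e. also with the $X_i^{\pm1}$. But $B=A$ is generated by the \emph{symmetric} functions $\sum_i X_i^r$, and each $X_j$ commutes with every $\sum_i X_i^r$ because all the $X_i$ mutually commute as operators on $\bV_k$. Therefore $B$ commutes with both generating families of the $\cH_k$-action, so $B\subset\End_{\cH_k}\bV_k$.

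The main obstacle, such as it is, is purely bookkeeping rather than conceptual: one must be careful that the identification $\mathcal{Z}(\cH_k)\cong\R[x_1^{\pm1},\dots,x_k^{\pm1}]^{\rS_k}$ from Lemma \ref{lem:hecke}(ii) is compatible with the (non-faithful) action \eqref{Haction}, so that ``image of $\mathcal{Z}(\cH_k)$'' really does mean the subalgebra of $\End_{\R}\bV_k$ generated by the symmetric Laurent polynomials in the $X_i$. Once this identification is in place the equality $A=B$ is essentially a restatement of Lemma \ref{lem:P2X} combined with the fact that power sums generate symmetric functions, and the centraliser inclusion follows from commutativity of the $X_i$ together with Schur--Weyl duality. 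For this reason the proof is genuinely a short corollary, as the statement's placement suggests.
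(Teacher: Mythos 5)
Your argument is correct and is essentially the paper's own: the authors simply observe that the corollary is an immediate consequence of Lemma \ref{lem:P2X} together with the fact that power sums form a basis of $\Lambda_k$, which is exactly the two-inclusion argument you spell out. The only point worth being slightly more careful about is that for the Laurent ring $\R[x_1^{\pm1},\dots,x_k^{\pm1}]^{\rS_k}$ one needs the power sums $P_r$ with $r<0$ as well (they generate the symmetric polynomials in the $x_i^{-1}$, hence $e_k^{-1}$), but these are all in $U'$ by definition, so your proof goes through.
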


\subsection{The cyclotomic quotient and the generalised symmetric group}
In the context of affine Hecke algebras one is usually interested in representations where the action of the polynomial part 
$\R[x_1^{\pm 1},\ldots,x_k^{\pm 1}]\subset \cH_k$ is semi-simple, that is, there should exist a common eigenbasis of the $X^{\pm 1}_i$. To this end, we consider 
\begin{equation}
\cH_k'=\RR\otimes_\R\cH_k,\qquad\RR=\R[t]/(z-t^n)\,
\end{equation}
and the $\RR$-module
\begin{equation}\label{V'}
\bV'_k=\RR\otimes_{\mb{C}} V^{\otimes k}=\bigoplus_{\lambda\in\alc}\bV'_\lambda,\quad
\bV'_\lambda=\RR\otimes_{\mb{C}} V'_\lambda\,,
\end{equation}
where $V'_\lambda=\{v~|~e'_{ii}v=m_i(\lambda)v~\}$ are the $\g_n$-weight spaces  of the Cartan subalgebra $\h'$ in apposition defined in \eqref{Cartan'} and the direct sum runs over weights in the following set
\begin{equation}\label{alcove}
\alc=\{\lambda\in\cP_k^+~|~n\geq\lambda_1,\ldots,\lambda_k\geq 1\}\,.
\end{equation}
This decomposition into weight spaces of $\h'$ should be seen in connection with the algebra isomorphism $\phi:U\to U^{\Omega}$, where $U^{\Omega}=U(\mathfrak{gl}^\Omega[t,t^{-1}])$ is the enveloping algebra of the twisted loop algebra. Recall that the change in Cartan subalgebras and the isomorphism $\phi$ is implemented via the similarity transformation with $\mc{S}(t)$ defined in \eqref{modSt}. 

Define a $\cH'_k$-action on $\bV'_k$ analogous to \eqref{Haction} but setting $z^{\pm 1}=t^{\pm n}$ in \eqref{X}.
\begin{prop} 
(i)  The action of $\cH'_k$ on $\bV'_k$ factors through the cyclotomic quotient $\cH'_k(n)=\cH'_k/\mc{I}_n$, where $\mathcal{I}_{n}$ is the two-sided ideal generated by the degree $n$ polynomial
\begin{equation}\label{cyclotomic}
f(x_1)=(x_1-t)(x_1-t\zeta)\cdots (x_1-t\zeta^{n-1})=x_1^n-t^n\,. 
\end{equation}
(ii) The action of the abelian subalgebra $\RR[x^{\pm 1}_1,\ldots,x^{\pm 1}_k]\subset \cH'_k$ on $\bV'_\lambda$ is diagonal and, in particular, the centre $\mathcal{Z}(\cH'_k)$ acts by multiplication with symmetric functions in the variables $(t^{\mp 1}\zeta^{\pm\lambda_1},\dots,t^{\mp 1}\zeta^{\pm\lambda_k})$.  
\end{prop}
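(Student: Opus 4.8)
The plan is to diagonalise the commuting operators $X_1,\dots,X_k$ over the ring $\RR$ and to read off both statements from their spectrum. By Lemma~\ref{lem:XP} the generator $x_i$ acts as the matrix $X$ of \eqref{X} in the $i$th tensor factor and trivially elsewhere, and by Lemma~\ref{lem:twistedHeisenberg} (applied with $r=1$, $z=t^n$, and $P_1=X$) the deformed matrix $\mathcal{S}(t)$ of \eqref{modSt} conjugates $X$ to the diagonal matrix $t\sum_{j=1}^n\zeta^{-j}e_{jj}$. Hence $X$ has the $n$ distinct eigenvalues $t\zeta^{-j}$, $j=1,\dots,n$ — precisely the $n$th roots of $z=t^n$ — with eigenvectors $w_j:=\mathcal{S}(t)^{-1}v_j$, since $X w_j=\mathcal{S}(t)^{-1}(\mathcal{S}(t)X\mathcal{S}(t)^{-1})v_j=t\zeta^{-j}w_j$. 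As $\det\mathcal{S}(t)$ is a unit in $\RR$, the $w_j$ form an $\RR$-basis of $V\otimes\RR$. At $t=1$ one has $\mathcal{S}(1)=\mathcal{S}$, so $w_j=v_j':=\mathcal{S}^{-1}v_j$, the weight basis of $\h'$ with $e_{ii}'v_j'=\delta_{ij}v_j'$; thus $\{w_j\}$ is the $t$-deformation of the $\h'$-eigenbasis dictated by the isomorphism $\phi$ of Proposition~\ref{prop:twisted}, and this is why the decomposition $\bV'_k=\bigoplus_{\lambda\in\alc}\bV'_\lambda$ must be read, over $\RR$, as the simultaneous eigenspace decomposition of the $X_i$.

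For part (i) I would argue directly from Lemma~\ref{lem:XP}: there $X^{n}=z\,\bs{1}_n=t^{n}\bs{1}_n$, so the first-factor operator satisfies $f(X_1)=X_1^{\,n}-t^{n}=0$, where $f$ is the cyclotomic polynomial \eqref{cyclotomic} whose roots $t\zeta^{j}$ ($j=0,\dots,n-1$) are exactly the eigenvalues of $X$ found above. Thus $f(x_1)$ lies in the kernel of the representation $\cH'_k\to\End_{\RR}\bV'_k$. Since the kernel of an algebra homomorphism is a two-sided ideal, it contains the two-sided ideal $\mathcal{I}_n=\langle f(x_1)\rangle$, and the action factors through $\cH'_k(n)=\cH'_k/\mathcal{I}_n$. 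That $f$ has $n$ \emph{distinct} roots is what additionally guarantees that each $X$, and hence the cyclotomic quotient, acts semisimply — the property motivating the passage to $\RR$.

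For part (ii) I would pass to the tensor eigenbasis. The operators $X_1,\dots,X_k$ act on distinct factors, hence commute, and each is diagonalised by the $w_j$; therefore $\{w_{i_1}\otimes\cdots\otimes w_{i_k}\}$ is an $\RR$-basis of $\bV'_k$ simultaneously diagonalising all $X_l$, with $X_l(w_{i_1}\otimes\cdots\otimes w_{i_k})=t\zeta^{-i_l}(w_{i_1}\otimes\cdots\otimes w_{i_k})$. Grouping these vectors by the multiset $\{i_1,\dots,i_k\}$, i.e. by the occupation numbers $m_j=\#\{l:i_l=j\}$, reproduces the weight spaces $\bV'_\lambda$ (in the deformed reading of the first paragraph): the tensors with $m_j=m_j(\lambda)$ span $\bV'_\lambda$, with $\lambda\in\alc$ the element of \eqref{alcove} having these multiplicities (cf. \eqref{stabS}). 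On each such basis vector the abelian algebra $\RR[x_1^{\pm1},\dots,x_k^{\pm1}]$ acts diagonally, with $x_l\mapsto t\zeta^{-i_l}$ and $x_l^{-1}\mapsto t^{-1}\zeta^{\,i_l}$, which is the first assertion. Finally, a central element $g\in\mathcal{Z}(\cH'_k)=\RR[x_1^{\pm1},\dots,x_k^{\pm1}]^{\rS_k}$ acts on $w_{i_1}\otimes\cdots\otimes w_{i_k}$ by the scalar $g(t\zeta^{-i_1},\dots,t\zeta^{-i_k})$; since $g$ is symmetric this value depends only on the multiset of eigenvalues, which for every basis vector of $\bV'_\lambda$ equals $\{t\zeta^{-\lambda_1},\dots,t\zeta^{-\lambda_k}\}$. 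Hence $g$ acts on all of $\bV'_\lambda$ by the single scalar $g(t\zeta^{-\lambda_1},\dots,t\zeta^{-\lambda_k})$, i.e. by multiplication with the symmetric function evaluated at $(t^{\mp1}\zeta^{\pm\lambda_1},\dots,t^{\mp1}\zeta^{\pm\lambda_k})$, the inverse variables $x_l^{-1}$ accounting for the top sign.

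The main obstacle I anticipate is not either computation in isolation but the reconciliation flagged above: the weight spaces are described through the Cartan $\h'$ at $z=1$ (the undeformed $\mathcal{S}$, via $e_{ii}'$), whereas over $\RR$ the operators $X_i$ are diagonal only in the $\mathcal{S}(t)$-basis $\{w_j\}$. One must therefore interpret $\bV'_k=\bigoplus_\lambda\bV'_\lambda$ as the $X_i$-eigenspace decomposition — equivalently, the $\mathcal{S}(t)$-deformation of the $\h'$-weight decomposition — and verify that grouping the tensor eigenbasis by occupation number reproduces precisely the index set $\alc$ of \eqref{alcove}. Everything else is bookkeeping with Lemmas~\ref{lem:XP} and~\ref{lem:twistedHeisenberg}.
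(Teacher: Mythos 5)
Your argument is correct and follows essentially the same route as the paper's own proof: part (i) is read off from the identity $X^{n}=t^{n}\bs{1}_n$ of Lemma \ref{lem:XP} (so $f(x_1)$ lies in the kernel and the two-sided ideal it generates is killed), and part (ii) from diagonalising the commuting $X_i$ via the similarity transformation with $\mathcal{S}(t)^{\otimes k}$ as in Lemma \ref{lem:twistedHeisenberg}; your extra paragraph identifying the simultaneous eigenbasis, grouped by occupation number, with the weight spaces $\bV'_\lambda$ of \eqref{V'} only makes explicit what the paper's proof leaves implicit. The single caveat is a harmless convention mismatch: by \eqref{Haction} the generator $x_i$ acts by $X_i^{-1}$ rather than $X_i$, which merely interchanges the two sign choices already accommodated by the $\pm$/$\mp$ in the statement.
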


\begin{proof}
The assertion (i) is a direct consequence of the matrix identities \eqref{XP} when $r\in n\mb{Z}$: setting $z=t^n$ they imply that the ideal $\mc{I}_n$ defined via \eqref{cyclotomic} is mapped to zero. Statement (ii) follows from exploiting the $\mc{S}$-matrix defined in \eqref{modS1} and its $t$-deformed version \eqref{modSt} which diagonalises the principal Heisenberg algebra; compare with Lemma \ref{lem:twistedHeisenberg}. Thus, we deduce from \eqref{XP} that the similarity transformation with $\mc{S}(t)^{\otimes k}$ on $\bV'_k$ diagonalises the action of $\RR[x^{\pm 1}_1,\ldots,x^{\pm 1}_k]\subset \cH_{n,k}$ on each of the weight spaces $\bV'_{\lambda}$ in \eqref{V'}. 
\end{proof}
The quotient $\cH'_{k}(n)$ is closely related to the group ring of the {\em generalised symmetric group} $\GS$ \cite{specht1932}. %
Recall that the generalised symmetric group is a special case of the family $G(n,p,k)$ of complex reflection groups in the Shephard-Todd classification with $p=1$. It can be defined as the {\em wreath product} of the symmetric  group $\rS_k$ with the cyclic group $C_n$ of order $n$ (which we identify with the roots of unity of order $n$ in $\mb{C}$),
\begin{equation}
\GS=C_n^{\times k}\rtimes \rS_{k}\;.
\end{equation}
Here $\rS_k$ acts on the $k$-fold direct product $C_n^{\times k}$ by permutation of indices. Consider the following exact sequence of groups, %
$1\rightarrow C_{n}^{\times k}\hookrightarrow
\GS\twoheadrightarrow \rS_{k}\rightarrow 1$, %
then we denote by $\mathcal{N}$ the normal subgroup which is the image of $C_{n}^{\times k}$. Let $ y_i\in\mathcal{N}$ be the image of the generator of the $i$th copy of $C_n$ under the natural isomorphism $\mathcal{N}\cong C_{n}^{\times k}$. 

\begin{lemma}\label{lem:ringiso}
The map
\begin{equation}
wx^{\lambda }\mapsto t^{\sum_i\lambda_i} wy^{\lambda}\,,\qquad\lambda\in\cP_k\;
\end{equation} 
defines a ring isomorphism $\cH'_{k}(n)\cong\RR[\GS]$. 
\end{lemma}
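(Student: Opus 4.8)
The plan is to write the isomorphism down explicitly on generators, identify its values on the basis $\{wx^\lambda\}$, and then produce an inverse. First I would define an $\RR$-algebra map out of $\cH'_k$ by setting
\[
\Psi(\sigma_i)=\sigma_i\quad(1\le i\le k-1),\qquad \Psi(x_i)=t\,y_i\quad(1\le i\le k),
\]
and extending multiplicatively; since $t$ is a unit in $\RR$ and each $y_i$ is invertible in $\RR[\GS]$, this forces $\Psi(x_i^{-1})=t^{-1}y_i^{-1}$. To see $\Psi$ is well defined one checks the defining relations of $\cH_k$ against those of $\GS$: the commutation $x_ix_j=x_jx_i$ follows from $y_iy_j=y_jy_i$; the relation $\sigma_i x_i\sigma_i=x_{i+1}$ becomes $t\,\sigma_iy_i\sigma_i=t\,y_{i+1}$, which holds because $\sigma_i^2=1$ and $\sigma_i y_i\sigma_i^{-1}=y_{i+1}$ in $\GS$; and $x_i\sigma_j=\sigma_jx_i$ for $j\ne i,i-1$ translates into $\sigma_j$ fixing $y_i$, i.e. $i\notin\{j,j+1\}$, which is exactly the stated index condition. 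The $\rS_k$-relations are preserved since $\sigma_i\mapsto\sigma_i$.

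Next I would verify that $\Psi$ annihilates the cyclotomic ideal and hence descends to $\cH'_k(n)=\cH'_k/\mc{I}_n$. Indeed $\Psi(x_1^n-t^n)=(ty_1)^n-t^n=t^n(y_1^n-1)=0$ because $y_1^n=1$ in $C_n$, and as $\mc{I}_n$ is the two-sided ideal generated by $x_1^n-t^n$ and $\Psi$ is an algebra map, $\Psi(\mc{I}_n)=0$. On the distinguished elements one then computes, using that $t$ is central,
\[
\Psi(wx^\lambda)=\Psi(w)\,\Psi(x_1)^{\lambda_1}\cdots\Psi(x_k)^{\lambda_k}
= w\,(t y_1)^{\lambda_1}\cdots(t y_k)^{\lambda_k}=t^{\sum_i\lambda_i}\,w y^\lambda,
\]
which is precisely the map in the statement (the computation is valid for $\lambda_i\in\mb{Z}$ since each $ty_i$ is a unit).

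For bijectivity the cleanest route is to construct the inverse directly. I would define an $\RR$-algebra map $\Phi:\RR[\GS]\to\cH'_k(n)$, induced by the group homomorphism $\GS\to\cH'_k(n)^{\times}$ sending $y_i\mapsto t^{-1}x_i$ and $\sigma_i\mapsto\sigma_i$ into the units of $\cH'_k(n)$. Well-definedness requires the wreath-product relations: $y_i^n=1$ maps to $t^{-n}x_i^n=1$, which holds in the quotient; and $\sigma_i y_i\sigma_i^{-1}=y_{i+1}$, $\sigma_i y_j\sigma_i^{-1}=y_j$ for $j\neq i,i+1$ translate back into \eqref{affHecke_rel}. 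Comparing on generators gives $\Psi\circ\Phi=\mathrm{id}$ and $\Phi\circ\Psi=\mathrm{id}$, so $\Psi$ is an isomorphism.

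The one genuine point to nail down — and the step I would treat most carefully — is the propagation $x_i^n=t^n$ for \emph{all} $i$ inside the quotient, since $\mc{I}_n$ is generated only by $x_1^n-t^n$. This follows from $x_i=(\sigma_{i-1}\cdots\sigma_1)\,x_1\,(\sigma_1\cdots\sigma_{i-1})$, whence $x_i^n=(\sigma_{i-1}\cdots\sigma_1)\,t^n\,(\sigma_1\cdots\sigma_{i-1})=t^n$ using centrality of $t^n=z$ and the telescoping of the $\sigma_j^2=1$; it is exactly what makes $\Phi$ land in the correct algebra. If one prefers to avoid $\Phi$, the same identity shows that $\cH'_k(n)$ is spanned over $\RR$ by $\{wx^\mu: w\in\rS_k,\ \mu\in\{0,\dots,n-1\}^k\}$, a set of size at most $n^k k!=|\GS|$, while $\RR[\GS]$ is free of rank exactly $n^k k!$; since $\Psi$ carries this spanning set bijectively onto the $\RR$-basis $\{t^{\sum_i\mu_i}wy^\mu\}$ and $t$ is a unit, the spanning set must in fact be a basis and $\Psi$ an isomorphism.
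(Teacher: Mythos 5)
Your proposal is correct and follows essentially the same route as the paper's (much terser) proof: both rest on matching the wreath-product relations $\sigma_iy_i\sigma_i=y_{i+1}$, $\sigma_iy_j=y_j\sigma_i$, $y_i^n=1$ against \eqref{affHecke_rel} and the cyclotomic relation, and on comparing the basis $\{x^\lambda w\}$ of the quotient with the basis $\{y^\lambda w\}$ of $\RR[\GS]$. The details you add --- the propagation of $x_i^n=t^n$ from the single generator $x_1^n-t^n$ via conjugation, and the spanning-versus-rank count that upgrades the spanning set to a basis --- are exactly the steps the paper leaves implicit in its appeal to Lemma \ref{lem:hecke}(i), so your write-up is a legitimate fleshing-out rather than a different argument.
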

\begin{proof}
Because of Lemma \ref{lem:hecke} (i) one deduces that a basis of $\cH'_{k}(n)$ is given by $\{x^\lambda w~|~n\geq \lambda_i\geq 1,\,w\in \rS_k\}$. Since $\{y^\lambda w~|~n\geq \lambda_i\geq 1,\,w\in \rS_k\}$ is a basis of $\RR[\GS]$ and $t^n y_i^n=t^n$ for all $i=1,\ldots,k$ as well as $\sigma_iy_i\sigma_i=y_{i+1}$ and $\sigma_i y_j=y_j\sigma_i$ for $j\neq i,i+1$ the assertion follows.
\end{proof}

Note that in general the centre $\mc{Z}(\cH'_k)$ does not map surjectively onto the centre of $\RR[\GS]$. For example, set $n=3$ and $k=2$ then the centre of the group algebra of $G(3,1,2)$ is 9-dimensional while the image of the centre of $\mc{Z}(\cH'_k)$ for $t=1$ is six-dimensional; see the example \cite[p.792]{ariki1996decomposition}. A basis of the centre for general wreath products with $\rS_k$ has been put forward in \cite{pushkarev1999representation}. However, in this article we are only interested in the image of the centre $\mc{Z}(\cH'_k)$ as we will be projecting onto the subspaces of symmetric and alternating tensors in \eqref{V'} below.

\begin{coro}
Let $M$ be a $\RR[\GS]$-module then $M$ is a $\cH'_{k}$-module.
\end{coro}
\begin{proof}
This is a direct consequence of Lemma \ref{lem:ringiso}.
\end{proof}
Since $\cH'_{k}(n)$ is essentially the group algebra of $\GS$ its irreducible modules are given in terms of the irreducible modules of $\GS$. The latter are known \cite{osima1954representations} to be the modules induced by the irreducible representations $\lambda:y^\mu\mapsto \zeta^{(\lambda,\mu)}$, $\lambda\in\alc$ of the normal subgroup $\mathcal{N}\cong\mathcal{C}_{n}^{\times k}$ and their associated stabiliser subgroup $G_\lambda\subset\GS$; see Appendix B for details. The following proposition identifies the weight spaces of the twisted loop algebra as irreducible $\RR[\GS]$-modules.
\begin{prop}\label{prop:V=L}
The weight spaces $\bV'_\lambda$ are irreducible $\cH'_{k}(n)$-modules that for $t=1$ are isomorphic to the irreducible $\GS$-modules $\mL_\lambda$ that are induced by the trivial representation of $G_\lambda\cong \rS_\lambda$.
\end{prop}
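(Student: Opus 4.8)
The strategy is to use the ring isomorphism $\cH'_k(n)\cong\RR[\GS]$ of Lemma \ref{lem:ringiso} to translate the statement into the representation theory of the generalised symmetric group, and to exploit the simultaneous diagonalisation of the $x_l$ provided by the deformed $\mathcal{S}$-matrix. First I would record an explicit $\RR$-basis of $\bV'_\lambda$. By Lemma \ref{lem:twistedHeisenberg} conjugation by $\mathcal{S}(t)^{\otimes k}$ turns each matrix $X$ (taken with $z=t^n$) into $t\,\operatorname{diag}(\zeta^{-1},\dots,\zeta^{-n})$; writing $u_a=\mathcal{S}(t)^{-1}v_a$ and $u_{\mathbf i}=u_{i_1}\otimes\cdots\otimes u_{i_k}$, the preceding Proposition shows that the $u_{\mathbf i}$ simultaneously diagonalise the commuting $x_1,\dots,x_k$, with $u_{\mathbf i}.x_l=t^{-1}\zeta^{i_l}u_{\mathbf i}$, and that $\bV'_\lambda$ is precisely the free $\RR$-module on those $u_{\mathbf i}$ for which $\mathbf i=(i_1,\dots,i_k)$ is a rearrangement of $\lambda$. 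In particular $\operatorname{rank}_\RR\bV'_\lambda=d_\lambda$ with $d_\lambda$ the quantum dimension \eqref{qdim}, and at $t=1$, where $\mathcal{S}(t)=\mathcal{S}$, the $u_{\mathbf i}$ are exactly the $\h'$-weight vectors defining $V'_\lambda$ in \eqref{V'}.

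Next I would prove irreducibility. Because the parts of $\lambda$ lie in $\{1,\dots,n\}$ by \eqref{alcove}, the roots of unity $\zeta^{1},\dots,\zeta^{n}$ are pairwise distinct, so distinct rearrangements $\mathbf i\neq\mathbf i'$ yield distinct joint $(x_1,\dots,x_k)$-eigenvalue tuples. Working over the fraction field $K=\operatorname{Frac}(\RR)$, a Lagrange interpolation in the $x_l$ produces orthogonal idempotents projecting onto the lines $Ku_{\mathbf i}$; hence every $\cH'_k(n)$-submodule of $\bV'_\lambda\otimes_\RR K$ has the form $\bigoplus_{\mathbf i\in T}Ku_{\mathbf i}$. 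Since $\rS_k$ permutes tensor slots, $u_{\mathbf i}.\sigma=u_{\mathbf i\circ\sigma}$, and acts transitively on the rearrangements of $\lambda$, such a submodule is either $0$ or everything. Thus $\bV'_\lambda$ is irreducible, i.e. $\operatorname{End}_{\cH'_k(n)}(\bV'_\lambda)=\RR$, and the identical argument specialised at $t=1$ shows $\bV'_\lambda$ is a simple $\mb{C}[\GS]$-module.

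It remains to identify $\bV'_\lambda$ at $t=1$, where $\cH'_k(n)\cong\mb{C}[\GS]$. Restricting to the abelian normal subgroup $\cN\cong C_n^{\times k}$ and using $y_l=x_l$ at $t=1$, the vector $u_{\mathbf i}$ affords the linear character $y^\mu\mapsto\zeta^{(\mathbf i,\mu)}$, so $\bV'_\lambda|_{\cN}$ is the multiplicity-free sum of the pairwise distinct characters in the single $\GS$-orbit of $\chi_\lambda\colon y^\mu\mapsto\zeta^{(\lambda,\mu)}$. The inertia group of $\chi_\lambda$ is $G_\lambda=\cN\rtimes\rS_\lambda$, with $\rS_\lambda$ the Young subgroup \eqref{stabS} stabilising $\lambda$ as a map $[k]\to\{1,\dots,n\}$, and the generator $u_\lambda$ is fixed by $\rS_\lambda$ since permuting equal entries leaves $\mathbf i=\lambda$ unchanged; that is, $u_\lambda$ spans the trivial $\rS_\lambda$-isotype inside the $\chi_\lambda$-line. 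Sending the canonical generator of $\mb{C}[\GS]\otimes_{\mb{C}[G_\lambda]}\mb{C}_{\chi_\lambda\boxtimes\mathrm{triv}}$ to $u_\lambda$ then defines a $\GS$-map onto $\bV'_\lambda$, surjective by transitivity and bijective by the dimension count $d_\lambda=[\GS:G_\lambda]$. This exhibits $\bV'_\lambda\cong\mL_\lambda$ as in Appendix B.

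The delicate points are bookkeeping rather than conceptual. I must align the right $\cH'_k(n)$-action \eqref{Haction} with the (left-module) induced-representation language, and verify that $\rS_\lambda$ fixes $u_\lambda$ on the nose, with no stray root of unity, so that $\bV'_\lambda$ is induced from the trivial and not from a nontrivial linear character of $\rS_\lambda$ --- this is exactly what pins down which irreducible $\GS$-module one obtains. The underlying simplicity of an induced module $\mL_\lambda$ is the standard Clifford/Wigner--Mackey statement for a semidirect product with abelian kernel, which I would cite from Appendix B; the idempotent argument above in any case gives irreducibility of $\bV'_\lambda$ directly and uniformly in $t$, so the two viewpoints are consistent.
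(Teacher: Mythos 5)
Your proposal is correct, and it shares its backbone with the paper's proof --- both diagonalise the $x_l$ via the deformed $\cS$-matrix, observe that the resulting eigenvectors $u_{\mathbf i}$ (the paper's $v'_{p}$) carry the $\cN$-characters in the orbit of $\chi_\lambda$, and note that $\rS_\lambda$ fixes $u_\lambda$ --- but you complete the argument differently at two points. For the identification with $\mL_\lambda$ the paper writes down an explicit bijection $\otimes_i v'_{p_i}\mapsto T_p$ onto the basis of $n$-tableaux of horizontal-strip shape and checks equivariance under $\cN$ and $G_p$, thereby importing the irreducibility of $\mL_\lambda$ from Osima's classification (Appendix B); you instead invoke the universal property of induction together with the index count $d_\lambda=[\GS:\cN\rtimes\rS_\lambda]$, which avoids the tableau combinatorics at the cost of leaning on Frobenius reciprocity. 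More substantively, your Lagrange-interpolation argument gives a direct, self-contained proof that $\bV'_\lambda\otimes_{\RR}K$ (and its specialisation at any $t_0\neq 0$, since the joint eigenvalues $t_0^{-1}\zeta^{i_l}$ still separate the rearrangements of $\lambda$) is irreducible uniformly in $t$; the paper's proof only establishes the isomorphism at $t=1$ and leaves the irreducibility claim for general $t$ implicit, so your version actually covers the full statement of the proposition more completely. Your flagged bookkeeping points (right versus left action, absence of a stray root of unity in the $\rS_\lambda$-action on $u_\lambda$) are exactly the ones that matter and are handled correctly; the only cosmetic discrepancy is that you call $\cN\rtimes\rS_\lambda$ the inertia group where the paper's $G_\lambda$ denotes just the $\rS_\lambda$-part, which does not affect the argument.
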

\begin{proof}
We state the isomorphism $V^{\otimes k}\rightarrow
\bigoplus_{\lambda \in\alc}\mL_{\lambda }$. 
Define a basis in $\bV'_k$ in terms of the discrete Fourier transform (\ref{modSt}) by setting 
\begin{eqnarray*}
v'_{p_{1}}\otimes \cdots \otimes v'_{p_{k}}
&=&t^{\sum_i p_i}\sum_{1\leq a_{1},\ldots ,a_{k}\leq n}\mathcal{%
S}^{-1}_{p_{1}a_{1}}v_{a_{1}}\otimes \cdots \otimes \mathcal{S}^{-1}%
_{p_{k}a_{k}}v_{a_{k}} \\
&=&t^{\sum_i p_i}\sum_{1\leq a_{1},\ldots ,a_{k}\leq n}\frac{\zeta ^{-(a, p)}}{n^{k/2}}~v_{a_{1}}\otimes \cdots \otimes v_{a_{k}}
\end{eqnarray*}%
where $p\in\alc \rS_k$ is any distinct permutation of an element in the alcove \eqref{alcove}. We map the vector $%
\otimes_i v'_{p_i}$ onto the unique $n$-tableau $T_p=T^{(1)}\otimes\cdots\otimes T^{(n)}$ of shape $(m_{1}(p),\ldots
,m_{n}(p))$ that fills the horizontal strip of length $m_{j}(p)$ with those $i\in \{1,\ldots ,k\}$ for which $p_{i}=j$; see Appendix B. Since all entries in each horizontal strip have to increase strictly from left to right this fixes the $n$-tableau $T_p%
=T^{(1)}\otimes \cdots \otimes T^{(n)}$ uniquely. Conversely, any $n$-tableau of the same shape defines uniquely a vector $\otimes_i v'_{q_i}$ with $q\in p\rS_k$. The latter span the irreducible representation $\mL_\lambda$ with $\lambda=(1^{m_1(p)}\ldots n^{m_n(p)})$. By construction the map  $\otimes_i v'_{p_i}\mapsto T_p$ preserves the (left) action of the normal subgroup $\mathcal{N}$, $(\otimes_i v'_{p_i}).x^\lambda=\zeta^{(\lambda,p)}(\otimes_i v'_{p_i})=y^{\lambda}(\otimes_i v'_{p_i})$ for $t=1$ see \eqref{actionzeta}, and of the stabiliser group $G_{p}\subset\GS$.
\end{proof}

\subsection{Characters and fusion product}
Recall from Appendix B that the irreducible representations $L(\bs{\lambda})$ of $\GS$ are labelled by $n$-multi\-par\-titions $\bs{\lambda}=(\lambda^{(1)},\dots,\lambda^{(n)})$ with $\sum_{i=1}^n|\lambda^{(i)}|=k$. Given such a $\boldsymbol{\lambda}$ we call the unique partition $\lambda=(1^{m_1}2^{m_2}\cdots n^{m_n})\in\alc$ with $m_i=|\lambda^{(i)}|$ its {\em type} and call two irreducible modules $\mL(\boldsymbol{\lambda})$ and $\mL(\boldsymbol{\mu})$ of the same type, if $\lambda=\mu$. The following result is taken from \cite{osima1954representations}.

\begin{lemma} (i) The restricted modules $\operatorname{Res}_{\mathcal{N}}^{\GS}\mL(\boldsymbol{\lambda})$ and $\operatorname{Res}_{\mathcal{N}}^{\GS}\mL(\boldsymbol{\mu})$ are isomorphic as $\mathcal{N}$-modules if and only if they are of the same type. (ii) The characters of $\mL(\boldsymbol{\lambda})$ restricted to the normal subgroup $\mathcal{N}$ are given by
\begin{equation}
\chi_{\boldsymbol{\lambda}} (y^{p}) =\Tr_{\mL(\boldsymbol{\lambda})}y^p=f_{\boldsymbol{\lambda}}\, m_{\lambda}(\zeta^{p}),
\qquad f_{\boldsymbol{\lambda}}=\prod_{i=1}^{n} f_{\lambda^{(i)}}\,, 
\end{equation}
where $p\in\cP_k$, $f_{\lambda^{(i)}}$ is the number of standard tableaux of shape $\lambda^{(i)}$ and $m_\lambda$ denotes the monomial symmetric function. That is,
\[
m_{\lambda}(\zeta^{p})=\sum_{\mu }\zeta ^{p _{1}\mu _{1}}\cdots \zeta
^{p_{k}\mu _{k}}~, 
\]
where the sum runs over all \emph{distinct} permutations $\mu$ of $\lambda$.
\end{lemma}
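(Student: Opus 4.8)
The plan is to run the little-group (Clifford--Wigner--Mackey) machinery for the wreath product $\GS=\cN\rtimes\rS_k$ with abelian base $\cN\cong C_n^{\times k}$, and then read off both statements from a single character computation. Recall from Proposition \ref{prop:V=L} that the linear characters of $\cN$ are the $\chi_\lambda\colon y^\mu\mapsto\zeta^{(\lambda,\mu)}$, that $\rS_k$ acts on them by permuting the $k$ tensor factors, and that the stabiliser of $\chi_\lambda$ under this action is exactly the Young subgroup $\rS_\lambda$ of \eqref{stabS}. In Osima's classification the irreducible $\mL(\bs\lambda)$ attached to an $n$-multipartition $\bs\lambda=(\lambda^{(1)},\dots,\lambda^{(n)})$ of type $\lambda=(1^{m_1}\cdots n^{m_n})$, so $|\lambda^{(i)}|=m_i$, is
\[
\mL(\bs\lambda)=\operatorname{Ind}_{\cN\rtimes\rS_\lambda}^{\GS}\bigl(\chi_\lambda\boxtimes\rho_{\bs\lambda}\bigr),\qquad
\rho_{\bs\lambda}=S^{\lambda^{(1)}}\boxtimes\cdots\boxtimes S^{\lambda^{(n)}},
\]
where $\rho_{\bs\lambda}$ is the irreducible of $\rS_\lambda\cong\rS_{m_1}\times\cdots\times\rS_{m_n}$ labelled by the $\lambda^{(i)}$, and $\chi_\lambda\boxtimes\rho_{\bs\lambda}$ is the representation of the little group $\cN\rtimes\rS_\lambda$ on which $\cN$ acts through $\chi_\lambda$ (legitimate because $\rS_\lambda$ fixes $\chi_\lambda$). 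The first thing I would do is recall why this list is exhaustive and irredundant, which is standard Clifford theory for a normal abelian subgroup.

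For part (ii) I would evaluate the induced character directly at $y^p\in\cN$. Since $\cN$ is normal, every conjugate $g^{-1}y^p g$ already lies in $\cN\subset\cN\rtimes\rS_\lambda$, so no terms in the induced-character sum are killed by the support condition; moreover $\rho_{\bs\lambda}$ is trivial on $\cN$, whence $\operatorname{tr}(\chi_\lambda\boxtimes\rho_{\bs\lambda})(g^{-1}y^p g)=(\dim\rho_{\bs\lambda})\,\chi_\lambda(g^{-1}y^p g)$. Writing $g=y^a w$ and using abelianness of $\cN$ to discard the dependence on $a$, the averaging over $\GS$ collapses to
\[
\Tr_{\mL(\bs\lambda)}y^p=\frac{\dim\rho_{\bs\lambda}}{|\rS_\lambda|}\sum_{w\in\rS_k}\zeta^{(\lambda\circ w^{-1},\,p)}.
\]
By orbit--stabiliser each distinct permutation of $\lambda$ occurs $|\rS_\lambda|$ times, so the right-hand side equals $(\dim\rho_{\bs\lambda})\,m_\lambda(\zeta^p)$; finally $\dim\rho_{\bs\lambda}=\prod_i\dim S^{\lambda^{(i)}}=\prod_i f_{\lambda^{(i)}}=f_{\bs\lambda}$ is the product of the numbers of standard tableaux, which gives the asserted formula. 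As a consistency check, $m_\lambda(\zeta^0)=d_\lambda$ of \eqref{qdim}, so setting $p=0$ recovers the expected dimension $\dim\mL(\bs\lambda)=d_\lambda f_{\bs\lambda}$.

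Part (i) then drops out of (ii) by orthogonality of the linear characters on the abelian group $\cN$: expanding $m_\lambda(\zeta^p)=\sum_{\nu\sim\lambda}\zeta^{(\nu,p)}$ gives
\[
\operatorname{Res}^{\GS}_{\cN}\mL(\bs\lambda)\cong f_{\bs\lambda}\bigoplus_{\nu\sim\lambda}\chi_\nu,
\]
so the set of irreducible $\cN$-constituents of the restriction is precisely the $\rS_k$-orbit $\{\chi_\nu\mid\nu\sim\lambda\}$, which is determined by, and determines, the multiplicities $(m_1,\dots,m_n)$, i.e.\ the type. Hence the restrictions carry the same $\cN$-constituents exactly when the types of $\bs\lambda$ and $\bs\mu$ agree, which is the content of (i). I expect the only genuinely delicate point to be the correct identification of the little-group data: verifying that $\operatorname{Stab}_{\rS_k}(\chi_\lambda)=\rS_\lambda$ and that $\chi_\lambda\boxtimes\rho_{\bs\lambda}$ restricts on $\cN$ to the scalar character $\chi_\lambda$ with multiplicity $\dim\rho_{\bs\lambda}$, so that $\rho_{\bs\lambda}$ enters the trace only through $f_{\bs\lambda}$; once these are pinned down, the remaining steps are the orbit--stabiliser bookkeeping above.
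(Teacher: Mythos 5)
Your argument is correct, but it reaches the character formula by a different route than the paper. The paper's proof is a direct trace computation on the explicit $n$-tableau basis of $\mL(\bs{\lambda})$ constructed in Appendix B: by \eqref{actionzeta} the element $y^{p}$ acts diagonally on an $n$-tableau $T$ with eigenvalue $\zeta^{(p,p(T))}$ depending only on which component $\lambda^{(i)}$ each entry of $T$ lies in, so grouping the basis by the content vector $p(T)$ immediately gives $\chi_{\bs{\lambda}}(y^p)=f_{\bs{\lambda}}\,\chi_{\lambda}(y^p)$ with $\chi_\lambda$ the character of the module $\mL_\lambda$ from Proposition \ref{prop:V=L}. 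You instead never touch the basis and recompute the restricted character from the little-group description $\mL(\bs\lambda)=\operatorname{Ind}_{\cN\rtimes\rS_\lambda}^{\GS}(\chi_\lambda\boxtimes\rho_{\bs\lambda})$ via the Frobenius induced-character formula; the orbit--stabiliser bookkeeping you carry out is exactly right, and your dimension check against \eqref{dimL} confirms the normalisation. What the paper's route buys is brevity, since the tableau model and the action \eqref{actionzeta} are already set up; what yours buys is independence from that explicit model and a cleaner derivation of the full decomposition $\operatorname{Res}^{\GS}_{\cN}\mL(\bs\lambda)\cong f_{\bs\lambda}\bigoplus_{\nu\sim\lambda}\chi_\nu$, which makes part (i) transparent. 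One small point your explicit multiplicity makes visible: taken literally, ``isomorphic as $\cN$-modules'' in (i) would also require $f_{\bs\lambda}=f_{\bs\mu}$, which can fail for multipartitions of the same type; your reading of (i) as equality of the sets of irreducible constituents is the one the paper actually uses afterwards (in defining the equivalence classes $[\mL(\bs\lambda)]$), and the paper's own one-line proof glosses over the same point.
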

\begin{proof} 
From \eqref{actionzeta}, notice that the action of $y^{p}$ on $T$
does not depend on the shape of each $\lambda^{(i)}$, but only on $ \lvert \lambda^{(i)} \rvert$.
We thus have
\begin{equation}
\chi_{\boldsymbol{\lambda}} (y^{p}) = f_{\boldsymbol{\lambda}}\;\chi_{\lambda} (y^{p}),\qquad\forall p\in\cP_k\,,
\end{equation}
where $\chi_\lambda$ is the character of the irreducible modules appearing in Proposition \ref{prop:V=L}. This proves, (i) and (ii).
\end{proof}

Denote by $\operatorname{Rep}\GS$ the representation ring of the generalised symmetric group with structure constants
\[
\mL(\boldsymbol{\lambda})\otimes\mL(\boldsymbol{\mu})=\bigoplus_{\boldsymbol{\nu}}c_{\boldsymbol{\lambda}\boldsymbol{\mu}}^{\boldsymbol{\nu}}\mL(\boldsymbol{\nu})\,.
\]
Since the normal subgroup $\mathcal{N}$ is abelian and finite, we can identify its representation ring $\operatorname{Rep}\mathcal{N}$ with its character ring, $\operatorname{Char}(\mathcal{N})\cong\mathbb{Z}_n^k$. Consider the map $\operatorname{Rep}\GS\rightarrow \operatorname{Char}(\mathcal{N})$ given by $\mL(\boldsymbol{\lambda})\mapsto\chi_{\boldsymbol{\lambda}}|_{\mathcal{N}}$. Its inverse image leads to the definition of equivalence classes $[\mL(\boldsymbol{\lambda})]$ of irreducible $\GS$-modules of type $\lambda\in\alc$. That is, $\mL(\bs{\lambda})\sim\mL(\bs{\mu})$ if the multipartitions $\bs{\lambda}$ and $\bs{\mu}$ are of the same type. In particular, the irreducible modules $\{\mL_\lambda\}_{\lambda\in\alc}$ whose $n$-multipartitions are given by $n$ horizontal strips of boxes of length $m_{i}(\lambda )$, form a set of class representatives. These irreducible modules are the ones induced by the trivial representation of the Young subgroup $\rS_\lambda\cong G_\lambda$. Thus, we have arrived at the following:
\begin{prop}
The quotient  $\operatorname{Rep}\GS/\!\!\thicksim$ endowed with the {\em fusion product}
\begin{equation}\label{Lfusion}
[\mL_\lambda][\mL_\mu]
=\bigoplus_{\nu\in\alc}N_{\lambda\mu}^\nu[\mL_{\nu}],\qquad
N_{\lambda\mu}^{\nu}=\sum_{\operatorname{type}(\boldsymbol{\nu})=\nu}c_{\boldsymbol{\lambda}\,\boldsymbol{\mu}}^{\boldsymbol{\nu}}
\,\frac{f_{\boldsymbol{\nu}}}{f_{\boldsymbol{\lambda}}f_{\boldsymbol{\mu}}}\,,
\end{equation}
is isomorphic to  $\operatorname{Char}(\mathcal{N})$. Here $\boldsymbol{\lambda}$ and $\boldsymbol{\mu}$ in the definition of $N_{\lambda\mu}^\nu$ are any pair of multi-partitions of type $\lambda$ and $\mu$.
\end{prop}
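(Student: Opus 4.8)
The plan is to realise the claimed isomorphism through the restriction homomorphism $\operatorname{Res}=\operatorname{Res}^{\GS}_{\mathcal{N}}:\operatorname{Rep}\GS\to\operatorname{Char}(\mathcal{N})$ and to transport the tensor product of $\GS$-modules into the pointwise product of $\mathcal{N}$-characters. Since restriction commutes with both direct sums and tensor products, $\operatorname{Res}$ is a ring homomorphism, and because $\rS_k$ acts on $\mathcal{N}\cong C_n^{\times k}$ by permuting the factors, every character in the image is $\rS_k$-invariant. The first step is to record, from the preceding lemma, the explicit form $\chi_{\boldsymbol{\lambda}}(y^{p})=f_{\boldsymbol{\lambda}}\,m_{\lambda}(\zeta^{p})$ with $\lambda=\operatorname{type}(\boldsymbol{\lambda})$, and to observe that the distinguished representatives $\mL_\lambda$ (induced from the trivial representation of $\rS_\lambda$) have each component a single row, whence $f_{\boldsymbol{\lambda}}=1$ and $\operatorname{Res}(\mL_\lambda)$ is exactly $y^{p}\mapsto m_\lambda(\zeta^{p})$. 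This is what makes the passage to the class representatives $\mL_\lambda$ legitimate.

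The key step is a single computation. Applying $\operatorname{Res}$ to the decomposition $\mL(\boldsymbol{\lambda})\otimes\mL(\boldsymbol{\mu})=\bigoplus_{\boldsymbol{\nu}}c_{\boldsymbol{\lambda}\boldsymbol{\mu}}^{\boldsymbol{\nu}}\mL(\boldsymbol{\nu})$ and inserting the character formula gives, for all $p\in\cP_k$,
\begin{equation*}
f_{\boldsymbol{\lambda}}f_{\boldsymbol{\mu}}\,m_\lambda(\zeta^{p})\,m_\mu(\zeta^{p})=\sum_{\boldsymbol{\nu}}c_{\boldsymbol{\lambda}\boldsymbol{\mu}}^{\boldsymbol{\nu}}\,f_{\boldsymbol{\nu}}\,m_{\operatorname{type}(\boldsymbol{\nu})}(\zeta^{p})\,.
\end{equation*}
Dividing by $f_{\boldsymbol{\lambda}}f_{\boldsymbol{\mu}}$ and collecting the right-hand terms according to the type $\nu=\operatorname{type}(\boldsymbol{\nu})\in\alc$ produces
\begin{equation*}
m_\lambda(\zeta^{p})\,m_\mu(\zeta^{p})=\sum_{\nu\in\alc}N_{\lambda\mu}^{\nu}\,m_\nu(\zeta^{p})\,,\qquad N_{\lambda\mu}^{\nu}=\sum_{\operatorname{type}(\boldsymbol{\nu})=\nu}c_{\boldsymbol{\lambda}\boldsymbol{\mu}}^{\boldsymbol{\nu}}\,\frac{f_{\boldsymbol{\nu}}}{f_{\boldsymbol{\lambda}}f_{\boldsymbol{\mu}}}\,,
\end{equation*}
which is precisely the structure constant in the statement.

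It remains to draw the conclusions. The functions $p\mapsto m_\nu(\zeta^{p})=\sum_{\mu\sim\nu}\zeta^{(\mu,p)}$, for $\nu\in\alc$, are the distinct $\rS_k$-orbit sums of the irreducible $\mathcal{N}$-characters and are therefore linearly independent. Since the left-hand side of the displayed product depends only on $\lambda$ and $\mu$, the coefficients $N_{\lambda\mu}^{\nu}$ are independent of the chosen lifts $\boldsymbol{\lambda},\boldsymbol{\mu}$, so the fusion product is well defined; commutativity follows from $c_{\boldsymbol{\lambda}\boldsymbol{\mu}}^{\boldsymbol{\nu}}=c_{\boldsymbol{\mu}\boldsymbol{\lambda}}^{\boldsymbol{\nu}}$, and associativity will be inherited once the identification below is in place. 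Indeed, the linear bijection $[\mL_\lambda]\mapsto\big(p\mapsto m_\lambda(\zeta^{p})\big)$ carries the fusion product to pointwise multiplication of characters by the displayed identity, and maps the basis $\{[\mL_\lambda]\}_{\lambda\in\alc}$ onto the basis of orbit sums; it therefore identifies $\operatorname{Rep}\GS/\!\!\thicksim$ with the image of $\operatorname{Res}$, namely the subring of $\rS_k$-invariant characters of $\mathcal{N}$, which is the copy of $\operatorname{Char}(\mathcal{N})$ intended in the statement.

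I expect the main obstacle to be conceptual rather than computational. The technical heart is the linear independence of the $m_\nu(\zeta^{p})$: it is this fact that simultaneously forces the fusion constants $N_{\lambda\mu}^{\nu}$ to be lift-independent and supplies the ring isomorphism, so I would isolate it first. The remaining care lies in correctly pinning down the target, since $\operatorname{Res}$ is not surjective onto the full $\operatorname{Char}(\mathcal{N})$ for $k\geq 2$ but only onto its $\rS_k$-invariant part, and in checking that the normalisation by $f_{\boldsymbol{\lambda}}$ is exactly what turns the fibre structure of $\operatorname{Res}$ into the equivalence relation $\thicksim$ of equal type.
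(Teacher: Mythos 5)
Your proof is correct and follows essentially the same route the paper takes (implicitly, in the discussion preceding the proposition): restrict to $\mathcal{N}$, apply Osima's character formula $\chi_{\boldsymbol{\lambda}}|_{\mathcal{N}}=f_{\boldsymbol{\lambda}}\,m_{\lambda}(\zeta^{\,\cdot})$, collect terms by type, and use linear independence of the orbit sums $m_\nu(\zeta^p)$ to see that the structure constants are lift-independent and that $[\mL_\lambda]\mapsto m_\lambda(\zeta^{\,\cdot})$ is a ring isomorphism onto the image of the restriction map. Your observation that this image is only the $\rS_k$-invariant part of $\operatorname{Char}(\mathcal{N})$ is a correct and worthwhile clarification of the statement.
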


A combinatorial version of the same result is the following product expansion of specialised monomial symmetric functions.

\begin{coro}
We have the following product expansion of monomial symmetric functions at roots of unity
\begin{equation}\label{mfusion}
m_\lambda(\zeta^p)m_\mu(\zeta^{p})=\sum_{\nu\in \alc}N_{\lambda\mu}^{\nu}m_\nu(\zeta^p) ,\quad\forall p\in\cP_k,
\end{equation}
where the expansion coefficients are the fusion coefficients in \eqref{Lfusion}. %
\end{coro}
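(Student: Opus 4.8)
The plan is to deduce the identity \eqref{mfusion} directly from the ring isomorphism $\operatorname{Rep}\GS/\!\!\sim\;\cong\operatorname{Char}(\mathcal{N})$ established in the preceding proposition, by recognising each specialised monomial symmetric function $m_\lambda(\zeta^\bullet)$ as the restriction to the normal subgroup $\mathcal{N}$ of the character of the distinguished module $\mL_\lambda$. First I would observe that restriction defines a ring homomorphism $\operatorname{Res}^{\GS}_{\mathcal{N}}:\operatorname{Rep}\GS\to\operatorname{Rep}\mathcal{N}\cong\operatorname{Char}(\mathcal{N})$, since restriction is additive and commutes with tensor products and, $\mathcal{N}$ being abelian, the tensor product of $\mathcal{N}$-characters is their pointwise product. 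Under the character formula of the previous lemma, $\chi_{\boldsymbol{\lambda}}|_{\mathcal{N}}(y^p)=f_{\boldsymbol{\lambda}}\,m_\lambda(\zeta^p)$, and for the class representatives $\mL_\lambda$ (induced from the trivial representation of $G_\lambda\cong\rS_\lambda$, whose multipartition consists of $n$ single-row partitions, so that $f_{\boldsymbol{\lambda}}=1$) this reads simply $\chi_\lambda|_{\mathcal{N}}=m_\lambda(\zeta^\bullet)$.

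The computation then proceeds by evaluating the restriction of the tensor product $\mL(\boldsymbol{\lambda})\otimes\mL(\boldsymbol{\mu})$ in two ways. On one hand, multiplicativity of restricted characters gives $\chi_{\boldsymbol{\lambda}}|_{\mathcal{N}}\cdot\chi_{\boldsymbol{\mu}}|_{\mathcal{N}}=f_{\boldsymbol{\lambda}}f_{\boldsymbol{\mu}}\,m_\lambda(\zeta^\bullet)m_\mu(\zeta^\bullet)$. On the other hand, expanding the tensor product into irreducibles and restricting term by term yields
\[
\chi_{\boldsymbol{\lambda}}|_{\mathcal{N}}\cdot\chi_{\boldsymbol{\mu}}|_{\mathcal{N}}
=\sum_{\boldsymbol{\nu}}c_{\boldsymbol{\lambda}\boldsymbol{\mu}}^{\boldsymbol{\nu}}\,\chi_{\boldsymbol{\nu}}|_{\mathcal{N}}
=\sum_{\nu\in\alc}\Bigl(\sum_{\operatorname{type}(\boldsymbol{\nu})=\nu}c_{\boldsymbol{\lambda}\boldsymbol{\mu}}^{\boldsymbol{\nu}}f_{\boldsymbol{\nu}}\Bigr)m_\nu(\zeta^\bullet),
\]
where in the last step I group the irreducible constituents according to their type $\nu\in\alc$ and use that $\chi_{\boldsymbol{\nu}}|_{\mathcal{N}}$ depends only on this type. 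Equating the two expressions, dividing by $f_{\boldsymbol{\lambda}}f_{\boldsymbol{\mu}}$ and comparing with the definition of $N_{\lambda\mu}^\nu$ in \eqref{Lfusion} produces exactly \eqref{mfusion} upon evaluating at $y^p$ for $p\in\cP_k$.

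The one point requiring care — and the only obstacle worth flagging — is the \emph{well-definedness} of the resulting coefficients, i.e. that the right-hand side is independent of the choice of multipartitions $\boldsymbol{\lambda},\boldsymbol{\mu}$ of type $\lambda,\mu$. This is settled by observing that the functions $\{m_\nu(\zeta^\bullet)\}_{\nu\in\alc}$ are linearly independent on $\mathcal{N}$: as $\nu$ ranges over $\alc$, the orbit sums are supported on disjoint $\rS_k$-orbits of pairwise distinct irreducible $\mathcal{N}$-characters, which are orthonormal by the character theory of the finite abelian group $\mathcal{N}\cong C_n^{\times k}$. Hence the left-hand side $m_\lambda(\zeta^\bullet)m_\mu(\zeta^\bullet)$, which manifestly depends only on $\lambda$ and $\mu$, admits a unique expansion in this basis, forcing the bracketed coefficient above to equal $N_{\lambda\mu}^\nu$ independently of all choices. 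Equivalently, one may phrase the entire argument as the assertion that the isomorphism of the previous proposition sends $[\mL_\lambda]\mapsto m_\lambda(\zeta^\bullet)$ and intertwines the fusion product with pointwise multiplication of characters in $\operatorname{Char}(\mathcal{N})$, whence \eqref{mfusion} is immediate.
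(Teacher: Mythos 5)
Your argument is correct and is exactly the route the paper intends: the corollary is stated there as an immediate consequence of the isomorphism $\operatorname{Rep}\GS/\!\!\sim\;\cong\operatorname{Char}(\mathcal{N})$, and your computation simply unpacks that by restricting characters to $\mathcal{N}$, grouping constituents by type, and dividing by $f_{\boldsymbol{\lambda}}f_{\boldsymbol{\mu}}$. Your explicit verification that the $\{m_\nu(\zeta^\bullet)\}_{\nu\in\alc}$ are linearly independent on $\mathcal{N}$ (which the paper leaves implicit, though it is confirmed by the orthogonality relations of Corollary \ref{cor:complete}) is a welcome addition that settles well-definedness of the coefficients.
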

Noting that $m_\lambda(1,\ldots,1)=d_\lambda$ for any $\lambda\in\alc$, we obtain as a special case of the last result the following identity for the quantum dimensions \eqref{qdim},
\begin{equation}
d_\lambda d_\mu =\sum_{\nu\in \alc}N_{\lambda\mu}^{\nu}d_\nu,\qquad d_\lambda=\dim L_\lambda\,.
\end{equation}
Of course the same identity follows directly from \eqref{Lfusion} and \eqref{dimL}.

\section{Frobenius structures on symmetric and alternating tensors}
Recall the definition of the symmetriser and antisymmetriser in $\mathbb{C}[\rS_k]$,
\begin{equation}\label{(anti)symmetriser}
e^{\pm}_k=\frac{1}{k!}\sum_{w\in \rS_k}(\pm 1)^{\ell(w)}w\,.
\end{equation}
In what follows we exploit Schur-Weyl duality and consider the $U^\Omega$-modules  obtained by projecting onto the symmetric, $\rS^kV=e_k^+V^{\otimes k}$, and antisymmetric, $\bigwedge^kV=e_k^-V^{\otimes k}$, subspaces for each fixed $k$. Applying the idempotents $e^{\pm}_k$ each weight space $\bV'_\lambda$ is mapped to a one-dimensional subspace simplifying the combinatorial description of the action of the subalgebra  $U'$ in the next section.

In this section we shall show that both subspaces, the symmetric and alternating tensors, carry the structure of symmetric Frobenius algebras with the algebra product defined in terms of the action of $U'$. As mentioned in the introduction Frobenius algebras are categorically equivalent to 2D TQFT. In the language of category theory a 2D TQFT is a monoidal functor $Z:\op{2Cob}\to\op{Vec}$ from the category of two-dimensional cobordisms $\op{2Cob}$ (generated via concatenation from the elementary 2-cobordisms shown below), 
\begin{equation}\label{2cob}%
\begin{tikzpicture}[baseline={([yshift=-.5ex]current bounding box.center)},tqft/boundary separation=30pt,tqft/cobordism height=30pt]
\pic[tqft,
incoming boundary components=1,
outgoing boundary components=1,
circle x radius=8pt,
circle y radius=4pt,
cobordism edge/.style={draw},
every boundary component/.style={draw,rotate=0},
rotate=0, name=a
];
\end{tikzpicture}
\quad
\begin{tikzpicture}[baseline={([yshift=-.5ex]current bounding box.center)},tqft/boundary separation=30pt,tqft/cobordism height=30pt]
\pic[tqft pair of pants,
circle x radius=8pt,
circle y radius=4pt,
cobordism edge/.style={draw},
every incoming boundary component/.style={draw, rotate=0},
every outgoing boundary component/.style={draw, rotate=0},
rotate=0, name=a
];
\end{tikzpicture}
\quad
\begin{tikzpicture}[baseline={([yshift=-.5ex]current bounding box.center)},tqft/boundary separation=30pt,tqft/cobordism height=30pt]
\pic[tqft,
incoming boundary components=0,
outgoing boundary components=2,
circle x radius=8pt,
circle y radius=4pt,
cobordism edge/.style={draw},
every boundary component/.style={draw,rotate=0},
rotate=0, name=a
];
\end{tikzpicture}
\quad
\begin{tikzpicture}[baseline={([yshift=-.5ex]current bounding box.center)},tqft/boundary separation=30pt,tqft/cobordism height=30pt]
\pic[tqft,
incoming boundary components=1,
outgoing boundary components=0,
circle x radius=8pt,
circle y radius=4pt,
cobordism edge/.style={draw},
every boundary component/.style={draw,rotate=0},
rotate=0, name=a
];
\end{tikzpicture}
\quad\text{ and }\quad
\begin{tikzpicture}[baseline={([yshift=-.5ex]current bounding box.center)},tqft/boundary separation=30pt,tqft/cobordism height=30pt]
\pic[tqft reverse pair of pants,
circle x radius=8pt,
circle y radius=4pt,
cobordism edge/.style={draw},
every boundary component/.style={draw,rotate=0},
rotate=0, name=a
];
\end{tikzpicture}
\quad
\begin{tikzpicture}[baseline={([yshift=-.5ex]current bounding box.center)},tqft/boundary separation=30pt,tqft/cobordism height=30pt]
\pic[tqft,
incoming boundary components=2,
outgoing boundary components=0,
circle x radius=8pt,
circle y radius=4pt,
cobordism edge/.style={draw},
every boundary component/.style={draw,rotate=0},
rotate=0, name=a
];
\end{tikzpicture}
\quad
\begin{tikzpicture}[baseline={([yshift=-.5ex]current bounding box.center)},tqft/boundary separation=30pt,tqft/cobordism height=30pt]
\pic[tqft,
incoming boundary components=0,
outgoing boundary components=1,
circle x radius=8pt,
circle y radius=4pt,
cobordism edge/.style={draw},
every boundary component/.style={draw,rotate=0},
rotate=0, name=a
];
\end{tikzpicture}
\;\;,
\end{equation}
to the category of finite-dimensional vector spaces $\op{Vec}$ (here over $\mb{C}$); see the textbook \cite{kock2004frobenius} for further details. Identifying 2-cobordisms which are homeomorphic implies that the image $\V$ of a circle under $Z$ must carry the structure of a symmetric Frobenius algebra with the 2-cobordisms in \eqref{2cob} (from left to right) corresponding to the identity map $\op{id}:\V\to\V$, multiplication $m:\V\otimes\V\to\V$, bilinear form $\eta:\V\otimes\V\to\mb{C}$, unit $e:\mb{C}\to\V$ and coproduct $\delta:\V\to\V\otimes\V$, co-form $\eta^*:\mb{C}\to\V\otimes\V$, co-unit or Frobenius trace $\veps:\V\to\mb{C}$.  We now define for each $k$ and $n$ such monoidal functors by explicitly constructing the latter maps for symmetric and alternating tensors in $\bV'_k$.

\subsection{Divided powers and a representation of the modular group}
For $\lambda\in\alc$ define the associated {\em divided powers} as the vector
\begin{equation}
v_\lambda=d_\lambda\, e^+_k (v_{\lambda_k}\otimes\cdots\otimes v_{\lambda_2}\otimes v_{\lambda_1}),
\end{equation}
which have the property that $e^+_{2k}(v_\lambda\otimes v_\mu)=d_\lambda d_\mu\,v_{\lambda\cup\mu}/d_{\lambda\cup\mu}$. Here $\lambda\cup\mu$ denotes the composition $(\lambda_1,\ldots,\lambda_k,\mu_1,\ldots,\mu_k)$. Let $V^*$ be the dual space of $V$ and denote by $\{v^1,\ldots,v^n\}$ the dual basis with $v^{i}(v_j)=\delta_{ij}$. Then we denote by $v^\lambda=\frac{1}{k!}\sum_{w\in \rS_k} v^{\lambda_{w(k)}}\otimes\cdots\otimes v^{\lambda_{w(1)}}$ the invariant tensor in $(V^*)^{\otimes k}$ that under the natural pairing satisfies $\langle v^{\lambda},v_\mu\rangle=\delta_{\lambda\mu}$. %

\begin{lemma}
The matrix elements of the $\cS$-matrix \eqref{modSt} in the basis of divided powers are given by
\begin{equation}\label{modSbos}
\cS_{\lambda\mu}(t)=\langle v^{\mu},\cS^{\otimes k}(t)v_\lambda\rangle=t^{-|\lambda|}\frac{m_{\lambda}(\zeta^\mu)}{\sqrt{n^{k}}}=t^{|\mu|-|\lambda|}
\frac{d_\lambda}{d_\mu}\cS_{\mu\lambda}(t)\;.
\end{equation}
In addition, the matrix elements of the inverse matrix are (setting $\bar t=t^{-1}$)
\begin{equation}\label{modSbosi}
\cS^{-1}_{\lambda\mu}(t)=t^{|\mu|}\frac{m_{\lambda}(\zeta^{-\mu})}{\sqrt{n^{k}}}=
t^{|\mu|-|\lambda|}\overline{\mc{S}_{\lambda\mu}(t)}=t^{|\mu|+|\lambda^*|}\mc{S}_{\lambda^*\mu}(t)
=t^{|\lambda|+|\mu|}\zeta^{-|\lambda|}\mc{S}_{\lambda\mu^\vee}(t),
\end{equation} 
where %
$
\lambda^*=(1^{m_{n-1}(\lambda)}2^{m_{n-2}(\lambda)}\ldots (n-1)^{m_1(\lambda)}n^{m_n(\lambda)})
$ and %
$\mu^\vee=(n+1-\mu_k,\ldots,n+1-\mu_1)$. 
The matrix elements of the $\cT$-matrix \eqref{modT1} in the basis of divided powers read,
\begin{equation}
\mc{T}_{\lambda\mu}=\langle v^{\mu},\mc{T}^{\otimes k}v_\lambda\rangle=\delta_{\lambda\mu}\,\zeta^{-\frac{kn(n-1)}{24}}
\prod_{i=1}^k\zeta^{\frac{\lambda_i(n-\lambda_i)}{2}}\,.
\end{equation}
Setting $t=1$ the matrices $\mc{S}=\mc{S}(1)$, $\mc{T}$ obey the relations \eqref{mod_relations} and \eqref{mod_relations2} with the matrix $\mc{C}$ given by $\mc{C}_{\lambda\mu}=\langle v^{\mu},\mc{C}^{\otimes k}v_\lambda\rangle=\delta_{\lambda^*\mu}$.
\end{lemma}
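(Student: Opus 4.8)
The plan is to exploit that both $\mc{S}(t)$ and $\mc{T}$ act factor-by-factor on $V^{\otimes k}$ and commute with the symmetriser $e^+_k$ of \eqref{(anti)symmetriser}, so that each matrix element reduces to a single-factor computation followed by symmetrisation. Reading off from \eqref{modSt} the entries $\mc{S}(t)_{ij}=t^{-j}\zeta^{ij}/\sqrt n$, one has $\mc{S}(t)v_j=\frac{1}{\sqrt n}\sum_i t^{-j}\zeta^{ij}v_i$, and hence
\[
\mc{S}^{\otimes k}(t)\,v_\lambda=d_\lambda\,e^+_k\,\mc{S}^{\otimes k}(t)(v_{\lambda_k}\otimes\cdots\otimes v_{\lambda_1})
=\frac{d_\lambda\,t^{-|\lambda|}}{\sqrt{n^k}}\,e^+_k\!\!\sum_{\mathbf a\in\{1,\dots,n\}^k}\!\!\zeta^{\sum_i a_i\lambda_i}\,v_{a_k}\otimes\cdots\otimes v_{a_1},
\]
using that $\mc{S}^{\otimes k}(t)$ is $\rS_k$-equivariant and thus commutes with $e^+_k$.

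Next I would pair with $v^\mu$. Since $e^+_k(v_{a_k}\otimes\cdots\otimes v_{a_1})$ depends only on the multiset $\{a_1,\dots,a_k\}$, the biorthogonality $\langle v^\mu,v_\nu\rangle=\delta_{\mu\nu}$ together with $v_\mu=d_\mu e^+_k(v_{\mu_k}\otimes\cdots\otimes v_{\mu_1})$ forces $\langle v^\mu,e^+_k(v_{a_k}\otimes\cdots\otimes v_{a_1})\rangle=d_\mu^{-1}$ when $\mathbf a$ is a permutation of $\mu$ and $0$ otherwise. Restricting the sum to such $\mathbf a$ (each distinct permutation occurring once) produces the specialised monomial symmetric function, giving $\langle v^\mu,\mc{S}^{\otimes k}(t)v_\lambda\rangle=\frac{d_\lambda}{d_\mu}\frac{t^{-|\lambda|}}{\sqrt{n^k}}\,m_\mu(\zeta^\lambda)$. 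The crucial combinatorial input is the symmetry identity
\[
d_\mu\,m_\lambda(\zeta^\mu)=d_\lambda\,m_\mu(\zeta^\lambda),
\]
obtained by counting the double sum $\sum_{w\in\rS_k}\zeta^{\sum_i\lambda_i\mu_{w(i)}}$ in two ways: fixing the permutation of $\mu$ gives $|\rS_\mu|\,m_\mu(\zeta^\lambda)$, while after $w\mapsto w^{-1}$ fixing the permutation of $\lambda$ gives $|\rS_\lambda|\,m_\lambda(\zeta^\mu)$, and $|\rS_\nu|=k!/d_\nu$ by \eqref{qdim}. Substituting this identity yields the first equality of \eqref{modSbos}, and rewriting $m_\mu(\zeta^\lambda)$ back through it recovers the stated relation to $\mc{S}_{\mu\lambda}(t)$.

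For the inverse I would use $\mc{S}(t)=\mc{S}\mc{D}(t)$, so $\mc{S}^{-1}(t)=\mc{D}(t)^{-1}\mc{S}^{-1}$ with $\mc{S}^{-1}=\bar{\mc S}$ by Proposition \ref{STC}, giving entries $\mc{S}^{-1}(t)_{ij}=t^{i}\zeta^{-ij}/\sqrt n$; as the divided powers and their duals are biorthogonal, this represents the inverse of $(\mc{S}_{\lambda\mu}(t))$ in the same basis. The identical factorwise computation (now the weight $t^{\sum_i a_i}$ collapses to $t^{|\mu|}$ on the support $\mathbf a\sim\mu$) yields $\mc{S}^{-1}_{\lambda\mu}(t)=t^{|\mu|}m_\lambda(\zeta^{-\mu})/\sqrt{n^k}$, the first equality of \eqref{modSbosi}. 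The three remaining presentations follow from elementary root-of-unity manipulations: complex conjugation with $\bar t=t^{-1}$ and $\bar\zeta=\zeta^{-1}$ gives the form involving $\overline{\mc{S}_{\lambda\mu}(t)}$; the substitution $\zeta^{-j}=\zeta^{n-j}$ inside the specialisation identifies $m_\lambda(\zeta^{-\mu})=m_{\lambda^*}(\zeta^\mu)$ for $\lambda^*$ as defined; and writing $\mu^\vee_i=n+1-\mu_{k+1-i}$, together with $\zeta^{n|\lambda|}=1$ and the invariance of $p\mapsto m_\lambda(\zeta^p)$ under permuting $p$, produces the factor $\zeta^{-|\lambda|}$ relating $m_\lambda(\zeta^{-\mu})$ to $m_\lambda(\zeta^{\mu^\vee})$.

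Finally, since $\mc{T}$ of \eqref{modT1} is diagonal it acts on $v_\lambda$ by the scalar $\zeta^{-kn(n-1)/24}\prod_{i}\zeta^{\lambda_i(n-\lambda_i)/2}$, so the diagonal formula for $\mc{T}_{\lambda\mu}$ is immediate from $\langle v^\mu,v_\lambda\rangle=\delta_{\lambda\mu}$. For the closing claim at $t=1$ I would tensor the $n\times n$ relations \eqref{mod_relations} and \eqref{mod_relations2} of Proposition \ref{STC}: for instance $(\mc{S}^{\otimes k})^2=(\mc{S}^2)^{\otimes k}=\mc{C}^{\otimes k}$ and likewise $((\mc{S}\mc{T})^{\otimes k})^3=\mc{C}^{\otimes k}$, and these descend to the symmetric subspace. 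It then remains to compute $\mc{C}^{\otimes k}v_\lambda$: from $\mc{C}v_j=v_{n-j}$ for $j<n$ and $\mc{C}v_n=v_n$ one reads off $\mc{C}^{\otimes k}v_\lambda=v_{\lambda^*}$ (the multiplicities of $\lambda$ being merely permuted, whence $d_{\lambda^*}=d_\lambda$), and pairing with $v^\mu$ gives $\mc{C}_{\lambda\mu}=\delta_{\lambda^*\mu}$. I expect the main obstacle to be pure bookkeeping rather than anything structural: establishing the symmetry identity with the correct normalisation of the pairing, and carefully tracking the reductions modulo $n$ — in particular the role of parts equal to $n$ — when verifying the equivalent forms of $\mc{S}^{-1}_{\lambda\mu}(t)$.
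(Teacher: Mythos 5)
Your proposal is correct and follows essentially the same route as the paper, which simply asserts that everything is "a direct consequence of the relations for $k=1$ and the definition of the matrix elements in the basis of divided powers"; you have filled in the details, correctly isolating the one nontrivial combinatorial input, namely the symmetry $d_\mu m_\lambda(\zeta^\mu)=d_\lambda m_\mu(\zeta^\lambda)$ obtained by double counting, together with the root-of-unity reductions for the $\lambda^*$ and $\mu^\vee$ forms and the computation $\mc{C}^{\otimes k}v_\lambda=v_{\lambda^*}$ that the paper also records explicitly.
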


\begin{proof}
All of these identities are a direct consequence of the relations \eqref{mod_relations} and \eqref{mod_relations2} from Proposition \ref{STC} for $k=1$ and the definition of the matrix elements in the basis of divided powers. (N.B. here we have changed conventions and setting $k=1$ all of the resulting matrices are the transpose of the matrices considered previously for $k=1$ in Section 2.) In particular, note that the matrices $\mc{C}$ and $\mc{\hat C}$ from \eqref{C1} in the basis of divided powers read $\mc{C}_{\lambda\mu}=\langle v^{\mu},\mc{C}^{\otimes k}v_\lambda\rangle=\delta_{\lambda^*\mu}$ and $\mc{\hat C}_{\lambda\mu}=\langle v^{\mu},\mc{\hat C}^{\otimes k}v_\lambda\rangle=\delta_{\op{rot}(\lambda)\mu}$ with $\op{rot}(\lambda)=(1^{m_n(\lambda)}2^{m_1(\lambda)}\ldots n^{m_{n-1}(\lambda)})$. Their product then yields $(\mc{\hat C}\mc{C})_{\lambda\mu}=\delta_{\lambda^\vee\mu}$ and the last identity in \eqref{modSbosi} then follows. 
\end{proof}
As we will be making repeated use of them, it is worthwhile to express some of the modular $\cS$-matrix relations in terms of the matrix elements \eqref{modSbos} which amount to non-trivial summation formulae of monomial symmetric functions $m_\lambda$ and their augmented counterparts $m^\lambda=|\rS_\lambda|m_\lambda$ with $\lambda\in\alc$ when specialised at roots of unity.
\begin{coro}\label{cor:complete}
We have the identities 
\begin{equation}
\sum_{\sigma\in\alc}\frac{m_{\lambda}(\zeta^\sigma)m^{\mu}(\zeta^{-\sigma})}{n^k|\rS_\sigma|}=
\sum_{\sigma\in\alc}\frac{m_{\lambda}(\zeta^\sigma)m^{\mu^*}(\zeta^{\sigma})}{n^k|\rS_\sigma|}=
\delta_{\lambda\mu}
\end{equation}
and
\begin{equation}
\sum_{\sigma\in\alc}m_{\sigma}(\zeta^\lambda)m^{\sigma}(\zeta^{-\mu})=\delta_{\lambda\mu}n^k|\rS_\lambda|
=\delta_{\lambda\mu}\frac{|G(n,1,k)|}{\dim\mL_\lambda}\;.
\end{equation}
\end{coro}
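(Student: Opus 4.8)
The strategy is to recognise both displayed identities as the single matrix statement $(\cS^{-1})^{\otimes k}(t)\,\cS^{\otimes k}(t)=\mathrm{Id}$, expressed in the basis of divided powers, followed by one application of the symmetry relation contained in \eqref{modSbos}. Since the $n\times n$ matrix $\cS$ is invertible by Proposition \ref{STC}, so is $\cS^{\otimes k}(t)$, and by the preceding lemma its inverse has the matrix elements \eqref{modSbosi} in the divided power basis. Because $\{v_\lambda\}_{\lambda\in\alc}$ and $\{v^\lambda\}_{\lambda\in\alc}$ are dual bases of $\rS^kV$, i.e. $\langle v^\lambda,v_\mu\rangle=\delta_{\lambda\mu}$, the identity operator on $\rS^kV$ resolves as $\mathrm{Id}_{\rS^kV}=\sum_{\sigma\in\alc}|v_\sigma\rangle\langle v^\sigma|$, and since $\cS^{\otimes k}(t)$ and its inverse preserve $\rS^kV$ we may insert this resolution between them. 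Reading off the matrix elements from \eqref{modSbos}, \eqref{modSbosi} via $\langle v^\sigma,\cS^{\otimes k}(t)v_\lambda\rangle=\cS_{\lambda\sigma}(t)$ and $\langle v^\mu,(\cS^{-1})^{\otimes k}(t)v_\sigma\rangle=\cS^{-1}_{\sigma\mu}(t)$, and setting $t=1$ (where the spurious $t$-powers $t^{|\mu|-|\lambda|}$ drop out on the diagonal), one obtains the common computational core
\[
\delta_{\lambda\mu}=\frac{1}{n^k}\sum_{\sigma\in\alc}m_\lambda(\zeta^\sigma)\,m_\sigma(\zeta^{-\mu}).
\]

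The single extra ingredient is the elementary symmetry $m^\alpha(\zeta^\beta)=m^\beta(\zeta^\alpha)$ for augmented monomials $m^\alpha=|\rS_\alpha|m_\alpha$, valid for all $\alpha,\beta\in\mb{Z}^k$: it is the manifest $\alpha\leftrightarrow\beta$ symmetry of $\sum_{w\in\rS_k}\zeta^{\sum_i\alpha_i\beta_{w(i)}}$, extending the symmetry relation of \eqref{modSbos} to arbitrary exponent vectors. Equivalently, $m_\alpha(\zeta^\beta)=\tfrac{|\rS_\beta|}{|\rS_\alpha|}m_\beta(\zeta^\alpha)$.

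For the first identity I would apply this to the factor $m_\sigma(\zeta^{-\mu})$, writing $m_\sigma(\zeta^{-\mu})=\tfrac{|\rS_\mu|}{|\rS_\sigma|}m_\mu(\zeta^{-\sigma})$; the core relation then becomes $\tfrac{|\rS_\mu|}{n^k}\sum_\sigma|\rS_\sigma|^{-1}m_\lambda(\zeta^\sigma)m_\mu(\zeta^{-\sigma})=\delta_{\lambda\mu}$, which is the first equality once $|\rS_\mu|m_\mu=m^\mu$ is recognised. The second equality follows by substituting $m_\mu(\zeta^{-\sigma})=m_{\mu^*}(\zeta^\sigma)$ — the third equality of \eqref{modSbosi} — together with $|\rS_{\mu^*}|=|\rS_\mu|$, which holds because $\mu^*$ merely permutes the multiplicities $m_j(\mu)\mapsto m_{n-j}(\mu)$. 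For the second identity I would instead apply the symmetry to $m_\lambda(\zeta^\sigma)=\tfrac{|\rS_\sigma|}{|\rS_\lambda|}m_\sigma(\zeta^\lambda)$, which turns the summation label $\sigma$ from an argument into a subscript: the core relation becomes $\tfrac{1}{n^k|\rS_\lambda|}\sum_\sigma|\rS_\sigma|\,m_\sigma(\zeta^\lambda)m_\sigma(\zeta^{-\mu})=\delta_{\lambda\mu}$, i.e. $\sum_\sigma m_\sigma(\zeta^\lambda)m^\sigma(\zeta^{-\mu})=\delta_{\lambda\mu}\,n^k|\rS_\lambda|$. The final rewriting $n^k|\rS_\lambda|=|\GS|/\dim\mL_\lambda$ is immediate from $|\GS|=n^kk!$ and $\dim\mL_\lambda=d_\lambda=k!/|\rS_\lambda|$.

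The calculation is essentially mechanical; the only place demanding care is the bookkeeping of index conventions — namely which slot of $\cS_{\lambda\mu}(t)=\langle v^\mu,\cS^{\otimes k}(t)v_\lambda\rangle$ is contracted in the matrix product, and the correct signs on the exponents $\zeta^{\pm\sigma}$ when applying the symmetry relation and the $*$-relation of \eqref{modSbosi}. The key structural point that makes the augmented monomials $m^\sigma$ and the weights $|\rS_\sigma|$ appear is precisely that the symmetry $m^\alpha(\zeta^\beta)=m^\beta(\zeta^\alpha)$ moves the index $\sigma$ between subscript and argument, trading a factor $|\rS_\sigma|^{\pm1}$ each time; verifying the two small facts $\zeta^{\lambda^*}=\zeta^{-\lambda}$ componentwise and $|\rS_{\mu^*}|=|\rS_\mu|$ is all that is needed beyond the unitarity of $\cS$.
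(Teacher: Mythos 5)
Your proposal is correct and follows essentially the same route as the paper, which disposes of the corollary in one line as a "direct consequence of the relations \eqref{mod_relations} and \eqref{mod_relations2} for $k=1$" — i.e.\ unitarity of $\cS$ read off in the divided-power basis via \eqref{modSbos} and \eqref{modSbosi}. Your write-up merely makes explicit the bookkeeping the paper leaves to the reader (the resolution of the identity on $\rS^kV$, the symmetry $m^\alpha(\zeta^\beta)=m^\beta(\zeta^\alpha)$ of augmented monomials, and the facts $\zeta^{-\mu}=\zeta^{\mu^*}$ and $|\rS_{\mu^*}|=|\rS_\mu|$), all of which check out.
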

\begin{proof}
All identities are a direct consequence of the relations \eqref{mod_relations} and \eqref{mod_relations2} for $k=1$.
\end{proof}

\begin{coro}\label{cor:Verlinde_bos}
Recall the definition \eqref{M} of the loop algebra element $M_\lambda\in U_+$ and denote by $M^*_\lambda\in U_-$ its adjoint. Then we have the following Verlinde-type formula for their matrix elements, 
\begin{equation}\label{Verlinde_bos}
\langle v^\nu,M^*_\lambda v_\mu\rangle
=\frac{|\rS_\nu|}{|\rS_\mu|}\,\overline{\langle v^\mu,M_\lambda v_\nu\rangle}
=\sum_{\sigma\in\alc}\frac{\cS_{\lambda\sigma}(t)\cS_{\mu\sigma}(t)\cS^{-1}_{\sigma\nu}(t)}{t^{kn}\cS_{n^k\sigma}(t)}
=t^{|\nu|-|\lambda|-|\mu|}N_{\lambda\mu}^{\nu},
\end{equation}
where $N_{\lambda\mu}^{\nu}$ are the fusion coefficients \eqref{Lfusion}. In particular, $N_{\lambda\mu}^{\nu}=0$ unless $|\lambda|+|\mu|-|\nu|=0\mod n$.
\end{coro}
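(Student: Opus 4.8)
The statement is a chain of three equalities together with a divisibility (the ``in particular''); the plan is to prove the three equalities from left to right and to deduce the divisibility at the end. Throughout I work in the divided‑power basis $\{v_\lambda\}_{\lambda\in\alc}$ and its dual $\{v^\lambda\}$, on which $M_\lambda$ and its adjoint $M^*_\lambda$ act through the coproduct as the symmetric Laurent polynomials $\sum_{\mu\sim\lambda}X^{\mu}$ and $\sum_{\mu\sim\lambda}X^{-\mu}$ in the matrices \eqref{X}. For the first equality I would use that $e^+_k$ is a self‑adjoint idempotent for the standard Hermitian form on $V^{\otimes k}$ (in which the $v_i$ are orthonormal); a short computation then gives $(v_\lambda,v_\mu)=d_\lambda\,\delta_{\lambda\mu}$, so the natural pairing obeys $\langle v^\nu,\cdot\rangle=d_\nu^{-1}(v_\nu,\cdot)$. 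Since the $*$‑involution \eqref{star} is the Hermitian conjugate, Lemma \ref{lem:adjoint} makes $M^*_\lambda$ the adjoint of $M_\lambda$, whence
\[
\langle v^\nu,M^*_\lambda v_\mu\rangle=\frac{1}{d_\nu}(v_\nu,M^*_\lambda v_\mu)=\frac{1}{d_\nu}\,\overline{(v_\mu,M_\lambda v_\nu)}=\frac{d_\mu}{d_\nu}\,\overline{\langle v^\mu,M_\lambda v_\nu\rangle},
\]
and $d_\mu/d_\nu=|\rS_\nu|/|\rS_\mu|$ by \eqref{qdim}.

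The middle equality is the Verlinde formula and is the conceptual heart. By Lemma \ref{lem:twistedHeisenberg} the operator $S=\cS^{\otimes k}(t)$ simultaneously diagonalises all the $P_r=X^r$, hence $M^*_\lambda$, with eigenvalue $t^{-|\lambda|}m_\lambda(\zeta^\sigma)$ on the $\sigma$‑eigenspace. Comparing with \eqref{modSbos} and with $\cS_{n^k\sigma}(t)=t^{-nk}n^{-k/2}$ (since $m_{n^k}(\zeta^\sigma)=1$) identifies this eigenvalue as $t^{-nk}\cS_{\lambda\sigma}(t)/\cS_{n^k\sigma}(t)$, so the all‑$n$ partition $n^k\in\alc$ plays the role of the vacuum. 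Writing $M^*_\lambda=S^{-1}DS$ with $D$ diagonal and reading off $Sv_\mu=\sum_\sigma\cS_{\mu\sigma}(t)v_\sigma$ together with $\langle v^\nu,S^{-1}v_\sigma\rangle=\cS^{-1}_{\sigma\nu}(t)$ then produces exactly the stated sum over $\sigma\in\alc$.

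The last equality is then a bookkeeping computation. Substituting the explicit matrix elements \eqref{modSbos}, \eqref{modSbosi} and $\cS_{n^k\sigma}(t)=t^{-nk}n^{-k/2}$ into the Verlinde sum, all powers of $t$ and $n$ collapse to
\[
\langle v^\nu,M^*_\lambda v_\mu\rangle=t^{|\nu|-|\lambda|-|\mu|}\,\frac{1}{n^k}\sum_{\sigma\in\alc}m_\lambda(\zeta^\sigma)\,m_\mu(\zeta^\sigma)\,m_\sigma(\zeta^{-\nu}).
\]
I would then apply the fusion rule \eqref{mfusion} to write $m_\lambda(\zeta^\sigma)m_\mu(\zeta^\sigma)=\sum_{\rho}N_{\lambda\mu}^\rho m_\rho(\zeta^\sigma)$, use the reciprocity $|\rS_\rho|m_\rho(\zeta^\sigma)=|\rS_\sigma|m_\sigma(\zeta^\rho)$ (equivalently $m^{\rho}(\zeta^\sigma)=m^{\sigma}(\zeta^\rho)$ for the augmented monomials $m^{\lambda}=|\rS_\lambda|m_\lambda$, proved by reindexing the defining sums) to move $\sigma$ into the label slot, and finally the second orthogonality relation of Corollary \ref{cor:complete} to obtain $\sum_\sigma m_\rho(\zeta^\sigma)m_\sigma(\zeta^{-\nu})=\delta_{\rho\nu}\,n^k$. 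This collapses the $\rho$‑sum and leaves $t^{|\nu|-|\lambda|-|\mu|}N_{\lambda\mu}^\nu$.

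The divisibility is immediate once the chain is in place: the operators $M_\lambda,M^*_\lambda$ lie in $U'\subset U(\g_n[z,z^{-1}])$ and the divided powers carry no $t$‑dependence, so $\langle v^\nu,M^*_\lambda v_\mu\rangle\in\R=\mb{C}[z,z^{-1}]$ with $z=t^n$; being a Laurent polynomial in $t^n$, it forces $|\nu|-|\lambda|-|\mu|\equiv 0\bmod n$ whenever $N_{\lambda\mu}^\nu\neq0$. The two substitution steps are routine; the point demanding genuine care is the reciprocity symmetry $m^{\rho}(\zeta^\sigma)=m^{\sigma}(\zeta^\rho)$, which is precisely what turns the asymmetric triple sum into a shape to which the orthogonality of Corollary \ref{cor:complete} applies, and—relatedly—keeping the $\cS$‑matrix index and transpose conventions consistent throughout the diagonalisation.
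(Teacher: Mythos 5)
Your proposal is correct and follows essentially the same route as the paper: diagonalise $M^*_\lambda$ via the $t$-deformed $\cS$-matrix to get the Verlinde sum, then combine the fusion rule \eqref{mfusion} with the $\cS$-matrix orthogonality (equivalently Corollary \ref{cor:complete}) to extract $N_{\lambda\mu}^\nu$, and read off the divisibility from the fact that the matrix elements are Laurent polynomials in $z=t^n$. The only cosmetic differences are that you prove the first (adjointness) equality explicitly via the Hermitian form and $(v_\lambda,v_\mu)=d_\lambda\delta_{\lambda\mu}$, where the paper leaves it implicit in the $\cS$-matrix symmetries, and that you track the $t$-powers directly instead of reducing to $t=1$ and invoking homogeneity of $m_\lambda$.
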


\begin{proof}
Note that $v'_\mu=(\cS^{-1}(t))^{\otimes k}v_\mu$ is an eigenvector of $M^*_\lambda\in U_-$ with eigenvalue $t^{-|\lambda|}m_{\lambda}(\zeta^{\mu})$ and $\cS_{n^k\sigma}(t)=t^{-nk}m_{\sigma}(1,\ldots,1)/\sqrt{n^k}d_\sigma=t^{-nk}/\sqrt{n^k}$. Therefore,
\[
\langle v^\nu,M^*_\lambda v_\mu\rangle=\sum_{\sigma\in\alc}\langle v^\nu,v'_\sigma\rangle \frac{\cS_{\lambda\sigma}(t)\cS_{\mu\sigma}(t)}{t^{nk}\cS_{n^k\sigma}(t)}
\]
which gives the first equality in \eqref{Verlinde_bos}. To prove the second equality we set first $t=1$.  Then the identity \eqref{mfusion} can be rewritten as
\[
\frac{\cS_{\lambda\sigma}}{\cS_{n^k\sigma}}\frac{\cS_{\mu\sigma}}{\cS_{n^k\sigma}}=
\sum_{\upsilon\in\alc}N_{\lambda\mu}^\upsilon\frac{\cS_{\upsilon\sigma}}{\cS_{n^k\sigma}}\;.
\]
Multiplying on both sides with $\cS^{-1}_{\sigma\nu}\cS_{n^k\sigma}$ and summing over $\sigma\in\alc$ gives the desired result for $t=1$. The identity for general $t$ then follows by noting that the monomial symmetric function $m_\lambda$ is homogeneous of degree $|\lambda|$. The final statement is a direct consequence of observing that the matrix elements of $M_\lambda^*=\sum_{p}X^{-p}$, with the sum running over all distinct permutations $p$ of $\lambda$, only depend on powers of $z^{-1}=t^{-n}$ according to the definition \eqref{X}.
\end{proof}

Note the identity $d_\lambda=\dim\mL_\lambda=\cS_{\lambda n^k}/\cS_{n^kn^k}$ which explains our earlier convention to call the multinomial coefficients \eqref{qdim} {\em quantum dimensions}: they are the largest (integral) eigenvalue of the fusion matrix $N_\lambda$ with $\lambda\in\alc$.

As an easy consequence of these previous results one now derives several identities for the fusion coefficients noting that $d_\lambda=d_{\lambda^*}$ for $\lambda\in\alc$.  
\begin{coro}\label{cor:symmN}
One has the following equalities:
\begin{itemize}
\item[(i)] $N_{\lambda n^k}^{\mu}=\delta_{\lambda\mu}$ and $N_{\lambda\mu}^{\nu}=N_{\mu\lambda}^{\nu}$
\item[(ii)] $N_{\lambda\mu}^{\nu}=N_{\lambda^*\mu^*}^{\nu^*}=d_\lambda N_{\mu\nu^*}^{\lambda^*}/d_{\nu}$ %
and $N_{\lambda\mu}^{n^k}=\delta_{\lambda^*\mu}d_\lambda$
\item[(iii)] $N_{\rot^a\lambda\rot^b\mu}^{\rot^c\nu}=N_{\lambda\mu}^\nu$ for $a,b,c\in\mb{Z}$ with $a+b=c\mod n$, where 
\[
\rot^a \lambda=(1^{m_{1-a}(\lambda)}\ldots n^{m_{n-a}(\lambda)})
\]
and all indices are understood modulo $n$.
\end{itemize}
\end{coro}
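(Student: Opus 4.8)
The plan is to reduce all three parts to a single explicit formula for the fusion coefficients and then read off the claimed symmetries by elementary manipulations of monomial symmetric functions specialised at roots of unity. The starting point is the product expansion \eqref{mfusion}, $m_\lambda(\zeta^p)m_\mu(\zeta^p)=\sum_\nu N_{\lambda\mu}^\nu m_\nu(\zeta^p)$, valid for all $p\in\alc$. Multiplying by $m^\nu(\zeta^{-p})/(n^k|\rS_p|)$, summing over $p\in\alc$ and invoking the first orthogonality relation of Corollary \ref{cor:complete} isolates
\[
N_{\lambda\mu}^\nu=\frac{|\rS_\nu|}{n^k}\sum_{p\in\alc}\frac{m_\lambda(\zeta^p)m_\mu(\zeta^p)m_{\nu^*}(\zeta^{p})}{|\rS_p|}=|\rS_\nu|\,G_{\lambda\mu\nu^*},
\]
where $G_{\lambda\mu\sigma}=n^{-k}\sum_{p\in\alc}m_\lambda(\zeta^p)m_\mu(\zeta^p)m_\sigma(\zeta^p)/|\rS_p|$ is manifestly symmetric in its three indices. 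Here I have used $m^\nu=|\rS_\nu|m_\nu$ and the star identity $m_\nu(\zeta^{-p})=m_{\nu^*}(\zeta^{p})$ recorded below. This master formula is the whole game: every assertion in the corollary is a symmetry of $G$ combined with bookkeeping of the factors $|\rS_\bullet|$ and $d_\bullet=|\rS_k|/|\rS_\bullet|$ from \eqref{qdim}.

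Before using it I would record three elementary identities for the specialisations $m_\lambda(\zeta^p)$, $\lambda\in\alc$. First, $m_{n^k}(\zeta^p)=1$ for all $p$, since $(n,\dots,n)$ contributes a single monomial and $\zeta^n=1$. Second, the star identity $m_{\lambda^*}(\zeta^p)=m_\lambda(\zeta^{-p})=\overline{m_\lambda(\zeta^p)}$, which holds because replacing a part $a$ of $\lambda$ by $n-a$ (equivalently $-a$ modulo $n$) sends $\zeta^{a}\mapsto\zeta^{-a}$, the part $n$ being fixed as $\zeta^n=\zeta^{-n}=1$. Third, the rotation identity $m_{\rot^a\lambda}(\zeta^p)=\zeta^{a|p|}m_\lambda(\zeta^p)$ with $|p|=\sum_i p_i$, since $\rot^a$ sends each part $i$ to $i+a$ modulo $n$ and so multiplies every monomial by the common phase $\zeta^{a\sum_i p_i}$. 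I would also note that $*$ and $\rot$ only relabel parts, so $|\rS_{\lambda^*}|=|\rS_{\rot^a\lambda}|=|\rS_\lambda|$ and $d_{\lambda^*}=d_\lambda$.

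With these in hand the three parts are short. Part (i): $N_{\lambda\mu}^\nu=N_{\mu\lambda}^\nu$ is immediate from the symmetry of $G$, and $N_{\lambda n^k}^\mu=\delta_{\lambda\mu}$ follows by substituting $m_{n^k}(\zeta^p)=1$ into the master formula and applying the orthogonality of Corollary \ref{cor:complete} once more. Part (ii): the first equality $N_{\lambda\mu}^\nu=N_{\lambda^*\mu^*}^{\nu^*}$ is obtained by complex-conjugating \eqref{mfusion} and using the star identity together with the reality of the integer coefficients $N_{\lambda\mu}^\nu$; the crossing relation follows from the total symmetry of $G$, which gives $N_{\mu\nu^*}^{\lambda^*}=|\rS_{\lambda^*}|G_{\mu\nu^*\lambda}=|\rS_\lambda|G_{\lambda\mu\nu^*}$, so that $N_{\lambda\mu}^\nu/N_{\mu\nu^*}^{\lambda^*}=|\rS_\nu|/|\rS_\lambda|=d_\lambda/d_\nu$; and $N_{\lambda\mu}^{n^k}=\delta_{\lambda^*\mu}d_\lambda$ comes from $m_{n^k}(\zeta^{p})=1$, the star identity and orthogonality. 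Part (iii): inserting the rotation identity into the master formula produces the overall phase $\zeta^{(a+b-c)|p|}$ inside the sum, which is identically $1$ when $a+b\equiv c\pmod n$ because $\zeta^n=1$; since $|\rS_{\rot^c\nu}|=|\rS_\nu|$, the sum is unchanged and $N_{\rot^a\lambda\rot^b\mu}^{\rot^c\nu}=N_{\lambda\mu}^\nu$.

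The step I would treat as the main obstacle is the bookkeeping of combinatorial weights rather than any conceptual difficulty. The orthogonality of Corollary \ref{cor:complete} carries the measure $1/(n^k|\rS_p|)$, and the augmented monomials $m^\nu=|\rS_\nu|m_\nu$ differ from the $m_\nu$ by exactly the stabiliser factor; it is the interplay of these two that turns the manifestly symmetric $G$ into the asymmetric-looking factor $d_\lambda/d_\nu$ in the crossing relation rather than a naive $1$, and a sign or weight slip here would corrupt parts (ii) and (iii). As an independent cross-check one can read off all of (i)--(iii) directly from the Verlinde-type formula \eqref{Verlinde_bos} of Corollary \ref{cor:Verlinde_bos} using the $\cS$-matrix relations \eqref{modSbos}--\eqref{modSbosi} (in particular $\cS^{-1}_{\lambda\mu}=\overline{\cS_{\lambda\mu}}$ at $t=1$, the conjugation $\lambda\mapsto\lambda^*$ implemented by $\mc{C}$, and the phase shift $\lambda\mapsto\rot\lambda$ implemented by $\mc{\hat C}$), but I expect the monomial route above to be the most transparent.
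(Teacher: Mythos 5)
Your proof is correct and is essentially the paper's argument in expanded form: your ``master formula'' $N_{\lambda\mu}^{\nu}=|\rS_\nu|\,G_{\lambda\mu\nu^*}$ is precisely the $t=1$ Verlinde formula \eqref{Verlinde_bos} with the $\cS$-matrix entries written out as monomial symmetric functions at roots of unity, and the orthogonality, conjugation and rotation identities you invoke are exactly the content of \eqref{modSbos}, \eqref{modSbosi}, Corollary \ref{cor:complete} and the identity $m_{\rot\lambda}(\zeta^\mu)=\zeta^{|\mu|}m_\lambda(\zeta^\mu)$ that the paper's (very terse) proof cites. The only difference is presentational: you package the symmetry as a totally symmetric three-index tensor $G$ and supply the bookkeeping of the stabiliser factors explicitly, which the paper leaves to the reader.
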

\begin{proof}
The identities (i) and (ii) follow from the $\cS$-matrix identities \eqref{modSbos}, \eqref{modSbosi} for $t=1$. Similarly, (iii) can be deduced from the $\cS$-matrix identity (for $t=1$)
\[
\cS_{\rot(\lambda)\mu}=\zeta^{|\mu|}\cS_{\lambda\mu}=\overline{\cS}_{\mu,\rot(\lambda)}^{-1}
\] 
which follows from observing that $m_{\rot\lambda}(\zeta^\mu)=e_k(\zeta^\mu)m_\lambda(\zeta^\mu)=\zeta^{|\mu|}m_\lambda(\zeta^\mu)$. 
\end{proof}
We now endow $\V^+_k=\rS^kV\otimes\RR$ with the structure of a Frobenius algebra. First note that $\V^+_k$ is a free $\RR$-module of finite rank. Define a $\RR$-linear map $\varepsilon:\V^+_k\to\RR$ by setting 
\[
\veps(v_\lambda)=t^{-kn}n^k\delta_{\lambda n^k}\,.
\]
Then the induced map $\V^+_k\to\op{Hom}_{\RR}(\V^+_k,\RR)$ is a $\RR$-module isomorphism.
\begin{theorem}\label{thm:symmFrob}
Let $t_0\in\mb{C}^\times$ with $|t_0|=1$. Then $\V^+_k/(t-t_0)\V^+_k$ together with the fusion product $v_\lambda v_\mu:=M^*_\lambda v_\mu$ and the trace functional induced by $\veps$ is a commutative Frobenius algebra. Moreover, this algebra is semi-simple with idempotents $\e_\lambda=t_0^{nk}\cS_{n^k\lambda}(t_0)v'_\lambda$.
\end{theorem}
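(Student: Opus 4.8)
The plan is to realise the fusion algebra on $\V^+_k$ as the regular representation of the commutative operator algebra generated by the $M^*_\lambda$, and to read off all the Frobenius data from the Verlinde-type formula of Corollary~\ref{cor:Verlinde_bos}. First I would expand the product in the basis of divided powers: since $v_\lambda v_\mu=M^*_\lambda v_\mu$ and $\langle v^\nu,M^*_\lambda v_\mu\rangle=t^{|\nu|-|\lambda|-|\mu|}N^\nu_{\lambda\mu}$, one gets
\[
v_\lambda v_\mu=\sum_{\nu\in\alc}t^{|\nu|-|\lambda|-|\mu|}N^\nu_{\lambda\mu}\,v_\nu\,.
\]
Commutativity is then immediate from $N^\nu_{\lambda\mu}=N^\nu_{\mu\lambda}$ (Corollary~\ref{cor:symmN}(i)) and the symmetry of the prefactor in $\lambda,\mu$.

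For associativity and the unit I would argue structurally. The operators $\{M^*_\lambda\}_{\lambda\in\alc}$ mutually commute, being images of the commutative algebra $U'$, and $M^*_{n^k}=z^{-k}\op{Id}=t^{-nk}\op{Id}$, while the display above with $N^\nu_{\lambda n^k}=\delta_{\lambda\nu}$ gives $M^*_\lambda v_{n^k}=t^{-nk}v_\lambda$. They are linearly independent: in the eigenbasis $\{v'_\mu\}$ introduced below they act by the matrix $[t^{-|\lambda|}m_\lambda(\zeta^\mu)]$, invertible by Corollary~\ref{cor:complete}. Hence $v_{n^k}$ is cyclic and $a\mapsto t^{nk}a\,v_{n^k}$ is a linear isomorphism from $A=\operatorname{span}_\RR\{M^*_\lambda\}\subset\End_\RR\V^+_k$ onto $\V^+_k$, sending $M^*_\lambda\mapsto v_\lambda$ and $\op{Id}\mapsto\bs{1}:=t^{nk}v_{n^k}$. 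A short check shows this transports the composition product of $A$ to the fusion product, whence associativity and the unit property $v_\lambda\,\bs{1}=v_\lambda$ follow. Everything holds over $\RR$ and descends to $t=t_0$ because the relevant determinants are monomials in $t$, nonvanishing at any $t_0\neq0$.

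For the Frobenius property I would compute $\eta(v_\lambda,v_\mu)=\veps(v_\lambda v_\mu)$. As $\veps$ is supported on $v_{n^k}$, only the term $\nu=n^k$ survives and, using $N^{n^k}_{\lambda\mu}=\delta_{\lambda^*\mu}d_\lambda$ (Corollary~\ref{cor:symmN}(ii)),
\[
\eta(v_\lambda,v_\mu)=t^{-|\lambda|-|\mu|}\,n^k\,N^{n^k}_{\lambda\mu}=t^{-|\lambda|-|\mu|}\,n^k\,d_\lambda\,\delta_{\lambda^*\mu}\,.
\]
Since $\lambda\mapsto\lambda^*$ is an involution of $\alc$ and $d_\lambda\neq0$, $t_0\neq0$, the Gram matrix is a nonzero multiple of a permutation matrix, hence invertible; this is precisely the assertion that $\veps$ induces the isomorphism $\V^+_k\to\op{Hom}_\RR(\V^+_k,\RR)$. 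Together with commutativity (which makes $\eta$ symmetric) and the automatic invariance $\eta(ab,c)=\veps(abc)=\eta(a,bc)$, this exhibits a commutative, hence symmetric, Frobenius algebra.

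Finally I would obtain semisimplicity and the explicit idempotents by diagonalisation. The vectors $v'_\lambda=(\cS^{-1}(t))^{\otimes k}v_\lambda$ form a common eigenbasis of all $M^*_\kappa$ with eigenvalue $t^{-|\kappa|}m_\kappa(\zeta^\lambda)$ (as in the proof of Corollary~\ref{cor:Verlinde_bos}, via Lemma~\ref{lem:twistedHeisenberg}); so $A_{t_0}$ is the full algebra of operators diagonal in this basis and is semisimple, hence so is $\V^+_k/(t-t_0)\V^+_k\cong A_{t_0}$. To pin down the idempotents I would compute a fusion square: writing $v'_\lambda=\sum_\kappa\cS^{-1}_{\lambda\kappa}(t)v_\kappa$ with $\cS^{-1}_{\lambda\kappa}(t)=t^{|\kappa|}m_\lambda(\zeta^{-\kappa})/\sqrt{n^k}$ gives
\[
v'_\lambda\,v'_\mu=\Big(\tfrac{1}{\sqrt{n^k}}\sum_{\kappa\in\alc}m_\lambda(\zeta^{-\kappa})\,m_\kappa(\zeta^\mu)\Big)v'_\mu\,.
\]
The one genuine computation is the inner sum: combining the reciprocity $m^\lambda(\zeta^{-\kappa})=m^\kappa(\zeta^{-\lambda})$ (from the symmetry in its two indices of $\sum_{w\in\rS_k}\zeta^{(\lambda,\,w\kappa)}$) with the orthogonality relation $\sum_\kappa m_\kappa(\zeta^\mu)\,m^\kappa(\zeta^{-\lambda})=\delta_{\lambda\mu}\,n^k|\rS_\lambda|$ of Corollary~\ref{cor:complete} yields $\sum_\kappa m_\lambda(\zeta^{-\kappa})m_\kappa(\zeta^\mu)=n^k\delta_{\lambda\mu}$, so $v'_\lambda v'_\mu=\sqrt{n^k}\,\delta_{\lambda\mu}v'_\mu$. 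Hence $\e_\lambda=\tfrac{1}{\sqrt{n^k}}v'_\lambda$ are orthogonal idempotents, and since $m_{n^k}(\zeta^\lambda)=1$ forces $\cS_{n^k\lambda}(t_0)=t_0^{-nk}/\sqrt{n^k}$, this agrees with the claimed $\e_\lambda=t_0^{nk}\cS_{n^k\lambda}(t_0)v'_\lambda$; completeness $\sum_\lambda\e_\lambda=\bs{1}$ follows from the elementary evaluation $\sum_{\mu\in\alc}m_\mu(\zeta^{-\kappa})=n^k\delta_{\kappa,n^k}$. I expect this orthogonality bookkeeping to be the only real obstacle; the hypothesis $|t_0|=1$ enters only to make the $\ast$-involution \eqref{star} descend to $t=t_0$, so that $\cS(t_0)$ is unitary and $M^*_\lambda$ is the genuine adjoint of $M_\lambda$, consistent with the orthogonality of the eigenbasis $\{v'_\lambda\}$.
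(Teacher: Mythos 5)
Your proof is correct and follows essentially the same route as the paper: commutativity and non-degeneracy of $\eta$ from Corollary \ref{cor:Verlinde_bos} and Corollary \ref{cor:symmN}(ii), associativity from the commutativity of the operators $M^*_\lambda$, and semi-simplicity from the $\cS$-matrix diagonalisation with idempotents proportional to the eigenvectors $v'_\lambda$. The only difference is that you carry out explicitly what the paper leaves implicit — the cyclic-vector identification of $\V^+_k$ with the operator algebra spanned by the $M^*_\lambda$, and the orthogonality computation $v'_\lambda v'_\mu=\sqrt{n^k}\,\delta_{\lambda\mu}v'_\mu$ pinning down the normalisation and completeness of the idempotents — and both of these checks are sound.
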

\begin{proof}
It follows from \eqref{Verlinde_bos} that the product $m:\V^+_k\otimes\V^+_k\to\V^+_k$ given by $m(v_\lambda,v_\mu)\equiv v_\lambda v_\mu=M^*_\lambda v_\mu$ is commutative. Associativity is then an easy consequence, %
$v_\lambda (v_\mu v_\nu)=M^*_\lambda (M^*_\nu v_\mu)=M^*_\nu (M^*_\lambda v_\mu)=(v_\lambda v_\mu) v_\nu$.
Define a bilinear form $\eta:\V^+_k\otimes\V^+_k\to\mb{C}$ and its dual $\eta^*:\mb{C}\to\V^+_k\otimes\V^+_k$ via
\begin{equation}\label{etas}
\eta(v_\lambda,v_\mu)=\veps(v_{\lambda}v_{\mu})=t_0^{-nk}n^kN_{\lambda\mu}^{n^k}
\quad\text{ and }\quad
\eta^*(1)=\sum_{\lambda\in\alc}m_\lambda\otimes m^{\lambda^*}\,,
\end{equation}
where $m^\lambda=|\rS_\lambda|m_\lambda$ with $\lambda\in\alc$ is the augmented monomial symmetric function. 
Employing Corollary \ref{cor:symmN} (ii) one deduces that $\eta$ is non-degenerate and, by definition, invariant. All the remaining maps corresponding to the other 2-cobordisms shown in \eqref{2cob} can now also be constructed. For example, using the 2-cobordisms in \eqref{2cob} one obtains for the coproduct $\delta:\V^+_k\to\V^+_k\otimes\V^+_k$ (the inverted pair-of-pants cobordism),
\begin{equation}\label{FrobDelta}
\delta=(\op{id}\otimes m)\circ(\eta^*\otimes\op{id})= (m\otimes\op{id})\circ(\op{id}\otimes\eta^*)\;.
\end{equation}

To show that the algebra is semi-simple, recall from the proof of Corollary \ref{cor:Verlinde_bos} that the modular $\mc{S}$-matrix diagonalises the fusion matrices $M^*_\lambda$ in the definition of the algebra product. This fixes the idempotents in terms of the eigenvectors $v'_\mu$. That the set of idempotents is complete follows from observing that the $\mc{S}$-matrix is invertible; compare with the identities from Corollary \ref{cor:complete}.
\end{proof}
Consider the ring $\RR[x_1,\ldots,x_k]^{\rS_k}\subset \mc{Z}(\cH'_k)$ of symmetric functions in $k$ variables over $\RR$ 
and denote by $\mc{J}_n$ the ideal generated by
\begin{equation}\label{pideal}
\mc{J}_n=\langle p_n-z\,k,p_{n+1}-z\,p_1,\ldots,p_{n+k-1}-z\,p_{k-1}\rangle\;,
 \end{equation}
 where $p_r=\sum_{i=1}^k x_i^{r}$ are the power sums in the variables $x_i$ and $z=t^n$.
 \begin{theorem}\label{thm:bosquotient}
 The map $\RR[x_1,\ldots,x_k]^{\rS_k}/\mc{J}_n\to\V^+_k$ which sends $m_\lambda(x^{-1}_1,\ldots,x^{-1}_k)\mapsto v_\lambda$ is a ring isomorphism. Here $x_i^{-1}=z x^{n-1}_i$ for $i=1,\ldots,k$.
\end{theorem}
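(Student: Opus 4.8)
Set $A=\RR[x_1,\dots,x_k]^{\rS_k}$ and recall from Lemma~\ref{lem:XP} that the commuting operators $X_1,\dots,X_k$ satisfy $X_i^{\,n}=z$. The plan is to realise the stated map as the evaluation homomorphism $\psi:A\to\End_\RR\V^+_k$, $x_i\mapsto X_i$, and to compute its kernel. Since the $X_i$ commute and preserve $\V^+_k=e^+_k\bV'_k$, the map $\psi$ is a well-defined $\RR$-algebra homomorphism, and by the diagonalisation underlying Corollary~\ref{cor:Verlinde_bos} the space $\V^+_k=\bigoplus_{\lambda\in\alc}\V^+_\lambda$ splits into joint eigenlines on which the $X_i$ act through a multiset of $n$th roots of $z$ indexed by $\lambda\in\alc$, distinct $\lambda$ giving distinct multisets. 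Under $\psi$ the element $m_\lambda(x^{-1})$ is sent to $M^*_\lambda=m_\lambda(X_1^{-1},\dots,X_k^{-1})$; using $N^{\nu}_{\lambda\,n^k}=\delta_{\lambda\nu}$ from Corollary~\ref{cor:symmN}(i) one checks that the unit of the Frobenius algebra is $\mathbf 1=z^{k}v_{n^k}$ and that $M^*_\lambda\mathbf 1=v_\lambda$. I would therefore define $\phi:A\to\V^+_k$ as $\psi$ followed by evaluation at $\mathbf 1$, so that $\phi(m_\lambda(x^{-1}))=v_\lambda$ for every $\lambda\in\alc$. As the $v_\lambda$ form an $\RR$-basis of $\V^+_k$, the map $\phi$ is surjective; it is multiplicative because $\psi(A)$ is a commutative subalgebra of $\End_\RR\V^+_k$ containing every multiplication operator $R_{v_\lambda}=M^*_\lambda$ of the commutative Frobenius algebra $\V^+_k$, whose multiplication operators form their own commutant, forcing $\psi(f)=R_{\phi(f)}$ and hence $\phi(fg)=\phi(f)\phi(g)$.

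Next I would show $\mc{J}_n\subseteq\ker\phi$. This is the operator identity $\sum_i X_i^{\,j}(X_i^{\,n}-z)=0$, which gives $\psi(p_{n+j}-z\,p_j)=0$ for the generators $j=0,\dots,k-1$; thus $\phi$ descends to a surjection $\bar\phi:A/\mc{J}_n\to\V^+_k$ taking the family $\{m_\lambda(x^{-1})\}_{\lambda\in\alc}$, which is then linearly independent in the quotient, to the basis $\{v_\lambda\}$. It remains to prove that $\bar\phi$ is injective, i.e.\ that $\ker\phi=\mc{J}_n$, and this is the heart of the matter.

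The kernel of $\phi$ is the annihilator of $\V^+_k$, which by the eigenvalue description above is the ideal of symmetric functions vanishing at the finite set of spectral points; since $x_i^{\,n}-z$ is separable over $\RR$, one identifies this kernel with $I\cap A$, where $I=\langle x_1^{\,n}-z,\dots,x_k^{\,n}-z\rangle\subset\RR[x_1,\dots,x_k]$. So the whole statement reduces to the commutative-algebra identity $\mc{J}_n=I\cap A$, the inclusion $\subseteq$ being clear. For the reverse inclusion I would first prove, by induction using Newton's identities, that $p_{n+m}-z\,p_m\in\mc{J}_n$ for \emph{all} $m\ge 0$: for $m\ge k$ both $p_{n+m}$ and $p_m$ obey the order-$k$ recursion $p_r=\sum_{i=1}^k(-1)^{i-1}e_i\,p_{r-i}$ (with the convention $p_0=k$), so $p_{n+m}-z\,p_m=\sum_{i=1}^k(-1)^{i-1}e_i(p_{n+m-i}-z\,p_{m-i})$ lies in $\mc{J}_n$ by induction, the cases $m=0,\dots,k-1$ being the generators.

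Given this, I would deduce $I\cap A\subseteq\mc{J}_n$ by symmetrisation. Any $f\in I\cap A$ can be written $f=\sum_i a_i(x_i^{\,n}-z)$ with $a_i\in\RR[x_1,\dots,x_k]$; averaging over $\rS_k$ (possible as $\tfrac1{k!}\in\RR$) rewrites $f=\sum_j\gamma(x_j;\hat{\mathbf x}_j)(x_j^{\,n}-z)$ with a single polynomial $\gamma$ symmetric in the complementary variables $\hat{\mathbf x}_j$. Expressing the elementary symmetric functions of $\hat{\mathbf x}_j$ through those of $\mathbf x=(x_1,\dots,x_k)$ and $x_j$ via $\prod_{i\ne j}(1+x_i u)=\prod_i(1+x_i u)/(1+x_j u)$, the summand $\gamma(x_j;\hat{\mathbf x}_j)$ becomes a polynomial in $x_j$ with coefficients $d_m\in A$, whence $f=\sum_m d_m\sum_j x_j^{\,m}(x_j^{\,n}-z)=\sum_m d_m(p_{n+m}-z\,p_m)\in\mc{J}_n$ by the previous step. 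This yields $\mc{J}_n=I\cap A=\ker\phi$, so $\bar\phi$ is injective, hence an isomorphism; equivalently $A/\mc{J}_n\cong(\RR[x_1,\dots,x_k]/I)^{\rS_k}$ by exactness of invariants in characteristic $0$, which is free over $\RR$ with basis the orbit sums $m_\lambda$, $\lambda\in\alc$, of rank $|\alc|=\binom{n+k-1}{k}=\dim_\RR\V^+_k$. I expect the main obstacle to be the symmetrisation bookkeeping of this last step—controlling the complementary-variable coefficients so that the relations $p_{n+m}-z\,p_m$ suffice; a geometric alternative I would keep in reserve is to pass to the fraction field, check by a Vandermonde argument (grouping equal coordinates) that $V(\mc{J}_n)$ is exactly the $|\alc|$ simple spectral points, and invoke the Jacobian criterion to conclude that $\mc{J}_n$ is radical.
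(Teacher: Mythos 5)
Your proposal is correct, and its overall architecture matches the paper's: both realise the map as evaluation of symmetric polynomials in the commuting operators $X_i$ (so that $m_\lambda(x^{-1})\mapsto M^*_\lambda v_{n^k}\cdot z^k=v_\lambda$ gives the ring structure) and both reduce the theorem to comparing $\mc{J}_n$ with the cyclotomic relations \eqref{BAE}. Where you genuinely diverge is in how that comparison is made. The paper argues via the generating function $P(u)$: modulo \eqref{pideal} the series $(u^n-z)P(u^{-1})$ terminates, so its residues at $u=x_i$ vanish, giving \eqref{BAE}; it then invokes the Nullstellensatz to identify $\RR[x_1,\ldots,x_k]^{\rS_k}/\mc{J}_n$ with the image of $\mc{Z}(\cH'_k)$ in $\End_{\RR}\bV^+_k$. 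Strictly speaking this only identifies the radical of $\mc{J}_n$ with $\langle x_1^n-z,\ldots,x_k^n-z\rangle\cap\RR[x]^{\rS_k}$, and the residue step takes place in the non-symmetric ring, so one still owes an argument that $\mc{J}_n$ itself is radical (or that no nilpotents survive). Your proof closes exactly this gap: the Newton-identity induction giving $p_{n+m}-zp_m\in\mc{J}_n$ for all $m\ge 0$ is the same computation as the paper's, but your subsequent $\rS_k$-symmetrisation of an arbitrary element of $I\cap A$ --- rewriting it as $\sum_m d_m(p_{n+m}-zp_m)$ with $d_m$ symmetric --- establishes the exact ideal identity $\mc{J}_n=I\cap A$ and so makes the Nullstellensatz and any radicality discussion unnecessary. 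The price is the extra bookkeeping with the complementary-variable elementary symmetric polynomials, which you handle correctly via $\prod_{i\ne j}(1+x_iu)=\prod_i(1+x_iu)/(1+x_ju)$; the payoff is a fully algebraic proof over the ring $\RR$ rather than one that implicitly works pointwise over its fraction field. Your identification of $\ker\phi$ with $\ker\psi$ via the commutant/regular-representation argument, and the count $|\alc|=\binom{n+k-1}{k}=\operatorname{rank}_{\RR}\V^+_k$, are also sound and consistent with Corollary \ref{cor:symmN}.
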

  \begin{proof}
  We first show that $\mc{J}_n$ is equal to the ideal $\mc{J}'_n$ generated by the relations
  \begin{equation}\label{BAE}
  \langle x_1^{n}-z,\ldots,x_k^{n}-z\rangle\,.
  \end{equation}
One direction is trivial, the relations \eqref{pideal} are obviously satisfied if \eqref{BAE} hold. To show the converse recall the generating function for power sums, $P(u)=\sum_{i \ge 1} p_i u^{i-1}= \sum_{i=1}^k \frac{x_i}{1-ux_i}$, where the last expression is understood in terms of a geometric series expansion. From Newton's formulae it follows via a proof by induction that \eqref{pideal} implies $p_{n+r}-z\,p_r=0$ for all $r\geq 0$ and, hence, one shows that 
\begin{eqnarray*}
 P(u) = 
 \sum_{i=1}^n p_i u^{i-1} + z u^n \sum_{i \ge 1} p_i u^{i-1} = 
 \sum_{i=1}^n p_i u^{i-1} + z u^n P(u)
\end{eqnarray*}
which can be rearranged as (replacing $u$ with $u^{-1}$)
\begin{equation*}
 \sum_{i=1}^n p_i u^{n+1-i} = (u^n-z) P(u^{-1}) = (u^n-z ) \sum_{i=1}^k \frac{u x_i}{u-x_i}\,.
\end{equation*}
This implies that the formal series expansion of $(u^n-z)  P(u^{-1})$ in $u$ terminates after finitely 
many terms. Therefore, the residues of $(u^n-z)  P(u^{-1})$ at $u=x_i$, for $i=1,\dots, k$ must 
vanish which is equivalent to \eqref{BAE}. This proves the claim.

The relations \eqref{BAE} imply that the $x_i$ are invertible, $x_i^{-1}=zx_i^{n-1}$. Recall that $\mc{Z}(\cH'_k)$ acts on the one-dimensional subspace $e^+_k\bV'_\lambda\otimes\RR'$ by multiplication with symmetric polynomials in the variables $(t^{\pm 1}\zeta^{\mp\lambda_1},\ldots,t^{\pm 1}\zeta^{\mp\lambda_k})$ with $\lambda\in\alc$. Hence, employing the Nullstellensatz 
we see that $\RR[x_1,\ldots,x_k]^{\rS_k}/\mc{J}_n$ is isomorphic to the image of $\mc{Z}(\cH'_k)$ in $\End_{\RR}\bV^+_k$. But the action of $\mc{Z}(\cH'_k)$ on $\V_k^+$  fixes the ring structure upon noting that $v_\lambda=z^k M_\lambda^*v_{n^k}$ and $M_\lambda^*=\sum_{\mu\sim\lambda}X^{-\mu}$ in $\End_{\RR}\bV^+_k$. 
  \end{proof}

\subsection{Alternating tensors and Gromov-Witten invariants}
We now turn to the subspaces $\bigwedge^kV\subset V^{\otimes k}$ of alternating tensors. Consider the alcove of strict partitions,
\begin{equation}\label{strictA}
\salc=\alc\cap\cP^{++}_k=\{\lambda~|~n\geq \lambda_1>\cdots>\lambda_k> 0\}\;.
\end{equation}
Obviously the latter have $m_i(\lambda)=0$ or $1$ and, therefore, the stabiliser groups $\rS_\lambda$ are trivial for $\lambda\in\salc$. Note also that $\salc=\emptyset$ unless $k\leq n$. For $\lambda\in\salc$ fix the basis
\begin{equation}\label{alt_tensors}
v_{\bar \lambda}=v_{\lambda_k} \wedge \cdots\wedge v_{\lambda_2}\wedge v_{\lambda_1}=e^-_k v_{\lambda_k}\otimes
\cdots \otimes v_{\lambda_2}\otimes v_{\lambda_1}\,,
\end{equation}
where $\bar\lambda=\lambda-\rho$ with $\rho=(k,\ldots,2,1)$ is the reduced partition whose Young diagram lies inside a bounding box of width $n-k$ and height $k$. Denote by $v^{\bar\lambda}=\sum_{w\in \rS_k}(-1)^{\ell(w)}v^{\lambda_{w(k)}}\otimes\cdots\otimes v^{\lambda_{w(1)}}$ its dual basis in $(V^*)^{\otimes k}$ with respect to the natural pairing. 

Along a similar vein as in the symmetric case one finds the following presentation of $PSL(2,\mb{Z})$ on alternating tensors if $k$ is odd. Let $\lambda,\mu\in\salc$ and denote by $a_{\lambda}(x_1,\ldots,x_k)=\det(x_j^{\lambda_i})
_{1\leq i,j\leq k}$ the alternating monomial.
\begin{lemma}  The matrix elements of the $t$-deformed modular $\cS$-matrix \eqref{modSt} in the basis of alternating tensors read
\begin{equation}\label{modSfermi}
\cS_{\lambda\mu}(t)=(-1)^{\frac{k(k-1)}{4}}\langle v^{\bar\mu},\mc{S}^{\otimes k}(t)v_{\bar\lambda}\rangle=
(-1)^{\frac{k(k-1)}{4}}t^{-|\lambda|}\frac{a_{\lambda}(\zeta^\mu)}{\sqrt{n^k}}
\end{equation}
while the inverse matrix elements obey the identities
\begin{equation}\label{modSfermii}
\cS^{-1}_{\lambda\mu}(t)=(-1)^{(k-1)\delta_{\lambda_1n}}\cS_{\mu\lambda^*}(t^{-1})
=t^{(n+1)k}\zeta^{-|\lambda|}\cS_{\mu^\vee\lambda}(t)
=\overline{\mc{S}_{\mu\lambda}(t)}\,.
\end{equation}
For the $\mc{T}$-matrix we obtain similar to the symmetric case
\begin{equation}\label{modTfermi}
\mc{T}_{\lambda\mu}=(-1)^{-\frac{k(k-1)}{12}}\langle v^{\bar\mu},\mc{T}^{\otimes k}v_{\bar\lambda}\rangle=\delta_{\lambda\mu}\,\zeta^{-\frac{kn(n-1)+k(k-1)}{24}}\prod_{i=1}^k\zeta^{\frac{\lambda_i(n-\lambda_i)}{2}}
\end{equation}
But only for $k$ odd (and $t=1$) do both matrices, $\cS=\cS(1)$ and $\mc{T}$, obey the relations \eqref{mod_relations}, \eqref{mod_relations2} with $\mc{C}_{\lambda\mu}=\delta_{\lambda\mu^*}$. 
\end{lemma}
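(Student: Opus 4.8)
The plan is to reduce everything to the single-particle matrices $\mc{S}(t),\mc{T},\mc{C}$ of Section 2, for which the modular relations are already recorded in Proposition \ref{STC}, and then to control the two scalar prefactors and the antisymmetrisation signs introduced by passing to $\bigwedge^kV$. For the entries \eqref{modSfermi} I would argue that $\mc{S}^{\otimes k}(t)$ preserves the alternating subspace and that, in the wedge basis, the matrix element $\langle v^{\bar\mu},\mc{S}^{\otimes k}(t)v_{\bar\lambda}\rangle$ is the $k\times k$ minor $\det\bigl(\mc{S}(t)_{\mu_i\lambda_j}\bigr)_{i,j=1}^k$. Since $\mc{S}(t)_{ab}=t^{-b}\zeta^{ab}/\sqrt n$ by \eqref{modSt}, pulling the factor $t^{-\lambda_j}/\sqrt n$ out of the $j$-th column turns this minor into $t^{-|\lambda|}\det(\zeta^{\mu_i\lambda_j})/\sqrt{n^k}=t^{-|\lambda|}a_\lambda(\zeta^\mu)/\sqrt{n^k}$, a transpose leaving the determinant unchanged. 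The reversed labelling in $v_{\bar\lambda}$ and $v^{\bar\mu}$ contributes the same reversal permutation on rows and on columns, so its sign $(-1)^{k(k-1)}=1$ cancels and the prefactor $(-1)^{k(k-1)/4}$ is purely a normalisation. The $\mc{T}$-entry \eqref{modTfermi} is immediate from \eqref{modT1}: $\mc{T}^{\otimes k}$ is diagonal, whence $\langle v^{\bar\mu},\mc{T}^{\otimes k}v_{\bar\lambda}\rangle=\delta_{\lambda\mu}\prod_i\mc{T}_{\lambda_i\lambda_i}$, and the prefactor $(-1)^{-k(k-1)/12}$ supplies the central-charge shift recorded in the statement.

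For the inverse identities \eqref{modSfermii} I would apply the single-particle relations \eqref{mod_relations2} factorwise. Writing $\overline{\mc{S}(t)}=\mc{S}(t^{-1})^{\ast}$ and using $\mc{C}\mc{S}=\mc{S}^{-1}$ on each tensor slot reduces each claimed identity to the corresponding $k=1$ statement of Proposition \ref{STC} together with the determinantal formula just obtained. The one genuinely new ingredient is the boundary sign $(-1)^{(k-1)\delta_{\lambda_1 n}}$: under $\mc{C}\colon v_j\mapsto v_{n-j}$ a part $\lambda_1=n$ is sent to $v_0=v_n$, i.e.\ back to the maximal label, so that the induced reordering of the wedge differs from the generic one by exactly $(-1)^{k-1}$. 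I would isolate this by comparing $\mc{C}^{\otimes k}v_{\bar\lambda}$ with $v_{\overline{\lambda^*}}$ in the two cases $\lambda_1<n$ and $\lambda_1=n$ separately.

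The modular relations in the last sentence are the heart of the lemma. Since $\mc{S}^{\otimes k}$, $\mc{T}^{\otimes k}$ and $\mc{C}^{\otimes k}$ all commute with $e_k^-$, restriction to $\bigwedge^kV$ is an algebra homomorphism, so the $k=1$ identities $\mc{S}^2=(\mc{S}\mc{T})^3=\mc{C}$ of Proposition \ref{STC} lift to $(\mc{S}^{\otimes k})^2=((\mc{S}\mc{T})^{\otimes k})^3=\mc{C}^{\otimes k}$ on the alternating subspace. Writing the lemma's matrices as $\cS=s\,\mc{S}^{\otimes k}|_{\wedge}$ and $\mc{T}=\tau\,\mc{T}^{\otimes k}|_{\wedge}$ with $s=(-1)^{k(k-1)/4}$ and $\tau=(-1)^{-k(k-1)/12}$, one checks $s^2=(s\tau)^3=(-1)^{k(k-1)/2}$. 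Hence $\cS^2=(\cS\mc{T})^3=(-1)^{k(k-1)/2}\,\mc{C}^{\otimes k}|_{\wedge}$, and it remains only to identify $\mc{C}^{\otimes k}|_{\wedge}$.

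This last identification is the decisive and delicate step. Comparing $\mc{C}^{\otimes k}v_{\bar\lambda}$ with $v_{\overline{\lambda^*}}$, the charge conjugation reverses the order of the $k$ wedge factors, contributing the sign $(-1)^{k(k-1)/2}$; when $\lambda_1=n$, however, the part $n$ is sent to $v_0=v_n$, i.e.\ back to the maximal label rather than to a small one, so the resulting reordering differs from the generic one by an extra $(-1)^{k-1}$. Thus $\mc{C}^{\otimes k}|_{\wedge}=(-1)^{k(k-1)/2}\,\op{diag}\bigl((-1)^{(k-1)\delta_{\lambda_1 n}}\bigr)\,\mc{C}$ with $\mc{C}_{\lambda\mu}=\delta_{\lambda\mu^*}$, and substituting into the previous paragraph the two occurrences of $(-1)^{k(k-1)/2}$ cancel, leaving $\cS^2=(\cS\mc{T})^3=\op{diag}\bigl((-1)^{(k-1)\delta_{\lambda_1 n}}\bigr)\,\mc{C}$. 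This equals the required $\mc{C}=\delta_{\lambda\mu^*}$ precisely when the diagonal correction is trivial, i.e.\ when $(-1)^{k-1}=1$, i.e.\ for $k$ odd; for $k$ even the boundary states with $\lambda_1=n$ acquire a spurious sign and $\mc{S}^2=\mc{C}$ (and with it $\mc{C}\cS=\cS^{-1}$) fails. Since $\mc{T}$ is diagonal with a $\mc{T}$-eigenvalue invariant under $\lambda\mapsto\lambda^*$, the relations $\mc{T}^{\ast}=\mc{T}^{-1}$ and $\mc{C}\mc{T}=\mc{T}\mc{C}$ hold for every $k$, so the entire parity condition is governed by this single $v_0=v_n$ wrap-around sign, which I expect to be the main obstacle to track correctly, alongside fixing mutually consistent branches for the fractional prefactors $s$ and $\tau$.
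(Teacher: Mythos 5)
Your proposal is correct and follows essentially the same route as the paper: reduce everything to the $k=1$ relations of Proposition \ref{STC}, compute the wedge-basis matrix elements as $k\times k$ minors, and isolate the two sign phenomena — the antisymmetrisation sign $(-1)^{k(k-1)/2}$ absorbed by the prefactors and the wrap-around sign $(-1)^{(k-1)\delta_{\lambda_1 n}}$ coming from $\mc{C}^{\otimes k}v_{\bar\lambda}=(-1)^{k(k-1)/2}(-1)^{(k-1)\delta_{\lambda_1 n}}v_{\bar\lambda^*}$ — which is exactly the content of the paper's (much terser) argument. Your bookkeeping of the prefactors $s,\tau$ and the identification of the $v_0=v_n$ boundary sign as the sole obstruction for $k$ even match the paper's conclusion precisely.
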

\begin{proof}
The relations are a straightforward consequence of the definitions and the properties of $\cS,\mc{T}$ for $k=1$. Note that the additional factor $(-1)^{\frac{k(k-1)}{4}}$ in the $\cS$-matrix is inserted because $\overline{a_\lambda(\zeta^\mu)}=\overline{a_\mu(\zeta^\lambda)}=(-1)^{\frac{k(k-1)}{2}}(-1)^{(k-1)\delta_{\lambda_1n}}a_{\lambda^*}(\zeta^{\mu})$. Similarly, we entered a factor $(-1)^{-\frac{k(k-1)}{12}}$ in the $\cT$-matrix so that $\mc{S}^2=(\mc{S}\mc{T})^3$ continues to hold. However, note that 
\[
\mc{C}^{\otimes k}v_{\bar \lambda}=v_{n-\lambda_k}\wedge\cdots\wedge v_{n-\lambda_1}=(-1)^{\frac{k(k-1)}{2}}(-1)^{(k-1)\delta_{\lambda_1n}}v_{\bar\lambda^*}\,.
\]
Hence the equality $\mc{S}^2=\mc{C}$ only continues to hold for $k$ odd.
\end{proof}
\begin{rema}
The presentation of $PSL(2,\mb{Z})$ for $t=1$ and $k$ odd has been previously considered by Naculich and Schnitzer \cite{naculich2007level} in connection with the $U(n)$ Wess-Zumino-Witten model. 
\end{rema}
In complete analogy with the previous case of divided powers we now consider the Verlinde formula for the case of alternating tensors. 
\begin{prop}\label{prop:BVI}
The matrix elements of the loop algebra element $S_{\bar\lambda}\in U_+$ defined in \eqref{defS} with $\lambda\in\salc$ and its adjoint $S^*_{\bar\lambda}\in U_-$  read
\begin{equation}\label{BVI_formula}
\langle v^{\bar\nu},S^*_{\bar\lambda} v_{\bar\mu}\rangle
=\overline{\langle v^{\bar\mu},S_{\bar\lambda} v_{\bar\nu}\rangle}
=\sum_{\sigma\in\salc}\frac{\cS_{\lambda\sigma}(t)\cS_{\mu\sigma}(t)\cS^{-1}_{\sigma\nu}(t)}{\cS_{\rho\sigma}(t)}
=(-1)^{d(k-1)}t^{-nd}C_{\bar\lambda\bar\mu}^{\bar\nu,d},
\end{equation}
where $nd=|\bar\lambda|+|\bar\mu|-|\bar\nu|$ and the structure constants $C_{\bar\lambda\bar\mu}^{\bar\nu,d}$ are determined by the following  product expansion of Schur functions at roots of unity,
\begin{equation}
s_{\bar\lambda}(\zeta^\sigma)s_{\bar\mu}(\zeta^\sigma)=\sum_{\nu\in\salc} (-1)^{d(k-1)}C_{\bar\lambda\bar\mu}^{\bar\nu,d}s_{\bar\nu}(\zeta^\sigma)
\end{equation}
with $\lambda=\bar\lambda+\rho$. Setting $t=\zeta^{\frac{k+1}{2}}q^{-\frac{1}{n}}$ the matrix elements \eqref{BVI_formula} are equal to the 3-point genus zero Gromov-Witten invariants of $qH^*(\operatorname{Gr}(k,n))$. In particular, $C_{\bar\lambda\bar\mu}^{\bar\nu,d}=0$ unless $|\bar\lambda|+|\bar\mu|-|\bar\nu|=0\mod n$.
\end{prop}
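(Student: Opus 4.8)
The plan is to mirror the proof of Corollary~\ref{cor:Verlinde_bos}, replacing monomial symmetric functions by Schur functions and the divided-power basis by the alternating basis \eqref{alt_tensors}, with the strict partition $\rho=(k,\dots,2,1)$ taking over the role that $n^k$ plays in the bosonic case. First I would diagonalise the action on $\bigwedge^k V$. By Lemma~\ref{lem:twistedHeisenberg} the similarity transformation $\cS(t)^{\otimes k}$ simultaneously diagonalises each cyclic element $X_i=P_1$ acting in the $i$-th factor, and hence each $X_i^{-1}$; the vectors $v'_{\bar\sigma}=(\cS^{-1}(t))^{\otimes k}v_{\bar\sigma}$ with $\sigma\in\salc$ become common eigenvectors of every $X_i^{-1}$ with eigenvalue $t^{-1}\zeta^{\sigma_i}$. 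Since $S^*_{\bar\lambda}=\Phi_-(s_{\bar\lambda})$ by Lemma~\ref{lem:adjoint} acts as the Schur polynomial $s_{\bar\lambda}$ in the variables $X_i^{-1}$, the vector $v'_{\bar\sigma}$ is an eigenvector with eigenvalue $s_{\bar\lambda}(t^{-1}\zeta^{\sigma_1},\dots,t^{-1}\zeta^{\sigma_k})=t^{-|\bar\lambda|}s_{\bar\lambda}(\zeta^\sigma)$ by homogeneity. Using the Weyl character formula $s_{\bar\lambda}=a_\lambda/a_\rho$ (with $\lambda=\bar\lambda+\rho$) together with \eqref{modSfermi} I would rewrite this eigenvalue as the ratio $\cS_{\lambda\sigma}(t)/\cS_{\rho\sigma}(t)$.

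Next I would establish the two equalities in \eqref{BVI_formula}. The conjugation relation $\langle v^{\bar\nu},S^*_{\bar\lambda}v_{\bar\mu}\rangle=\overline{\langle v^{\bar\mu},S_{\bar\lambda}v_{\bar\nu}\rangle}$ is immediate from $S^*_{\bar\lambda}$ being the Hermitian adjoint of $S_{\bar\lambda}$ under the $\ast$-involution \eqref{star}; in contrast with Corollary~\ref{cor:Verlinde_bos}, no ratio of stabiliser orders appears because $\rS_\lambda$ is trivial for $\lambda\in\salc$. For the Verlinde sum I would expand $v_{\bar\mu}$ in the eigenbasis $\{v'_{\bar\sigma}\}$, apply $S^*_{\bar\lambda}$ with the eigenvalue just computed, and pair against $v^{\bar\nu}$; the orthogonality $\sum_{\sigma}\cS_{\nu\sigma}(t)\cS^{-1}_{\sigma\nu'}(t)=\delta_{\nu\nu'}$ collapses the expansion coefficients into $\cS^{-1}_{\sigma\nu}(t)$ and yields $\sum_{\sigma\in\salc}\cS_{\lambda\sigma}(t)\cS_{\mu\sigma}(t)\cS^{-1}_{\sigma\nu}(t)/\cS_{\rho\sigma}(t)$.

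To identify this sum with $(-1)^{d(k-1)}t^{-nd}C^{\bar\nu,d}_{\bar\lambda\bar\mu}$ I would first set $t=1$: the product of two eigenvalues $s_{\bar\lambda}(\zeta^\sigma)s_{\bar\mu}(\zeta^\sigma)$ equals $\sum_\nu(-1)^{d(k-1)}C^{\bar\nu,d}_{\bar\lambda\bar\mu}\,\cS_{\nu\sigma}/\cS_{\rho\sigma}$ by the defining expansion, and multiplying by $\cS^{-1}_{\sigma\nu'}\cS_{\rho\sigma}$ and summing over $\sigma$ identifies the Verlinde sum with $(-1)^{d(k-1)}C^{\bar\nu',d}_{\bar\lambda\bar\mu}$ — exactly as the step using \eqref{mfusion} in Corollary~\ref{cor:Verlinde_bos}. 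The prefactor $t^{-nd}$ for general $t$ then follows by homogeneity, since $|\bar\lambda|+|\bar\mu|-|\bar\nu|=nd$, and the selection rule $n\mid|\bar\lambda|+|\bar\mu|-|\bar\nu|$ is read off as before from $S^*_{\bar\lambda}=\sum_{|T|=\bar\lambda}X^{-T}$ being built out of the $X_i^{-1}$, whose matrix elements involve only integer powers of $z^{-1}=t^{-n}$.

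Finally, for the identification with Gromov-Witten invariants I would invoke the specialisation $t=\zeta^{(k+1)/2}q^{-1/n}$, under which the matrix \eqref{modSfermi} reduces at $q=1$ to the modular $\cS$-matrix of the level-$k$ $\widehat{su}(n)$ WZW model (the Remark after \eqref{modTfermi}, cf.\ \cite{naculich2007level}), while the $q$-dependence carried by $t$ supplies the grading by the curve degree $d$; the Verlinde formula \eqref{BVI_formula} then computes the fusion coefficients of $\V_k(\gh_n)$, which equal the $3$-point genus zero Gromov-Witten invariants of $qH^*(\op{Gr}(k,n))$ by the isomorphism $qH^*(\op{Gr}(k,n))\cong\V_k(\gh_n)$ of \cite{witten1993verlinde,agnihotri1995quantum,bertram1997quantum}. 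The hard part will be the convention matching in this last step: pinning down the sign $(-1)^{d(k-1)}$ and the root-of-unity phase $\zeta^{(k+1)/2}$ together with the branch $q^{-1/n}$, so that the affine-Weyl folding of the ordinary Littlewood-Richardson product $s_{\bar\lambda}s_{\bar\mu}=\sum_\eta c^\eta s_\eta$ back into the alcove $\salc$ — which sends an out-of-alcove $s_\eta$ to $\pm z^{-d}s_{\bar\nu}$ — reproduces precisely the $q^d$-grading of the quantum product. This is bookkeeping driven by the antisymmetrisation signs of the fermionic case rather than a conceptual obstruction, but it is where the care is required.
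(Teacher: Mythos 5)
Your proposal is correct and follows the paper's own proof almost step for step: the paper likewise mirrors the argument of Corollary \ref{cor:Verlinde_bos}, diagonalises via $\cS(t)^{\otimes k}$, and uses the Weyl character formula $t^{-|\bar\lambda|}s_{\bar\lambda}(\zeta^{\sigma})=a_\lambda(\zeta^\sigma)/\bigl(t^{|\bar\lambda|}a_\rho(\zeta^\sigma)\bigr)=\cS_{\lambda\sigma}(t)/\cS_{\rho\sigma}(t)$ to turn the eigenvalue into the ratio appearing in the Verlinde-type sum. The one place where you diverge is the final identification with Gromov--Witten invariants: you route this through the fusion ring $\V_k(\gh_n)$ and the isomorphism $qH^*(\op{Gr}(k,n))\cong\V_k(\gh_n)$, and you correctly flag the sign $(-1)^{d(k-1)}$ and the phase $\zeta^{\frac{k+1}{2}}$ as the delicate remaining bookkeeping. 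The paper instead rewrites the matrix element explicitly in terms of alternants as in \eqref{BVI2} and recognises the result as the Bertram--Vafa--Intriligator formula at the stated value of $t$, citing \cite{bertram1997quantum} and \cite{rietsch2001quantum} where the normalisations are already fixed; this sidesteps exactly the convention-matching you identify as the hard part. Both routes are legitimate --- in particular, since for fixed $\bar\lambda,\bar\mu,\bar\nu$ the degree is pinned down by $nd=|\bar\lambda|+|\bar\mu|-|\bar\nu|$, the $q=1$ fusion coefficient already determines the individual invariant $C_{\bar\lambda\bar\mu}^{\bar\nu,d}$ --- but if you want a self-contained argument you should, as the paper does, reduce to the explicit alternant sum rather than leave the sign and phase to an appeal to the Verlinde isomorphism.
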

\begin{proof}
The proof follows along the same lines as in the previous case of symmetric tensors and employing that
\[
t^{-|\bar\lambda|}s_{\bar\lambda}(\zeta^{\mu})=t^{-|\bar\lambda|}\frac{a_{\lambda}(\zeta^\mu)}{a_\rho(\zeta^\mu)}
=\frac{\cS_{\lambda\mu}(t)}{\cS_{\rho\mu}(t)}\;.
\]
In particular, the matrix elements can be rewritten as
\begin{equation}\label{BVI2}
C_{\bar\lambda\bar\mu}^{\bar\nu}(t)=(-1)^{d(k-1)}t^{-nd}C_{\bar\lambda\bar\mu}^{\bar\nu,d}=
t^{|\bar\nu|-|\bar\lambda|-|\bar\mu|}
\sum_{\sigma\in\salc}\frac{a_{\lambda}(\zeta^\sigma)a_{\mu}(\zeta^\sigma)a_{\nu}(\zeta^{-\sigma})}{n^ka_{\rho}(\zeta^\sigma)}
\end{equation}
and one then recognises for the stated value of $t$ the Bertram-Vafa-Intriligator formula for Gromov-Witten invariants; see e.g. \cite{bertram1997quantum} and \cite{rietsch2001quantum} and references therein.
\end{proof}
Note that the last proposition implies that $C_{\bar\lambda\bar\mu}^{\bar\nu}(\zeta^{\frac{k+1}{2}})=C_{\bar\lambda\bar\mu}^{\bar\nu,d}$ is a non-negative integer. 
\begin{coro}\label{cor:GWsymm}
We have the following identities for the matrix elements \eqref{BVI_formula}:
\begin{equation}\label{GWsymm}
C_{\bar\lambda\bar\mu}^{\bar\nu}(t)=C_{\bar\mu\bar\lambda}^{\bar\nu}(t)=C_{\bar\nu^\vee\bar\mu}^{\bar\lambda^\vee}(t)
\qquad\text{ and }\qquad
C_{\emptyset\bar\mu}^{\bar\nu}(t)=\delta_{\mu\nu}\;.
\end{equation}
\end{coro}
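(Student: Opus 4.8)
The plan is to derive all three identities straight from the Verlinde-type expression for the matrix elements in Proposition~\ref{prop:BVI}. Writing $\lambda=\bar\lambda+\rho$, $\mu=\bar\mu+\rho$, $\nu=\bar\nu+\rho$, formula~\eqref{BVI_formula} gives
\[
C_{\bar\lambda\bar\mu}^{\bar\nu}(t)=\sum_{\sigma\in\salc}\frac{\cS_{\lambda\sigma}(t)\,\cS_{\mu\sigma}(t)\,\cS^{-1}_{\sigma\nu}(t)}{\cS_{\rho\sigma}(t)}\,.
\]
With this single formula in hand, each identity reduces to an elementary manipulation of the summand, supplemented by the $\cS$-matrix relations \eqref{modSfermi} and \eqref{modSfermii}.

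The first two claims are essentially immediate. Commutativity $C_{\bar\lambda\bar\mu}^{\bar\nu}(t)=C_{\bar\mu\bar\lambda}^{\bar\nu}(t)$ holds because the summand is manifestly symmetric under $\lambda\leftrightarrow\mu$. For the unit property, $\bar\lambda=\emptyset$ corresponds to $\lambda=\rho$, so that $\cS_{\lambda\sigma}(t)=\cS_{\rho\sigma}(t)$ cancels the denominator and leaves $\sum_{\sigma\in\salc}\cS_{\mu\sigma}(t)\,\cS^{-1}_{\sigma\nu}(t)=(\cS\cS^{-1})_{\mu\nu}=\delta_{\mu\nu}$, which is exactly $C_{\emptyset\bar\mu}^{\bar\nu}(t)=\delta_{\mu\nu}$.

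The duality $C_{\bar\lambda\bar\mu}^{\bar\nu}(t)=C_{\bar\nu^\vee\bar\mu}^{\bar\lambda^\vee}(t)$ is the one step that requires care, and here I would use the sign-free middle identity in \eqref{modSfermii}, namely $\cS^{-1}_{\alpha\beta}(t)=t^{(n+1)k}\zeta^{-|\alpha|}\cS_{\beta^\vee\alpha}(t)$, rather than the first identity (which carries the awkward factor $(-1)^{(k-1)\delta_{\lambda_1 n}}$). Taking $(\alpha,\beta)=(\sigma,\nu)$ rewrites $\cS^{-1}_{\sigma\nu}(t)=t^{(n+1)k}\zeta^{-|\sigma|}\cS_{\nu^\vee\sigma}(t)$, while taking $(\alpha,\beta)=(\sigma,\lambda^\vee)$ and using that $\vee$ is an involution, $(\lambda^\vee)^\vee=\lambda$, rewrites $\cS_{\lambda\sigma}(t)=t^{-(n+1)k}\zeta^{|\sigma|}\cS^{-1}_{\sigma\lambda^\vee}(t)$. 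Substituting both into the summand, the factors $t^{\pm(n+1)k}$ and $\zeta^{\pm|\sigma|}$ cancel, and the numerator becomes $\cS_{\nu^\vee\sigma}(t)\,\cS_{\mu\sigma}(t)\,\cS^{-1}_{\sigma\lambda^\vee}(t)$, which is precisely the summand for $C_{\bar\nu^\vee\bar\mu}^{\bar\lambda^\vee}(t)$ (the full partitions attached to the reduced indices $\bar\nu^\vee$ and $\bar\lambda^\vee$ being $\nu^\vee$ and $\lambda^\vee$). Summing over $\sigma\in\salc$ then yields the claim. The main obstacle is thus purely bookkeeping: selecting the sign-free form of the $\cS$-matrix identity and tracking the $t$- and $\zeta$-powers so that they cancel cleanly; once the correct identity is chosen, the computation is mechanical.
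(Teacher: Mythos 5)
Your proposal is correct and follows the same route as the paper: the paper's own proof simply states that all the identities follow from \eqref{BVI_formula} together with the inverse-$\cS$-matrix identities \eqref{modSfermii}, which is exactly the computation you carry out in detail (symmetry of the summand in $\lambda\leftrightarrow\mu$, cancellation of $\cS_{\rho\sigma}$ when $\lambda=\rho$, and the substitution $\cS^{-1}_{\alpha\beta}(t)=t^{(n+1)k}\zeta^{-|\alpha|}\cS_{\beta^\vee\alpha}(t)$ for the duality). Your bookkeeping of the $t$- and $\zeta$-powers checks out, so this is a fleshed-out version of the paper's one-line argument.
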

\begin{proof}
All of the asserted equalities follow from \eqref{BVI_formula} and the identities \eqref{modSfermii} for the inverse of the modular $\mc{S}$-matrix. In particular, $C_{\bar\lambda\bar\mu\bar\nu}(t):=C_{\bar\lambda\bar\mu}^{\bar\nu^\vee}(t)$ is invariant under permutations of $\bar\lambda,\bar\mu$ and $\bar\nu$. 
\end{proof}

Having identified the matrix elements \eqref{BVI_formula} for $t=\zeta^{\frac{k+1}{2}}$ with the Gromov-Witten invariants $C_{\bar\mu\bar\nu}^{\bar\lambda,d}$, we obtain as a corollary the simplest case of the Satake correspondence for quantum cohomology (c.f. \cite{golyshev2011quantum}): 

\begin{coro}\label{cor:Vminus}
Let $\V^-_k=\bigwedge^kV\otimes\RR$ be the subspace of alternating tensors. Then $\V^-_k/(t-\zeta^{\frac{k+1}{2}})\V^-_k$ together with the product $v_{\bar\lambda}v_{\bar\mu}:=S^*_{\bar\lambda}v_{\bar\mu}$ and trace functional $\veps(v_{\bar\lambda})=\delta_{\bar\lambda,\bar\rho^\vee}$ is a commutative semi-simple Frobenius algebra, which is isomorphic to the specialisation of $qH^*(\op{Gr}(k,n))$ at $q=1$.
\end{coro}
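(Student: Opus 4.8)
The plan is to transcribe the argument of Theorem~\ref{thm:symmFrob} to the alternating setting, with the divided powers $v_\lambda$, the operators $M^*_\lambda$ and Corollary~\ref{cor:Verlinde_bos} replaced by the basis $\{v_{\bar\lambda}\}_{\lambda\in\salc}$, the operators $S^*_{\bar\lambda}\in U_-$ and the Verlinde-type formula of Proposition~\ref{prop:BVI}. First I would check the algebra axioms. Commutativity of $v_{\bar\lambda}v_{\bar\mu}=S^*_{\bar\lambda}v_{\bar\mu}$ is immediate from the symmetry $C_{\bar\lambda\bar\mu}^{\bar\nu}(t)=C_{\bar\mu\bar\lambda}^{\bar\nu}(t)$ recorded in Corollary~\ref{cor:GWsymm}. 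For associativity and, simultaneously, the key operator identity I would use that the $S^*_{\bar\lambda}$ lie in the commutative subalgebra $U'$ generated by the pairwise commuting $P_r$ and are therefore jointly diagonalised by the $\cS$-matrix \eqref{modSfermi}: on $v'_\sigma=(\cS^{-1}(t))^{\otimes k}v_{\bar\sigma}$ the operator $S^*_{\bar\lambda}$ acts by the scalar $t^{-|\bar\lambda|}s_{\bar\lambda}(\zeta^\sigma)=\cS_{\lambda\sigma}(t)/\cS_{\rho\sigma}(t)$. Comparing eigenvalues and invoking the product expansion of Schur functions at roots of unity from Proposition~\ref{prop:BVI} then yields $S^*_{\bar\lambda}S^*_{\bar\mu}=\sum_{\nu\in\salc}C_{\bar\lambda\bar\mu}^{\bar\nu}(t)\,S^*_{\bar\nu}$ as endomorphisms of $\V^-_k$, so that $v_{\bar\lambda}\mapsto S^*_{\bar\lambda}$ is a unital algebra homomorphism and the product is associative; the unit is $v_{\bar\rho}=v_\emptyset$ since $C_{\emptyset\bar\mu}^{\bar\nu}(t)=\delta_{\mu\nu}$ forces $S^*_\emptyset=\op{id}$.

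For the Frobenius structure I would set $\eta(v_{\bar\lambda},v_{\bar\mu})=\veps(v_{\bar\lambda}v_{\bar\mu})$; invariance $\eta(ab,c)=\eta(a,bc)$ is automatic from associativity. To see non-degeneracy note that $\bar\rho^\vee=(n-k)^k$, so $\veps$ extracts the coefficient of the top class and $\eta(v_{\bar\lambda},v_{\bar\mu})=C_{\bar\lambda\bar\mu}^{\bar\rho^\vee}(t)$. Writing this as the fully symmetric bracket $C_{\bar\lambda\bar\mu\bar\rho}(t)$ of Corollary~\ref{cor:GWsymm} and using $\bar\rho=\emptyset$ together with $C_{\emptyset\bar\mu}^{\bar\nu}(t)=\delta_{\mu\nu}$, I would reduce it to $\eta(v_{\bar\lambda},v_{\bar\mu})=\delta_{\bar\mu,\bar\lambda^\vee}$, the Poincar\'e pairing, which is a permutation matrix on the basis and hence non-degenerate. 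Thus the induced map $\V^-_k\to\op{Hom}_{\RR}(\V^-_k,\RR)$ is an isomorphism and all remaining cobordism maps are assembled from $m$, $\eta$, the unit and $\veps$ exactly as in \eqref{FrobDelta}.

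Semi-simplicity I would argue verbatim as in Theorem~\ref{thm:symmFrob}: the $\cS$-matrix \eqref{modSfermi} diagonalises the fusion operators $S^*_{\bar\lambda}$, so the suitably normalised eigenvectors $v'_\sigma$ form a complete system of orthogonal idempotents, completeness following from the invertibility of $\cS$ recorded in \eqref{modSfermii}.

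For the identification with quantum cohomology I would invoke Proposition~\ref{prop:BVI}. Specialising to $t=\zeta^{\frac{k+1}{2}}$, i.e.\ $q=1$ in $t=\zeta^{\frac{k+1}{2}}q^{-1/n}$, collapses the prefactor $(-1)^{d(k-1)}t^{-nd}$ to $1$, so the structure constants become the $3$-point genus-zero Gromov-Witten invariants $C_{\bar\lambda\bar\mu}^{\bar\nu,d}$ computed by the Bertram-Vafa-Intriligator formula. The $\RR$-linear map $v_{\bar\lambda}\mapsto\sigma_\lambda$ onto Schubert classes then intertwines the fusion product with the quantum product at $q=1$, and since $\veps(v_{\bar\nu})=\delta_{\bar\nu,(n-k)^k}$ corresponds to evaluation against the point class under Poincar\'e duality, it is an isomorphism of Frobenius algebras. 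I expect this last step to be the only delicate one: one must check that the single degree $d$ fixed by $nd=|\bar\lambda|+|\bar\mu|-|\bar\nu|$ reproduces the entire $q=1$ quantum product $\sigma_\lambda*\sigma_\mu=\sum_{\nu,d}C_{\bar\lambda\bar\mu}^{\bar\nu,d}\sigma_\nu$ (the grading forcing a unique $d$ for each triple) and that the quantum-cohomology Frobenius trace is precisely the one induced by $\veps$; the rest is a direct transcription of the divided-power computation with $m_\lambda$ and $d_\lambda$ replaced by $s_{\bar\lambda}$ and $\cS_{\rho\sigma}$.
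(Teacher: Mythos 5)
Your proposal is correct and follows essentially the same route as the paper: commutativity and the symmetries of $C_{\bar\lambda\bar\mu}^{\bar\nu}(t)$ from Proposition \ref{prop:BVI} and Corollary \ref{cor:GWsymm}, non-degeneracy of $\eta(v_{\bar\lambda},v_{\bar\mu})=\delta_{\bar\lambda^\vee\bar\mu}$ via the involution $\bar\lambda\mapsto\bar\lambda^\vee$, semi-simplicity from the invertible $\cS$-matrix, and the identification with $qH^*(\op{Gr}(k,n))$ at $q=1$ through the Bertram--Vafa--Intriligator formula. The only cosmetic difference is that you make the associativity argument explicit via the operator identity $S^*_{\bar\lambda}S^*_{\bar\mu}=\sum_{\bar\nu}C_{\bar\lambda\bar\mu}^{\bar\nu}(t)S^*_{\bar\nu}$, where the paper simply transcribes the commutation argument from Theorem \ref{thm:symmFrob}.
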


\begin{proof}
That the product $m:\V^-_k\otimes\V^-_k\to\V^-_k$ with $m(v_{\bar\lambda},v_{\bar\mu}):=S^*_{\bar\lambda}v_{\bar\mu}$ is commutative is a direct consequence of \eqref{BVI_formula}. Associativity then follows by the analogous argument used in the case of symmetric tensors; see the proof of Theorem \ref{thm:symmFrob}. The invariance of the bilinear form $\eta(v_{\bar\lambda},v_{\bar\mu}):=\veps(v_{\bar\lambda}v_{\bar\mu})=\delta_{\bar\lambda^\vee\bar\mu}$ is proved using the identities in \eqref{GWsymm}. Since the map $\bar\lambda\mapsto\bar\lambda^\vee$ is an involution one easily verifies that $\eta$ is non-degenerate. The remaining maps corresponding to the 2-cobordisms in \eqref{2cob} are then constructed from $\eta$ and the multiplication map $m$ as in the case of symmetric tensors. 

For semi-simplicity we again employ the modular $\cS$-matrix which encodes the idempotents and that the latter span $\V^-_k$ since $\mc{S}$ is invertible.  Proposition \ref{prop:BVI} then entails that the map which sends $v_{\bar\lambda}$ onto a Schubert class gives an algebra isomorphism $\V^-_k/(t-\zeta^{\frac{k+1}{2}})\V^-_k\cong qH^*(\op{Gr}(k,n))/\langle q-1\rangle$.
\end{proof}

For completeness we mention here some differences with the case of symmetric tensors. Firstly recall that $qH^*(\op{Gr}(k,n))$ has the following known presentation (see e.g. \cite{abrams2000quantum}) in terms the Landau-Ginzburg or super-potential
\[
W_q(x_1,\ldots,x_k)=\frac{p_{n+1}(x_1,\ldots,x_k)}{n+1}+(-1)^k q p_1(x_1,\ldots,x_k)\;,
\]
where $p_r(x_1,\ldots,x_k)=\sum_{i=1}^kx_i^r$ are the power sums,
\[
qH^*(\op{Gr}(k,n))\cong \mathbb{C}[q][e_1,\ldots,e_k]/\langle\frac{\partial W_q}{\partial e_1},\ldots,\frac{\partial W_q}{\partial e_k}\rangle\;.
\]
We were unable to find an analogous super-potential for the ideal \eqref{pideal} used in the case of symmetric tensors. However, one readily sees the similarity with the equations \eqref{BAE} which are a direct consequence of \eqref{cyclotomic}.

The other difference lies in the Frobenius structure fixed via the Frobenius trace $\veps$, which is different from the trace functional the algebra would inherit viewed as a Verlinde algebra: there one fixes the value of $\veps$ on the idempotents in terms of the quantum dimensions $\cS_{\lambda\rho}/\cS_{\rho\rho}$. However, in contrast to the case of symmetric tensors, the latter are (in general) neither positive nor real for alternating tensors. Instead the ring $qH^*(\op{Gr}(k,n))$ inherits the intersection form from $H^*(\op{Gr}(k,n))$. Thus, even for $k=1$ both Frobenius algebras are different due to the different bilinear forms, $\V^+_1\cong\V_1(\sh_n)$, the Verlinde algebra of the $\sh_n$-WZW model, and $\V^-_1\cong qH^*(\mb{P}^{n-1})/(q-1)$, the specialised quantum cohomology of projective space. 

\section{Cylindric symmetric functions}
In the previous section we have computed the matrix elements of $M_\mu$ and $S_\mu$ (as well as those of their adjoints) and identified them with the fusion coefficients of 2D TQFTs (structure constants of symmetric Frobenius algebras), which for particular values of the loop parameter, turn out to be non-negative integers. It is therefore natural to ask for a combinatorial description: we show that the fusion coefficients give the expansion coefficients of so-called cylindric symmetric functions by taking matrix elements of the following Cauchy identities in $\End_{\R}\bV_k$.
\begin{lemma} 
Recall the definitions \eqref{E} and \eqref{H}. The following equalities hold in $\End_{\R}\bV_k$,
\begin{equation}\label{Cauchy1}
\prod_{j\geq 1}H(u_j)=
\prod_{i=1}^k\prod_{j\geq 1}(1-X_i u_j)^{-1}=\sum_{\lambda\in\cP_k^+}M_\lambda h_{\lambda}(u)=\sum_{\lambda\in\cP_k^+}S_\lambda s_{\lambda}(u),
\end{equation}
and
\begin{equation}\label{Cauchy2}
\prod_{j\geq 1}E(u_j)=
\prod_{i=1}^k\prod_{j\geq1}(1+X_i u_j)=\sum_{\lambda\in\cP_k^+}M_\lambda e_{\lambda}(u)=\sum_{\lambda\in\cP_k^+}S_\lambda s_{\lambda'}(u),
\end{equation}
where the $X_i$ are the matrices  \eqref{X} defined in the presentation of $\cH_k$ discussed in the previous section and the $u_j$ are some commuting indeterminates. The analogous identities hold for $H^*(u_j)$, $E^*(u_j)$ and $M_\lambda^*$, $S_{\lambda}^*$ with the $X_i$ replaced by $X_i^{-1}$.
\end{lemma}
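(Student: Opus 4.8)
The plan is to reduce both identities to the classical (dual) Cauchy identities in $\Lambda$, once the leftmost equalities in \eqref{Cauchy1} and \eqref{Cauchy2} have been recast as statements about the commuting operators $X_1,\dots,X_k$. Everything takes place in the ring of formal power series in the $u_j$ with coefficients in $\End_\R\bV_k$, so all infinite products and exponential series are well-defined coefficientwise.

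First I would rewrite $\Delta^{k-1}(H(u))$ using the exponential form \eqref{H} together with Lemma \ref{lem:P2X}, which gives $\Delta^{k-1}(P_r)=\sum_{i=1}^k X_i^r$. Since the $X_i$ act on distinct tensor factors they commute, so the exponential factorises:
\begin{equation*}
\Delta^{k-1}(H(u))=\exp\Bigl(\sum_{r\geq 1}\frac{u^r}{r}\sum_{i=1}^k X_i^r\Bigr)=\prod_{i=1}^k\exp\Bigl(\sum_{r\geq 1}\frac{(uX_i)^r}{r}\Bigr)=\prod_{i=1}^k(1-uX_i)^{-1},
\end{equation*}
where the last step is the formal power series identity $\exp\bigl(\sum_{r\geq 1}y^r/r\bigr)=(1-y)^{-1}$ applied with $y=uX_i$. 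Taking the product over $j$ and reordering the finitely-supported factors yields the first equality in \eqref{Cauchy1}. The identity \eqref{Cauchy2} is obtained in the same way from \eqref{E}: here $-\sum_{r\geq1}(-uX_i)^r/r=\log(1+uX_i)$ gives $\Delta^{k-1}(E(u))=\prod_{i=1}^k(1+uX_i)$, whence $\prod_{j\geq 1}E(u_j)=\prod_{i,j}(1+X_iu_j)$.

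For the remaining equalities I would treat the $X_i$ as commuting indeterminates. The excerpt already records that $M_\lambda=\Delta^{k-1}(M_\lambda)=\sum_{\mu\sim\lambda}X^\mu=m_\lambda(X_1,\dots,X_k)$ and $S_\lambda=\Delta^{k-1}(S_\lambda)=\sum_{|T|=\lambda}X^T=s_\lambda(X_1,\dots,X_k)$, i.e. the operators $M_\lambda,S_\lambda$ are literally the monomial and Schur polynomials evaluated at the $X_i$. The classical Cauchy identity and its dual,
\begin{equation*}
\prod_{i,j}(1-x_iu_j)^{-1}=\sum_\lambda m_\lambda(x)h_\lambda(u)=\sum_\lambda s_\lambda(x)s_\lambda(u),\qquad
\prod_{i,j}(1+x_iu_j)=\sum_\lambda m_\lambda(x)e_\lambda(u)=\sum_\lambda s_\lambda(x)s_{\lambda'}(u),
\end{equation*}
are polynomial identities in the commuting variables $x_1,\dots,x_k$ and therefore remain valid after the substitution $x_i\mapsto X_i$; since $m_\lambda$ and $s_\lambda$ vanish on $k$ variables whenever $\ell(\lambda)>k$, the sums collapse to $\lambda\in\cP_k^+$. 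This produces all four expansions. The adjoint statement follows by applying the anti-linear involution $*$ of \eqref{star}: by Lemma \ref{lem:XP} together with $P_r^*=P_{-r}$ one has $X_i^*=X_i^{-1}$, so $*$ carries $H(u),E(u),M_\lambda,S_\lambda$ to $H^*(u),E^*(u),M_\lambda^*,S_\lambda^*$ and simultaneously replaces every $X_i$ by $X_i^{-1}$, reproducing the identities verbatim.

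I do not expect a serious obstacle; the only points requiring care are bookkeeping ones. The first is that all manipulations occur in the formal power series ring in the $u_j$, so that the exponential series and the passage from $\exp$ to a product are legitimate. The second, and the most delicate, is the justification that a universal polynomial identity among symmetric functions may be specialised at the non-algebraically-independent operators $X_i$; this is legitimate precisely because it is a polynomial identity, valid independently of any relations the $X_i$ satisfy, such as $X_i^n=z$ from Lemma \ref{lem:XP}.
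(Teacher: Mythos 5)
Your proposal is correct and follows essentially the same route as the paper: both reduce the statement to the classical Cauchy identities via the identification of $P_r$, $M_\lambda$, $S_\lambda$ with power sums, monomial and Schur polynomials in the commuting operators $X_1,\dots,X_k$, with the truncation to $\ell(\lambda)\le k$ coming from the vanishing of symmetric polynomials in $k$ variables, and the adjoint case handled by $X_i^*=X_i^{-1}$ (the paper phrases this via $\Phi_-$ and Lemma \ref{lem:adjoint}). The only cosmetic difference is that you verify the first equality explicitly through the exponential form of $H(u)$ and $E(u)$, whereas the paper treats it as immediate from the homomorphism property of $\Phi_\pm$.
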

\begin{proof}
Employing the Hopf algebra homomorphisms $\Phi_{\pm}$ from Lemma \ref{lem:Lambda2U} the Cauchy identities are a direct consequence of the corresponding identities in $\Lambda$. The restriction of the sums to partitions $\lambda\in\cP^+_k$ follows from observing that $M_\lambda$ and $S_\lambda$ must identically vanish on $\bV_k$ for $\ell(\lambda)>k$. Again this follows from the familiar relations in the ring of symmetric functions: recall the projection $\Lambda\twoheadrightarrow\Lambda_k=\mb{C}[x_1,\ldots,x_k]^{\rS_k}$ by setting $x_i=0$ for $i>k$. This introduces linear dependencies among the power sums allowing one to express $p_{\lambda}$ with $\ell(\lambda)>k$ in terms of $\{p_\mu\}_{\mu\in\cP_k^+}$ as the latter form a basis of $\Lambda_k$; see \cite{macdonald1998symmetric}. As a result the projections of monomial symmetric and Schur functions are identically zero for $\ell(\lambda)>k$.  Employing Lemma \ref{lem:P2X} the same linear dependencies among the $\{P_\lambda\}_{\lambda\in\cP^+}$ then ensure that $M_\lambda$ and $S_\lambda$ are both the zero map for $\ell(\lambda)>k$. The analogous identities for the adjoint operators are obtained by the same arguments using $\Phi_-$ and Lemma \ref{lem:adjoint}.
\end{proof}

Note that the $X_i$ in \eqref{Cauchy1}, \eqref{Cauchy2} all (trivially) commute. Hence, each matrix element must yield a symmetric function in the $u_j$ and the latter are the mentioned cylindric symmetric functions. Their name originates from their definition as sums over so-called cylindric tableaux; see e.g. \cite{gessel1997cylindric,postnikov2005affine,mcnamara2006cylindric}. The latter are fillings of skew shapes (lattice paths) on the cylinder $\mf{C}_{k,n}=\mb{Z}^2/(-k,n)\mb{Z}$. The aspect in our work we like to stress here is the definition of new families of cylindric functions as well as the linking all of them to the same combinatorial action of the extended affine symmetric group in terms of ``infinite permutations''.

\subsection{Infinite permutations and the extended affine symmetric group}
Recall the realisation of the affine symmetric group $\tilde{\rS}_k=\cQ_k\rtimes \rS_k$ in terms of bijections $\tilde w:\mathbb{Z}\to\mathbb{Z}$; see \cite{lusztig1983some,bjorner1996affine,eriksson1998affine,shi2006kazhdan}.  Here we state a generalisation of this presentation for the extended affine symmetric group $\hat \rS_k=\cP_k\rtimes \rS_k$.
\begin{prop}
The extended affine symmetric group $\hat \rS_k$ can be realised as the set of bijections $\hat w:\mathbb{Z}\to\mathbb{Z}$ subject to the two conditions
\begin{equation}\label{extS}
\hat w(m+k)=\hat w(m)+k,\;\forall m\in\mathbb{Z}\quad\text{and}\quad\sum_{m=1}^k {\hat w(m)} =\binom{k}{2}\mod k\;.
\end{equation}
The group multiplication is given via composition. The subset of bijections $\tilde w$ for which $\sum_{m=1}^k {\tilde w(m)} =\binom{k}{2}$ gives the affine symmetric group $\tilde \rS_k\subset\hat \rS_k$. 
\end{prop}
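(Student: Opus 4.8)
The plan is to realise $\hat\rS_k$ concretely inside the group $W$ of all bijections $\hat w:\mathbb{Z}\to\mathbb{Z}$ satisfying the periodicity condition $\hat w(m+k)=\hat w(m)+k$ (the first identity in \eqref{extS}), and then to identify the set cut out by both conditions in \eqref{extS} with $\hat\rS_k$ itself. First I would assign to each generator an explicit element of $W$, described in ``window notation'' $(\hat w(1),\dots,\hat w(k))$. For $1\le i\le k-1$ let $\hat\sigma_i$ interchange the residue classes $i$ and $i+1$ modulo $k$ (so $m\mapsto m+1$ if $m\equiv i$, $m\mapsto m-1$ if $m\equiv i+1$, and $m\mapsto m$ otherwise); let $\hat\sigma_0$ be the corresponding affine transposition of the classes $0$ and $1$ that wraps around the window; and let $\hat\tau$ be the shift $m\mapsto m-1$. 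Each of these manifestly lies in $W$.

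The first substantive step is to verify that these bijections satisfy the defining relations \eqref{sigma} and \eqref{tau}. The quadratic relations and the commuting relations for $|i-j|>1$ are immediate because $\hat\sigma_i$ and $\hat\sigma_j$ then act on disjoint pairs of residue classes; the cubic braid relations, including the two involving $\hat\sigma_0$, reduce to a short residue-by-residue check. The one point that must be pinned down carefully is the orientation of the mixed relation \eqref{tau}: one computes $\hat\tau\hat\sigma_{i+1}\hat\tau^{-1}=\hat\sigma_i$, and this is precisely why $\hat\tau$ is taken to be the shift $m\mapsto m-1$ rather than $m\mapsto m+1$. Granting the relations, and since the composition of periodic bijections is again periodic, the assignment extends to a group homomorphism $\Psi:\hat\rS_k\to W$.

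Next I would show that $\Psi$ is an isomorphism onto $W$, exploiting the normal form $\hat w=wx^\lambda=wx_1^{\lambda_1}\cdots x_k^{\lambda_k}$ recorded after \eqref{x}. Given any $\hat w\in W$, bijectivity together with periodicity forces the window $(\hat w(1),\dots,\hat w(k))$ to be a complete system of residues modulo $k$; recording the residues yields a permutation $w\in\rS_k$ and recording the quotients yields a weight $\lambda\in\cP_k$. One then checks that $\hat w=\Psi(wx^\lambda)$ and that $(w,\lambda)$ is uniquely determined, so $\Psi$ is a bijection; since composition on $W$ transports to the semidirect-product structure of $\rS_k\ltimes\cP_k=\hat\rS_k$, it is a group isomorphism. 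The same window description shows that the congruence in the second part of \eqref{extS} is automatic: because $(\hat w(1),\dots,\hat w(k))$ is a complete residue system, $\sum_{m=1}^k\hat w(m)\equiv 0+1+\cdots+(k-1)=\binom{k}{2}\pmod k$ for every $\hat w\in W$. Hence $W$ coincides with the set defined by both conditions in \eqref{extS}, the second being recorded only to make the comparison with $\tilde\rS_k$ transparent.

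Finally, to identify the affine subgroup I would track the window sum as a function of the normal form: a direct computation shows that $\sum_{m=1}^k\Psi(wx^\lambda)(m)$ depends on $(w,\lambda)$ only through the level $\sum_i\lambda_i$, taking the form $c+k\sum_i\lambda_i$ for a fixed constant $c\equiv\binom{k}{2}\pmod k$. Since $\tilde\rS_k=\cQ_k\rtimes\rS_k$ and the root lattice is exactly $\cQ_k=\{\lambda\in\cP_k~|~\sum_i\lambda_i=0\}$, the level set $\sum_i\lambda_i=0$ is precisely $\tilde\rS_k$, which is what the exact-equality version of the second condition in \eqref{extS} records. I expect the main obstacle to be the bookkeeping in the two middle steps: checking the affine braid relations and the correct orientation of \eqref{tau} for the explicit bijections, and then setting up the residue/quotient dictionary carefully enough that the window sum comes out in the normalised form $c+k\sum_i\lambda_i$ with the constant fixed as in \eqref{extS}.
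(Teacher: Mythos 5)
Your argument is correct and lands on the same realisation, but it is genuinely more self-contained than the paper's. The paper's proof outsources the hard part: it quotes Lusztig's realisation of $\tilde\rS_k$ as periodic bijections with fixed window sum, writes down the same bijections $\sigma_i$ and the shift $\tau(m)=m-1$, checks only the mixed relation $\tau\circ\sigma_{i+1}=\sigma_i\circ\tau$, and then decomposes a general element as $\hat w=\tilde w\circ\tau^d$ (affine part times a power of the shift). You instead verify all of the relations \eqref{sigma}--\eqref{tau} directly, obtain a homomorphism into the group of periodic bijections, and invert it via the residue/quotient (window) dictionary, i.e.\ the normal form $wx^\lambda$ recorded after \eqref{x}. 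What your route buys is independence from the citation and the clean observation that the congruence in \eqref{extS} is automatic for \emph{every} periodic bijection, since the window is a complete residue system; the paper only verifies it on elements of the form $\tilde w\circ\tau^d$. What the paper's route buys is brevity.

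One point your computation actually exposes, and which you should nail down rather than leave at ``$c\equiv\binom{k}{2}\bmod k$'': with the window $(\hat w(1),\dots,\hat w(k))$ one gets $\sum_{m=1}^k\Psi(wx^\lambda)(m)=\binom{k+1}{2}+k\sum_i\lambda_i$, so the level-zero subgroup $\tilde\rS_k=\cQ_k\rtimes\rS_k$ has window sum exactly $\binom{k+1}{2}$, which agrees with $\binom{k}{2}$ only modulo $k$. The exact-equality condition as literally written in the proposition therefore cuts out the coset $\tilde\rS_k\circ\tau$ (which is not closed under composition) rather than $\tilde\rS_k$ itself; the intended normalisation is $\binom{k+1}{2}$, or equivalently a window indexed by $m=0,\dots,k-1$. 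This is a slip in the statement that the paper's proof inherits silently from the citation; your final step should fix the constant $c$ explicitly so that the identification of the affine subgroup is unambiguous.
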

\begin{proof}
For $\tilde \rS_k$ this is the known presentation for the affine symmetric group from \cite{lusztig1983some}. In particular, the simple Weyl reflections $\{ \sigma_0, \sigma_1, \ldots,\sigma_{k-1} \}$ are the maps $\mathbb{Z}\to\mathbb{Z}$ defined via
\begin{equation}
\sigma_i(m) = 
\begin{cases}
m+1, & m=i \text{ mod } k \\
m-1, & m=i+1 \text{ mod } k \\
m, & \text{otherwise .}
\end{cases}
\end{equation}
Introduce the shift operator $\tau:\mathbb{Z}\to\mathbb{Z}$ by $m\mapsto\tau(m)=m-1$. Then one has the identities %
$
\tau\circ\sigma_{i+1}=\sigma_i\circ\tau
$, where indices are understood modulo $k$. %
One easily verifies that any $\hat w= \tilde w\circ\tau^d$ with $\tilde w\in\tilde \rS_k$ and $d\in\mathbb{Z}$ obeys the stated conditions. Likewise any such map can be written in the form $\hat w= \tilde w\circ\tau^d$. Thus, the group generated by $\langle\tau,\sigma_0, \sigma_1, \ldots,\sigma_{k-1} \rangle$ is the extended affine symmetric group $\hat \rS_k$.
\end{proof}
Our main interest in this realisation of $\hat \rS_k$ is that it naturally leads to the consideration of cylindric loops.

\subsection{Cylindric skew shapes and reverse plane partitions}
Fix $n\in\mathbb{N}$. We are now generalising the notion of the weight lattice in order to define a level-$n$ action of the extended affine symmetric group. Let $\cP_{k,n}$ denote the set of functions $\lambda:\mathbb{Z}\to\mathbb{Z}$ subject to the constraint $\lambda_{i+k}=\lambda_i-n$ for all $i\in\mathbb{Z}$. 
\begin{lemma}\label{lem:Saction}
The map $\cP_{k,n}\times\hat \rS_k\to \cP_{k,n}$ with $(\lambda,\hat w)\mapsto \lambda\circ\hat w$, where the `infinite permutation'  $\hat w:\mb{Z}\to\mb{Z}$ is a bijection satisfying \eqref{extS}, defines a right action.  
\end{lemma}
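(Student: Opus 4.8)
The plan is to verify the three defining properties of a right action in turn: that $\lambda\circ\hat w$ again lies in $\cP_{k,n}$ (well-definedness), that the identity bijection acts trivially, and that the action is compatible with the group multiplication, which by the preceding proposition is composition of bijections. Throughout, the action is understood as precomposition, $(\lambda\circ\hat w)_i=\lambda_{\hat w(i)}$ for $i\in\mb{Z}$.

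First I would check well-definedness. Given $\lambda\in\cP_{k,n}$ and $\hat w\in\hat \rS_k$, I compute $(\lambda\circ\hat w)_{i+k}=\lambda_{\hat w(i+k)}$. Using the first condition in \eqref{extS}, namely $\hat w(i+k)=\hat w(i)+k$, this equals $\lambda_{\hat w(i)+k}$, and then the defining quasi-periodicity $\lambda_{j+k}=\lambda_j-n$ of elements of $\cP_{k,n}$ gives $\lambda_{\hat w(i)}-n=(\lambda\circ\hat w)_i-n$. Hence $\lambda\circ\hat w$ satisfies the constraint defining $\cP_{k,n}$ and the map is well-defined. Note that only the first of the two conditions in \eqref{extS} enters here; the second (the congruence on $\sum_{m=1}^k\hat w(m)$) is needed only to characterise membership in $\hat \rS_k$ and plays no role in the action itself.

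Next, the identity axiom is immediate since $(\lambda\circ\op{id})_i=\lambda_{\op{id}(i)}=\lambda_i$. For compatibility, let $\hat w_1,\hat w_2\in\hat \rS_k$ and unwind the definitions: $((\lambda\circ\hat w_1)\circ\hat w_2)_i=(\lambda\circ\hat w_1)_{\hat w_2(i)}=\lambda_{\hat w_1(\hat w_2(i))}=\lambda_{(\hat w_1\circ\hat w_2)(i)}=(\lambda\circ(\hat w_1\circ\hat w_2))_i$. Since the group product $\hat w_1\hat w_2$ is by definition the composite $\hat w_1\circ\hat w_2$, this is exactly the right-action identity $(\lambda\circ\hat w_1)\circ\hat w_2=\lambda\circ(\hat w_1\hat w_2)$. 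Closure of $\cP_{k,n}$ under the operation is supplied by the first step, and closure of $\hat \rS_k$ under composition is already established in the preceding proposition, so nothing further is required.

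There is no genuine obstacle here; the content is a direct verification. The only point demanding care is keeping the variance straight: because the action is by precomposition, the order of composition must be read off correctly to obtain a right (rather than left) action, and one must remember to invoke \emph{both} the quasi-periodicity of $\lambda$ and the $k$-equivariance $\hat w(i+k)=\hat w(i)+k$ of $\hat w$ in the well-definedness step. These two periodicities are precisely matched, so that the level $n$ is preserved by the action.
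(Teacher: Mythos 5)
Your verification is correct and is precisely the "straightforward computation" the paper leaves to the reader: the quasi-periodicity of $\lambda$ combined with $\hat w(i+k)=\hat w(i)+k$ gives well-definedness, and associativity of composition gives the right-action axiom. Nothing differs in substance from the paper's (omitted) argument.
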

\begin{proof}
A straightforward computation.
\end{proof}
One can convince oneself that the above is the familiar level-$n$ action of $\hat \rS_k$ on the weight lattice $\cP_k$ by observing that each $\lambda\in\cP_{k,n}$ is completely fixed by its values $(\lambda_1,\ldots,\lambda_k)$ on the set $[k]$. Employing this identification between weights in $\lambda\in\cP_k$ and their associated maps in $\lambda\in\cP_{k,n}$ (which we shall denote by the same symbol), the  set of partitions \eqref{alcove} defined earlier, constitutes an `alcove': a fundamental domain with respect to the above level-$n$ action of $\hat \rS_k$ on $\cP_{k,n}$. That is, for any $\lambda\in\cP_{k,n}$ the orbit $\lambda\hat \rS_k$ intersects $\alc$ in a unique point. 

Note that when employing this identification of weights and maps one needs to be careful not to identify the sum $\lambda+\mu$ of two weights $\lambda,\mu\in\cP_k$ with the usual addition of maps, where $\lambda+\mu:\mb{Z}\to\mb{Z}$ is defined as $(\lambda+\mu)_i=\lambda_i+\mu_i$.
  
Given $\lambda\in\alc$ and $d\in\mathbb{Z}$ denote by $\lambda[d]$ the (doubly) infinite sequence
 \[
 \lambda[d]=( \dots,\lambda[d]_{-1},\lambda[d]_0,\lambda[d]_1,\dots)=
 ( \dots,\lambda_{-d-1},\lambda_{-d},\lambda_{1-d},\dots)\,,
 \]
that is, the image $\lambda\circ\tau^d(\mathbb{Z})$ of the map $\lambda\circ\tau^d:\mathbb{Z}\to\mathbb{Z}$. This sequence defines a lattice path $\{(i,\lambda[d]_i)\}_{i\in\mb{Z}}\subset\mathbb{Z}\times\mathbb{Z}$ which projects onto the cylinder $\mf{C}_{k,n}=\mb{Z}^2/(-k,n)\mb{Z}$ and is therefore called a {\em cylindric loop}. 

While we have adopted here the notation from \cite{postnikov2005affine}, see also \cite{mcnamara2006cylindric}, our definition of cylindric loops is {\em different} from the one used in these latter works. In {\em loc. cit.} $\lambda[d]$ is obtained by shifting $\lambda[0]$ in the direction of the lattice vector $(1,1)$ in $\mb{Z}^2$, while we shift here by the lattice vector $(1,0)$ instead. We will connect with the cylindric loops from \cite{postnikov2005affine,mcnamara2006cylindric} below when discussing the shifted level-$n$ action \eqref{shifted_action} of $\hat \rS_k$.

A {\em cylindric skew diagram} or {\em cylindric shape} is defined as the number of lattice points between two cylindric loops: let  $\lambda,\mu \in \alc$ be such that $\mu_i \le \lambda_{i-d}=(\lambda\circ\tau^d)_i$ for all $i \in \mathbb{Z}$, then we write
$\mu[0] \le \lambda[d]$ and say that the set 
 \begin{equation}\label{cylshape}
 \lambda / d / \mu = \{ (i,j) \in \mathbb{Z}^2 ~|~ \mu[0]_i< j \le \lambda[d]_i  \}
 \end{equation}
 is a cylindric skew diagram (or shape) of degree $d$.
\begin{defi}
A {\em cylindric reverse plane partition} (CRPP) of shape $\Theta=\lambda/d/\mu$ is a map $\hat\pi:\Theta \to \mathbb{N}$ such that for any $(i,j) \in \Theta$ one has  $\hat\pi(i,j) = \hat\pi(i+k,j-n)$ together with
 \[
  \hat\pi(i,j) \le \hat\pi(i+1,j)\quad\text{ and }\quad\hat\pi(i,j) \le \hat\pi(i,j+1),
\]
provided $(i+1,j),(i,j+1) \in \Theta$. In other words, the entries in the squares between the cylindric loops $\mu[0]$ and $\lambda[d]$ are non-decreasing from left to right in rows and down columns.
\end{defi}
Alternatively, $\hat\pi$ can be defined as a sequence of cylindric loops 
\begin{equation} \label{SequenceCylLoops}
( \lambda^{(0)}[0]=\mu[0], \lambda^{(1)}[d_1], \dots, \lambda^{(l)}[d_l]=\lambda[d] )
\end{equation}
with $\lambda^{(i)} \in \alc$ and $ d_i - d_{i-1} \ge 0$ such that 
$\hat\pi^{-1}(i)=\lambda^{(i)} / (d_i-d_{i-1}) / \lambda^{(i-1)}$ is a cylindric skew diagram; see Figure \ref{fig:CRPPexamples} for examples when $n=4$ and $k=3$. %
The weight of $\hat\pi$ is the vector $\op{wt}(\hat\pi)=(\op{wt}_1(\hat\pi),\ldots,\op{wt}_l(\hat\pi))$ where $\op{wt}_i(\hat\pi)$ is the number of lattice points $(a,b)\in \lambda^{(i)} / (d_i-d_{i-1}) / \lambda^{(i-1)}$ with $1\leq a\leq k$. 

\subsection{Cylindric complete symmetric functions} \label{CylCom}
Using CRPPs we now introduce a new family of symmetric functions which can be viewed as generalisation of functions which arise from the coproduct of complete symmetric functions; see Appendix A for details. In the last part of this section we then show that they form a positive subcoalgebra of $\Lambda$ whose structure constants are the fusion coefficients from \eqref{Lfusion} and \eqref{Verlinde_bos}.

Given $\mu\in\alc$ note that $\mu_1-\mu_k<n$ and, hence, its stabiliser group $\rS_\mu\subset \rS_k\subset\hat \rS_k$. Define $\tilde \rS^{\mu}$ 
as the minimal length representatives of the cosets $\rS_\mu\backslash\tilde \rS_k$. The following is a generalisation of the set \eqref{setTheta} in Appendix A to the cylindric case: for $\lambda,\mu \in \alc$ and $d \in \mathbb{Z}_{\ge 0}$ define the set
\begin{equation} \label{SetAffinePermTheta}
  \{ \tilde{w}\in \tilde{\rS}^\mu~|~\mu\circ\tilde{w} \le\lambda\circ\tau^d \}
\end{equation}
and denote by $\theta_{\lambda/d/\mu}$ its cardinality.
\begin{lemma} \label{lem:ExpressSetTheta}
The set \eqref{SetAffinePermTheta} is non-empty if and only if $\lambda/d/\mu$ is a valid cylindric skew shape, i.e. if $\mu\leq\lambda\circ\tau^d$. In the latter case we have the following expression for its cardinality,
\begin{equation} \label{CylindricTheta}
 \theta_{\lambda/d/\mu}=\prod_{i=1}^n { {(\lambda\circ\tau^d)'_i-\mu'_{i+1}} \choose {\mu'_i-\mu'_{i+1}} } - 
  \prod_{i=1}^n { {(\lambda\circ\tau^{d-1})'_i-\mu'_{i+1}} \choose {\mu'_i-\mu'_{i+1}} }\;,
\end{equation}
where the binomial coefficients are understood to vanish whenever one of their arguments is negative.
\end{lemma}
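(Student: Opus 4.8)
The plan is to reduce the enumeration to a count of cylindric loops, evaluate that count level-by-level as a product of binomials, and then extract the cylindric correction that accounts for the second product in \eqref{CylindricTheta}.

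First I would record that $\tilde w\mapsto \mu\circ\tilde w$ is injective on the minimal-length representatives $\tilde\rS^\mu$: two representatives have the same image iff they lie in the same coset of $\rS_\mu$, and each such coset meets $\tilde\rS^\mu$ exactly once. Hence $\theta_{\lambda/d/\mu}$ equals the number of distinct cylindric loops $\kappa=\mu\circ\tilde w$ in the $\tilde\rS_k$-orbit of $\mu$ satisfying $\kappa\le\lambda\circ\tau^d$ pointwise; all of these share a common winding, since $\tilde\rS_k$ preserves the period-sum $\sum_{i=1}^k\kappa_i$. For the non-emptiness criterion I would use that both $\mu$ and $\lambda\circ\tau^d$ are weakly decreasing as bi-infinite sequences in $\cP_{k,n}$ (this follows from $\mu,\lambda\in\alc$, where $\lambda_1-\lambda_k<n$ keeps the sequence monotone across the drop by $n$ at each period). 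If $\mu\le\lambda\circ\tau^d$ then the identity lies in the set, so it is non-empty; conversely, if some rearrangement $\kappa$ of $\mu$ satisfies $\kappa\le\lambda\circ\tau^d$, then comparing order statistics against the already-sorted bound $\lambda\circ\tau^d$ shows that the dominant rearrangement $\mu$ itself obeys $\mu\le\lambda\circ\tau^d$, which is precisely the condition that $\lambda/d/\mu$ be a valid cylindric shape.

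The combinatorial core is an auxiliary planar count. Writing $\nu'_i=\#\{j\ge 1:\nu_j\ge i\}$ for the conjugate occurring in \eqref{CylindricTheta}, I would enumerate the lattice configurations weakly below a fixed weakly-decreasing bound $\beta$ and of content $\mu$ by building a nested chain of position-sets $A_n\subseteq A_{n-1}\subseteq\cdots\subseteq A_1$, where $A_i$ records the positions carrying a value $\ge i$. The constraints are $|A_i|=\mu'_i$ (content $\mu$) and $A_i\subseteq\{j\ge 1:\beta_j\ge i\}$ (staying below $\beta$); as $\beta$ is decreasing, the latter set is the initial segment $[1,\beta'_i]$. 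Enlarging $A_{i+1}$ to $A_i$ can be done in $\binom{\beta'_i-\mu'_{i+1}}{\mu'_i-\mu'_{i+1}}$ ways, and since the choices at successive levels are independent the total is
\[
\Xi(\beta)=\prod_{i=1}^n\binom{\beta'_i-\mu'_{i+1}}{\mu'_i-\mu'_{i+1}}.
\]
This is the cylindric analogue of the non-cylindric enumeration of \eqref{setTheta} in Appendix A, and with $\beta=\lambda\circ\tau^d$ it reproduces the first product in \eqref{CylindricTheta}.

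Finally I would pass to the universal cover of the cylinder $\mathfrak{C}_{k,n}$ and account for winding. The bound $\lambda\circ\tau^d$ and its translate $\lambda\circ\tau^{d-1}$ satisfy $\lambda\circ\tau^{d-1}\le\lambda\circ\tau^d$, so the configurations counted by $\Xi(\lambda\circ\tau^{d-1})$ form a sub-family of those counted by $\Xi(\lambda\circ\tau^d)$; the claim is that the fixed-winding affine loops enumerated by $\theta_{\lambda/d/\mu}$ are in bijection with the configurations lying below $\lambda\circ\tau^d$ but not below $\lambda\circ\tau^{d-1}$, i.e. those reaching into the fundamental strip between the two consecutive translates, which yields $\theta_{\lambda/d/\mu}=\Xi(\lambda\circ\tau^d)-\Xi(\lambda\circ\tau^{d-1})$ as required. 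I expect this last step to be the main obstacle: one must match the winding datum distinguishing $\tilde\rS_k$ inside $\hat\rS_k=\tilde\rS_k\rtimes\langle\tau\rangle$ with the choice of fundamental strip and verify that each representative in $\tilde\rS^\mu$ is counted in exactly one strip, so that no loop is double-counted and none is missed. The subtlety is that the naive restriction of a periodic loop to the positive ray does \emph{not} land in the correct strip, so the bijection must be organised through the winding number $d$ rather than by truncation; making this precise, together with the routine check that the binomials vanish correctly in the boundary cases where an argument becomes negative, is where the real work lies.
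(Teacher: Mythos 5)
Your first two steps are sound and in fact mirror the paper's own strategy: the injectivity of $\tilde w\mapsto\mu\circ\tilde w$ on $\tilde\rS^\mu$ and the non-emptiness criterion are handled the same way, and your product $\Xi(\beta)=\prod_{i=1}^n\binom{\beta'_i-\mu'_{i+1}}{\mu'_i-\mu'_{i+1}}$ is exactly the Appendix~A count $\theta_{\Lambda^{(d)}/\mu^{(d)}}$ for the padded weights $\Lambda^{(d)}=(n,\dots,n,\lambda_1,\dots,\lambda_k)$ and $\mu^{(d)}=(\mu_1,\dots,\mu_k,0,\dots,0)$ in $\rS_{k+d}$, using $(\Lambda^{(d)})'_i=\lambda'_i+d=(\lambda\circ\tau^d)'_i$. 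The arithmetic of your final step is also consistent: the configurations below $\lambda\circ\tau^d$ but not below $\lambda\circ\tau^{d-1}$ do number $\Xi(\lambda\circ\tau^d)-\Xi(\lambda\circ\tau^{d-1})$.

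The genuine gap is the bijection itself, which you explicitly defer (``where the real work lies'') --- and this is not a routine verification but the entire content of the paper's proof. Two concrete difficulties must be resolved. First, the set \eqref{SetAffinePermTheta} is \emph{nested increasing} in $d$ (since $\lambda\circ\tau^{d-1}\le\lambda\circ\tau^d$ pointwise), so an affine element whose loop already satisfies $\mu\circ\tilde w\le\lambda\circ\tau^{d-1}$ still belongs to the degree-$d$ set, yet your target set consists only of configurations that \emph{violate} the $\tau^{d-1}$ bound; the bijection therefore cannot act by any form of truncation or restriction of the periodic loop, and you give no candidate map. Second, passing from a periodic loop on $\mb{Z}$ to a finitely supported configuration on a window of length $k+d>k$ requires unpacking $\tilde w$ as $w\circ x^{-\beta}$ with $w\in\rS^\mu$, $\beta\in\cP_k$, $\beta_i\ge 0$, $|\beta|=d$ (itself a non-trivial reduction via $\tau^{-d}=(\sigma_{k-1}\cdots\sigma_1)^{-d}\circ x^{-\beta}$), and then interleaving the values $(\mu\circ w)_{j_i}$ at the positions with $\beta_{j_i}>0$ with blocks of $\beta_{j_i}-1$ zeros to produce a weight $\gamma\in\cP_{k+d}$ with $\gamma\le\Lambda^{(d)}$ and $\gamma_1>0$; the subtraction then arises from splitting on $\gamma_1>0$ versus $\gamma_1=0$ (the latter biject with the $(k+d-1)$-window count), not from your set difference, although both realisations have the same cardinality. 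Without an explicit map and its inverse, your argument only shows that the two sides of \eqref{CylindricTheta} have compatible shapes, not that they are equal.
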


\begin{proof}
First note that if $d=0$ then we must have $\tilde w\in \rS^\mu$, because $\lambda,\mu\in\alc$. In this case we recover \eqref{SetTheta} and Lemma \ref{lem:BinomialTheta} proved in Appendix A.   

Assume now that $d>0$. Restricting the maps $\mu$ and $\lambda\circ\tau^d$ to the set $[k]$ we recover the corresponding weights in $\cP^+_k$. By a similar argument as in the case $d=0$, however with more involved steps due to the level-$n$ action of the affine symmetric group, one then arrives at the first statement, i.e. that \eqref{SetAffinePermTheta} is non-empty if and only if $\mu_i\leq (\lambda\circ\tau^d)_i$ for $i\in[k]$. This inequality is then extended to all $i\in\mb{Z}$ using that $\mu_{i+k}=\mu_i-n$ and $(\lambda\circ\tau^d)_{i+k}=(\lambda\circ\tau^d)_i-n$ which implies that $\lambda/d/\mu$ is a valid cylindric skew shape. Since $\mu\leq (\lambda\circ\tau^d)$ if and only if $\mu'\leq (\lambda\circ\tau^d)'$ the right hand side in \eqref{CylindricTheta} is zero if $\lambda/d/\mu$ is not a cylindric skew shape. 

Thus, we now assume $\mu\leq\lambda\circ\tau^d$ for the remainder of the proof. To compute the cardinality we will rewrite the set \eqref{SetAffinePermTheta} such that it can be expressed in terms of non-cylindric weights for $\rS_{k+d}$ and then apply again the result from Appendix A with $k$ replaced by $k+d$. 

Each element in $\tilde{\rS}^\mu$ can be expressed as $w \circ x^{-\alpha}$ with $w \in \rS^\mu$ and $\alpha \in \cQ_k$ (that is, $|\alpha|=\sum_i\alpha_i=0$). Thus, the set \eqref{SetAffinePermTheta} can be rewritten as
\[
  \big\{(w,\alpha) \in \rS^\mu \times \cQ_k ~|~ 
  \mu \circ w \circ x^{-\alpha} \circ \tau^{-d} \le \lambda  \big\} \;.
 \]
Since $\tau=x_k \circ \sigma_{k-1} \circ \cdots \circ  \sigma_1$ it follows that $\tau^{-d}=(\sigma_{k-1} \circ \cdots \circ \sigma_1)^{-d} \circ x^{-\beta}$ for some $\beta \in \cP_k$ with $|\beta|=d$. Noting further that 
for any $w' \in \rS_k$  we have that $ x^{-\alpha} \circ w' = w' \circ x^{-\alpha \circ w'}$, we can conclude that $\beta'=-(\alpha \circ w'+\beta)$
ranges over all the elements in $\cP_k$ with $|\beta'|=d$ if $\alpha$ ranges over all the elements in $\cQ_k$. Thus, we arrive at the alternative expression 
 \[
  \big\{(w,\beta') \in \rS^\mu \times \cP_k ~|~
  \mu \circ w \circ (\sigma_{k-1} \circ \cdots \circ \sigma_1)^{-d} \circ x^{-\beta'} \le \lambda,\,|\beta'|=d \big\} \;.
 \] 
Each element $w \circ (\sigma_{k-1} \circ \cdots \circ \sigma_1)^{-d}$
has a unique decomposition $w_\mu \circ w^\mu$, with $w_\mu\in \rS_\mu$ and $w^\mu \in \rS^\mu$, such that
 distinct $w$ correspond to distinct $w^\mu$. Therefore, the set \eqref{SetAffinePermTheta} is in bijection with the set 
 \begin{equation} \label{SubSetAffineSmu}
\mb{A}=\big\{(w,\beta) \in \rS^\mu \times \cP_k~|~ 
  \mu \circ w \circ x^{-\beta} \le \lambda, \, |\beta|=d\big\} \;,
 \end{equation}
where $\beta\in\cP_k$ can only have non-negative parts $\beta_i\ge 0$ as $\lambda,\mu \in \alc$.

We now express the cardinality of the set \eqref{SubSetAffineSmu} for $\hat \rS_k$ in terms of the cardinality of the (non-cylindric) set \eqref{setTheta} for $\rS_{k+d}$. Define the two weights in $\cP^+_{k+d}$ setting $\Lambda^{(d)}=(n,n,\dots,n,\lambda_1,\dots,\lambda_k)$ and 
 $\mu^{(d)}=(\mu_1,\dots,\mu_k,0,\dots,0)$.
 We now construct a bijection between \eqref{SubSetAffineSmu} and the set 
 \begin{equation} \label{SetBarMuLambdaD}
 \mb{B}= \{ \bar{w} \in \rS^{\mu^{(d)}} ~|~ \mu^{(d)} \circ \bar{w} \le \Lambda^{(d)},\;
  (\mu^{(d)} \circ \bar{w})_1 > 0 \} \;.
 \end{equation}
Fix $(w,\beta)\in\mb{A}$ and let $J(\beta)=\{j_1,\dots,j_l\} \subset [k]$ be the set of indices for which $\beta_{j_i}> 0$, $i=1,\dots,l$. 
Denote by $\bar J(\beta)=[k]\backslash J(\beta)$ its complement. Define a weight $\gamma=\gamma(w,\beta,d,\mu)\in\cP_{k+d}$ whose parts $\gamma_j$ for $1\le j\le d$ are fixed by the vector
\[
 \big((\mu \circ w)_{j_1},\underbrace{0,\dots,0}_{\beta_{j_1}-1},(\mu \circ w)_{j_2},\underbrace{0,\dots,0}_{\beta_{j_2}-1},
  \dots,(\mu \circ w)_{j_l},\underbrace{0,\dots,0}_{\beta_{j_l}-1} \big) \;.
 \]
 and for $1\le j\le k$ we set 
\[
\gamma_{j+d}=\left\{
\begin{array}{ll}
(\mu\circ w)_{j}, & j\in\bar J(\beta)\\
0,& \text{ else}
\end{array}\right.
\,.
\]
See Figure \ref{fig:proof} for an illustration. Define $\bar w(w,\beta)\in \rS^{\mu^{(d)}}\subset \rS_{k+d}$ to be the unique permutation such that $\gamma=\mu^{(d)}\circ\bar w$. By construction, it follows that $\gamma\le\Lambda^{(d)}$ and $\gamma_1>0$. Hence, $\bar w(w,\beta)\in\mb{B}$. 

Conversely, given $\bar w\in\mb{B}$, define $\gamma=\gamma(\bar w)=\mu^{(d)}\circ\bar w$. Then the parts of $\gamma$ fix the weight $\beta(\bar w)\in\cP_k$ in $\mb{A}$ by reversing the above construction. In particular, the positions of nonzero parts $\gamma_j$ with $d+1\le j\le d+k$ fix the set $\bar J(\beta)$. From $\bar J(\beta)$ and its complement $J(\beta)$ in $[k]$ one constructs a vector $\bar\gamma\in\cP_k$ by setting $\bar\gamma_j=\gamma_{j+d}$ if $j\in\bar J(\beta)$ and if $j=j_i\in J(\beta)$ then let $\bar \gamma_j$ be the $i$th nonzero part among the first $d$ parts of $\gamma$. Define $w=w(\bar w)\in \rS^{\mu}\subset \rS_k$ via $\bar\gamma=\mu\circ w$. It then follows again by construction that $(w(\bar w),\beta(\bar w))\in\mb{A}$.

By distinguishing the cases $(\mu^{(d)} \circ w')_1>0$ and $(\mu^{(d)} \circ w')_1=0$  with $w'\in \rS^{\mu^{(d)}}\subset \rS_{k+d}$, the cardinality of the set \eqref{SetBarMuLambdaD} can be written as the difference of the cardinalities of the sets  
$\{ w' \in \rS^{\mu^{(d)}}~|~ \mu^{(d)} \circ w' \leq \Lambda^{(d)} \}\subset \rS_{k+d} $
and $\{ w'' \in \rS^{\mu^{(d-1)}}~|~ \mu^{(d-1)} \circ w'' \leq \Lambda^{(d-1)} \} \subset \rS_{k+d-1}$. Namely, suppose $(\mu^{(d)}\circ w')_1=0$, then we may assume $w'(1)=k+d$, because otherwise we simply apply a permutation $w'''\in \rS_{\mu^{(d)}}$ such that the assumption holds (recall that the last $d$ parts of $\mu^{(d)}$ are all zero by definition). Define $w''\in \rS_{k+d-1}$ by setting $w''(i)=w'(i+1)$ for $i=1,\ldots,k+d-1$. Thus, using Lemma \ref{lem:BinomialTheta} and \eqref{setTheta} from the appendix we arrive at
\[
\theta_{\lambda/d/\mu}=\theta_{\Lambda^{(d)}/\mu^{(d)}}-\theta_{\Lambda^{(d-1)}/\mu^{(d-1)}} \;,
\]
and since $(\Lambda^{(d)})'_i=\lambda'_i+d=(\lambda \circ \tau^d)'_i$ equation \eqref{CylindricTheta} follows.
\end{proof}
Similar to the non-cylindric case treated in Lemma \ref{lem:SkewComplete} in the appendix, we employ \eqref{CylindricTheta} 
to define weighted sums over cylindric reverse plane partitions:
given a CRPP $\hat\pi$ set 
\begin{equation}\label{thetapihat}
\theta_{\hat \pi}=\prod_{i\ge 1} \theta_{\lambda^{(i)}/(d_i-d_{i-1})/\lambda^{(i-1)}},
\end{equation}
where the cylindric skew diagram $\lambda^{(i)}/(d_i-d_{i-1})/\lambda^{(i-1)}$ 
is the pre-image $\hat\pi^{-1}(i)$, and we denote by $u^{\hat\pi}$ the monomial 
$u_1^{\op{wt}_1(\hat\pi)}u_2^{\op{wt}_2(\hat\pi)}\cdots$ in some commuting indeterminates $u_i$. If $d=0$ we recover the definition \eqref{theta} from Appendix A, i.e. $\theta_{\lambda/0/\mu}=\theta_{\lambda/\mu}$.

\begin{figure}
\centering
\includegraphics[width=.9\textwidth]{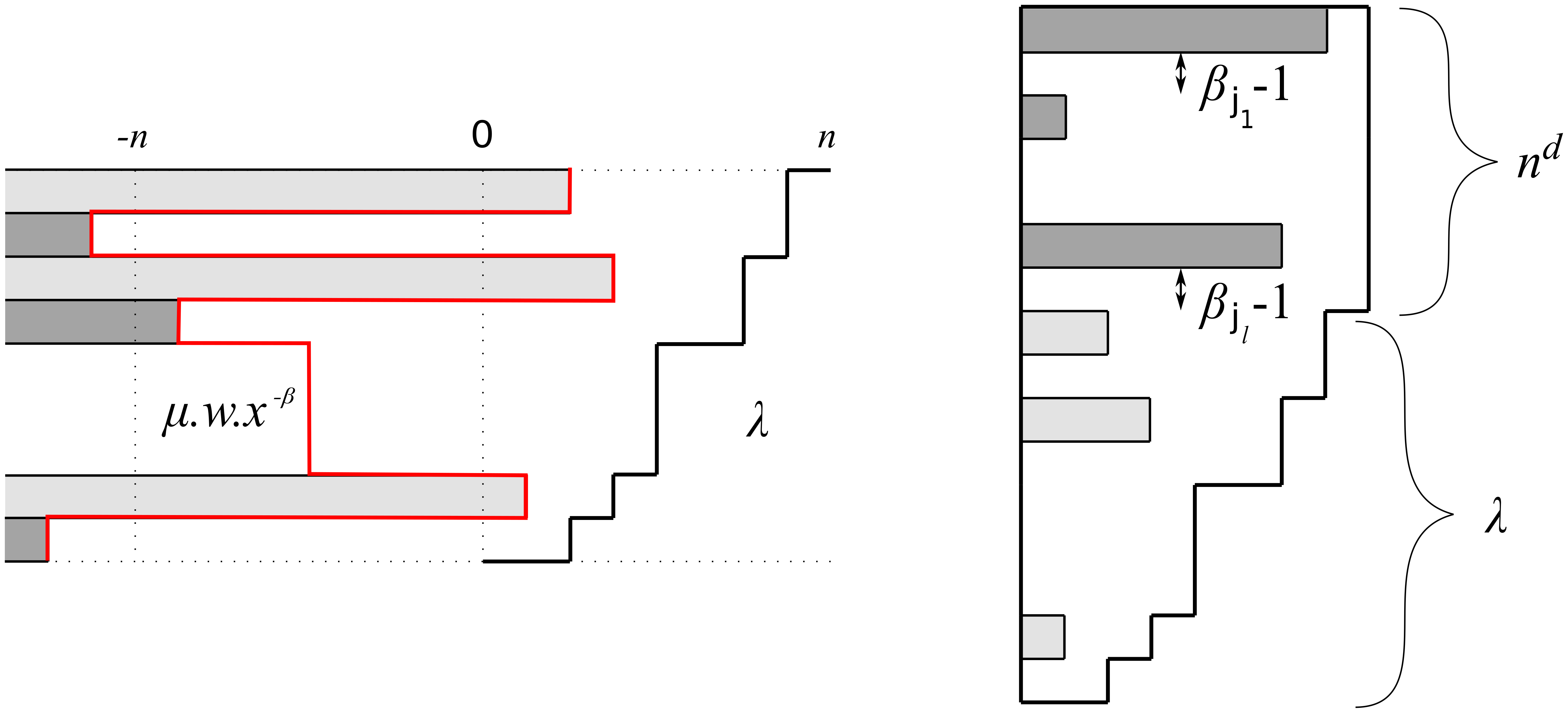}  
\caption{A graphical depiction of the construction of the weight vector $\gamma\in\cP_{k+d}$ (shown on the right) in the proof of Lemma \ref{lem:ExpressSetTheta}. }
\label{fig:proof}
\end{figure}

\begin{defi}
For $\lambda,\mu \in \alc$ and $d \in \mathbb{Z}_{\ge 0}$, define the {\em cylindric complete symmetric function} $h_{\lambda/d/\mu}$ as the weighted sum
\begin{equation}\label{cylh}
h_{\lambda/d/\mu}(u)=\sum_{\hat\pi} \theta_{\hat\pi} u^{\hat\pi}
\end{equation}
over all cylindric reverse plane partitions $\hat\pi$ of shape $\lambda/d/\mu$.
\end{defi}
Note that when setting $d=0$ we recover the (non-cylindric) skew complete cylindric function discussed in Appendix A, that is $h_{\lambda/0/\mu}=h_{\lambda/\mu}$. We now prove for $d>0$ that $h_{\lambda/d/\mu}$ is a symmetric function by expanding it into the bases of monomial and complete symmetric functions. Proceeding in close analogy to the non-cylindric case $d=0$ discussed in Appendix A, we first link the expansion coefficients to product identities in the quotient ring $\Lambda_k\otimes\RR/\mc{J}_n$ from Theorem \ref{thm:bosquotient}. As the latter is isomorphic to $\V^+_k$ we shall use the same notation for both of them in what follows.

Let $\lambda,\mu \in \alc$, $\nu\in \cP^+$, $d\in\mb{Z}_{\ge 0}$ and define
\begin{equation} \label{thetaLambdaMuNu}
 \theta_{\lambda / d / \mu}(\nu)= \sum_{\hat \pi} \theta_{\hat \pi}\;,
\end{equation}
where the sum is restricted to CRPP $\hat \pi$ of shape $\lambda/d/\mu$ and weight $\nu$. 
\begin{lemma}\label{lem:mMuhNuQuotient}
The following product rule holds in $\V^+_k$
 \begin{equation} \label{hm2m}
  m_\mu(x_1^{-1},\ldots,x^{-1}_k) h_\nu (x_1^{-1},\ldots,x^{-1}_k)   =  \sum_{\lambda \in \alc} z^{-d} \theta_{\lambda / d / \mu}(\nu) m_\lambda(x_1^{-1},\ldots,x^{-1}_k) \;.
 \end{equation}
where $d = \frac{|\mu|+|\nu|-|\lambda|}{n}$ in the sum on the right hand side. In particular, $\theta_{\lambda / d / \mu}(\nu)$ is nonzero only if $d n+|\lambda|=|\mu|+|\nu|$.
\end{lemma}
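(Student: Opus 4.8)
The plan is to carry out the entire computation inside the quotient ring $\V^+_k\cong\Lambda_k\otimes\RR/\mc{J}_n$ of Theorem~\ref{thm:bosquotient}, under the identification $m_\lambda(x^{-1}_1,\ldots,x^{-1}_k)\mapsto v_\lambda$, $\lambda\in\alc$. I would evaluate the product $m_\mu(x^{-1})h_\nu(x^{-1})$ in two stages: first expand it in the ambient ring $\Lambda_k$ of symmetric functions in the $x^{-1}_i$, and then reduce modulo $\mc{J}_n$. For the first stage the corresponding non-cylindric product rule in $\Lambda_k$, established in Appendix~A (Lemma~\ref{lem:SkewComplete}), reads
\[
m_\mu(x^{-1})\,h_\nu(x^{-1})=\sum_{\sigma\in\cP_k^+}\theta_{\sigma/\mu}(\nu)\,m_\sigma(x^{-1}),
\]
where $\theta_{\sigma/\mu}(\nu)=\theta_{\sigma/0/\mu}(\nu)$ is the ordinary reverse-plane-partition count of \eqref{thetaLambdaMuNu}. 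Since such a reverse plane partition requires $|\sigma|=|\mu|+|\nu|$, only partitions $\sigma$ of that size contribute; combined with the reduction below, this forces a single value of $d$ and yields the ``in particular'' assertion.

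The second stage rests on a monomial reduction rule in $\V^+_k$. Using the relations \eqref{BAE}, i.e. $x_i^n=z$, a single monomial reduces as $x^{-\sigma}=z^{-c}x^{-\bar\sigma}$, where $\bar\sigma_i\in\{1,\ldots,n\}$ is the residue of $\sigma_i$ modulo $n$ and $c=\tfrac1n\sum_i(\sigma_i-\bar\sigma_i)$ depends only on the multiset of parts of $\sigma$ (note that $\sigma\supseteq\mu$ with $\mu\in\alc$ forces all parts of $\sigma$ to be positive, so $c\ge 0$). As this reduction commutes with the $\rS_k$-action permuting the variables and $\rS_k$ acts transitively on the monomials of both $m_\sigma(x^{-1})$ and $m_\lambda(x^{-1})$, the fibres of the induced map on monomials all have size $|\rS_\lambda|/|\rS_\sigma|$. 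Hence, writing $\lambda\in\alc$ for the partition obtained by sorting $(\bar\sigma_1,\ldots,\bar\sigma_k)$ and $d=c=(|\sigma|-|\lambda|)/n$,
\[
m_\sigma(x^{-1})=z^{-d}\,\frac{|\rS_\lambda|}{|\rS_\sigma|}\,m_\lambda(x^{-1})\qquad\text{in }\V^+_k.
\]

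Substituting this rule into the $\Lambda_k$-expansion and grouping the terms according to the alcove representative $\lambda$ they reduce to, one obtains
\[
m_\mu(x^{-1})\,h_\nu(x^{-1})=\sum_{\lambda\in\alc}z^{-d}\Big(\sum_{\sigma\downarrow\lambda}\frac{|\rS_\lambda|}{|\rS_\sigma|}\,\theta_{\sigma/\mu}(\nu)\Big)m_\lambda(x^{-1}),
\]
where $\sigma\downarrow\lambda$ denotes that $\sigma\in\cP_k^+$ folds onto $\lambda$ under the level-$n$ action, all such $\sigma$ having $|\sigma|=|\mu|+|\nu|$ and hence the same $d=(|\mu|+|\nu|-|\lambda|)/n$. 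Comparing with the claimed identity, the lemma reduces to the \emph{folding identity}
\[
\theta_{\lambda/d/\mu}(\nu)=\sum_{\sigma\downarrow\lambda}\frac{|\rS_\lambda|}{|\rS_\sigma|}\,\theta_{\sigma/\mu}(\nu).
\]

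This folding identity is where I expect the main difficulty to lie. The natural strategy is to read it off the affine-permutation description of Lemma~\ref{lem:ExpressSetTheta}: cutting a cylindric reverse plane partition of shape $\lambda/d/\mu$ along the cylinder $\mf{C}_{k,n}$ unrolls it into an ordinary reverse plane partition of some shape $\sigma/\mu$ with $\sigma$ in the $\hat\rS_k$-orbit of $\lambda$, and the ratio $|\rS_\lambda|/|\rS_\sigma|$ records the multiplicity with which the periodic object is recovered from its representatives. Concretely I would combine the decomposition $\tilde\rS^\mu\leftrightarrow\rS^\mu\times\cQ_k$ with the unrolling formula $\theta_{\lambda/d/\mu}=\theta_{\Lambda^{(d)}/\mu^{(d)}}-\theta_{\Lambda^{(d-1)}/\mu^{(d-1)}}$ established in the proof of Lemma~\ref{lem:ExpressSetTheta}, promoting it from the single-layer count to arbitrary weight $\nu$. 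An alternative, and perhaps more economical, route is induction on $\ell(\nu)$: by associativity in $\V^+_k$ and the description \eqref{SequenceCylLoops} of a CRPP as a chain of cylindric loops, the general statement follows from the single-part Pieri rule $\nu=(r)$, where $\theta_{\lambda/d/\mu}((r))=\theta_{\lambda/d/\mu}$ is exactly the affine-permutation count \eqref{CylindricTheta}, the inductive step requiring only the layer factorisation $\theta_{\lambda/d/\mu}(\nu)=\sum_{\kappa\in\alc}\theta_{\kappa/e/\mu}\,\theta_{\lambda/(d-e)/\kappa}(\nu_{\ge 2})$ dictated by multiplying first by $h_{\nu_1}$.
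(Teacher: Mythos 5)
Your overall strategy is legitimate and genuinely different from the paper's: you expand $m_\mu(x^{-1})h_\nu(x^{-1})$ in the ambient ring $\Lambda_k$ via the non-cylindric rule of Lemma~\ref{lem:SkewComplete} and then fold each $m_\sigma(x^{-1})$ into the alcove using the reduction $m_\sigma(x^{-1})=z^{-d}\tfrac{|\rS_\lambda|}{|\rS_\sigma|}m_\lambda(x^{-1})$ (this is \eqref{mreduce}; it is elementary and proved independently, so there is no circularity in invoking it here). This correctly reduces the lemma to the folding identity $\theta_{\lambda/d/\mu}(\nu)=\sum_{\sigma\downarrow\lambda}\tfrac{|\rS_\lambda|}{|\rS_\sigma|}\theta_{\sigma/\mu}(\nu)$. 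The genuine gap is that you never prove this identity, and it is exactly the point where the cylindric combinatorics enters; both of your sketched strategies remain at the level of intentions. In particular, in your second route the assertion that the case $\nu=(r)$ ``is exactly the affine-permutation count \eqref{CylindricTheta}'' only restates the definition of the left-hand side; what has to be shown is $\theta_{\lambda/d/\mu}=\sum_{\sigma\downarrow\lambda}\tfrac{|\rS_\lambda|}{|\rS_\sigma|}\theta_{\sigma/\mu}$, and that is not a tautology.

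The missing step can be supplied from the set \eqref{SubSetAffineSmu}: its cardinality is $\sum_{\beta}\bigl|\{w\in\rS^\mu~|~(\mu\circ w)_i\le\lambda_i+n\beta_i\ \forall i\}\bigr|$ over $\beta\in\cP_k$ with $\beta_i\ge 0$, $|\beta|=d$; the inner count depends only on the partition $\sigma$ obtained by sorting the composition $(\lambda_i+n\beta_i)_i$ (permute coordinates), where it equals $\theta_{\sigma/\mu}$, and the number of admissible $\beta$ for which $(\lambda_i+n\beta_i)_i$ sorts to a fixed $\sigma$ is $\prod_j m_j(\lambda)!/\prod_l m_l(\sigma)!=|\rS_\lambda|/|\rS_\sigma|$. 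That settles the single-layer case, and the general case follows by iterating over the parts of $\nu$ exactly as you indicate. For comparison, the paper avoids the folding identity altogether: it reduces at once to $\nu=(r)$ and computes the coefficient of the single monomial $x^{-\lambda}$ in $m_\mu(x^{-1})h_r(x^{-1})$ directly in the quotient using $x_i^{-n}=z^{-1}$, which produces the set \eqref{SubSetAffineSmu} of pairs $(w,\beta)$ whose cardinality is $\theta_{\lambda/d/\mu}$ by Lemma~\ref{lem:ExpressSetTheta}. That direct monomial count is shorter; your detour through $\Lambda_k$ is workable but, as written, incomplete at its decisive step.
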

\begin{proof}
It suffices to show that in $\V^+_k$
  \begin{equation}
   m_\mu(x_1^{-1},\ldots,x^{-1}_k)  h_r(x_1^{-1},\ldots,x^{-1}_k) = \sum_{\lambda \in \alc} z^{-d} \theta_{\lambda / d / \mu} m_\lambda(x_1^{-1},\ldots,x^{-1}_k),
 \end{equation}
 where the sum runs over all $\lambda \in \alc$ such that $\mu \le \lambda \circ \tau^d$ with $d=\frac{r+|\mu|-|\lambda|}{n}\in \mathbb{Z}_{\ge 0}$.  The general case \eqref{hm2m} then follows by repeatedly applying the latter expansion.
 
First note that the coefficient of $m_\lambda$ in $m_\mu h_r$ must equal the coefficient of  the monomial term $x^{-\lambda}$ in the same product. Since $m_\mu$ and $h_r$ are polynomials of degree $\mu$ and $r$, respectively, and $x_i^{-n}=z^{-1}$, it follows that $r+|\mu|-|\lambda|=0\mod n$. Hence, the term $x^{-\lambda}$ with $\lambda\in\alc$ occurs in the product expansion if and only if $\mu \circ w \leq \lambda \circ x^{-\beta}$ with $\beta \in \cP_k$, $\beta_i\geq 0$, and $|\lambda|=r+|\mu|-n|\beta|$. (N.B. the symbol $x$ appears here twice, once in the role as variable and another time as translation acting on a weight.) Thus, for any such $\lambda\in\alc$ the coefficient of $x^{-\lambda}$ in $ m_\mu(x_1^{-1},\ldots,x^{-1}_k) h_r(x_1^{-1},\ldots,x^{-1}_k)$ equals $z^{-|\beta|}$ times the  cardinality of the set 
 \begin{equation} \label{wAlpha}
 \Big\{ (w,\beta) \in \rS^\mu \times \cP_k~|~  \beta_i\geq 0,\,|\beta|=d,\, \mu \circ w \le \lambda \circ x^\beta \Big\} \;,
 \end{equation}
 where $nd=r+|\mu|-|\lambda|$. Comparing with \eqref{SubSetAffineSmu} we see that the coefficient is equal to $z^{-d} \theta_{\lambda/d/\mu}$ with $d=\frac{r+|\mu|-|\lambda|}{n}$.
\end{proof}

 Note that the last lemma implies that $\theta_{\lambda/d/\mu}(\nu)=\theta_{\lambda/d/\mu}(\beta)$ for $\beta \sim \nu$,
  where $\theta_{\lambda/d/\mu}(\beta)$ with $\beta$ a composition is defined analogous to \eqref{thetaLambdaMuNu}. Thus, we have as immediate corollary:
\begin{coro} 
The function \eqref{cylh} has the expansion
 \begin{equation}
  h_{\lambda/d/\mu} = \sum_{\nu\in\cP^+} \theta_{\lambda/d/\mu}(\nu) m_\nu \label{CylhM}
 \end{equation}
 in the ring of symmetric functions $\Lambda$. 
\end{coro}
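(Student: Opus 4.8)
The plan is to read the monomial expansion off the defining sum \eqref{cylh} by collecting terms of equal weight, and then to promote the resulting composition-indexed coefficients to partition-indexed ones using the symmetry that is already built into Lemma \ref{lem:mMuhNuQuotient}. First I would group the sum \eqref{cylh} according to the weight vector: writing $\beta=\op{wt}(\hat\pi)$ and letting $\theta_{\lambda/d/\mu}(\beta)=\sum_{\op{wt}(\hat\pi)=\beta}\theta_{\hat\pi}$ denote the composition-version of \eqref{thetaLambdaMuNu}, one immediately obtains $h_{\lambda/d/\mu}(u)=\sum_{\beta}\theta_{\lambda/d/\mu}(\beta)\,u^{\beta}$, the sum running over all compositions $\beta$ of the fixed size $|\lambda|+nd-|\mu|$. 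Since $m_\nu=\sum_{\beta\sim\nu}u^\beta$, the asserted expansion \eqref{CylhM} is then equivalent to the single claim that $\theta_{\lambda/d/\mu}(\beta)=\theta_{\lambda/d/\mu}(\nu)$ whenever $\beta$ is a rearrangement of the partition $\nu$; this simultaneously shows that $h_{\lambda/d/\mu}$ is genuinely symmetric, hence an element of $\Lambda$.

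To establish this weight symmetry I would work inside the quotient ring $\V^+_k$ of Theorem \ref{thm:bosquotient}, in which the classes $\{m_\lambda(x_1^{-1},\ldots,x_k^{-1})\}_{\lambda\in\alc}$ form a basis (being the preimages of the divided-power basis $\{v_\lambda\}_{\lambda\in\alc}$ under the ring isomorphism of that theorem). Iterating the single-strip product rule established in Lemma \ref{lem:mMuhNuQuotient} along a sequence of cylindric loops as in \eqref{SequenceCylLoops}, and matching the telescoping product of single-strip weights $\theta_{\lambda^{(i)}/(d_i-d_{i-1})/\lambda^{(i-1)}}$ against the definition \eqref{thetapihat} of $\theta_{\hat\pi}$, yields the refined identity
\[
m_\mu(x^{-1})\,h_{\beta_1}(x^{-1})\cdots h_{\beta_l}(x^{-1})=\sum_{\lambda\in\alc}z^{-d}\,\theta_{\lambda/d/\mu}(\beta)\,m_\lambda(x^{-1})
\]
in $\V^+_k$, in which the entire composition $\beta=(\beta_1,\dots,\beta_l)$ is tracked rather than only its partition rearrangement. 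The decisive point is that the left-hand side is a product of complete symmetric functions with the factor $m_\mu$, so it is invariant under permuting the $h_{\beta_i}$ and therefore depends on $\beta$ only through the partition $\nu$ obtained by sorting $\beta$; comparing coefficients in the basis $\{m_\lambda(x^{-1})\}_{\lambda\in\alc}$ then forces $\theta_{\lambda/d/\mu}(\beta)=\theta_{\lambda/d/\mu}(\nu)$.

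With the symmetry in hand I would finish by regrouping, $\sum_{\beta}\theta_{\lambda/d/\mu}(\beta)u^\beta=\sum_{\nu\in\cP^+}\theta_{\lambda/d/\mu}(\nu)\sum_{\beta\sim\nu}u^\beta=\sum_{\nu\in\cP^+}\theta_{\lambda/d/\mu}(\nu)m_\nu$, which is exactly \eqref{CylhM}. I expect the main obstacle to be the bookkeeping in the iterated product expansion: one must verify that peeling off the factors $h_{\beta_i}$ one at a time reproduces precisely the sequence-of-cylindric-loops description \eqref{SequenceCylLoops} of a CRPP of shape $\lambda/d/\mu$ and weight $\beta$, with the intermediate degrees $d_i-d_{i-1}$ summing to $d$ and the single-strip counts multiplying to $\theta_{\hat\pi}$. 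Once this combinatorial matching and the basis property from Theorem \ref{thm:bosquotient} are in place, the remaining steps are purely formal.
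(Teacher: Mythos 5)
Your proposal is correct and follows essentially the same route as the paper: the paper also derives \eqref{CylhM} as an immediate consequence of Lemma \ref{lem:mMuhNuQuotient}, noting that iterating the single-strip product rule in $\V^+_k$ shows $\theta_{\lambda/d/\mu}(\beta)=\theta_{\lambda/d/\mu}(\nu)$ for any rearrangement $\beta\sim\nu$ (since the $h_{\beta_i}$ commute and $\{m_\lambda(x^{-1})\}_{\lambda\in\alc}$ is a basis of the quotient), after which regrouping the weighted sum over CRPP by weight gives the monomial expansion. Your identification of the iterated product with the sequence-of-cylindric-loops description of a CRPP is exactly the bookkeeping implicit in the paper's phrase ``the general case follows by repeatedly applying the latter expansion.''
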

Note that if we set $\nu=(r,0,0,\ldots)$ with $r=dn+|\lambda|-|\mu|$ then $\theta_{\lambda/d/\mu}(\nu)=1$, as long as $\lambda/d/\mu$ is a valid cylindric shape, since then there exists precisely one CRPP of that weight, namely the cylindric shape $\lambda/d/\mu$ itself. Hence, $h_{\lambda/d/\mu}$ is not identically zero provided $\lambda/d/\mu$ is a valid cylindric skew diagram. 

Similar to the product expansion \eqref{hm2m} we also wish to express the fusion coefficients from \eqref{Lfusion}, \eqref{mfusion} which appear in the expansion of the product $m_\mu m_\nu$ in $\V^+_k$ in terms of the cardinalities of sets involving affine permutations. To this end, we now extend the definition of the fusion coefficients to weights outside the alcove \eqref{alcove}.

For $\lambda,\mu,\nu \in \cP^+_k$ define $\bar N_{\mu \nu}^\lambda$ as the cardinality of the set 
\begin{equation} \label{Nlambdamunu}
\bigg\{(w,w')\in \rS^\mu\times \rS^\nu~|~\mu\circ w+\nu\circ w'=\lambda\circ x^\alpha
\text{ for some }\alpha\in\cP_k\bigg\} \;.
\end{equation}
Note that any such weight $\alpha\in\cP_k$ appearing in the above definition does have to satisfy $n |\alpha|=|\mu|+|\nu|-|\lambda|$.
 \begin{lemma}
  For $\mu,\nu \in \cP^+_k$ we have the following product expansion in $\V^+_k$ 
  \begin{equation} \label{Nmunulambda}
   m_\mu(x_1^{-1},\ldots,x^{-1}_k) m_\nu(x_1^{-1},\ldots,x^{-1}_k) = \sum_{\lambda \in \alc} z^{\frac{|\lambda|-|\mu|-|\nu|}{n}} \bar N_{\mu \nu}^\lambda m_\lambda(x_1^{-1},\ldots,x^{-1}_k)\,.
  \end{equation}
So, in particular, $\bar N^{\lambda}_{\mu\nu}=N^{\lambda}_{\mu\nu}$ for $\lambda,\mu,\nu\in\alc$. Moreover, we have the following `reduction formula' for monomial symmetric functions in $\V^+_k$,
\begin{equation}\label{mreduce}
 m_{\check \lambda}(x_1^{-1},\ldots,x^{-1}_k)= \frac{|\rS_{\check \lambda}|}{|\rS_\lambda|} m_\lambda(x_1^{-1},\ldots,x^{-1}_k) z^{\frac{-|\check \lambda|+|\lambda|}{n}},
\end{equation}
where $\check\lambda$ is the unique intersection point of the orbit $\lambda\hat \rS_k$ with $\alc$.
 \end{lemma}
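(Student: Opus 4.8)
The plan is to prove both identities by expanding the monomial symmetric functions into monomials and reducing modulo the ideal $\mc{J}_n$ using the relations \eqref{BAE}, which in $\V^+_k$ read $x_i^{-n}=z^{-1}$; this runs in close parallel to the proof of Lemma \ref{lem:mMuhNuQuotient}. For \eqref{Nmunulambda} I would first write $m_\mu(x^{-1})m_\nu(x^{-1})=\sum_{w\in\rS^\mu,\,w'\in\rS^\nu}x^{-(\mu\circ w+\nu\circ w')}$, using that $\{\mu\circ w\}_{w\in\rS^\mu}$ runs exactly once over the distinct permutations of $\mu$, and likewise for $\nu$. Since $\lambda\in\alc$ is dominant, the monomial $x^{-\lambda}$ occurs only in $m_\lambda(x^{-1})$ among all $m_{\lambda'}(x^{-1})$ with $\lambda'\in\alc$, so the coefficient of $m_\lambda(x^{-1})$ is the coefficient of $x^{-\lambda}$ in the reduced product. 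The key observation is that the level-$n$ translation action is given coordinate-wise by $\lambda\circ x^\alpha=\lambda+n\alpha$, read off from the infinite-permutation realisation underlying Lemma \ref{lem:Saction}; hence a monomial $x^{-(\mu\circ w+\nu\circ w')}$ reduces in $\V^+_k$ to $z^{-|\alpha|}x^{-\lambda}$ precisely when $\mu\circ w+\nu\circ w'=\lambda\circ x^\alpha$ for some $\alpha\in\cP_k$ with non-negative parts. Counting these pairs $(w,w')$ is exactly the cardinality $\bar N_{\mu\nu}^\lambda$ of \eqref{Nlambdamunu}, and since the total degree forces $n|\alpha|=|\mu|+|\nu|-|\lambda|$ the power of $z$ is the uniform factor $z^{(|\lambda|-|\mu|-|\nu|)/n}$, giving \eqref{Nmunulambda}.

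To identify $\bar N_{\mu\nu}^\lambda=N_{\mu\nu}^\lambda$ for $\lambda,\mu,\nu\in\alc$, I would specialise \eqref{Nmunulambda} to $z=1$ (that is $t=1$). There the centre acts on each weight space by the evaluated monomial $m_\mu(\zeta^\sigma)$, as recorded in the proof of Corollary \ref{cor:Verlinde_bos}, so that $\V^+_k$ is precisely the fusion ring whose product is \eqref{mfusion}. Comparing \eqref{Nmunulambda} at $z=1$ with the product expansion \eqref{mfusion}, and using that the structure constants of $\V^+_k$ in the basis $\{m_\lambda\}_{\lambda\in\alc}$ are unique, forces $\bar N_{\mu\nu}^\lambda=N_{\mu\nu}^\lambda$ whenever all three weights lie in $\alc$ (the exponent of $z$ is then an integer, being $-|\alpha|$ on the support of $\bar N$).

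For the reduction formula \eqref{mreduce} I would again expand $m_\lambda(x^{-1})=\sum_{\gamma\sim\lambda}x^{-\gamma}$ and reduce each $x^{-\gamma}$ coordinate-wise modulo $\mc{J}_n$ to $z^{-d}x^{-\op{red}(\gamma)}$ with $\op{red}(\gamma)_i\in\{1,\dots,n\}$ and $nd=|\lambda|-|\check\lambda|$. Because coordinate-wise reduction is effected by an element $x^{-\alpha}$ of the level-$n$ action, $\op{red}(\gamma)$ lies in the orbit $\gamma\hat\rS_k=\lambda\hat\rS_k$; as its parts lie in $\{1,\dots,n\}$ its dominant rearrangement is the unique alcove point $\check\lambda$ guaranteed by the fundamental-domain property of \eqref{alcove}. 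Thus every monomial of $m_\lambda$ reduces to a permutation of $\check\lambda$, with uniform power of $z$. The multiplicity is then pure orbit-counting: coordinate-wise reduction commutes with the $\rS_k$-action permuting coordinates, so it maps the transitive $\rS_k$-set of distinct permutations of $\lambda$ (of size $|\rS_k|/|\rS_\lambda|$) onto that of $\check\lambda$ (of size $|\rS_k|/|\rS_{\check\lambda}|$) with all fibres of equal cardinality $|\rS_{\check\lambda}|/|\rS_\lambda|$. Collecting terms yields the proportionality between $m_\lambda(x^{-1})$ and $m_{\check\lambda}(x^{-1})$ with constant this ratio of stabiliser orders and power $z^{(|\check\lambda|-|\lambda|)/n}$, which rearranges to \eqref{mreduce}.

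The main obstacle, common to both parts, is the bookkeeping of the level-$n$ action: one must verify carefully that the purely algebraic coordinate-wise reduction modulo $x_i^n=z$ coincides with the translation part $x^\alpha$ of the $\hat\rS_k$-action on $\cP_{k,n}$, so that the counting of reductions is faithfully matched with the orbit sets \eqref{Nlambdamunu} and \eqref{SubSetAffineSmu}. Once this dictionary is established, the remainder is elementary: transitive orbit-counting for the multiplicities and a uniqueness-of-structure-constants comparison for $\bar N=N$.
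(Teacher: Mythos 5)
Your proof is correct and follows essentially the same route as the paper: expand the monomial symmetric functions into monomials, reduce modulo $x_i^{-n}=z^{-1}$, identify the coefficient of $x^{-\lambda}$ with the cardinality of the set \eqref{Nlambdamunu}, and for the reduction formula perform the orbit count that the paper packages as $m_\lambda(x_1^{-1},\ldots,x_k^{-1})=\frac{1}{|\rS_\lambda|}\sum_{w\in\rS_k}x^{-\lambda\circ w}$. One remark: your fibre count correctly yields $m_{\check\lambda}(x_1^{-1},\ldots,x_k^{-1})=\frac{|\rS_\lambda|}{|\rS_{\check\lambda}|}\,z^{(|\lambda|-|\check\lambda|)/n}\,m_\lambda(x_1^{-1},\ldots,x_k^{-1})$, whose stabiliser ratio is the reciprocal of the one printed in \eqref{mreduce}; the printed ratio appears to be a typo, since the version you derived is the one consistent with how \eqref{mreduce} is applied in the proof of Lemma \ref{lem:Nreduction}.
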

 Because we have equality between the fusion coefficients in \eqref{Lfusion} and the coefficients $\bar N_{\mu \nu}^\lambda$ if $\lambda,\mu,\nu\in\alc$, we shall henceforth use the same notation for both and it will be understood that $N_{\mu \nu}^\lambda$ is defined via the cardinality of the set \eqref{Nlambdamunu} whenever one of the weights lies outside the alcove \eqref{alcove}.
 
  \begin{proof}
Since the monomial symmetric function $m_\lambda$ is homogeneous and of degree $|\lambda|$ it follows from $x_i^{-n}-z^{-1}=0$, that we must have $|\mu|+|\nu|-|\lambda|=0\mod n$. Therefore, the monomial $x_1^{-\lambda_1} \cdots x_k^{-\lambda_k}$ can only occur in the product $m_\mu(x_1^{-1},\ldots,x^{-1}_k) m_\nu(x_1^{-1},\ldots,x^{-1}_k)$ provided there exist $ w \in \rS^\mu$ and $w' \in \rS^\nu$ such that  $w(\mu)+w'(\nu)= \lambda \circ x^\alpha$ for some $\alpha \in \cP_k$ satisfying $|\mu|+|\nu|-|\lambda|=n|\alpha|$, which proves the asserted product expansion.

To prove the reduction formula note that in $\Lambda_k$ we have that $m_\lambda(x_1^{-1},\ldots,x^{-1}_k)=\sum_{w \in \rS^\lambda} x^{\lambda \circ w}= \frac{1}{|\rS_\lambda|} \sum_{w \in \rS_k} x^{-\lambda \circ w}$. Since $x_i^{-n}=z^{-1}$ in the quotient we arrive at the stated formula.
 \end{proof}
 Since the set $\{m_\lambda(x_1^{-1},\ldots,x^{-1}_k)\}_{\lambda\in\alc}$ forms a basis of the quotient ring $\V^+_k$ the coefficients $N^{\lambda}_{\mu\nu}$ with $\lambda,\mu,\nu \in \cP^+_k$ must be expressible in terms of the coefficients where $\lambda,\mu,\nu\in\alc$. The following lemma together with the identities from Corollary \ref{cor:symmN} gives an explicit reduction formula.
 
\begin{lemma}\label{lem:Nreduction}
Let $\lambda,\mu,\nu\in\cP^+_k$. Denote by $\check\nu\in\alc$ the unique intersection point of the orbit $\nu \hat \rS_k$ with the alcove \eqref{alcove}. 
Then 
\begin{equation}\label{reducedN}
N_{\mu\nu}^{\lambda}=N_{\mu\check\nu}^{\lambda}
\binom{m_n(\check\nu)}{m_0(\nu),m_{n}(\nu),m_{2n}(\nu),\ldots}
\prod_{i=1}^{n-1}\binom{m_i(\check\nu)}{m_i(\nu),m_{i+n}(\nu),m_{i+2n}(\nu),\ldots}\;,
\end{equation}
where $m_j(\nu)$ and $m_j(\check\nu)$ are the multiplicities of the part $j$ in $\nu$ and $\check\nu$, respectively.
\end{lemma}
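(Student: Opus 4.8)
The plan is to reduce the index $\nu$ to its alcove representative $\check\nu$ inside the quotient ring $\V^+_k$ of Theorem \ref{thm:bosquotient}, and then to extract the claimed factor by comparing two instances of the product expansion \eqref{Nmunulambda}. Throughout, $N^\lambda_{\mu\nu}$ is read off either as a coefficient in \eqref{Nmunulambda} or, for general $\lambda$, as the cardinality of the set \eqref{Nlambdamunu}.

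First I would re-derive the monomial reduction formula \eqref{mreduce} in the precise form needed. Starting from $m_\nu(x_1^{-1},\dots,x_k^{-1})=\frac{1}{|\rS_\nu|}\sum_{w\in\rS_k}x^{-\nu\circ w}$ in $\Lambda_k$ and using $x_i^{n}=z$, each monomial reduces as $x^{-\nu\circ w}=z^{(|\check\nu|-|\nu|)/n}\,x^{-\overline{\nu\circ w}}$, where $\overline{\nu\circ w}$ denotes the coordinatewise reduction of the parts into $\{1,\dots,n\}$. The exponent of $z$ is the same for every $w$, as it depends only on $|\nu|-|\check\nu|$, and by $\rS_k$-symmetry the reduced monomials sum to $|\rS_{\check\nu}|\,m_{\check\nu}(x^{-1})$. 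This yields, in $\V^+_k$,
\[
m_\nu(x^{-1})=\frac{|\rS_{\check\nu}|}{|\rS_\nu|}\,z^{\frac{|\check\nu|-|\nu|}{n}}\,m_{\check\nu}(x^{-1}).
\]

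Next I would multiply by $m_\mu(x^{-1})$ and expand both sides with \eqref{Nmunulambda}. The left-hand side produces $\sum_{\lambda\in\alc}z^{(|\lambda|-|\mu|-|\nu|)/n}N^\lambda_{\mu\nu}\,m_\lambda(x^{-1})$, while the right-hand side produces $\frac{|\rS_{\check\nu}|}{|\rS_\nu|}\sum_{\lambda\in\alc}z^{(|\lambda|-|\mu|-|\nu|)/n}N^\lambda_{\mu\check\nu}\,m_\lambda(x^{-1})$, the two powers of $z$ combining to the same exponent. Since $\{m_\lambda(x^{-1})\}_{\lambda\in\alc}$ is a basis of $\V^+_k$, comparing coefficients gives $N^\lambda_{\mu\nu}=\frac{|\rS_{\check\nu}|}{|\rS_\nu|}N^\lambda_{\mu\check\nu}$ for all $\lambda\in\alc$; for general $\lambda\in\cP^+_k$ the same identity follows from \eqref{Nlambdamunu}, since $w'\mapsto\overline{\nu\circ w'}$ defines a surjection $\rS^\nu\to\rS^{\check\nu}$ with uniform fibres of size $|\rS_{\check\nu}|/|\rS_\nu|$, the mod-$n$ shift being absorbed into the translation $x^\alpha$.

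Finally I would identify the stabiliser ratio with the multinomial product in \eqref{reducedN}. Because the level-$n$ action moves parts by multiples of $n$ (compare $n|\alpha|=|\mu|+|\nu|-|\lambda|$ in \eqref{Nmunulambda}), $\check\nu$ is obtained from $\nu$ by reducing every part modulo $n$ into $\{1,\dots,n\}$—with parts divisible by $n$, including $0$, sent to $n$—and re-sorting. Hence $m_i(\check\nu)=\sum_{\ell\ge 0}m_{i+\ell n}(\nu)$ for $1\le i\le n-1$ and $m_n(\check\nu)=\sum_{\ell\ge 0}m_{\ell n}(\nu)$. Substituting $|\rS_\nu|=\prod_{j\ge 0}m_j(\nu)!$ and $|\rS_{\check\nu}|=\prod_{i=1}^n m_i(\check\nu)!$ from \eqref{stabS} and grouping the factorials in the numerator by residue class modulo $n$ turns $|\rS_{\check\nu}|/|\rS_\nu|$ into precisely the product of multinomial coefficients in \eqref{reducedN}.

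The main obstacle is the bookkeeping in the reduction step: one must verify that the power of $z$ attached to each reduced monomial is independent of $w$, and that the reduction of permutations has uniform fibres, so that a single scalar $|\rS_{\check\nu}|/|\rS_\nu|$—rather than a $w$-dependent quantity—emerges. The supporting combinatorial fact, that the level-$n$ alcove projection is literally coordinatewise reduction modulo $n$ into $\{1,\dots,n\}$, is what converts the stabiliser ratio into the stated multinomials and should be checked explicitly.
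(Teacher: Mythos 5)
Your proof is correct and follows essentially the same route as the paper: reduce $m_\nu$ to $m_{\check\nu}$ in $\V^+_k$, compare coefficients in the product expansion \eqref{Nmunulambda}, and rewrite the stabiliser ratio $|\rS_{\check\nu}|/|\rS_\nu|$ as the product of multinomial coefficients via the multiplicity identities $m_i(\check\nu)=\sum_{\ell\ge 0}m_{i+\ell n}(\nu)$. Note that the reduction formula in the form you derive, $m_\nu=\frac{|\rS_{\check\nu}|}{|\rS_\nu|}z^{(|\check\nu|-|\nu|)/n}m_{\check\nu}$, is the one consistent with \eqref{reducedN}; the ratio as printed in \eqref{mreduce} is inverted relative to this.
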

\begin{proof}
Recall that for $\nu \in \cP^+_k$ the cardinality of $\rS_\nu$ is given by $|\rS_\nu|= \prod_{i \ge 0} m_i(\nu)!$.  Noting the equalities $m_n(\check \nu)=m_0(\nu)+m_n(\nu)+m_{2n}(\nu)+\dots$ and $m_i(\check \nu)=m_i(\nu)+m_{i+n}(\nu)+\dots$, one applies the definition of multinomial coefficients to arrive at the relation
\[
 |\rS_{\check \nu}|= |\rS_{\nu}| \binom{m_n(\check\nu)}{m_0(\nu),m_{n}(\nu),m_{2n}(\nu),\ldots}
\prod_{i=1}^{n-1}\binom{m_i(\check\nu)}{m_i(\nu),m_{i+n}(\nu),m_{i+2n}(\nu),\ldots} \;.
\]
Applying  equation \eqref{mreduce} in \eqref{Nmunulambda} completes the proof.
\end{proof}

Let $L_{\alpha\beta}$ be the number of $\mb{N}_0$-matrices whose row sums are fixed by the components of the vector $\alpha$ and whose column sums are fixed by the components of the vector $\beta$; see Appendix A. The next lemma is  the generalisation of the first identity in \eqref{skewThetaPsi} to the cylindric case.

\begin{lemma} \label{lem:theta2N}
Let $\lambda,\mu \in \alc$ and $\nu \in \cP^+$. Set $d = \frac{|\mu|+|\nu|-|\lambda|}{n}$, then the following equality holds
 \begin{equation} 
  \theta_{\lambda/d/\mu}(\nu) = \sum_{\sigma \in \cP^+_k} L_{\nu \sigma} N_{\sigma \mu}^\lambda\;. \label{ThetaN}
     \end{equation}
\end{lemma}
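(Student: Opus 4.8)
The plan is to compute the product $m_\mu h_\nu$ inside the quotient ring $\V^+_k$ in two different ways and to compare the resulting coefficients in the basis $\{m_\lambda(x_1^{-1},\ldots,x^{-1}_k)\}_{\lambda\in\alc}$. First I would expand the complete symmetric function in the monomial basis of $\Lambda$ using the classical identity $h_\nu=\sum_\sigma L_{\nu\sigma}\,m_\sigma$, where $L_{\nu\sigma}$ counts the $\mb{N}_0$-matrices with row sums $\nu$ and column sums $\sigma$ (see Appendix A) and the sum runs over all partitions $\sigma$ with $|\sigma|=|\nu|$. Projecting to $k$ variables and substituting $x_i\mapsto x_i^{-1}$, the terms indexed by $\sigma$ with $\ell(\sigma)>k$ vanish since $m_\sigma(x_1^{-1},\ldots,x^{-1}_k)=0$ in that case, so that in $\V^+_k$
\[
h_\nu(x_1^{-1},\ldots,x^{-1}_k)=\sum_{\sigma\in\cP^+_k}L_{\nu\sigma}\,m_\sigma(x_1^{-1},\ldots,x^{-1}_k)\,.
\]

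Next I would multiply by $m_\mu(x_1^{-1},\ldots,x^{-1}_k)$ and apply the product rule \eqref{Nmunulambda} to each factor $m_\sigma m_\mu$ (with the roles $\mu\mapsto\sigma$, $\nu\mapsto\mu$ in that formula), obtaining
\[
m_\mu h_\nu = \sum_{\sigma\in\cP^+_k}L_{\nu\sigma}\sum_{\lambda\in\alc}z^{\frac{|\lambda|-|\sigma|-|\mu|}{n}}N_{\sigma\mu}^\lambda\,m_\lambda\,,
\]
all functions being evaluated at $(x_1^{-1},\ldots,x^{-1}_k)$. The crucial observation is that $L_{\nu\sigma}=0$ unless $|\sigma|=|\nu|$, since the total number of matrix entries equals both $|\nu|$ and $|\sigma|$; hence for every contributing term the exponent of $z$ collapses to $\tfrac{|\lambda|-|\nu|-|\mu|}{n}=-d$, giving
\[
m_\mu h_\nu = \sum_{\lambda\in\alc}z^{-d}\Big(\sum_{\sigma\in\cP^+_k}L_{\nu\sigma}N_{\sigma\mu}^\lambda\Big)m_\lambda\,,\qquad d=\frac{|\mu|+|\nu|-|\lambda|}{n}\,.
\]

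On the other hand, Lemma \ref{lem:mMuhNuQuotient} yields the same product directly as $m_\mu h_\nu=\sum_{\lambda\in\alc}z^{-d}\,\theta_{\lambda/d/\mu}(\nu)\,m_\lambda$. Since $\{m_\lambda(x_1^{-1},\ldots,x^{-1}_k)\}_{\lambda\in\alc}$ is a basis of $\V^+_k$, comparing the coefficient of $m_\lambda$ for each $\lambda\in\alc$ and cancelling the common factor $z^{-d}$ produces exactly \eqref{ThetaN}. The one point demanding care, and the main obstacle, is the compatibility of the $z$-grading: one must confirm that the support condition $|\sigma|=|\nu|$ indeed forces a single power $z^{-d}$ on the right-hand side, so that the coefficient comparison in the $z$-graded basis is legitimate; once this is settled the argument is pure bookkeeping, with the passage from the sum over all $\sigma$ to the sum over $\sigma\in\cP^+_k$ justified by the vanishing of $m_\sigma(x_1^{-1},\ldots,x^{-1}_k)$ for $\ell(\sigma)>k$.
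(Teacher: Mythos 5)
Your proposal is correct and follows essentially the same route as the paper: expand $h_\nu$ in the monomial basis via $L_{\nu\sigma}$, use the product rule \eqref{Nmunulambda} for $m_\sigma m_\mu$ in $\V^+_k$, compare with Lemma \ref{lem:mMuhNuQuotient} using that $\{m_\lambda(x_1^{-1},\ldots,x_k^{-1})\}_{\lambda\in\alc}$ is a basis, and invoke $L_{\nu\sigma}=0$ unless $|\sigma|=|\nu|$ to fix the power of $z$. The degree-compatibility point you flag is exactly the observation the paper makes, so no gap remains.
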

\begin{proof}
 Insert the known expansion $h_\nu(x_1^{-1},\ldots,x^{-1}_k)=\sum_{\sigma \in \cP^+_k} L_{\nu \sigma} m_\sigma(x_1^{-1},\ldots,x^{-1}_k)$ (see Appendix A) into 
 the product $m_\mu(x_1^{-1},\ldots,x^{-1}_k) h_\nu(x_1^{-1},\ldots,x^{-1}_k)$ in $\V^+_k$ and compare with \eqref{hm2m}, using the 
 fact that $\{ m_\lambda(x_1^{-1},\ldots,x^{-1}_k) \}_{\lambda \in \alc}$ is a basis of $\V^+_k$. 
 Note that $L_{\nu \sigma}$ is nonzero only if $|\nu|=|\sigma|$,
which implies that on the right hand side of the asserted equation only the coefficients $N_{\sigma \mu}^\lambda$ appear for which $\frac{|\sigma|+|\mu|-|\lambda|}{n}=d$.
\end{proof}

We have now all the results in place to state the main result of this section which connects the cylindric complete symmetric functions with our discussion in the previous sections.
\begin{theorem} \label{thm:ExpansionCylindricH}
Let $\lambda,\mu\in\alc$ and $d\in\mathbb{Z}_{\ge 0}$. Then (i) the symmetric function $h_{\lambda/d/\mu}$ has the expansion
\begin{equation}
h_{\lambda/d/\mu}=\sum_{\nu\in\cP^+_{k}}N_{\mu\nu}^{\lambda} \;h_\nu \label{cylh2h} 
\end{equation}
into the basis $\{h_\nu\}_{\nu\in\cP^+}\subset \Lambda$, where the sum is restricted to those $\nu\in\cP^+_k$ for which $|\nu|=dn+|\lambda|-|\mu|$. (ii) We have the following formal power series expansions in $z$,
\begin{eqnarray}
\langle v^{\lambda^{\vee}},\prod_{j\geq 1}H(u_j)v_{\mu^\vee}\rangle =
\overline{\langle v^\lambda,\prod_{j\geq 1}H^*(u_j)v_\mu\rangle}=
&=&\sum_{d\geq 0}z^{d} h_{\lambda/d/\mu}(u),
\end{eqnarray}
where $\lambda^{\vee}=(1^{m_n(\lambda)}2^{m_{n-1}(\lambda)}\ldots n^{m_1(\lambda)})$, $\bar u_j=u_j$ and $\bar z=z^{-1}$. 
\end{theorem}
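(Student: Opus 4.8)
The plan is to prove part (i) first and then deduce both equalities in part (ii) from it together with the Verlinde-type formula of Corollary \ref{cor:Verlinde_bos} and the adjoint relation of Lemma \ref{lem:adjoint}.

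For part (i), I would start from the monomial expansion \eqref{CylhM}, $h_{\lambda/d/\mu}=\sum_{\nu}\theta_{\lambda/d/\mu}(\nu)m_\nu$, and substitute \eqref{ThetaN} from Lemma \ref{lem:theta2N}, namely $\theta_{\lambda/d/\mu}(\nu)=\sum_{\sigma\in\cP^+_k}L_{\nu\sigma}N_{\sigma\mu}^\lambda$. Interchanging the two sums gives $h_{\lambda/d/\mu}=\sum_{\sigma\in\cP^+_k}N_{\sigma\mu}^\lambda\big(\sum_\nu L_{\nu\sigma}m_\nu\big)$. The inner sum is exactly $h_\sigma$: since $L_{\nu\sigma}=L_{\sigma\nu}$ (transposing the counted $\mb{N}_0$-matrices) and $h_\sigma=\sum_\nu L_{\sigma\nu}m_\nu$ is the standard transition in $\Lambda$ (Appendix A), one gets $\sum_\nu L_{\nu\sigma}m_\nu=h_\sigma$. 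Finally, replacing $N_{\sigma\mu}^\lambda$ by $N_{\mu\sigma}^\lambda$ via the symmetry in Corollary \ref{cor:symmN}(i) yields \eqref{cylh2h}; the length/degree constraint $\nu\in\cP^+_k$, $|\nu|=dn+|\lambda|-|\mu|$, is forced because $N_{\mu\sigma}^\lambda$ vanishes otherwise.

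For the rightmost equality in (ii), I would take matrix elements in the adjoint form of the Cauchy identity \eqref{Cauchy1}, $\prod_{j\ge1}H^*(u_j)=\sum_{\nu\in\cP^+_k}M_\nu^*h_\nu(u)$, obtaining $\langle v^\lambda,\prod_jH^*(u_j)v_\mu\rangle=\sum_\nu\langle v^\lambda,M_\nu^*v_\mu\rangle h_\nu(u)$. Corollary \ref{cor:Verlinde_bos} evaluates each coefficient as $\langle v^\lambda,M_\nu^*v_\mu\rangle=t^{|\lambda|-|\nu|-|\mu|}N_{\nu\mu}^\lambda=z^{-d}N_{\nu\mu}^\lambda$, with $nd=|\nu|+|\mu|-|\lambda|$ and $z=t^n$. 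Grouping by $d$ and invoking part (i) (using $N_{\nu\mu}^\lambda=N_{\mu\nu}^\lambda$) collapses the $h_\nu$-sum into $h_{\lambda/d/\mu}(u)$, so $\langle v^\lambda,\prod_jH^*(u_j)v_\mu\rangle=\sum_{d\ge0}z^{-d}h_{\lambda/d/\mu}(u)$. Conjugating, with $\bar z=z^{-1}$, $\bar u_j=u_j$ and the reality of the coefficients of $h_{\lambda/d/\mu}$, turns $z^{-d}$ into $z^{d}$ and fixes $h_{\lambda/d/\mu}$, giving the right-hand side of the displayed identity.

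It remains to prove the leftmost equality, which I expect to be the main obstacle, since it requires identifying the combined Dynkin automorphism with the $\vee$-operation on the divided-power basis. By Lemma \ref{lem:adjoint} one has $M_\nu^*=\overline{\Pi(M_\nu)}$ with $\Pi=\hat\Gamma\circ\Gamma$ (Lemma \ref{lem:Pi}). As $M_\nu$ is a symmetric function of the matrices $X_i$ and $\Pi$ (a Hopf automorphism) acts on each tensor factor by conjugation with $\mc{\hat C}\mc{C}$ via \eqref{C1}, one has $\Pi(M_\nu)=\mc{R}M_\nu\mc{R}^{-1}$ for the real involution $\mc{R}=(\mc{\hat C}\mc{C})^{\otimes k}$, hence $M_\nu^*=\mc{R}\,\overline{M_\nu}\,\mc{R}^{-1}$. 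From the divided-power matrix elements $\mc{C}^{\otimes k}v_\lambda=v_{\lambda^*}$ and $\mc{\hat C}^{\otimes k}v_\lambda=v_{\rot\lambda}$ one reads off $\mc{R}v_\lambda=v_{\lambda^\vee}$ and, dually, $\mc{R}^{T}v^\lambda=v^{\lambda^\vee}$; since $\vee$ is an involution, $\mc{R}^{-1}v_\mu=v_{\mu^\vee}$. Inserting these into $\langle v^\lambda,M_\nu^*v_\mu\rangle=\langle v^\lambda,\mc{R}\,\overline{M_\nu}\,\mc{R}^{-1}v_\mu\rangle=\langle v^{\lambda^\vee},\overline{M_\nu}\,v_{\mu^\vee}\rangle$ and conjugating (the bases being real and the natural pairing bilinear) gives $\overline{\langle v^\lambda,M_\nu^*v_\mu\rangle}=\langle v^{\lambda^\vee},M_\nu v_{\mu^\vee}\rangle$ for every $\nu$; summing against $h_\nu(u)$ through \eqref{Cauchy1} then produces the leftmost equality. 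The delicate points will be the bookkeeping of transpose versus hermitian adjoint, the reality of $\mc{R}$ and of the bases $\{v_\lambda\},\{v^\lambda\}$, and verifying that $\mc{R}$ realises $\lambda\mapsto\lambda^\vee$ on both $V^{\otimes k}$ and its dual.
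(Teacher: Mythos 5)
Your proposal is correct and follows essentially the same route as the paper: part (i) is exactly the paper's computation (combining \eqref{CylhM} with \eqref{ThetaN}; you use the symmetry $L_{\nu\sigma}=L_{\sigma\nu}$ where the paper pairs against $L^{-1}$ via the Hall inner product, which is the same bookkeeping), and part (ii) is the combination of the Cauchy identity \eqref{Cauchy1} with the Verlinde-type formula that the paper invokes without detail. Your operator-level derivation of the leftmost equality via $M_\nu^*=\mc{R}\,\overline{M_\nu}\,\mc{R}^{-1}$ with $\mc{R}=(\mc{\hat C}\mc{C})^{\otimes k}$ is a valid (and cleanly executed) unpacking of what the paper compresses into the citation of Corollary \ref{cor:symmN}, whose identities are themselves consequences of conjugation by $\mc{C}$ and $\mc{\hat C}$; the only cosmetic imprecision is that the degree restriction in \eqref{cylh2h} comes from $\theta_{\lambda/d/\mu}$ being supported in degree $dn+|\lambda|-|\mu|$ (with $L$ degree-preserving), not solely from the vanishing of $N_{\mu\nu}^{\lambda}$, which by itself only forces the degree modulo $n$.
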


\begin{proof} 
We only need to prove (i) as (ii) is then a direct consequence of \eqref{Cauchy1} and Corollary \ref{cor:symmN}. Using \eqref{ThetaN} one sees that $\sum_{\sigma} \theta_{\lambda/d/\mu}(\sigma) L^{-1}_{\nu \sigma}$ equals
$N^\lambda_{\mu \nu}$ if $d = \frac{|\mu|+|\nu|-|\lambda|}{n}$ and $\ell(\nu) \le k$, and $0$ otherwise.
From \eqref{CylhM} we then have
\[
 h_{\lambda / d / \mu} = \sum_{\nu, \sigma} \theta_{\lambda/d/\mu}(\sigma) \langle m_\sigma, m_\nu \rangle h_\nu = 
 \sum_{\nu} \bigg( \sum_{\sigma} \theta_{\lambda/d/\mu}(\sigma) L^{-1}_{\nu \sigma} \bigg) h_\nu
\]
which proves \eqref{cylh2h}. 
\end{proof}

There are several corollaries of the last theorem which are worth exploring. First note that the expansion coefficients in \eqref{cylh2h} from (i) in Theorem \ref{thm:ExpansionCylindricH} do involve $N^{\lambda}_{\mu\nu}$ where $\nu\in\cP^+_k$ might be outside the alcove \eqref{alcove}. While according to the reduction formula \eqref{reducedN} we can express these coefficients in terms of the fusion coefficients $N^{\lambda}_{\mu\check\nu}$ where $\check\nu\in\alc$ there is an alternative expansion of $h_{\lambda/d/\mu}$ into the special set $\{h_{\sigma/d/n^k}\}_{\sigma\in\alc}$ of cylindric complete symmetric functions that only features the original fusion coefficients $N^{\lambda}_{\mu\sigma}$ with $\lambda$, $\mu$ and $\sigma\in\alc$.
\begin{coro}\label{cor:cylh2cylh}
Let $\lambda,\mu\in\alc$ and $d\in\mb{Z}_{\geq 0}$. Then we have the expansion
\begin{equation}\label{cylh2cylh}
h_{\lambda/d/\mu}=\sum_{d'=0}^{d+k}\sum_{\sigma}N_{\mu\sigma}^{\lambda}h_{\sigma/k+d-d'/n^k}\,,
\end{equation}
where the sum runs over all $\sigma\in\alc$ such that $|\sigma|=d'n+|\lambda|-|\mu|$. 
\end{coro}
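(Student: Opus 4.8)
The plan is to prove the identity in $\Lambda$ by expanding every cylindric function on the right-hand side into the basis $\{h_\nu\}$ via Theorem \ref{thm:ExpansionCylindricH}(i) and then matching the coefficient of each $h_\nu$ against the direct expansion $h_{\lambda/d/\mu}=\sum_{\nu}N_{\mu\nu}^{\lambda}h_\nu$ of the left-hand side. Applying Theorem \ref{thm:ExpansionCylindricH}(i) to each summand on the right, with the substitution $\lambda\to\sigma$, $\mu\to n^k$ and $d\to k+d-d'$, gives
\[
h_{\sigma/(k+d-d')/n^k}=\sum_{\nu\in\cP_k^+}N_{n^k\nu}^{\sigma}\,h_\nu,
\]
where the sum runs over $\nu$ with $|\nu|=(k+d-d')n+|\sigma|-|n^k|=(d-d')n+|\sigma|$. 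Inserting the corollary's constraint $|\sigma|=d'n+|\lambda|-|\mu|$ shows that every $h_\nu$ produced on the right satisfies $|\nu|=dn+|\lambda|-|\mu|$, exactly as required on the left, so it suffices to compare coefficients of each such $h_\nu$.

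The coefficient of $h_\nu$ on the right-hand side is $\sum_{d',\sigma\in\alc}N_{\mu\sigma}^{\lambda}N_{n^k\nu}^{\sigma}$, and the crux is to evaluate $N_{n^k\nu}^{\sigma}$. Here I would apply the reduction formula of Lemma \ref{lem:Nreduction} to the lower-right index, $N_{n^k\nu}^{\sigma}=N_{n^k\check\nu}^{\sigma}\,C(\nu)$, and then use the symmetry $N_{n^k\check\nu}^{\sigma}=N_{\check\nu n^k}^{\sigma}=\delta_{\check\nu\sigma}$ from Corollary \ref{cor:symmN}(i), valid because $n^k,\check\nu,\sigma\in\alc$. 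This yields
\[
N_{n^k\nu}^{\sigma}=\delta_{\check\nu\sigma}\,C(\nu),\qquad
C(\nu)=\binom{m_n(\check\nu)}{m_0(\nu),m_n(\nu),\ldots}\prod_{i=1}^{n-1}\binom{m_i(\check\nu)}{m_i(\nu),m_{i+n}(\nu),\ldots},
\]
with $\check\nu\in\alc$ the alcove representative of $\nu$ and $C(\nu)$ the multinomial factor of Lemma \ref{lem:Nreduction}. The Kronecker delta collapses the $\sigma$-sum to the single term $\sigma=\check\nu$, whose accompanying index is forced to be $d'=(|\check\nu|+|\mu|-|\lambda|)/n$. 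A second application of Lemma \ref{lem:Nreduction}, now to $N_{\mu\nu}^{\lambda}$ itself, reassembles $N_{\mu\check\nu}^{\lambda}\,C(\nu)=N_{\mu\nu}^{\lambda}$. Hence the coefficient of $h_\nu$ on the right equals $N_{\mu\nu}^{\lambda}$, which matches Theorem \ref{thm:ExpansionCylindricH}(i) for $h_{\lambda/d/\mu}$ and proves the stated expansion.

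The hard part will be confirming that the prescribed summation range $0\le d'\le d+k$ is exactly right, i.e. that the unique $d'$ produced above always lies inside it and that the omitted terms contribute nothing. I would establish that whenever $N_{\mu\check\nu}^{\lambda}\neq 0$ the forced degree obeys $0\le d'\le k\le d+k$. This follows from the defining set \eqref{Nlambdamunu}: any contributing pair $(w,w')$ satisfies $\mu\circ w+\check\nu\circ w'=\lambda\circ x^{\alpha}$ with $|\alpha|=d'$, and since $\mu,\check\nu,\lambda\in\alc$ have all parts in $\{1,\ldots,n\}$, each component forces
\[
n\alpha_i=(\mu\circ w)_i+(\check\nu\circ w')_i-\lambda_i\in[2-n,\,2n-1],
\]
so that $\alpha_i\in\{0,1\}$ and therefore $0\le d'=|\alpha|\le k$. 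Thus every nonzero contribution lands in the stated range while the surplus terms (up to $d+k$) vanish identically, and the same bound shows that the cylindric degrees $k+d-d'$ occurring on the right are non-negative, so all the functions $h_{\sigma/(k+d-d')/n^k}$ are well defined.
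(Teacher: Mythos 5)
Your proof is correct, but it takes a genuinely different route from the paper's. The paper works at the level of matrix elements: it starts from the generating-function identity $\langle v^{\lambda},\prod_i H^*(u_i)v_{\mu}\rangle=\sum_{d\ge 0}z^{-d}h_{\lambda/d/\mu}(u)$ from Theorem \ref{thm:ExpansionCylindricH}(ii), rewrites $v_\mu=z^kM^*_\mu v_{n^k}$ using the Frobenius product in $\V^+_k$, inserts a resolution of the identity over $\sigma\in\alc$, and compares powers of $z^{-1}$; the admissible range of $d'$ then falls out automatically from the bookkeeping of $z$-powers. You instead expand both sides in the $\{h_\nu\}$ basis via Theorem \ref{thm:ExpansionCylindricH}(i) and match coefficients, using $N_{n^k\nu}^{\sigma}=\delta_{\check\nu\sigma}C(\nu)$ (Lemma \ref{lem:Nreduction} plus Corollary \ref{cor:symmN}(i)) to collapse the double sum and a second application of Lemma \ref{lem:Nreduction} to reassemble $N_{\mu\nu}^{\lambda}$. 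What your approach buys is that it is purely combinatorial/algebraic, never leaving $\Lambda$, and it makes explicit \emph{why} the reduction multinomial $C(\nu)$ cancels; the price is that you must separately verify that the forced $d'$ lands in $[0,d+k]$, which you do correctly via the componentwise bound $n\alpha_i\in[2-n,2n-1]$ forcing $\alpha_i\in\{0,1\}$ and hence $0\le d'\le k$. Both arguments ultimately rest on the same associativity $M^*_\nu M^*_\mu v_{n^k}$ of the Frobenius product, so they are close in spirit, but yours is a legitimate independent verification.
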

\begin{proof}
Starting from the second identity in \eqref{Cauchy1} we take matrix elements in the subspace of symmetric tensors to find,
\begin{eqnarray*}
\langle v^{\lambda},\prod_{i\geq 0}H^*(u_i)v_{\mu}\rangle &=& \sum_{d\geq 0}z^{-d} h_{\lambda/d/\mu}(u)\\
&=& \sum_{\nu\in\cP^+_k}\langle v^{\lambda},M^*_{\nu}v_{\mu}\rangle h_{\nu}(u)
= z^k\sum_{\nu\in\cP^+_k}\langle v^{\lambda},M^*_{\nu}M^*_{\mu}v_{n^k}\rangle h_{\nu}(u)\\
&=&\sum_{\sigma\in\alc}\langle v^{\lambda},M^*_{\mu}v_{\sigma}\rangle \,
z^k\sum_{\nu\in\cP^+_k}\langle v^{\sigma},M^*_{\nu}v_{n^k}\rangle h_{\nu}(u)\\
&=&\sum_{\sigma\in\alc}\langle v^{\lambda},M^*_{\mu}v_{\sigma}\rangle \,\sum_{d''\geq 0} z^{k-d''} h_{\sigma/d''/n^k}(u)\,. 
\end{eqnarray*}
In the second line of this computation we applied the product identity $v_{\mu}=z^{k}v_{\mu}v_{n^k}=z^kM^*_{\mu}v_{n^k}$ in $\V^+_k$. Equating the coefficients of the same powers in $z^{-1}$, we obtain the asserted expansion.
\end{proof}

The cylindric functions used in the expansion \eqref{cylh2cylh} are particularly simple. To see this we note that we can re-parametrise the cylindric complete symmetric functions $h_{\lambda/d/\mu}$ in terms of skew shapes $\tilde\lambda/\tilde d/\tilde\mu$ where $\tilde\lambda$, $\tilde\mu$ are the partitions obtained from $\lambda,\mu\in\alc$ by deleting all parts of size $n$ and setting $\tilde d=d+m_n(\lambda)-m_n(\mu)$. The resulting set $\{\tilde\lambda\in\cP^+_k~|~n>\tilde\lambda_1\geq\cdots\geq\tilde\lambda_k\geq 0\}$ of these `reduced' partitions forms an alternative alcove for the $\hat \rS_k$-action on cylindric loops. It is not difficult to verify that the skew shapes $\lambda/d/\mu$ and $\tilde\lambda/\tilde d/\tilde\mu$ are the same up to a simple overall translation in the $\mb{Z}^2$-plane and, hence, that $h_{\lambda/d/\mu}=h_{\tilde\lambda/\tilde d/\tilde\mu}$. Setting $\mu=n^k$ and shifting $d$ by $k$, this becomes 
\begin{equation}
h_{\lambda/d+k/n^k}=h_{\tilde\lambda/d+k-\ell(\lambda)/\emptyset}=h_{\lambda/d/\emptyset}
\end{equation}
from which it is now evident that the latter functions are cylindric analogues of the (non-skew) complete symmetric functions $h_\lambda$. 

\begin{lemma}\label{lem:nscylh}
 Let $ \lambda \in \alc$ and $d \ge -m_n(\lambda)$, then we have the expansion
 \begin{equation}
  h_{\lambda/d/\emptyset}= \sum_{\nu} \frac{|\rS_\lambda|}{|\rS_\nu|}\, h_\nu\;,
 \end{equation}
where the sum runs over all $\nu\in \lambda \hat \rS_k\subset\cP_k$ with $|\nu|-|\lambda|=dn$. For all other values of $d\in\mb{Z}$ the function $h_{\lambda/d/\emptyset}$ is identically zero. Moreover, the (non-skew) cylindric complete symmetric functions $\{h_{\lambda/d/\emptyset}~|~\lambda\in\alc,\;d\geq -m_n(\lambda)\}$ are linearly independent.
\end{lemma}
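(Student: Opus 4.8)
The plan is to derive all three assertions from the fusion-coefficient expansion of Theorem \ref{thm:ExpansionCylindricH}(i) together with the reparametrisation $h_{\lambda/d/\emptyset}=h_{\lambda/(d+k)/n^k}$ recorded just before the statement. Applying Theorem \ref{thm:ExpansionCylindricH}(i) with inner shape $\mu=n^k$ gives
\[
h_{\lambda/d/\emptyset}=h_{\lambda/(d+k)/n^k}=\sum_{\nu}N_{n^k\nu}^{\lambda}\,h_\nu ,
\]
where $\nu$ ranges over $\cP^+_k$ with $|\nu|=(d+k)n+|\lambda|-|n^k|=|\lambda|+dn$; since the orbit $\lambda\hat\rS_k$ already sits inside the rank-$k$ weight lattice, only partitions with at most $k$ parts can contribute. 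Thus the whole computation reduces to evaluating the extended fusion coefficients $N_{n^k\nu}^{\lambda}$ for $\lambda\in\alc$ and $\nu\in\cP^+_k$.

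To evaluate them I would first use that $n^k$ is the unit of the fusion product: for $\nu\in\alc$, Corollary \ref{cor:symmN}(i) and the symmetry $N_{\lambda\mu}^{\nu}=N_{\mu\lambda}^{\nu}$ give $N_{n^k\nu}^{\lambda}=N_{\nu n^k}^{\lambda}=\delta_{\lambda\nu}$. For general $\nu\in\cP^+_k$ I would then apply the reduction formula of Lemma \ref{lem:Nreduction}, which rewrites $N_{n^k\nu}^{\lambda}$ as $N_{n^k\check\nu}^{\lambda}$ times the product of multinomial coefficients in \eqref{reducedN}; that product equals $|\rS_{\check\nu}|/|\rS_\nu|$, as is shown inside the proof of Lemma \ref{lem:Nreduction}. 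Combining the two steps yields
\[
N_{n^k\nu}^{\lambda}=\delta_{\lambda\check\nu}\,\frac{|\rS_{\check\nu}|}{|\rS_\nu|}=\delta_{\lambda\check\nu}\,\frac{|\rS_\lambda|}{|\rS_\nu|},
\]
and since $\check\nu=\lambda$ is exactly the condition $\nu\in\lambda\hat\rS_k$, substituting back reproduces $h_{\lambda/d/\emptyset}=\sum_{\nu}\frac{|\rS_\lambda|}{|\rS_\nu|}h_\nu$ with $\nu$ running over the orbit elements of size $|\lambda|+dn$.

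The vanishing statement and the range $d\ge -m_n(\lambda)$ I would read off from this same expansion. The coefficient of $h_\nu$ is nonzero only for a dominant $\nu\in\lambda\hat\rS_k$ with non-negative parts and $|\nu|=|\lambda|+dn$; a short check on the orbit shows that the minimal admissible size is obtained by turning the $m_n(\lambda)$ parts equal to $n$ into $0$, which corresponds to $d=-m_n(\lambda)$. For $d<-m_n(\lambda)$ the orbit contains no partition of the prescribed size (equivalently the cylindric shape $\lambda/(d+k)/n^k$ fails to be valid, by the non-vanishing criterion established after \eqref{CylhM}), so the sum is empty and $h_{\lambda/d/\emptyset}=0$. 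For linear independence I would argue by disjointness of supports: the set of $\nu$ occurring in $h_{\lambda/d/\emptyset}$ lies in the single orbit $\lambda\hat\rS_k$ and in the single size class $|\lambda|+dn$; distinct $\lambda\in\alc$ give disjoint orbits (each orbit meets $\alc$ in a unique point) and, for fixed $\lambda$, distinct $d$ give distinct sizes. Hence the supports are pairwise disjoint subsets of the linearly independent family $\{h_\nu\}_{\nu\in\cP^+}$, and as each $h_{\lambda/d/\emptyset}$ has strictly positive coefficients it is nonzero, so the whole family is linearly independent.

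The step I expect to be the main obstacle is the bookkeeping in the fusion-coefficient evaluation: correctly matching the shift $d\mapsto d+k$ and the accompanying $z$-degrees against the homogeneity $|\nu|=|\lambda|+dn$, and—more delicately—pinning down the stabiliser ratio as $|\rS_\lambda|/|\rS_\nu|$ rather than its reciprocal, which depends on applying the monomial and fusion reduction formulas in the correct direction.
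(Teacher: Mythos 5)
Your proposal is correct and follows essentially the same route as the paper: the reparametrisation $h_{\lambda/d/\emptyset}=h_{\lambda/(d+k)/n^k}$, the expansion of Theorem \ref{thm:ExpansionCylindricH}(i), the reduction formula of Lemma \ref{lem:Nreduction} to get the stabiliser ratio $|\rS_{\check\nu}|/|\rS_\nu|$, the identity $N_{n^k\check\nu}^{\lambda}=\delta_{\lambda\check\nu}$ from Corollary \ref{cor:symmN}, and linear independence via disjointness of the supports over distinct orbits and degrees. The only cosmetic difference is that you deduce the range $d\ge -m_n(\lambda)$ from emptiness of the orbit in the relevant degree rather than from the invalidity of the cylindric shape, which amounts to the same thing.
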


\begin{proof}
 The constraint on $d$ follows trivially from observing that $\lambda/d/\emptyset$ for $d<\ell(\tilde\lambda)-k=-m_n(\lambda)$ is not a valid cylindric skew shape. From Theorem \ref{thm:ExpansionCylindricH}
 we have the expansion $h_{\lambda/d/\emptyset}=h_{\lambda/d+k/n^k}=\sum_{\nu \in \cP^+_k} N_{n^k \nu}^\lambda h_\nu$,
 where the sum runs over all $\nu \in \cP^+_k$ such that $dn=|\nu|-|\lambda|$.  Employing Lemma \ref{lem:Nreduction} this can be rewritten as $h_{\lambda/d+k/n^k}=\sum_{\nu \in \cP^+_k} N_{n^k \check\nu}^\lambda \frac{|\rS_{\check\nu}|}{|\rS_\nu|} h_\nu$, where $\check\nu$ is the unique intersection point of the orbit $\nu \hat \rS_k$
 with the alcove \eqref{alcove}. Using the equality $N_{n^k \check\nu}^\lambda =\delta_{\lambda \check\nu}$ proved in Corollary \ref{cor:symmN}, the claim follows since the only weights $\nu \in \cP^+_k$ for which $\check\nu=\lambda$ are the ones 
 satisfying the constraint $\nu \in \lambda \hat \rS_k$.
 
To show linear independence, note that each $\nu\in\cP^+_k$ has a unique intersection point with the alcove \eqref{alcove} under the level-$n$ action of $\hat \rS_k$. Hence, in an arbitrary linear combination of non-skew cylindric complete symmetric functions $h_{\lambda/d/\emptyset}$ we cannot get any cancellation since the $h_\nu$ themselves are linearly independent.
\end{proof}

As another immediate consequence of Theorem \ref{thm:ExpansionCylindricH}, namely of (ii), one has the following  equalities between matrix elements and coefficient functions,
\begin{equation}
\langle v^\lambda,H_\nu v_\mu\rangle=z^d\theta_{\lambda^{\vee}/d/\mu^{\vee}}(\nu)
\quad\text{and}\quad
\langle v^\lambda,H^*_\nu v_\mu\rangle=z^{-d}\theta_{\lambda/d/\mu}(\nu)\,.
\end{equation}
In particular, the identity $|\rS_\mu|\,\theta_{\lambda/d/\mu}(\nu)=|\rS_\lambda|\,\theta_{\mu^{\vee}/d/\lambda^{\vee}}(\nu)$ holds.
\begin{coro} Let $\lambda,\mu\in\alc$ and $d\in\mb{Z}_{\geq 0}$. Then
\begin{equation}
h_{\lambda/d/\mu}=\frac{|\rS_\lambda|}{|\rS_\mu|}\,h_{\mu^{\vee}/d/\lambda^{\vee}}\,.
\end{equation}
\end{coro}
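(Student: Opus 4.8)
The plan is to reduce the claimed equality of symmetric functions to a coefficient-wise symmetry and then invoke the monomial expansion \eqref{CylhM}. Since \eqref{CylhM} gives $h_{\lambda/d/\mu}=\sum_{\nu\in\cP^+}\theta_{\lambda/d/\mu}(\nu)\,m_\nu$ and likewise $h_{\mu^\vee/d/\lambda^\vee}=\sum_{\nu\in\cP^+}\theta_{\mu^\vee/d/\lambda^\vee}(\nu)\,m_\nu$, it suffices to prove the relation $|\rS_\mu|\,\theta_{\lambda/d/\mu}(\nu)=|\rS_\lambda|\,\theta_{\mu^\vee/d/\lambda^\vee}(\nu)$ for every $\nu\in\cP^+$. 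Substituting this into the first expansion and pulling the scalar $|\rS_\lambda|/|\rS_\mu|$ out of the sum yields the assertion at once. This is precisely the identity recorded in the paragraph immediately preceding the statement, so once that identity is granted the corollary is immediate; the real content lies in its justification.

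To establish the coefficient symmetry I would read off both sides from the matrix-element formulas $\langle v^\lambda,H^*_\nu v_\mu\rangle=z^{-d}\theta_{\lambda/d/\mu}(\nu)$ and $\langle v^\lambda,H_\nu v_\mu\rangle=z^{d}\theta_{\lambda^\vee/d/\mu^\vee}(\nu)$, which are themselves consequences of part (ii) of Theorem \ref{thm:ExpansionCylindricH}. The bridge between the two is the Verlinde-type adjoint relation of Corollary \ref{cor:Verlinde_bos}: the argument that produced \eqref{Verlinde_bos} for $M_\lambda$ applies verbatim to the generating operators $H_\nu$ and gives $\langle v^\lambda,H^*_\nu v_\mu\rangle=\tfrac{|\rS_\lambda|}{|\rS_\mu|}\,\overline{\langle v^\mu,H_\nu v_\lambda\rangle}$, the $*$-adjoint being supplied by Lemma \ref{lem:adjoint}. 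Applying the second matrix-element formula with the roles of $\lambda,\mu$ interchanged gives $\langle v^\mu,H_\nu v_\lambda\rangle=z^{d'}\theta_{\mu^\vee/d'/\lambda^\vee}(\nu)$; using that the $\theta$'s are (real) non-negative integers and that $\bar z=z^{-1}$, this turns the right-hand side into $\tfrac{|\rS_\lambda|}{|\rS_\mu|}\,z^{-d'}\theta_{\mu^\vee/d'/\lambda^\vee}(\nu)$.

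The one point requiring care — the main, if minor, obstacle — is the degree bookkeeping: one must verify that the integer $d$ on the left equals the $d'$ on the right so that the powers of $z^{-1}$ cancel. This follows from the grading constraint of Lemma \ref{lem:theta2N}, namely $nd=|\mu|+|\nu|-|\lambda|$, together with the elementary identity $|\lambda^\vee|=(n+1)k-|\lambda|$ (and similarly for $\mu^\vee$), immediate from $\lambda^\vee=(1^{m_n(\lambda)}\cdots n^{m_1(\lambda)})$ and $\sum_i m_i(\lambda)=k$. Indeed $nd'=|\lambda^\vee|+|\nu|-|\mu^\vee|=|\mu|+|\nu|-|\lambda|=nd$, so $d=d'$. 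Equating the two expressions for $\langle v^\lambda,H^*_\nu v_\mu\rangle$ then yields $\theta_{\lambda/d/\mu}(\nu)=\tfrac{|\rS_\lambda|}{|\rS_\mu|}\theta_{\mu^\vee/d/\lambda^\vee}(\nu)$, which is the coefficient identity of the first paragraph and completes the proof of the corollary.
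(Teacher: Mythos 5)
Your proposal is correct and follows essentially the same route as the paper: the corollary is read off from the monomial expansion \eqref{CylhM} once the coefficient identity $|\rS_\mu|\,\theta_{\lambda/d/\mu}(\nu)=|\rS_\lambda|\,\theta_{\mu^{\vee}/d/\lambda^{\vee}}(\nu)$ is in hand, and that identity is exactly what the paper extracts from the matrix-element formulas of Theorem \ref{thm:ExpansionCylindricH}(ii) together with the adjoint relation of Lemma \ref{lem:adjoint} in the non-orthonormal basis of divided powers. Your explicit verification of the degree matching $d=d'$ via $|\lambda^\vee|=(n+1)k-|\lambda|$ fills in a detail the paper leaves tacit.
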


  \begin{figure}
\centering
\includegraphics[width=1\textwidth]{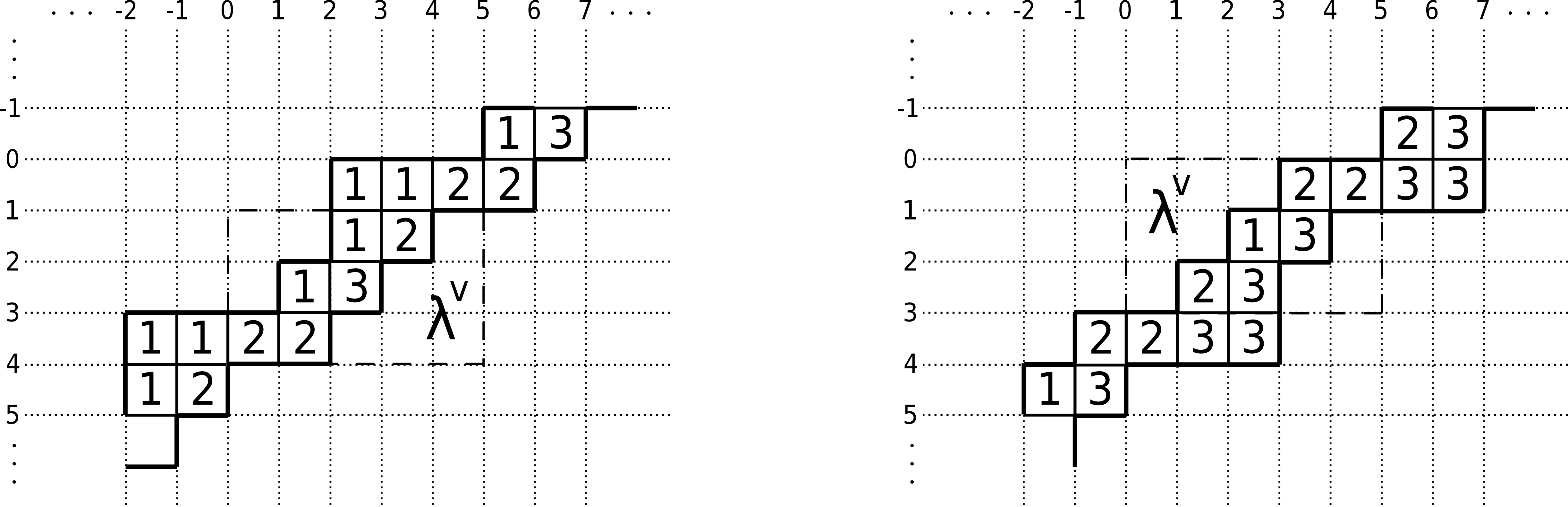}        
\caption{Set $n=4,k=3,d=1$ and choose $\lambda=(4,3,2)$, $\mu=(2,2,1)$ from $\mathcal{A}^+_{3}(4)$. Shown on the left is a CRPP $ \hat \pi $ of shape $\lambda/d/\mu$ and weight $(4,3,1)$. Note the bounding box with top left corner at $(d,0)$, height $k$ and width $n+1$. The image $\hat \pi^\vee$ of $\hat\pi$ under the map $\vee: \Pi_{k,n} \to \Pi_{k,n}$ is displayed on the right.
This is a CRPP of shape $\mu^\vee/d/\lambda^\vee$ and weight $(1,3,4)$, where $\mu^\vee=(4,3,3)$ and $\lambda^\vee=(3,2,1)$. One sees how $\hat \pi^\vee$ is obtained from $\hat \pi$ by
a rotation of $180^\circ$ and the swapping the numbers $1 \leftrightarrow 3$.
}
\label{ExamplePiVee}
\end{figure}

One might ask whether there is a bijection between CRPP of shape $\lambda/d/\mu$ and those of shape $\mu^{\vee}/d/\lambda^{\vee}$ which explains the above relation combinatorially.
\begin{prop}\label{prop:CRPPVee}
 Let $\Pi_{k,n}$ be the set of all CRPP. The map $\vee: \Pi_{k,n} \to \Pi_{k,n}$
which sends the CRPP $\hat \pi$ of shape $\lambda/d/\mu$ given by
\[
( \lambda^{(0)}[0]=\mu[0], \lambda^{(1)}[d_1], \dots, \lambda^{(l)}[d_l]=\lambda[d] )
\]
to the CRPP $\hat \pi^\vee$ of shape $\mu^\vee/d/\lambda^\vee$ given by
\[
 (\lambda^\vee[0], \lambda^{(l-1)^\vee}[d-d_{l-1}], \dots, \lambda^{(1)^\vee}[d-d_1],
 \lambda^{(0)^\vee}[d-d_0] = \mu^\vee [d])
\]
is an involution. Moreover, we have the equality,
 \begin{equation} 
  \theta_{\hat \pi} = \frac{|\rS_\lambda|}{|\rS_\mu|} \,\theta_{\hat \pi^\vee}\,.
  \end{equation} 
\end{prop} 
\begin{proof}
First we show that the set of points $\lambda/d/\mu$ is a cylindric skew diagram if and only if $\mu^\vee/d/\lambda^\vee$ is also one. Recall 
that $\lambda/d/\mu$ is a cylindric skew diagram if and only if $\mu[0] \le \lambda[d]$. From the equality $\lambda[d]_{i}-\mu[0]_i=\mu^\vee[d]_{k+1-i+d}-\lambda^\vee[0]_{k+1-i+d}$ for $i \in \mathbb{Z}$, which is a straightforward computation, noting that $\lambda[d]_i=\lambda_{i-d}$, one sees that $\lambda^\vee[0] \le \mu^\vee[d]$ must hold. This proves the claim.

After a manipulation of \eqref{CylindricTheta} using the identity 
$(\lambda \circ \tau^d)'_i=\lambda_i'+d$ and $\lambda'_i=k-(\lambda^{\vee})'_{n+2-i}$, we arrive at 
 \begin{equation} \label{ThetaVeeTheta}
 \theta_{\lambda/d/\mu} = \frac{|\rS_\lambda|}{|\rS_\mu|}\, \theta_{\mu^\vee/d/\lambda^\vee}\,.
\end{equation}
This proves our assertion for CRPPs $\hat\pi$ with $l=1$. The case $l>1$ now follows by observing that $\hat\pi$ is a sequence of cylindric skew shapes and that 
\begin{eqnarray*}
 \theta_{\hat \pi} = \prod_{i = 1}^l \theta_{\lambda^{(i)} /d_i-d_{i-1}/\lambda^{(i-1)}} =\prod_{i=1}^l \tfrac{|\rS_{\lambda^{(i)}}|}{|\rS_{\lambda^{(i-1)}}|} 
 \theta_{\lambda^{(i-1)^\vee}/(d-d_{i-1})-(d-d_i)/\lambda^{(i)^\vee}}
 = \frac{|\rS_\lambda|}{|\rS_\mu|} \theta_{\hat \pi^\vee}\,.
\end{eqnarray*}
\end{proof}

\subsection{Cylindric elementary symmetric functions} 
We now present the result analogous to Theorem \ref{thm:ExpansionCylindricH} for the second identity \eqref{Cauchy2}. As the line of argument apart from minor differences parallels closely the one in the previous section, we will mostly omit proofs unless there are important differences.
 
As a special case of cylindric reverse plane partitions one can define cylindric tableaux $\hat T$, where the entries are either strictly increasing down columns or along rows from left to right. Here we are interested in row strict CRPP defined as follows.
\begin{defi}
A {\em row strict} CRPP of shape $\Theta=\lambda/d/\mu$ is a map $\hat T:\Theta \to \mathbb{N}$ such that for any $(i,j) \in \Theta$ one has 
 \begin{eqnarray*}
 \hat T(i,j) &=& \hat T(i+k,j-n) \;, \\
  \hat T(i,j) &\le& \hat T(i+1,j), \text{ if } (i+1,j) \in \Theta \;, \\
    \hat T(i,j) &<& \hat T(i,j+1),\quad \text{ if }(i,j+1) \in \Theta \;.
\end{eqnarray*}
\end{defi}
Alternatively, a row strict CRPP $\hat T$ can be defined as a sequence of cylindric loops 
\begin{equation} \label{SequenceCylLoops}
( \lambda^{(0)}[0]=\mu[0], \lambda^{(1)}[d_1], \dots, \lambda^{(l)}[d_l]=\lambda[d] )
\end{equation}
with $\lambda^{(i)} \in \alc$ and $ d_i - d_{i-1} \ge 0$ such that 
$\hat T^{-1}(i)=\lambda^{(i)} / (d_i-d_{i-1}) / \lambda^{(i-1)}$ is a cylindric vertical strip. That is, in each row we have at most one box. 
We denote the weight of $\hat T$ by $\op{wt}(\hat T)=(\op{wt}_1(\hat T),\ldots,\op{wt}_l(\hat T))$. An example of a row strict CRPP for $n=4$ and $k=3$ is displayed in Figure \ref{fig:CRPPexamples}.

We will now generalise the set \eqref{setPsi} from Appendix A to the cylindric case and proceed 
in a similar fashion to Section \ref{CylCom}.

For $\lambda, \mu \in \alc$ and $d \in \mathbb{Z}_{\ge 0}$  define the set
\begin{equation} \label{NumberPsiLambdaDMu}
 \{ \tilde{w} \in \tilde{\rS}^{\mu}:  (\lambda \circ \tau^d)_i-(\mu \circ \tilde{w})_i=0,1,\,\forall i\in\mb{Z} \} \;,
\end{equation}
and denote by $\psi_{\lambda/d/\mu}$ its cardinality.

\begin{lemma} \label{lem:ExpressSetPsi}
The set \eqref{NumberPsiLambdaDMu} is non-empty if and only if $\lambda/d/\mu$ is a cylindric vertical strip. In the latter case we have that
\begin{equation} \label{CylindricPsiLambdaMu}
\psi_{\lambda / d / \mu} =
 \prod_{i=1}^n \binom{(\lambda \circ \tau^d)'_i-(\lambda \circ \tau^d)'_{i+1}}{(\lambda \circ \tau^d)'_i-\mu'_i}  \;.
 \end{equation}
\end{lemma}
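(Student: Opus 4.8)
My plan is to count the affine permutations in \eqref{NumberPsiLambdaDMu} directly on the cylinder, in the spirit of Lemma \ref{lem:ExpressSetTheta} but exploiting that the two-sided vertical-strip constraint makes the cell contents of the skew shape rigid, so that no difference of two products (as in the complete case) is needed. First I would replace the set by a set of cylindric loops. Since $\tilde\rS^\mu$ consists of the minimal length representatives of the right cosets $\rS_\mu\backslash\tilde\rS_k$ and $\mu\circ(w_\mu\tilde w)=\mu\circ\tilde w$ for $w_\mu\in\rS_\mu$, the map $\tilde w\mapsto\mu\circ\tilde w$ is a bijection from $\tilde\rS^\mu$ onto the distinct loops $\nu\in\cP_{k,n}$ lying in the $\tilde\rS_k$-orbit of $\mu$ (equivalently, those $\nu$ whose alcove representative is $\mu$ and whose winding agrees with that of $\mu$). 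Thus $\psi_{\lambda/d/\mu}$ counts the $\nu$ in this orbit with $(\lambda\circ\tau^d)_i-\nu_i\in\{0,1\}$ for all $i\in\mb{Z}$, i.e. the cylindric loops one vertical step below $\lambda\circ\tau^d$.

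The key point is that the cylindric conjugate is an invariant of the $\tilde\rS_k$-orbit: it equals the ordinary conjugate of the alcove representative shifted by the winding, so $\nu'_j=\mu'_j$ for every admissible $\nu$ (the relation $(\lambda\circ\tau^d)'_i=\lambda'_i+d$ from the proof of Lemma \ref{lem:ExpressSetTheta} is the special case $\nu=\lambda\circ\tau^d$). Consequently the number of cells of the cylindric skew shape $(\lambda\circ\tau^d)/\nu$ in the $j$-th column class (cells whose column index is congruent to $j$ modulo $n$) is the fixed number $b_j=(\lambda\circ\tau^d)'_j-\mu'_j$, independent of the rearrangement $\nu$. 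Because each row carries at most one cell, a cell of the $j$-th class occupies one of the $m_j=(\lambda\circ\tau^d)'_j-(\lambda\circ\tau^d)'_{j+1}$ rows of that class, and choosing which $b_j$ of them carry a cell is free and independent over $j$; a short telescoping check shows every such choice returns a loop with $\nu'_j=\mu'_j$, hence one in the orbit of $\mu$. This produces the factor $\binom{(\lambda\circ\tau^d)'_j-(\lambda\circ\tau^d)'_{j+1}}{(\lambda\circ\tau^d)'_j-\mu'_j}$, and since both $m_j$ and $b_j$ are periodic of period $n$ by the quasi-periodicity $(\lambda\circ\tau^d)'_{j+n}=(\lambda\circ\tau^d)'_j-k$, one fundamental range $j=1,\dots,n$ records each class once and yields \eqref{CylindricPsiLambdaMu}. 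The non-emptiness claim drops out simultaneously: the product is nonzero iff $0\le b_j\le m_j$ for all $j$, i.e. $(\lambda\circ\tau^d)'_{j+1}\le\mu'_j\le(\lambda\circ\tau^d)'_j$, which is exactly the interlacing of conjugates characterising a cylindric vertical strip; for $d=0$ this recovers the non-cylindric statement of Appendix A.

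The step I expect to be the main obstacle is the careful treatment of the cylindric boundary. One must verify the orbit-invariance of the cylindric conjugate and the bijection between cell selections and elements of $\tilde\rS^\mu$ without over- or under-counting when $\mu$ has repeated parts, checking that the winding is correctly carried by $\tau^d$ so that all admissible $\nu$ remain in $\tilde\rS_k$ rather than the larger $\hat\rS_k$. The delicate arithmetic is the closing factor $j=n$, where $(\lambda\circ\tau^d)'_{n+1}=(\lambda\circ\tau^d)'_1-k=d$ forces its top entry down from $\lambda'_n+d$ to $\lambda'_n$; this is precisely the cylindric correction that prevents the naive unrolling to a straight vertical-strip count for $\rS_{k+d}$ (with padded weights $\Lambda^{(d)}=(n^d,\lambda_1,\dots,\lambda_k)$ and $\mu^{(d)}=(\mu_1,\dots,\mu_k,0^d)$) from being an equality. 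In the complete case the analogous correction surfaced as the subtraction of a second product; here the rigidity of the cell contents instead absorbs it into the periodic closure of a single product, which is why \eqref{CylindricPsiLambdaMu} has no difference term.
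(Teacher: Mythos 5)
Your argument is correct, and it takes a genuinely different route from the paper. The paper proves \eqref{CylindricPsiLambdaMu} by rewriting \eqref{NumberPsiLambdaDMu} as a set of pairs $(w,\alpha)\in\rS^\mu\times\cP_k$ with $\alpha_i\geq 0$, $|\alpha|=d$, observing that the two-sided constraint forces $\alpha_i=1$ exactly where $(\mu\circ w)_i=n$, and thereby constructing a bijection onto the \emph{non-cylindric} set $\{\bar w\in\rS^{\mu\circ\tau^{-d}}~|~\lambda_i-(\mu\circ\tau^{-d}\circ\bar w)_i=0,1\}$; it then quotes Lemma \ref{lem:CardinalityPsi} from Appendix A and converts via $(\mu\circ\tau^{-d})'_i=\mu'_i-d$. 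You instead count directly on the cylinder: admissible loops $\nu=\mu\circ\tilde w$ correspond to choosing, in each column class $j$, which $b_j$ of the $m_j$ rows carry the (rigidly placed) box. Your two key claims both check out: the per-class box count $b_j=(\lambda\circ\tau^d)'_j-\mu'_j$ is an orbit invariant because adding $n$ to one entry of a loop raises the count in \emph{every} residue class by exactly one, so a root-lattice translation ($|\alpha|=0$) changes nothing; and the telescoping identity $m_j-b_j+b_{j+1}=\mu'_j-\mu'_{j+1}$ together with $\sum_j b_j=|\lambda|+dn-|\mu|$ shows every selection lands back in the $\tilde\rS_k$-orbit of $\mu$. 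What each approach buys: the paper's reduction is shorter given the Appendix A machinery and mirrors the structure of the proof of Lemma \ref{lem:ExpressSetTheta}, whereas yours is self-contained, explains combinatorially why the answer is a single product of binomials rather than a difference of products, and yields the non-emptiness criterion ($0\le b_j\le m_j$ for all $j$, i.e.\ the interlacing of conjugates) in the same stroke, where the paper only gestures at an ``analogous argument''. The one step you should write out rather than assert is the orbit-invariance of the $b_j$; as noted, it is a one-line computation.
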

Note once more that we define the binomial coefficients in \eqref{CylindricPsiLambdaMu} to be zero whenever one of their arguments is negative.
\begin{proof}
The first part of the statement follows from an analogous line of argument as in the proof of Lemma \ref{lem:ExpressSetTheta}.  Thus, we assume that $\lambda/d/\mu$ is a cylindric vertical strip and proceed by a similar strategy as in the proof of Lemma \ref{lem:ExpressSetTheta} rewriting the set \eqref{NumberPsiLambdaDMu} in the following alternative form,
  \begin{equation} \label{SubSetAffineSmuE}
 \mb{A}=\Big\{ (w,\alpha) \in \rS^\mu \times \cP_k~|~
 \lambda_i - (\mu \circ w \circ x^{-\alpha})_i=0,1 , \,  |\alpha|=d ,\,\alpha_i\geq 0\Big\} \;.
 \end{equation}
Next we construct a bijection between $\mb{A}$ and the set
\begin{equation} \label{OtherSetE}
\mb{B}= \{ \bar{w} \in \rS^{\mu \circ \tau^{-d} } ~|~ \lambda_i - (\mu \circ \tau^{-d} \circ \bar{w})_i =0,1 \} \;,
\end{equation} 
where in the latter the weights $\lambda$ and $\mu \circ \tau^{-d}$ belong to $\cP^+_k$. %
Fix an element $(w,\alpha)\in\mb{A}$. Because $\lambda,\mu \in \mathcal{A}^+_k(n)$ 
it follows that $\alpha_i$ is nonzero only if $(\mu \circ w)_i = n$,
in which case $\alpha_i=1$. This implies that $m_i(\mu \circ w \circ x^{-\alpha})=m_i(\mu \circ \tau^{-d})$ for $i=0,\dots,n$,
and thus there exists a unique permutation
 $\bar{w}(w,\alpha) \in \rS^{\mu \circ \tau^{-d} }$ such that 
$ \mu \circ \tau^{-d} \circ \bar{w} = \mu \circ w \circ x^{-\alpha}$ as weights in $\cP_k$. By construction $\bar w(w,\alpha)\in\mb{B}$.

Conversely, 
given $\bar{w} \in \mb{B} $ define a weight $\alpha(\bar w)\in \cP_k$ such that $\alpha_i=1$ if $(\mu \circ \tau^{-d} \circ \bar{w})_i=0$ and $\alpha_i=0$ otherwise.  Then there exists a unique permutation $w(\bar w) \in \rS^{\mu} $ such that 
$\mu \circ w \circ x^{-\alpha} = \mu \circ \tau^{-d} \circ \bar{w}$ as elements in $\cP_{k,n}$. By construction $(w(\bar w),\alpha(\bar w))\in\mb{A}$.

Noting that $(\mu \circ \tau^{-d})'_i=\mu'_i-d$ the asserted equality then follows, since the cardinality of 
\eqref{OtherSetE} is equal to $\psi_{\lambda/\mu \circ \tau^{-d}}$ by Lemma \ref{lem:CardinalityPsi} and \eqref{setPsi}. Hence, 
\begin{eqnarray*}
 \psi_{\lambda/d/\mu} = \psi_{\lambda/\mu \circ \tau^{-d}} = \prod_{i=1}^n \binom{\lambda'_i-\lambda'_{i+1}}{(\mu \circ \tau^{-d})'_{i}-\lambda'_{i+1}}
 = \prod_{i=1}^n \binom{(\lambda \circ \tau^d)'_i-(\lambda \circ \tau^d)'_{i+1}}{\mu'_{i}-(\lambda \circ \tau^d)'_{i+1}}  \;.
\end{eqnarray*}
\end{proof}

Given a row strict CRPP $\hat T$ set $\psi_{\hat T}=\prod_{i\ge 1} \psi_{\lambda^{(i)}/(d_i-d_{i-1})/\lambda^{(i-1)}}$, 
where the cylindric skew diagram $\lambda^{(i)}/(d_i-d_{i-1})/\lambda^{(i-1)}$ is 
the pre-image $\hat T^{-1}(i)$, and denote by $u^{\hat T}$
the monomial $u_1^{\op{wt}_1(\hat T)} u_2^{\op{wt}_2(\hat T)} \cdots$ in the indeterminates $u_i$.

\begin{defi}
For $\lambda,\mu \in \alc$ and $d \in \mathbb{Z}_{\ge 0}$, introduce the {\em cylindric elementary symmetric function} $e_{\lambda/d/\mu}$ as the weighted sum
\begin{equation}\label{cyle}
e_{\lambda/d/\mu}(u)=\sum_{\hat T}  \psi_{\hat T} u^{\hat T}
\end{equation}
over all row strict CRPP $\hat T$ of shape $\lambda/d/\mu$.
\end{defi}
Since $\psi_{\lambda/0/\mu}=\psi_{\lambda/\mu}$, according to Lemma \ref{lem:ExpressSetPsi}, it follows that for $d=0$ we recover the (non-cylindric) skew elementary cylindric function $e_{\lambda/0/\mu}=e_{\lambda/\mu}$ from Appendix A; see \eqref{e2T}.

In a similar vein as in the case of cylindric complete symmetric functions one proves that $e_{\lambda/d/\mu}$ is also a symmetric function by deriving first the following product expansion in the quotient ring $\V^+_k$. 

Let $\lambda,\mu \in \mathcal{A}_{k,n}$, $\nu \in \cP^+$, $d\in\mb{Z}_{\ge 0}$ and define
\begin{equation} \label{psiLambdaMuNu}
 \psi_{\lambda / d / \mu}(\nu)= \sum_{\hat T} \psi_{\hat T},
\end{equation}
where the sum is restricted
to row strict CRPP $\hat T$ of shape $\lambda/d/\mu$ and weight $\op{wt}(\hat T)=\nu$. 
\begin{lemma}
The following product rule holds in $\V^+_k$,
 \begin{equation} \label{mMueNuCyl}
   m_\mu(x_1^{-1},\ldots,x^{-1}_k) e_\nu(x_1^{-1},\ldots,x^{-1}_k)  =  \sum_{\lambda \in \alc} z^{-d} \psi_{\lambda / d / \mu}(\nu) 
   m_\lambda(x_1^{-1},\ldots,x^{-1}_k) \;,
 \end{equation}
where $d = \frac{|\mu|+|\nu|-|\lambda|}{n}$. In particular, $\psi_{\lambda / d / \mu}(\nu)$ is nonzero only if $dn = |\mu|+|\nu|-|\lambda|$.
\end{lemma}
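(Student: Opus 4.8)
The plan is to follow closely the proof of Lemma \ref{lem:mMuhNuQuotient}, replacing the complete symmetric function $h_r$ by the elementary symmetric function $e_r$ and exploiting that the monomials occurring in $e_r(x_1^{-1},\ldots,x_k^{-1})$ are \emph{squarefree}. First I would reduce to the single-factor case, i.e. establish that in $\V^+_k$
\begin{equation*}
m_\mu(x_1^{-1},\ldots,x^{-1}_k)\, e_r(x_1^{-1},\ldots,x^{-1}_k)
= \sum_{\lambda\in\alc} z^{-d}\,\psi_{\lambda/d/\mu}\, m_\lambda(x_1^{-1},\ldots,x^{-1}_k),
\end{equation*}
where the sum runs over all $\lambda\in\alc$ for which $\lambda/d/\mu$ is a cylindric vertical strip with $d=\tfrac{r+|\mu|-|\lambda|}{n}\in\mb{Z}_{\ge 0}$. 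The general identity \eqref{mMueNuCyl} then follows by iterating this rule along the parts of $\nu$, since $e_\nu=\prod_i e_{\nu_i}$ and a row strict CRPP of shape $\lambda/d/\mu$ and weight $\nu$ is precisely a chain of cylindric vertical strips whose $i$th step has weight $\nu_i$; the product $\psi_{\hat T}=\prod_i\psi_{\lambda^{(i)}/(d_i-d_{i-1})/\lambda^{(i-1)}}$ then reproduces \eqref{psiLambdaMuNu}.

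For the single-factor identity I would extract coefficients monomial by monomial. Because $\{m_\lambda(x_1^{-1},\ldots,x^{-1}_k)\}_{\lambda\in\alc}$ is a basis of $\V^+_k$, the coefficient of $m_\lambda$ equals the coefficient of the single monomial $x^{-\lambda}$ in the product. Degree counting — $m_\mu$ is homogeneous of degree $|\mu|$, $e_r$ of degree $r$, and the relation $x_i^{-n}=z^{-1}$ shifts degree by multiples of $n$ — forces $r+|\mu|-|\lambda|\equiv 0\bmod n$ and fixes $d$, which already yields the final ``in particular'' claim after summing over the parts of $\nu$.

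The core computation is then to show that this coefficient equals $z^{-d}\psi_{\lambda/d/\mu}$. Each monomial of $m_\mu(x_1^{-1},\ldots,x^{-1}_k)$ is $x^{-\mu\circ w}$ with $w\in\rS^\mu$, and each monomial of $e_r(x_1^{-1},\ldots,x^{-1}_k)$ is $x^{-\delta}$ with $\delta\in\{0,1\}^k$ and $|\delta|=r$; this squarefree feature is exactly what distinguishes the elementary from the complete case and produces a vertical strip rather than a general reverse plane partition. Reducing $x^{-(\mu\circ w+\delta)}$ to the alcove via $x_i^{-n}=z^{-1}$ produces a factor $z^{-|\alpha|}$ together with the weight $\lambda$ precisely when $\lambda_i-(\mu\circ w\circ x^{-\alpha})_i\in\{0,1\}$ for all $i$, with $\alpha\in\cP_k$, $\alpha_i\ge 0$ and $n|\alpha|=r+|\mu|-|\lambda|$, i.e. $|\alpha|=d$. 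Thus the coefficient of $x^{-\lambda}$ is $z^{-d}$ times the cardinality of the set $\mb{A}$ in \eqref{SubSetAffineSmuE}, which by Lemma \ref{lem:ExpressSetPsi} equals $\psi_{\lambda/d/\mu}$.

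I anticipate the main obstacle to be verifying that the contributing monomials $(w,\delta)$ are in bijection with the pairs $(w,\alpha)$ counted by $\mb{A}$, without over- or under-counting: one must check that the $0$--$1$ vector $\delta$ is recovered unambiguously as $\delta_i=(\lambda\circ x^\alpha)_i-(\mu\circ w)_i$ and that the translation $\alpha$ is uniquely determined by which components are pushed past the boundary of the alcove \eqref{alcove}. Once this bookkeeping is matched to the set-up already carried out in the proof of Lemma \ref{lem:ExpressSetPsi}, the asserted product rule follows.
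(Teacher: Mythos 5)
Your proposal is correct and follows exactly the route the paper intends: the lemma is stated there without an explicit proof, with the text deferring to the ``analogous line of argument'' used for Lemma \ref{lem:mMuhNuQuotient}, and your argument is precisely that analogue — reduce to a single factor $e_r$, match the coefficient of $x^{-\lambda}$ with the cardinality of the set $\mb{A}$ in \eqref{SubSetAffineSmuE} (which Lemma \ref{lem:ExpressSetPsi} identifies with $\psi_{\lambda/d/\mu}$), and iterate over the parts of $\nu$ to recover \eqref{psiLambdaMuNu}. The bookkeeping you flag (recovering $\delta$ and $\alpha$ uniquely from the reduction $x_i^{-n}=z^{-1}$) is handled correctly and mirrors the set \eqref{wAlpha} step in the complete symmetric function case.
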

Analogous to the line of argument followed in Section \ref{CylCom}, we deduce from \eqref{mMueNuCyl} that $\psi_{\lambda / d / \mu}(\nu)$ is invariant under permutations of $\nu$. Moreover, setting $\nu=(1^r)$ with $r=dn+|\lambda|-|\mu|$ there exists at least one $\hat T$ of that weight and, hence, $\psi_{\lambda/d/\mu}(1^r)$ is nonzero as long as $\lambda/d/\mu$ is a valid cylindric shape.

Because the proof of the following two statements parallels closely our previous discussion we omit it.
\begin{coro}
(i) The function $e_{\lambda/d/\mu}$ has the expansion
\begin{equation}\label{CyleM}
e_{\lambda/d/\mu}=\sum_{\nu\in\cP^+}\psi_{\lambda/d/\mu}(\nu)m_\nu
\end{equation}
into monomial symmetric functions and, hence, is symmetric. 

(ii) The expansion coefficients \eqref{psiLambdaMuNu} have the following alternative expression, 
\begin{equation}\label{PsiM}
 \psi_{\lambda/d/\mu}(\nu) = \sum_{\sigma \in \cP^+_k} M_{\nu \sigma} N_{\sigma \mu}^\lambda\,,
 \end{equation}
where $M_{\alpha\beta}$ is the number of all $(0,1)$-matrices with row sums equal to $\alpha_i$ and column sums equal to $\beta_i$.
\end{coro}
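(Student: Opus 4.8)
The plan is to mirror the argument used for cylindric complete symmetric functions, replacing $h_\nu$ by $e_\nu$ and the matrices $L_{\nu\sigma}$ by $M_{\nu\sigma}$ throughout; both parts follow once the product rule \eqref{mMueNuCyl} and the monomial product rule \eqref{Nmunulambda} are in hand.

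For part (i), I would first establish that $\psi_{\lambda/d/\mu}(\nu)$ depends only on the partition underlying $\nu$, i.e. that $\psi_{\lambda/d/\mu}(\nu)=\psi_{\lambda/d/\mu}(\beta)$ whenever $\beta\sim\nu$. This is immediate from \eqref{mMueNuCyl}: since $e_\nu=e_{\nu_1}e_{\nu_2}\cdots$ depends only on the multiset of parts of $\nu$, so does the coefficient $z^{-d}\psi_{\lambda/d/\mu}(\nu)$ of $m_\lambda$ in the product $m_\mu e_\nu$. Given this permutation invariance, grouping the terms in the defining sum \eqref{cyle} by weight yields $e_{\lambda/d/\mu}(u)=\sum_{\nu}\psi_{\lambda/d/\mu}(\nu)u^\nu$ with $\nu$ ranging over all compositions; collecting the monomials $u^\nu$ that share a common underlying partition then produces the expansion \eqref{CyleM} into monomial symmetric functions, which in particular shows that $e_{\lambda/d/\mu}$ is symmetric.

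For part (ii), the key classical input is the expansion of a product of elementary symmetric functions into the monomial basis, $e_\nu=\sum_{\sigma}M_{\nu\sigma}m_\sigma$ in $\Lambda$, where $M_{\nu\sigma}$ counts the $(0,1)$-matrices with row sums $\nu_i$ and column sums $\sigma_j$ (each factor $e_{\nu_i}$ contributes a squarefree monomial of degree $\nu_i$, recorded by the $i$th row of the matrix). Projecting onto $\Lambda_k\otimes\RR\cong\V^+_k$ and substituting $x_i\mapsto x_i^{-1}$, I would insert this expansion into $m_\mu(x^{-1})e_\nu(x^{-1})$ and apply the monomial product rule \eqref{Nmunulambda} to every term $m_\mu m_\sigma$. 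Comparing the coefficient of $m_\lambda$ with the one read off from \eqref{mMueNuCyl}, and using that $M_{\nu\sigma}=0$ unless $|\sigma|=|\nu|$ so that the powers $z^{(|\lambda|-|\mu|-|\sigma|)/n}$ and $z^{-d}$ agree, yields $\psi_{\lambda/d/\mu}(\nu)=\sum_{\sigma\in\cP^+_k}M_{\nu\sigma}N_{\mu\sigma}^\lambda$; the truncation to $\sigma\in\cP^+_k$ appears precisely because $m_\sigma$ vanishes in $\V^+_k$ when $\ell(\sigma)>k$. The symmetry $N_{\mu\sigma}^\lambda=N_{\sigma\mu}^\lambda$, manifest from the symmetric defining set \eqref{Nlambdamunu} and consistent with Corollary \ref{cor:symmN}(i), then rewrites the coefficient in the stated form \eqref{PsiM}.

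The argument is routine given the complete-symmetric case; the only step needing care is the bookkeeping of the $z$-grading, where the degree constraint $|\sigma|=|\nu|$ forced by $M_{\nu\sigma}$ is exactly what keeps the power $z^{-d}$ consistent across the comparison, together with confirming that the sum over $\sigma$ is correctly truncated to partitions with at most $k$ parts rather than all partitions.
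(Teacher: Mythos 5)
Your proposal is correct and follows exactly the route the paper intends: the paper omits this proof explicitly because it "parallels closely" the complete-symmetric case, namely deducing permutation invariance of $\psi_{\lambda/d/\mu}(\nu)$ from the product rule \eqref{mMueNuCyl} for part (i), and for part (ii) inserting $e_\nu=\sum_\sigma M_{\nu\sigma}m_\sigma$ into $m_\mu e_\nu$ in $\V^+_k$ and comparing coefficients of the basis $\{m_\lambda\}_{\lambda\in\alc}$, just as in Lemma \ref{lem:theta2N}. Your attention to the $z$-grading via $|\sigma|=|\nu|$ and to the truncation to $\ell(\sigma)\le k$ matches the paper's remarks in the analogous complete-symmetric argument.
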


Taking matrix elements in the identity \eqref{Cauchy2} we obtain the following: 
\begin{theorem} \label{thm:ExpansionCylindricE}
Let $\lambda,\mu\in\alc$ and $d\in\mathbb{Z}_{\ge 0}$. Then (i) the symmetric function $e_{\lambda/d/\mu}$ has the expansion
\begin{equation}
e_{\lambda/d/\mu}=\sum_{\nu\in\cP^+_k}N_{\mu\nu}^{\lambda} \;e_\nu \label{cyle2e} 
\end{equation}
into the basis $\{e_\nu\}_{\nu\in\cP^+}\subset \Lambda$, where the sum is restricted to those $\nu\in\cP^+_k$ for which $|\nu|=dn+|\lambda|-|\mu|$, and (ii) we have the formal power series expansions 
\begin{eqnarray}
\langle v^{\lambda^{\vee}},\prod_{j\geq 1}E(u_j)v_{\mu^{\vee}}\rangle =
\overline{\langle v^\lambda,\prod_{j\geq 1}E^*(u_j)v_\mu\rangle}
&=& \sum_{d\geq 0}z^{d} e_{\lambda/d/\mu}(u)\;.
\end{eqnarray}
\end{theorem}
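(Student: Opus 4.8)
The plan is to follow the template of the proof of Theorem~\ref{thm:ExpansionCylindricH}, establishing part~(i) by a purely algebraic re-expansion in $\Lambda$ and then deducing part~(ii) from the Cauchy identity \eqref{Cauchy2} together with the Verlinde-type formula \eqref{Verlinde_bos}. For part~(i) I would start from the monomial expansion \eqref{CyleM}, namely $e_{\lambda/d/\mu}=\sum_{\sigma\in\cP^+}\psi_{\lambda/d/\mu}(\sigma)\,m_\sigma$, and convert it into the basis $\{e_\nu\}$. The only structural input required is the transition matrix between elementary and monomial symmetric functions: writing $e_\nu=\sum_\sigma M_{\nu\sigma}\,m_\sigma$ with $M_{\nu\sigma}$ the number of $(0,1)$-matrices with row sums $\nu$ and column sums $\sigma$ (the same $M$ as in \eqref{PsiM}), the dual Cauchy identity $\prod_{i,j}(1+x_iy_j)=\sum_\lambda e_\lambda(x)m_\lambda(y)=\sum_\lambda m_\lambda(x)e_\lambda(y)$ shows that $M$ is symmetric, and it is invertible on each homogeneous component since $\{e_\nu\}$ and $\{m_\nu\}$ are both bases. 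Hence $m_\sigma=\sum_\nu(M^{-1})_{\sigma\nu}e_\nu$. This plays the role that the Hall-pairing identity $\langle m_\sigma,m_\nu\rangle=L^{-1}_{\sigma\nu}$ played in the complete-symmetric case; the point of difference is that $\{e_\nu\}$ is \emph{not} Hall-dual to $\{m_\nu\}$ (its dual is the forgotten basis), so the inversion must be carried out directly rather than read off from an inner product.

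Substituting and inserting \eqref{PsiM} I would compute the coefficient of $e_\nu$ as
\[
\sum_{\sigma}\psi_{\lambda/d/\mu}(\sigma)(M^{-1})_{\sigma\nu}
=\sum_{\tau\in\cP^+_k}N^{\lambda}_{\tau\mu}\sum_{\sigma}M_{\sigma\tau}(M^{-1})_{\sigma\nu}
=\sum_{\tau\in\cP^+_k}N^{\lambda}_{\tau\mu}\,\delta_{\nu\tau},
\]
where the inner sum collapses to $\delta_{\nu\tau}$ by the symmetry of $M$. For $\nu\in\cP^+_k$ this yields $N^{\lambda}_{\nu\mu}=N^{\lambda}_{\mu\nu}$ after invoking the symmetry of Corollary~\ref{cor:symmN}(i); for $\ell(\nu)>k$ the Kronecker delta never fires, since $\tau$ ranges only over $\cP^+_k$ in \eqref{PsiM}, so the coefficient vanishes. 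This reproduces exactly the claimed restriction of the sum in \eqref{cyle2e} to $\nu\in\cP^+_k$, and homogeneity pins down $|\nu|=dn+|\lambda|-|\mu|$.

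For part~(ii) I would expand the adjoint Cauchy identity $\prod_{j\ge1}E^*(u_j)=\sum_{\nu\in\cP^+_k}M^*_\nu\,e_\nu(u)$ and take matrix elements between divided powers. The Verlinde formula \eqref{Verlinde_bos} gives $\langle v^{\lambda},M^*_\nu v_{\mu}\rangle=z^{-d}N^{\lambda}_{\mu\nu}$ with $nd=|\nu|+|\mu|-|\lambda|$, so by part~(i) the generating series collects into $\sum_{d\ge0}z^{-d}e_{\lambda/d/\mu}(u)$; complex conjugation, using $\bar z=z^{-1}$, $\bar u_j=u_j$ and the reality of the coefficients $N^{\lambda}_{\mu\nu}$ and of $e_\nu$, turns this into the right-hand side $\sum_{d\ge0}z^{d}e_{\lambda/d/\mu}(u)$. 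The remaining first equality, relating the non-adjoint matrix element $\langle v^{\lambda^\vee},\prod_jE(u_j)v_{\mu^\vee}\rangle$ to $\overline{\langle v^{\lambda},\prod_jE^*(u_j)v_{\mu}\rangle}$, I would obtain precisely as in Theorem~\ref{thm:ExpansionCylindricH}(ii): combine \eqref{Verlinde_bos} with the $*$- and $\vee$-symmetries of the fusion coefficients collected in Corollary~\ref{cor:symmN}.

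I expect the main obstacle to be bookkeeping rather than anything conceptual. The two delicate points are: first, verifying that the elementary-to-monomial transition matrix $M$ is symmetric and invertible on the relevant homogeneous component, and that the length restriction $\nu\in\cP^+_k$ drops out cleanly from the collapse of the double sum (the analogue of the $\ell(\nu)\le k$ clause in the complete-symmetric case); and second, applying the $*/\vee$-symmetries of $N$ from Corollary~\ref{cor:symmN} in the correct order to recover the first equality in~(ii), where the dictionary between $\lambda^\vee$, $\lambda^*$ and the rotation $\rot$ must be tracked carefully. Neither step is deep, which is consistent with the fact that the whole argument parallels the complete-symmetric case almost verbatim.
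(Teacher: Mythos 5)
Your proposal is correct and is essentially the argument the paper intends: the paper omits the proof of Theorem \ref{thm:ExpansionCylindricE}, stating only that it parallels the complete-symmetric case, and your computation (expand via \eqref{CyleM}, insert \eqref{PsiM}, invert the symmetric transition matrix $M$ to collapse the double sum to $N^{\lambda}_{\mu\nu}$ for $\nu\in\cP^+_k$, then read off (ii) from \eqref{Cauchy2} and Corollary \ref{cor:symmN}) is exactly that parallel. Your observation that the Hall-pairing step $\langle m_\sigma,m_\nu\rangle=L^{-1}_{\nu\sigma}$ from the proof of Theorem \ref{thm:ExpansionCylindricH} must be replaced by a direct inversion of $M$ (since $\{e_\nu\}$ is dual to the forgotten basis, not to $\{m_\nu\}$) is the one genuinely nontrivial adaptation, and you handle it correctly via the symmetry and invertibility of $M$ on each homogeneous component.
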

Note that (ii) implies the following equalities between matrix elements and coefficient functions, 
\begin{equation}
\langle v^\lambda,E_\nu v_\mu\rangle=z^{d}\psi_{\lambda^{\vee}/d/\mu^{\vee}}(\nu)
\qquad\text{and}\qquad\langle v^\lambda,E^*_\nu v_\mu\rangle=z^{-d}\psi_{\lambda/d/\mu}(\nu)\,.
\end{equation}
In particular, the identity $|\rS_\mu|\,\psi_{\lambda/d/\mu}(\nu)=|\rS_\lambda|\,\psi_{\mu^{\vee}/d/\lambda^{\vee}}(\nu)$ holds. By a similar line of argument as in the case of cylindric complete symmetric functions one arrives at the following `duality relations' for cylindric elementary symmetric functions and row strict CRPP under the involution $\vee:\Pi_{k,n}\to\Pi_{k,n}$:

\begin{prop}
The involution $\vee: \Pi_{k,n} \to \Pi_{k,n}$ from Proposition \ref{prop:CRPPVee} preserves the subset of row strict CRPP $\hat T$ and one has the equalities
 \begin{equation} 
  \psi_{\hat T} = \frac{|\rS_\lambda|}{|\rS_\mu|} \,\psi_{\hat T^\vee}\qquad\text{ and }\qquad
 e_{\lambda/d/\mu}=\frac{|\rS_\lambda|}{|\rS_\mu|}\,e_{\mu^{\vee}/d/\lambda^{\vee}} \;.
  \end{equation} 
\end{prop}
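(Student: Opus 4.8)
The plan is to follow, \emph{mutatis mutandis}, the proof of Proposition~\ref{prop:CRPPVee}, replacing the difference-of-products formula \eqref{CylindricTheta} for $\theta_{\lambda/d/\mu}$ by the single-product formula \eqref{CylindricPsiLambdaMu} for $\psi_{\lambda/d/\mu}$. There are three things to establish: (a) that $\vee$ preserves the subclass of row strict CRPP; (b) the single-strip weight identity $\psi_{\lambda/d/\mu}=\tfrac{|\rS_\lambda|}{|\rS_\mu|}\,\psi_{\mu^\vee/d/\lambda^\vee}$; and (c) its promotion to arbitrary $\hat T$ together with the functional identity for $e_{\lambda/d/\mu}$.

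First I would treat (a). A row strict CRPP is exactly a CRPP whose consecutive skew pieces $\lambda^{(i)}/(d_i-d_{i-1})/\lambda^{(i-1)}$ are cylindric vertical strips, i.e. contain at most one box per row. Since $\vee$ acts on the cylinder as a $180^\circ$ rotation (composed with a reversal of the level labels of the chain, as in Proposition~\ref{prop:CRPPVee}), and a $180^\circ$ rotation sends rows to rows, the ``at most one box per row'' condition is carried to itself, so $\hat T^\vee$ is again row strict. In parallel, exactly as for the skew-diagram equivalence in Proposition~\ref{prop:CRPPVee}, one checks from $\lambda[d]_i-\mu[0]_i=\mu^\vee[d]_{k+1-i+d}-\lambda^\vee[0]_{k+1-i+d}$ that $\lambda/d/\mu$ is a cylindric vertical strip if and only if $\mu^\vee/d/\lambda^\vee$ is.

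For (b) I would set $a_i=(\lambda\circ\tau^d)'_i=\lambda'_i+d$ and $b_i=\mu'_i$, so that \eqref{CylindricPsiLambdaMu} reads $\psi_{\lambda/d/\mu}=\prod_{i=1}^n\binom{a_i-a_{i+1}}{a_i-b_i}$. Using $(\lambda^\vee)'_i=k-\lambda'_{n+2-i}$ (equivalently $(\mu^\vee)'_i=k-\mu'_{n+2-i}$), together with the substitutions for $\mu^\vee\circ\tau^d$ and $\lambda^\vee$ and a reindexing, turns $\psi_{\mu^\vee/d/\lambda^\vee}$ into $\prod_{i=1}^n\binom{b_i-b_{i+1}}{a_{i+1}-b_{i+1}}$. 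Expanding both products as ratios of factorials, the factors $(b_i-a_{i+1})!$ cancel, the ``shifted'' factorials $(a_{i+1}-b_{i+1})!$ telescope against $(a_i-b_i)!$ down to $(a_{n+1}-b_{n+1})!/(a_1-b_1)!$, and the alcove boundary values $a_1-b_1=a_{n+1}-b_{n+1}=d$ (from $\lambda'_1=\mu'_1=k$ and $\lambda'_{n+1}=\mu'_{n+1}=0$) make this ratio equal to $1$. What survives is $\prod_i(a_i-a_{i+1})!/\prod_i(b_i-b_{i+1})!=\prod_i m_i(\lambda)!/\prod_i m_i(\mu)!=|\rS_\lambda|/|\rS_\mu|$, which is (b); along the way one records the clean fact $m_i(\mu^\vee)=m_{n+1-i}(\mu)$, whence $|\rS_{\mu^\vee}|=|\rS_\mu|$.

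Finally, for (c) the factorisation $\psi_{\hat T}=\prod_i\psi_{\lambda^{(i)}/(d_i-d_{i-1})/\lambda^{(i-1)}}$ combined with (b) applied to each strip yields a telescoping product of ratios $|\rS_{\lambda^{(i)}}|/|\rS_{\lambda^{(i-1)}}|$, exactly as in Proposition~\ref{prop:CRPPVee}, giving $\psi_{\hat T}=\tfrac{|\rS_\lambda|}{|\rS_\mu|}\psi_{\hat T^\vee}$. To pass to the functional identity I would use that $\vee$ reverses the weight composition and that $\psi_{\lambda/d/\mu}(\nu)$ depends only on the partition underlying $\nu$ (noted after \eqref{mMueNuCyl}); this makes $\vee$ a bijection preserving the sorted weight, so summing the strip identity over row strict $\hat T$ of weight $\nu$ gives $\psi_{\lambda/d/\mu}(\nu)=\tfrac{|\rS_\lambda|}{|\rS_\mu|}\psi_{\mu^\vee/d/\lambda^\vee}(\nu)$ for every partition $\nu$, and then \eqref{CyleM} delivers $e_{\lambda/d/\mu}=\tfrac{|\rS_\lambda|}{|\rS_\mu|}e_{\mu^\vee/d/\lambda^\vee}$. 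I expect the main obstacle to be the factorial bookkeeping in step (b): unlike the complete case, where \eqref{CylindricTheta} is a difference of two products, here \eqref{CylindricPsiLambdaMu} is a single product, so the cancellation must be arranged purely through the reindexing and the boundary identities, and care with the index shifts modulo $n$ and with the vertical-strip inequalities (needed to keep all binomial arguments non-negative) is where errors are most likely.
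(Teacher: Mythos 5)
Your proposal is correct and follows exactly the route the paper intends: the paper omits this proof, stating only that the steps are analogous to those of Proposition \ref{prop:CRPPVee}, and your argument is precisely that adaptation, with the single-product formula \eqref{CylindricPsiLambdaMu} replacing the difference of products in \eqref{CylindricTheta}. Your factorial bookkeeping in step (b) checks out (the boundary values $a_1-b_1=a_{n+1}-b_{n+1}=d$ do make the telescoped ratio equal to $1$), and the passage to $e_{\lambda/d/\mu}$ via permutation-invariance of $\psi_{\lambda/d/\mu}(\nu)$ and \eqref{CyleM} is sound.
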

We omit the proof as the steps are analogous to the ones when proving Proposition \ref{prop:CRPPVee}.

\subsection{Cylindric symmetric functions as positive coalgebras}
As an easy consequence of Theorems \ref{thm:ExpansionCylindricH} and \ref{thm:ExpansionCylindricE} we now compute the coproduct of the cylindric symmetric functions in $\Lambda$ viewed as a Hopf algebra (see Appendix A). This will allow us to identify certain subspaces of $\Lambda$ whose non-negative structure constants are the fusion coefficients $N_{\mu\nu}^\lambda\in\mb{Z}_{\geq 0}$ with $\lambda,\mu,\nu\in\alc$ and for which we have derived three equivalent expressions in \eqref{Lfusion}, \eqref{Verlinde_bos} and \eqref{Nmunulambda}. 
  \begin{coro}
 The image of the cylindric complete symmetric functions under the coproduct in the Hopf algebra $\Lambda$  
 is given by
 \begin{eqnarray}  \label{DeltaCylh}
  \Delta(h_{\lambda/d/\mu}) &=& \sum_{d_1+d_2=d} \sum_{\nu \in \alc}
  h_{\lambda/d_1/\nu} \otimes h_{\nu / d_2 / \mu}  \;.
 \end{eqnarray}
 The analogous formula holds for $e_{\lambda/d/\mu}$ and both families of functions are related via 
 \begin{equation}\label{cylhanticyle}
\gamma(h_{\lambda/d/\mu})= (-1)^{|\lambda|-|\mu|+dn} e_{\lambda/d/\mu}\,,
\end{equation}
where $\gamma:\Lambda\to\Lambda$ is the antipode. 
 \end{coro}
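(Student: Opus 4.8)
The plan is to derive the coproduct formula \eqref{DeltaCylh} from the generating-function description of $h_{\lambda/d/\mu}$ in Theorem \ref{thm:ExpansionCylindricH}(ii) by inserting a complete set of intermediate states. First I would work with the operator $\prod_{j\geq 1}H^*(u_j)\in\End_{\RR}\V^+_k$, whose matrix elements are recorded (up to conjugation, and using $\bar z=z^{-1}$) in Theorem \ref{thm:ExpansionCylindricH}(ii), giving $\langle v^\lambda,\prod_{j}H^*(u_j)v_\mu\rangle=\sum_{d\geq 0}z^{-d}h_{\lambda/d/\mu}(u)$. The decisive input is that under the Hopf structure on $\Lambda$ the coproduct $\Delta$ is realised by splitting the alphabet of indeterminates, so that $\Delta h_{\lambda/d/\mu}$ corresponds to $h_{\lambda/d/\mu}$ evaluated on the disjoint union of two alphabets $u$ and $w$ (see Appendix A). Since all the $H^*(u_j)$ and $H^*(w_j)$ mutually commute (the $X_i$ commute), the operator attached to the combined alphabet factors as $\prod_j H^*(u_j)\prod_j H^*(w_j)$.

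Next I would insert the resolution of the identity $\op{Id}_{\V^+_k}=\sum_{\nu\in\alc}v_\nu\langle v^\nu,\,\cdot\,\rangle$, which is legitimate because $\{v_\nu\}_{\nu\in\alc}$ is an $\RR$-basis of $\V^+_k$ with dual basis $\{v^\nu\}$ satisfying $\langle v^\nu,v_\mu\rangle=\delta_{\nu\mu}$. This yields
\[
\langle v^\lambda,\prod_j H^*(u_j)\prod_j H^*(w_j)v_\mu\rangle=\sum_{\nu\in\alc}\langle v^\lambda,\prod_j H^*(u_j)v_\nu\rangle\,\langle v^\nu,\prod_j H^*(w_j)v_\mu\rangle.
\]
Each factor on the right is again of the form covered by Theorem \ref{thm:ExpansionCylindricH}(ii), so the sum becomes $\sum_{\nu\in\alc}\sum_{d_1,d_2\geq 0}z^{-(d_1+d_2)}h_{\lambda/d_1/\nu}(u)h_{\nu/d_2/\mu}(w)$; comparing the coefficient of $z^{-d}$ with the left-hand side (the union-of-alphabets realisation of $\Delta h_{\lambda/d/\mu}$) and reinterpreting the two alphabets as the two tensor factors gives \eqref{DeltaCylh}. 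The elementary case is obtained verbatim, replacing $H^*$ by $E^*$ and invoking Theorem \ref{thm:ExpansionCylindricE}(ii) in place of Theorem \ref{thm:ExpansionCylindricH}(ii). I expect the main obstacle to be the bookkeeping: making precise the identification of $\Delta$ with the union-of-alphabets operation and tracking the powers of $z$ so that the constraint $d_1+d_2=d$ emerges exactly, while verifying that the completeness sum runs over the finite index set $\alc$ rather than all of $\cP^+_k$.

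Finally, the antipode relation \eqref{cylhanticyle} is immediate from the two expansions \eqref{cylh2h} and \eqref{cyle2e}. Since $\Lambda$ is commutative, its antipode $\gamma$ is an algebra homomorphism with $\gamma(h_r)=(-1)^r e_r$, whence $\gamma(h_\nu)=(-1)^{|\nu|}e_\nu$ for every partition $\nu$. Applying $\gamma$ to \eqref{cylh2h} and using that every $\nu$ occurring in that expansion has the same total size $|\nu|=dn+|\lambda|-|\mu|$, the common sign $(-1)^{|\nu|}=(-1)^{|\lambda|-|\mu|+dn}$ factors out of the sum, leaving
\[
\gamma(h_{\lambda/d/\mu})=(-1)^{|\lambda|-|\mu|+dn}\sum_{\nu}N_{\mu\nu}^\lambda e_\nu=(-1)^{|\lambda|-|\mu|+dn}e_{\lambda/d/\mu}
\]
by \eqref{cyle2e}, as claimed.
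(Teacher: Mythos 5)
Your argument is correct and follows essentially the same route as the paper: the coproduct formula is obtained by inserting the resolution of the identity $\sum_{\nu\in\alc}v_\nu\langle v^\nu,\cdot\rangle$ into the matrix element of $\prod_j H^*(u_j)\prod_j H^*(w_j)$, identifying the two alphabets with the two tensor factors of $\Delta$, and comparing powers of $z$; the antipode relation follows from the expansions \eqref{cylh2h} and \eqref{cyle2e} together with $\gamma(h_\nu)=(-1)^{|\nu|}e_\nu$ and the fact that all $\nu$ occurring have the common degree $|\nu|=dn+|\lambda|-|\mu|$. Your write-up is in fact slightly more explicit than the paper's (in particular about the alphabet-splitting realisation of $\Delta$ and the degree bookkeeping), but there is no substantive difference.
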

 Note that \eqref{DeltaCylh} implies the recurrence formula
\[
h_{\lambda/d/\mu}(u_1,u_2,\ldots)= \sum_{d_1+d_2=d} \sum_{\nu \in \alc}
  u_1^{|\nu|-|\mu|+d_2 n}\theta_{\nu/d_2/\mu} h_{\lambda/d_1/\nu}(u_2,u_3,\ldots)
\]
with the coefficient $\theta_{\nu/d_2/\mu}$ given by \eqref{CylindricTheta}. The analogous identity holds for $e_{\lambda/d/\mu}$ with $\psi_{\lambda/d_2/\mu}$ from \eqref{CylindricPsiLambdaMu} instead.
 
 \begin{proof}
The coproduct expressions follow from inserting the identity map into (ii) of Theorem \ref{thm:ExpansionCylindricH},
\begin{multline*}
\langle v^{\lambda},H^*(u_1)H^*(u_2)\cdots H^*(v_1)H^*(v_2)\cdots v_\mu\rangle=\\
\sum_{\nu\in\alc} \langle v^{\lambda},H^*(u_1)H^*(u_2)\cdots v_{\nu}\rangle\langle v^{\nu},H^*(v_1)H^*(v_2)\cdots v_\mu\rangle\,.
\end{multline*}
Using the power series expansions from (ii) of Theorem \ref{thm:ExpansionCylindricH} and comparing coefficients of each power $z^{d}$ the asserted equality follows. The same trick applies to the coproduct formula for $e_{\lambda/d/\mu}$.

The relation involving the antipode $\gamma$ (see Appendix A for its definition) follows from the expansions (i) in Theorems \ref{thm:ExpansionCylindricH} and \ref{thm:ExpansionCylindricE} as well as the identity $\gamma(h_\lambda)=(-1)^{|\lambda|}e_{\lambda}$.
\end{proof}

Setting $\mu=\emptyset$ in \eqref{DeltaCylh} and using the expansion \eqref{cylh2cylh} as well as \eqref{cylhanticyle} we arrive at the desired result:
\begin{coro}\label{cor:coalgebra}
The respective subspaces spanned by %
\begin{equation}\label{cylsets}
\{h_{\lambda/d/\emptyset}~|~\lambda\in\alc,\;d\ge -m_n(\lambda)\}\qquad\text{and}\qquad 
 \{e_{\lambda/d/\emptyset}~|~\lambda\in\alc,\;d\ge -m_n(\lambda)\}
\end{equation}
each form a positive subcoalgebra of $\Lambda$ with structure constants $N_{\mu\nu}^\lambda$, $\lambda,\mu,\nu\in\alc$, 
 \begin{equation}\label{coalgebra}
 \Delta(h_{\lambda/d/\emptyset})=\sum_{d_1+d_2=d}\sum_{\mu\in\alc}h_{\lambda/d_1/\mu}\otimes h_{\mu/d_2/\emptyset},\quad
 h_{\lambda/d_1/\mu}=\sum_{d'_1\in\mb{Z}}\sum_{\nu}N^{\lambda}_{\mu\nu}h_{\nu/d_1-d'_1/\emptyset}\;,
 \end{equation}
 where the second sum runs over all $\nu\in\alc$ such that $|\nu|=|\lambda|+d'_1n-|\mu|$. The analogous coproduct expansion holds for the functions $e_{\lambda/d/\emptyset}$.
\end{coro}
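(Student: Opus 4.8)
The plan is to assemble the statement directly from results already established, with no fresh combinatorics required: the coproduct formula \eqref{DeltaCylh}, the re-expansion \eqref{cylh2cylh}, the antipode relation \eqref{cylhanticyle}, the linear independence from Lemma \ref{lem:nscylh}, and the non-negativity of the fusion coefficients $N_{\mu\nu}^\lambda$ recorded in Corollary \ref{cor:Verlinde_bos}. First I would specialise \eqref{DeltaCylh} at $\mu=\emptyset$, which yields at once the first identity in \eqref{coalgebra},
\[
\Delta(h_{\lambda/d/\emptyset})=\sum_{d_1+d_2=d}\sum_{\mu\in\alc}h_{\lambda/d_1/\mu}\otimes h_{\mu/d_2/\emptyset}\,.
\]
The right-hand tensor factor already belongs to the proposed spanning set, so the only thing left is to rewrite the left-hand factor $h_{\lambda/d_1/\mu}$, which carries a \emph{general} $\mu\in\alc$, as a combination of non-skew functions of the form $h_{\sigma/d'/\emptyset}$.

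Next I would feed \eqref{cylh2cylh} into this first tensor slot and remove the base $n^k$ by means of the identity $h_{\sigma/k+d_1-d'/n^k}=h_{\sigma/d_1-d'/\emptyset}$ (the display preceding Lemma \ref{lem:nscylh}). Renaming $d_1':=d'$ and $\nu:=\sigma$, the finite sum in \eqref{cylh2cylh} turns into
\[
h_{\lambda/d_1/\mu}=\sum_{d_1'}\sum_{\nu\in\alc}N_{\mu\nu}^\lambda\,h_{\nu/d_1-d_1'/\emptyset}\,,
\]
subject to the degree constraint $|\nu|=|\lambda|+d_1'n-|\mu|$ inherited from \eqref{cylh2cylh}; this is precisely the second identity of \eqref{coalgebra}. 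Since every $h_{\lambda/d_1/\mu}$ thereby lies in the span $M$ of $\{h_{\lambda/d/\emptyset}\}$, the displayed coproduct shows $\Delta(M)\subset M\otimes M$, so $M$ is a subcoalgebra. Linear independence of the spanning set is Lemma \ref{lem:nscylh}, so these are genuine basis expansions, and finiteness of all sums is guaranteed by the homogeneity grading $\deg h_{\lambda/d/\emptyset}=|\lambda|+dn$. Positivity is then immediate, since the structure constants are the $N_{\mu\nu}^\lambda$, which are non-negative integers by Corollary \ref{cor:Verlinde_bos}.

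For the elementary case I would apply the antipode $\gamma$ to the coproduct of $h_{\lambda/d/\emptyset}$ and invoke cocommutativity of $\Lambda$, so that $\Delta\circ\gamma=(\gamma\otimes\gamma)\circ\Delta$. Substituting \eqref{cylhanticyle} on each factor, the signs combine as $(-1)^{|\lambda|-|\mu|+d_1 n}(-1)^{|\mu|+d_2 n}=(-1)^{|\lambda|+dn}$, which matches the sign of $\gamma(h_{\lambda/d/\emptyset})=(-1)^{|\lambda|+dn}e_{\lambda/d/\emptyset}$ and cancels throughout. This produces the identical coproduct formula with $h$ replaced by $e$ everywhere, using \eqref{cyle2e} and the $e$-analogue of \eqref{cylh2cylh}, and the positivity argument transfers verbatim.

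I do not expect a deep obstacle: the genuine content sits in the earlier lemmas (especially Lemma \ref{lem:ExpressSetTheta} and the Verlinde-type formula), and here everything is assembly. The one place demanding care is the index bookkeeping in the second step — correctly matching the summation range $0\le d'\le d+k$ of \eqref{cylh2cylh} to the shifted winding number $d_1-d_1'$ in $h_{\nu/d_1-d_1'/\emptyset}$, and confirming the degree constraint $|\nu|=|\lambda|+d_1'n-|\mu|$ together with the finiteness of the sums via the grading. Once this reindexing is pinned down, the positive-subcoalgebra structure and its $e$-variant follow mechanically.
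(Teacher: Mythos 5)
Your argument is correct and follows essentially the same route as the paper, which obtains the corollary precisely by setting $\mu=\emptyset$ in \eqref{DeltaCylh} and invoking \eqref{cylh2cylh} together with the antipode relation \eqref{cylhanticyle}; your reindexing of \eqref{cylh2cylh} via $h_{\sigma/k+d_1-d'/n^k}=h_{\sigma/d_1-d'/\emptyset}$ and your sign check for the $e$-case are both accurate. The only difference is that you spell out details (linear independence, cocommutativity, degree bookkeeping) that the paper leaves implicit.
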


\subsection{Expansions into powers sums}
In light of Theorems \ref{thm:ExpansionCylindricH} and \ref{thm:ExpansionCylindricE} and the definitions \eqref{defE&H}, \eqref{E} and \eqref{H}, we discuss the expansion of the cylindric functions from the previous sections into power sums; compare with the formulae \eqref{skewhe2p} in Appendix A. The resulting expansions coefficients describe the inverse image of the cylindric functions under the characteristic map \eqref{ch}. Note that according to Theorems \ref{thm:ExpansionCylindricH} and \ref{thm:ExpansionCylindricE} the cylindric functions $h_{\lambda/d/\mu}$ and $e_{\lambda/d/\mu}$ have both degree $m=|\lambda|-|\mu|+dn$.

We wish to obtain the analogue of Lemma \ref{lem:varphi2ACT} for the cylindric case and start by introducing the generalisation of an adjacent column tableau; see Appendix A. 
\begin{defi}\label{def:CACPP}
For $\lambda,\mu\in\alc$ and $d\geq 0$ call the cylindric skew shape $\lambda/d/\mu$ a {\em cylindric adjacent column strip} (CACS) if it is either a cylindric horizontal strip whose boxes lie in adjacent columns or a translation thereof, i.e. there exists $0<d'\leq d$ such that $\lambda/d-d'/\mu$ obeys the former conditions. We call a CRPP $\hat \pi$ a {\em cylindric adjacent column plane partition} (CACPP) if each cylindric skew shape $\hat \pi^{-1}(i)=\lambda^{(i)}/d_i-d_{i-1}/\lambda^{(i-1)}$ is a cylindric adjacent column strip. 
\end{defi}
Implicit in the above definition is that $d'$ can only take the values $d'=d$ or $d'=d-1$, since the cylindric skew shape $\lambda/d-d'/\mu$ can only be a horizontal strip if $d-d'=0,1$. See Figure \ref{fig:CRPPexamples} for an example of a CACPP when $n=4$ and $k=3$.

By similar arguments as in the non-cylindric case discussed in Appendix A one shows the following:
\begin{lemma}\label{lem:CACS}
Let $\lambda/d/\mu$ be a CACS with $r=(|\lambda|-|\mu|+nd)\not\in n\mb{N}$. Then there exists a unique $1\leq a\leq n$ such that $\lambda\circ\tau^d=\mu(a,r)$, where $\mu(a,r)$ is the unique element in $\cP_{k,n}$ whose cylindric loop is obtained as follows: starting in columns $a\mod n$ of the cylindric loop of $\mu$ consecutively add one box in each of the $(r-1)$ adjacent columns on the right. 
\end{lemma}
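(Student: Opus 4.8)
The plan is to prove the two assertions of the lemma---existence of a starting column $a$ and its uniqueness---directly from the combinatorial description of a CACS, mirroring the non-cylindric adjacent-column analysis behind Lemma~\ref{lem:varphi2ACT} and unwinding the level-$n$ action exactly as in the proofs of Lemmas~\ref{lem:ExpressSetTheta} and~\ref{lem:ExpressSetPsi}. First I would extract the combinatorial content of Definition~\ref{def:CACPP}: a cylindric horizontal strip has at most one box in each of the $n$ columns of the cylinder $\mf{C}_{k,n}$, and the adjacency condition forces the occupied columns to form a cyclically consecutive block; admitting the translations by $0<d'\le d$ with $d-d'\in\{0,1\}$ then shows that the full CACS $\lambda/d/\mu$ is obtained from $\mu$ by adjoining its $r=|\lambda|-|\mu|+nd$ boxes (those with row index in $\{1,\dots,k\}$) in $r$ cyclically consecutive columns, one box per column as one moves to the right, winding around $\mf{C}_{k,n}$ whenever $r>n$.

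For existence I would take $a$ to be the column index, read modulo $n$, of the first box adjoined, and observe that the recipe just described for $\lambda/d/\mu$ is literally the construction of $\mu(a,r)$ in the statement. Since the outer cylindric loop of $\lambda/d/\mu$ is $\lambda\circ\tau^d$ by~\eqref{cylshape} and $\lambda[d]_i=\lambda_{i-d}$, it follows that $\lambda\circ\tau^d=\mu(a,r)$ as elements of $\cP_{k,n}$. The one point requiring care is to confirm that the winding number is exactly $d$ and not off by a shift: this I would check by a comparison of box counts, using that adjoining $r$ boxes to the loop $\mu$ raises $|\,\cdot\,|$ on $[k]$ in a way consistent with $(\lambda\circ\tau^d)_i=\lambda_{i-d}$ and the periodicity $(\lambda\circ\tau^d)_{i+k}=(\lambda\circ\tau^d)_i-n$, so that the defining relation $r=|\lambda|-|\mu|+nd$ pins down $d$.

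For uniqueness I would exploit the hypothesis $r\notin n\mb{N}$. Writing $r=qn+s$ with $1\le s\le n-1$, the construction of $\mu(a,r)$ adjoins $q+1$ boxes to each of the $s$ columns $a,a+1,\dots,a+s-1$ read modulo $n$, and exactly $q$ boxes to each of the remaining $n-s$ columns. The resulting column-multiplicity vector therefore has a single cyclic run of $(q+1)$'s whose left endpoint is $a$; hence $a$ is recovered from $\mu(a,r)$, giving uniqueness. This is precisely where the hypothesis is needed: if $r\in n\mb{N}$ then $s=0$, every column receives the same number $q$ of boxes, $\mu(a,r)$ is independent of $a$, and the starting column is not well defined. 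I expect the main obstacle to be the geometric bookkeeping on the cylinder---fixing the meaning of ``column $a$'' modulo the $(-k,n)$-identification and verifying that the block of adjoined columns is genuinely an interval (rather than a more complicated set) once the translations by $d'$ are taken into account---whereas the degree count that fixes the winding number $d$ and the run-counting argument for uniqueness should be routine.
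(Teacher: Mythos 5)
Your proposal is correct and follows essentially the route the paper intends: the paper omits a formal proof, saying only that the claim follows ``by similar arguments as in the non-cylindric case'' of Lemma \ref{lem:varphi2ACT}, together with the subsequent remark that a CACS with $r\geq n$ is reduced by $\tau^{-1}$ to a horizontal strip with fewer than $n$ boxes. Your filling-in of the details --- reading off $a$ as the leftmost adjoined column, fixing the winding number by the box count $r=|\lambda|-|\mu|+nd$, and recovering $a$ from the unique cyclic interval of $s=r\bmod n$ columns receiving $q+1$ rather than $q$ boxes (which is exactly where $r\notin n\mb{N}$ enters) --- is consistent with the paper's sketch and with its explicit remark that for $r\in n\mb{N}$ one has $\lambda=\mu$ and the starting column is not determined.
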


This lemma explains our previous definition of a CACS: similar as in the non-cylindric case discussed in Appendix A we consider cylindric skew shapes $\lambda/d/\mu$ that are obtained by consecutively adding one box in adjacent columns;  see Figure \ref{fig:CACTexample} for an example. 

However, in contrast to the non-cylindric case it can now happen that if $r=|\lambda|-|\mu|+nd\geq n$ the strip `winds around the cylinder' and due to the periodicity condition the final skew shape $\lambda/d/\mu$ with $d\geq 1$ will contain more than one box in the same column; see once more Figure \ref{fig:CACTexample} for an example. If this is the case then the skew shape can be reduced by applying the translation operator $\tau^{-1}$, say $d'$ times, until $\lambda/d-d'/\mu$ contains less than $n$ boxes. This `reduced skew shape' must again be a horizontal strip. In particular, if $r=|\lambda|-|\mu|+nd=mn$ is a multiple of $n$ then we have the trivial case where $\lambda=\mu$ and the skew shape is obtained by acting $m$ times with $\tau$.

\begin{figure}
\centering
\includegraphics[width=1\textwidth]{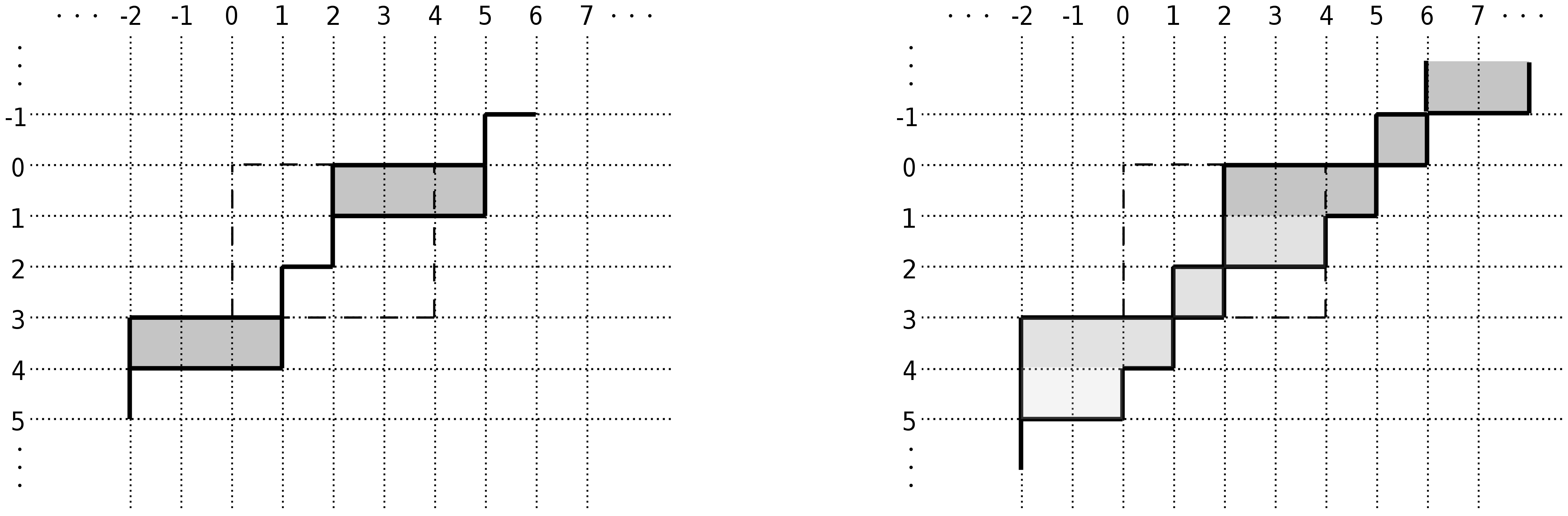}   
\caption{Choose $\lambda=(2,1,1)$, $\mu=(2,2,1) $ and $\nu=(4,2,1)$ in $\mathcal{A}^+_{3}(4)$. On the left we have the CACS $\lambda/1/\mu$ with $\lambda \circ \tau = \mu(3,3)$ obtained by adding one box in columns $3,4,5$ modulo $n$ to the cylindric loop of $\mu$. Since $m_5(\lambda\circ\tau)=m_1(\lambda)=2$ we have $\varphi_{\lambda/1/\mu}=2$. Depicted on the right the CACS $\nu/1/\mu$ where $\nu \circ \tau = \mu(3,6)$ is obtained by adding a box in columns $3$ to $8$ modulo $n$. As indicated by the shading of boxes, here the CACS winds around the cylinder. Note that this CACS is obtained by translation of the horizontal strip $\nu/0/\mu=\nu/\mu$. Since $m_8(\nu\circ\tau)=m_4(\nu)=1$ we have $\varphi_{\nu/1/\mu}=1$. }\label{fig:CACTexample}
\end{figure}

Let us now extend the definition of the function $\varphi_{\lambda/0/\mu}=\varphi_{\lambda/\mu}$ from Lemma \ref{lem:varphi2ACT} to $d>0$, where $\lambda/d/\mu$ with $\lambda,\mu\in\alc$ is a CACS. Employing Lemma \ref{lem:CACS} we introduce for each such adjacent column strip the coefficient
\begin{equation} \label{varphivalues}
 \varphi_{\lambda/d/\mu} = \begin{dcases}
                            m_{a-1+r}(\lambda \circ \tau^d), & r \text{ mod } n \neq 0 \\
                            k, & \text{ else }
                           \end{dcases}
\end{equation}
and for a given CACPP $\hat \pi$  define $\varphi_{\hat \pi}= \prod_{i=1}^{l} \varphi_{\lambda^{(i)} / (d_i-d_{i-1})/\lambda^{(i-1)}}$. We adopt the convention of setting $\varphi_{\lambda^{(i)} / (d_i-d_{i-1})/\lambda^{(i-1)}}=0$ if $\lambda^{(i)} / (d_i-d_{i-1})/\lambda^{(i-1)}$ is not an adjacent column strip and define the weight of $\hat\pi$ as in the case of a more general CRPP. Denote by 
 \begin{equation} \label{CylvarphiLambdaMuNu}
 \varphi_{\lambda/d/\mu}(\nu) = \sum_{\hat T} \varphi_{\hat T} \;, 
\end{equation}
the weighted sum over all CACPP of shape $\lambda / d/\mu$ and weight $\nu$. Note that it follows from these definitions that for $\nu\in\cP^+$ with some $\nu_i>n$ we have the equality $\varphi_{\lambda/d/\mu}(\nu) =\varphi_{\lambda/(d-1)/\mu}(\ldots,\nu_i-n,\ldots)$, since both sets of CACPP are in bijection by translating the $i$th CACS with $\tau$ and because $m_{a-1+r}(\lambda \circ \tau^{d})=m_{a-1+r-n}(\lambda \circ \tau^{d-1})$ for $r\geq n$.

\begin{theorem}
Let $\lambda,\mu \in \alc$ and $d \in \mathbb{Z}_{\ge 0}$. The cylindric functions $h_{\lambda/d/\mu}$ and $e_{\lambda/d/\mu}$ have the expansions
    \begin{equation}\label{cylhe2p}
   h_{\lambda/d/\mu} = \sum_{\nu\in\cP^+} \frac{\varphi_{\lambda/d/\mu}(\nu)}{\z_\nu} p_\nu 
      \quad\text{and}\quad
      e_{\lambda/d/\mu} = \sum_{\nu\in\cP^+} \frac{\varphi_{\lambda/d/\mu}(\nu)}{\z_\nu} \epsilon_\nu p_\nu 
  \end{equation}
into the basis of power sums. In particular, $\varphi_{\lambda / d / \mu}(\nu)$ vanishes unless $dn = |\mu|+|\nu|-|\lambda|$.
\end{theorem}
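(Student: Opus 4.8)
The plan is to derive both expansions at once from part (ii) of Theorems \ref{thm:ExpansionCylindricH} and \ref{thm:ExpansionCylindricE} by rewriting the products $\prod_j H^*(u_j)$ and $\prod_j E^*(u_j)$ as exponentials in the \emph{commuting} operators $P_{-r}$ (these lie in the image of the abelian subalgebra $\mathfrak{h}'_0[z,z^{-1}]$, so they commute in $U_-$). Using $\sum_j u_j^r=p_r(u)$ together with \eqref{H} and \eqref{E}, and $P_r^*=P_{-r}$, I would first obtain
\begin{equation*}
\prod_{j\geq 1}H^*(u_j)=\exp\Big(\sum_{r\geq 1}\tfrac{p_r(u)}{r}P_{-r}\Big)=\sum_{\nu\in\cP^+}\frac{p_\nu(u)}{\z_\nu}P_{-\nu}
\quad\text{ and }\quad
\prod_{j\geq 1}E^*(u_j)=\sum_{\nu\in\cP^+}\epsilon_\nu\frac{p_\nu(u)}{\z_\nu}P_{-\nu},
\end{equation*}
where $P_{-\nu}=P_{-\nu_1}P_{-\nu_2}\cdots$, and the sign $\epsilon_\nu=(-1)^{|\nu|-\ell(\nu)}$ in the second identity is exactly the product of the factors $(-1)^{r-1}$ attached to each $P_{-r}$ in \eqref{E}. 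Taking matrix elements against $v^\lambda$ and $v_\mu$ and comparing coefficients of $z^{-d}$ with the series in part (ii) of the two theorems then gives $h_{\lambda/d/\mu}=\sum_\nu \z_\nu^{-1}\big(z^{d}\langle v^\lambda,P_{-\nu}v_\mu\rangle\big)p_\nu$ and $e_{\lambda/d/\mu}=\sum_\nu \epsilon_\nu\z_\nu^{-1}\big(z^{d}\langle v^\lambda,P_{-\nu}v_\mu\rangle\big)p_\nu$. Hence it suffices to identify the normalised matrix element $z^{d}\langle v^\lambda,P_{-\nu}v_\mu\rangle$ with the coefficient $\varphi_{\lambda/d/\mu}(\nu)$ of \eqref{CylvarphiLambdaMuNu}; the final ``in particular'' statement is then automatic, since $\langle v^\lambda,P_{-\nu}v_\mu\rangle$ is a single monomial in $z$ whose exponent is fixed by the $z$-grading to be $-d$ with $nd=|\mu|+|\nu|-|\lambda|$.

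Next I would reduce the computation of $\langle v^\lambda,P_{-\nu}v_\mu\rangle$ to single operators. Since $P_{-r}=\sum_i X_i^{-r}$ commutes with the $\rS_k$-action it preserves $\V^+_k$, so I may insert the resolution of the identity $\mathrm{Id}_{\V^+_k}=\sum_{\sigma\in\alc}v_\sigma\langle v^\sigma,\,\cdot\,\rangle$ (valid because the divided powers $\{v_\sigma\}_{\sigma\in\alc}$ and their duals $\{v^\sigma\}$ are dual bases of $\V^+_k$) between consecutive factors of $P_{-\nu}=P_{-\nu_1}\cdots P_{-\nu_\ell}$. This expresses $\langle v^\lambda,P_{-\nu}v_\mu\rangle$ as a sum over all chains of intermediate weights $\mu=\lambda^{(0)},\lambda^{(1)},\dots,\lambda^{(\ell)}=\lambda$ in $\alc$ of the product $\prod_i\langle v^{\lambda^{(i)}},P_{-\nu_i}v_{\lambda^{(i-1)}}\rangle$ of single-operator matrix elements. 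Such a chain is precisely the data of a sequence of cylindric loops, so once each single factor is matched with the coefficient of a cylindric adjacent column strip, the product becomes $\varphi_{\hat\pi}=\prod_i\varphi_{\lambda^{(i)}/(d_i-d_{i-1})/\lambda^{(i-1)}}$ and summing over chains reproduces the weighted sum \eqref{CylvarphiLambdaMuNu} defining $\varphi_{\lambda/d/\mu}(\nu)$.

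The crux is therefore the single-operator identity: for $\lambda,\mu\in\alc$ and $r\geq 1$,
\begin{equation*}
z^{d}\langle v^\lambda,P_{-r}v_\mu\rangle=\varphi_{\lambda/d/\mu},\qquad nd=|\mu|+r-|\lambda|,
\end{equation*}
with $\varphi_{\lambda/d/\mu}$ as in \eqref{varphivalues}, the matrix element vanishing unless $\lambda/d/\mu$ is a cylindric adjacent column strip. This is the cylindric analogue of Lemma \ref{lem:varphi2ACT}, and I would prove it by evaluating the action of $P_{-r}=\sum_i X_i^{-r}$ directly on a divided power $v_\mu$: each summand shifts the value of a single tensor factor by $r$ modulo $n$, and the requirement that the resulting multiset of values be $\lambda$ forces $\lambda\circ\tau^{d}$ to arise from $\mu$ by adding one box in each of $r$ consecutive columns, that is $\lambda\circ\tau^{d}=\mu(a,r)$ for the unique column $a$ supplied by Lemma \ref{lem:CACS}. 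The accompanying power of $z^{-1}$ records how often the strip winds around the cylinder $\mathfrak{C}_{k,n}$, which the $z$-grading pins down as $d$; the integer multiplicity of the transition, namely the number of tensor factors that can serve as the source of the added strip, is exactly $m_{a-1+r}(\lambda\circ\tau^d)$, respectively $k$ when $r\equiv 0\bmod n$, which is precisely \eqref{varphivalues}.

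The main obstacle is this last identification, and in particular the careful bookkeeping of the cyclic wrapping: in contrast to the non-cylindric situation of Lemma \ref{lem:varphi2ACT}, applying $X^{-r}$ to a single factor may carry its value past the boundary of $\{1,\dots,n\}$, so one must check both that the number of wraps produced by $X^{-r}$ coincides with the winding number $d$ of the cylindric shape $\lambda/d/\mu$ and that the multiplicity count is compatible with the periodicity $\mu_{i+k}=\mu_i-n$ (cf. the ``winding around the cylinder'' phenomenon illustrated in Figure \ref{fig:CACTexample}). Once the expansion for $h_{\lambda/d/\mu}$ is in place, the statement for $e_{\lambda/d/\mu}$ requires no further combinatorial input: it uses the identical matrix element $z^{d}\langle v^\lambda,P_{-\nu}v_\mu\rangle=\varphi_{\lambda/d/\mu}(\nu)$ and differs only through the sign $\epsilon_\nu$ already produced by the $E^*$ generating function, which is exactly why the same coefficient $\varphi_{\lambda/d/\mu}(\nu)$ governs both formulas in \eqref{cylhe2p}.
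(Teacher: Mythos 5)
Your argument is correct, and it reaches the theorem by a genuinely different (though closely parallel) route from the paper. The paper's proof stays inside the quotient ring $\V^+_k$ of Theorem \ref{thm:bosquotient}: it first establishes the product rule \eqref{mMupNuCyl}, $m_\mu(x^{-1})\,p_\nu(x^{-1})=\sum_\lambda z^{-d}\varphi_{\lambda/d/\mu}(\nu)\,m_\lambda(x^{-1})$, then proves the cylindric analogue of \eqref{varphiR}, namely $\varphi_{\lambda/d/\mu}(\nu)=\sum_\sigma R_{\nu\sigma}N_{\sigma\mu}^\lambda$, and finally substitutes $h_\sigma=\sum_\nu \z_\nu^{-1}R_{\nu\sigma}p_\nu$ into the already-proven expansion \eqref{cylh2h}. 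You instead start from the exponential form \eqref{Cauchy3} of the generating functions, take matrix elements, compare $z$-degrees against part (ii) of Theorems \ref{thm:ExpansionCylindricH} and \ref{thm:ExpansionCylindricE}, and reduce everything to the single identity $z^{d}\langle v^\lambda,P_{-\nu}v_\mu\rangle=\varphi_{\lambda/d/\mu}(\nu)$, which you obtain by inserting the resolution of the identity over $\alc$ and computing the action of a single $P_{-r}$ on a divided power. That last computation is precisely the content of \eqref{mMupNuCyl} for $\nu=(r)$ (transported through the isomorphism $m_\lambda(x^{-1})\mapsto v_\lambda$ of Theorem \ref{thm:bosquotient}), so the combinatorial crux you isolate --- the cylindric version of Lemma \ref{lem:varphi2ACT}, with the winding number matched to the power of $z^{-1}$ and the multiplicity $m_{a-1+r}(\lambda\circ\tau^d)$ of \eqref{varphivalues} --- is exactly the step the paper also relies on and leaves to the reader. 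What your route buys: it dispenses with the transition-matrix lemma $\varphi=R\cdot N$ and with the detour through the $h_\nu$-expansion, and it delivers the theorem and the subsequent corollary (the formal power series expansion of $\langle v^\lambda,\prod_j H^*(u_j)v_\mu\rangle$ into power sums) in a single stroke; it also makes manifest why the same coefficient $\varphi_{\lambda/d/\mu}(\nu)$ of \eqref{CylvarphiLambdaMuNu} governs both formulas, the sign $\epsilon_\nu$ coming solely from the generating function $E^*$. What the paper's route buys is that it never leaves the ring $\Lambda$ and its quotient, and it records explicitly how $\varphi_{\lambda/d/\mu}(\nu)$ factors through the fusion coefficients $N_{\sigma\mu}^\lambda$. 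The one point to execute carefully in your version is the normalisation in the divided-power pairing $\langle v^\sigma,v_\tau\rangle=\delta_{\sigma\tau}$ when extracting coefficients of $P_{-r}v_\mu$, so that the integer multiplicity really is $m_{a-1+r}(\lambda\circ\tau^d)$ (respectively $k$ when $r\equiv 0 \bmod n$) and not rescaled by $d_\lambda/d_\mu$; with the conventions of the paper this works out exactly as you state.
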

The proof of the theorem follows the same strategy as the proof of the previous expansion formulae. Namely, first one shows the following product expansion in the quotient ring $\V_k^+$:
 \begin{lemma}
 For $\mu\in\alc$ and $\nu\in\cP^+$ we have the following product identity in $\V^+_k$,
 \begin{equation} \label{mMupNuCyl}
   m_\mu(x_1^{-1},\ldots,x^{-1}_k) p_\nu(x_1^{-1},\ldots,x^{-1}_k)  =  \sum_{\lambda \in \alc} z^{-d} \varphi_{\lambda / d / \mu}(\nu) 
   m_\lambda(x_1^{-1},\ldots,x^{-1}_k) \;,
 \end{equation}
where only those $\lambda$ appear in the sum for which $|\lambda| = |\mu|+|\nu|-dn$ with $d\in\mb{Z}_{\geq 0}$. 
\end{lemma}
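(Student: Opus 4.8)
The plan is to mirror the proofs of the product rules \eqref{hm2m} and \eqref{mMueNuCyl}, reducing the general statement \eqref{mMupNuCyl} to the case of a single power sum and then carrying out the combinatorial identification in that base case. Since $p_\nu=p_{\nu_1}p_{\nu_2}\cdots$ in $\Lambda$, it suffices to establish the single-step rule
\begin{equation*}
m_\mu(x_1^{-1},\ldots,x_k^{-1})\,p_r(x_1^{-1},\ldots,x_k^{-1})=\sum_{\lambda\in\alc}z^{-d}\,\varphi_{\lambda/d/\mu}\,m_\lambda(x_1^{-1},\ldots,x_k^{-1})
\end{equation*}
in $\V^+_k$, where the sum runs over $\lambda\in\alc$ such that $\lambda/d/\mu$ is a CACS with $r=|\lambda|-|\mu|+nd$ and $\varphi_{\lambda/d/\mu}$ is given by \eqref{varphivalues}. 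The general identity then follows by applying this rule repeatedly, each application attaching one further cylindric adjacent column strip: because $\varphi_{\hat\pi}=\prod_i\varphi_{\lambda^{(i)}/(d_i-d_{i-1})/\lambda^{(i-1)}}$ by definition and the strip appended at step $j$ carries weight $\nu_j$, summing over all such sequences of strips reproduces exactly $\varphi_{\lambda/d/\mu}(\nu)$ from \eqref{CylvarphiLambdaMuNu}. Starting each step from an arbitrary $\lambda^{(i-1)}\in\alc$ is legitimate since the single-step rule is proved for all $\mu\in\alc$; this is the same bookkeeping used to pass from the single-box rules to \eqref{hm2m} and \eqref{mMueNuCyl}.

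For the base case I would expand $m_\mu(x^{-1})=\sum_{\sigma\sim\mu}x^{-\sigma}$ over distinct permutations and $p_r(x^{-1})=\sum_{i=1}^k x_i^{-r}$, and track the reduction $x_i^{-n}=z^{-1}$ coming from \eqref{BAE}. Fixing $\lambda\in\alc$, the coefficient of the monomial $x^{-\lambda}$ in the product equals the coefficient of $m_\lambda$ since $\lambda$ is the dominant representative, and a term $x^{-(\sigma+r\epsilon_i)}$ reduces to $z^{-d}x^{-\lambda}$ precisely when $\sigma_j=\lambda_j$ for all $j\neq i$ and $\sigma_i+r$ reduces modulo $n$ to $\lambda_i$ with quotient $d$. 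Homogeneity forces $d=(|\mu|+r-|\lambda|)/n$, which yields the asserted vanishing unless $dn=|\mu|+|\nu|-|\lambda|$ with $d\ge 0$. Hence for fixed $i$ there is at most one admissible $\sigma$, so the coefficient is $z^{-d}$ times the number of admissible indices $i$, and I would interpret this operation geometrically: moving a single coordinate by $r$ steps on the cylinder $\mf{C}_{k,n}$ deposits one box in each of $r$ consecutive columns, winding $d$ times around, i.e.\ it builds a cylindric adjacent column strip of length $r$.

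The main obstacle is the combinatorial identification in this base case, matching the number of admissible $(\sigma,i)$ with the multiplicity $\varphi_{\lambda/d/\mu}=m_{a-1+r}(\lambda\circ\tau^d)$ (and with $k$ in the degenerate case $r\equiv 0\bmod n$). Here I would invoke Lemma \ref{lem:CACS}, which both characterises when $\lambda/d/\mu$ is a CACS and records the starting column $a$, and argue that the admissible choices of the moved coordinate are in bijection with the coordinates of $\lambda\circ\tau^d$ equal to the endpoint value $a-1+r$, whose number is by definition $m_{a-1+r}(\lambda\circ\tau^d)$; when the strip closes into a full wind ($\lambda=\mu$, $r=mn$) every coordinate is admissible and the count collapses to $k$. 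This is the cylindric, level-$n$ extension of the non-cylindric computation underlying Lemma \ref{lem:varphi2ACT} in Appendix A, and — exactly as in the proofs of Lemmas \ref{lem:ExpressSetTheta} and \ref{lem:ExpressSetPsi} — the only genuinely new point is controlling the wrap-around of the strip around the cylinder together with the accompanying powers of $z$.
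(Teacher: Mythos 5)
Your proposal is correct and is exactly the argument the paper intends: the paper omits the proof of this lemma, stating only that it ``follows the same strategy as the proof of the previous expansion formulae,'' i.e.\ reduce to a single $p_r$, extract the coefficient of the monomial $x^{-\lambda}$ in the quotient ring using $x_i^{-n}=z^{-1}$, and identify the count of admissible terms with $\varphi_{\lambda/d/\mu}$ via Lemma \ref{lem:CACS} exactly as in the non-cylindric Lemma \ref{lem:varphi2ACT}. Your base-case analysis (one admissible $\sigma$ per index $i$, the count of admissible $i$ being $m_{a-1+r}(\lambda\circ\tau^d)$, collapsing to $k$ when $r\equiv 0\bmod n$) and the iteration over strips to recover $\varphi_{\lambda/d/\mu}(\nu)$ match the paper's intended route.
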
  

 The second lemma needed to prove the expansion into power sums is the following generalisation of equation \eqref{varphiR} from Appendix A. 
 \begin{lemma}
 Let $R_{\lambda\mu}$ be the transition matrix from the basis of monomial symmetric functions to power sums, then we have the following equality
  \begin{equation}
   \varphi_{\lambda/d/\mu}(\nu) = \sum_{\sigma \in \cP^+_k} R_{\nu \sigma} N_{\sigma \mu}^\lambda,
  \end{equation}
  where, once more, $d=\frac{|\mu|+|\nu|-|\lambda|}{n}\in\mb{Z}_{\geq 0}$.
\end{lemma}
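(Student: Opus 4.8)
The plan is to mirror the argument used for Lemma~\ref{lem:theta2N} and for the expansion~\eqref{PsiM}, with the power sum $p_\nu$ now playing the role previously played by $h_\nu$ and $e_\nu$. The two inputs are the product rule~\eqref{mMupNuCyl}, which writes $m_\mu(x_1^{-1},\ldots,x_k^{-1})\,p_\nu(x_1^{-1},\ldots,x_k^{-1})$ in the monomial basis of $\V^+_k$ with coefficients $z^{-d}\varphi_{\lambda/d/\mu}(\nu)$, and the classical expansion $p_\nu(x_1^{-1},\ldots,x_k^{-1})=\sum_{\sigma}R_{\nu\sigma}\,m_\sigma(x_1^{-1},\ldots,x_k^{-1})$ of the power sum into monomials (see Appendix~A). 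The sum over $\sigma$ is effectively restricted to $\sigma\in\cP^+_k$, since $m_\sigma(x_1^{-1},\ldots,x_k^{-1})$ vanishes in $\V^+_k$ whenever $\ell(\sigma)>k$.

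First I would substitute the second expansion into the product and obtain $m_\mu\,p_\nu=\sum_\sigma R_{\nu\sigma}\,m_\mu m_\sigma$ in $\V^+_k$. Then I would rewrite each monomial product by means of~\eqref{Nmunulambda}, namely $m_\mu m_\sigma=\sum_{\lambda\in\alc} z^{-d}N_{\sigma\mu}^\lambda m_\lambda$ with $nd=|\mu|+|\sigma|-|\lambda|$, using the commutativity $N_{\mu\sigma}^\lambda=N_{\sigma\mu}^\lambda$ from Corollary~\ref{cor:symmN}(i). Collecting the coefficient of each $m_\lambda$ gives $m_\mu\,p_\nu=\sum_{\lambda\in\alc} z^{-d}\bigl(\sum_\sigma R_{\nu\sigma}N_{\sigma\mu}^\lambda\bigr)m_\lambda$.

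Comparing this with~\eqref{mMupNuCyl} and invoking that $\{m_\lambda(x_1^{-1},\ldots,x_k^{-1})\}_{\lambda\in\alc}$ is a basis of $\V^+_k$ (Theorem~\ref{thm:bosquotient}), I would equate the coefficient of $m_\lambda$ on both sides to read off $\varphi_{\lambda/d/\mu}(\nu)=\sum_{\sigma\in\cP^+_k}R_{\nu\sigma}N_{\sigma\mu}^\lambda$. The degree constraint falls out automatically: since $p_\nu$ is homogeneous of degree $|\nu|$, the coefficient $R_{\nu\sigma}$ vanishes unless $|\sigma|=|\nu|$, so every surviving term has $d=(|\mu|+|\sigma|-|\lambda|)/n=(|\mu|+|\nu|-|\lambda|)/n$, which both matches the single power of $z$ in~\eqref{mMupNuCyl} and shows $\varphi_{\lambda/d/\mu}(\nu)=0$ unless $dn=|\mu|+|\nu|-|\lambda|$.

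The computation is entirely formal, so there is no genuine obstacle; the one point requiring care is precisely this degree bookkeeping. When $m_\mu m_\sigma$ is expanded via~\eqref{Nmunulambda} the power of $z$ depends a priori on $\sigma$ through $|\sigma|$, and the factorisation of a single $z^{-d}$ out of the $\sigma$-sum, which is what licenses a term-by-term comparison of the $m_\lambda$-coefficients against~\eqref{mMupNuCyl}, is legitimate only because the homogeneity of $p_\nu$ forces $|\sigma|=|\nu|$ to be constant across the sum.
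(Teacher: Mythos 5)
Your argument is correct and is exactly the proof the paper has in mind: the authors omit it, stating only that it parallels the non-cylindric case and the proofs of Lemma~\ref{lem:theta2N} and \eqref{PsiM}, which is precisely the substitution of $p_\nu=\sum_\sigma R_{\nu\sigma}m_\sigma$ into \eqref{mMupNuCyl} followed by a comparison of $m_\lambda$-coefficients in the basis $\{m_\lambda\}_{\lambda\in\alc}$ of $\V^+_k$. Your remark that homogeneity of $p_\nu$ forces $|\sigma|=|\nu|$, so a single power of $z$ factors out, is the same degree bookkeeping the paper records for $L_{\nu\sigma}$ in the proof of Lemma~\ref{lem:theta2N}.
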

We omit the proof as it is similar to the non-cylindric case.

\begin{coro}
We have the following formal power series expansions
\begin{eqnarray}
\langle v^{\lambda^{\vee}},\prod_{j\geq 1}H(u_j)v_{\mu^{\vee}}\rangle= 
\overline{\langle v^\lambda,\prod_{j\geq 1}H^*(u_j)v_\mu\rangle}
&=& \sum_{d\geq 0}z^{d}\sum_{\nu}\frac{\varphi_{\lambda/d/\mu}(\nu)}{\z_{\nu}}p_\nu(u)\;,
\end{eqnarray}
where the sum runs over all $\nu\in\cP^+$ such that $dn+|\lambda|=|\mu|+|\nu|$. The analogous expansions hold for the cylindric elementary symmetric functions when replacing $H\to E$ and $p_\nu\to\epsilon_\nu p_\nu$; compare with \eqref{defE&H}. 
\end{coro}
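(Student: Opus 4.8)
The plan is to combine the two results immediately preceding this corollary, so that the statement becomes a purely formal repackaging. Theorem~\ref{thm:ExpansionCylindricH}(ii) already identifies the generating function of matrix elements
$\langle v^{\lambda^\vee},\prod_{j\geq 1}H(u_j)v_{\mu^\vee}\rangle=\overline{\langle v^\lambda,\prod_{j\geq 1}H^*(u_j)v_\mu\rangle}$
with the formal power series $\sum_{d\geq 0}z^{d}\,h_{\lambda/d/\mu}(u)$. It therefore suffices to insert into each coefficient of $z^{d}$ the power-sum expansion \eqref{cylhe2p}, namely $h_{\lambda/d/\mu}=\sum_{\nu\in\cP^+}\z_\nu^{-1}\varphi_{\lambda/d/\mu}(\nu)\,p_\nu$, established in the theorem just above.

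Concretely, I would substitute the $h$-expansion from \eqref{cylhe2p} term by term in the variable $z$ and then interchange the two sums. Working at the level of formal power series this interchange is automatic: the identity is asserted coefficient-wise in $z$, and for each fixed $d$ the inner sum over $\nu$ is finite. Indeed, the vanishing statement $\varphi_{\lambda/d/\mu}(\nu)=0$ unless $dn=|\mu|+|\nu|-|\lambda|$, which is part of the preceding theorem, restricts $\nu$ to partitions of the fixed degree $|\mu|+dn-|\lambda|$ having at most $k$ parts. This very condition is the constraint $dn+|\lambda|=|\mu|+|\nu|$ recorded in the corollary, so no additional bookkeeping is needed, and the result is precisely the asserted double series $\sum_{d\geq 0}z^{d}\sum_{\nu}\z_\nu^{-1}\varphi_{\lambda/d/\mu}(\nu)\,p_\nu(u)$.

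For the elementary analogue I would run the identical argument with $H$ replaced by $E$. Theorem~\ref{thm:ExpansionCylindricE}(ii) supplies the companion identity $\langle v^{\lambda^\vee},\prod_{j\geq 1}E(u_j)v_{\mu^\vee}\rangle=\sum_{d\geq 0}z^{d}\,e_{\lambda/d/\mu}(u)$, while the second expansion in \eqref{cylhe2p} gives $e_{\lambda/d/\mu}=\sum_{\nu}\z_\nu^{-1}\epsilon_\nu\,\varphi_{\lambda/d/\mu}(\nu)\,p_\nu$. Substituting as before produces the stated series with the extra sign $\epsilon_\nu$.

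There is no genuine obstacle in this corollary: all the substance has already been discharged upstream, in the combinatorial definition of $\varphi_{\lambda/d/\mu}(\nu)$, its realisation as the transition coefficient $\varphi_{\lambda/d/\mu}(\nu)=\sum_{\sigma\in\cP^+_k}R_{\nu\sigma}N^\lambda_{\sigma\mu}$, and the power-sum expansion of $h_{\lambda/d/\mu}$ and $e_{\lambda/d/\mu}$. The only point meriting a word is the legitimacy of reorganising the two summations, which follows from the degree grading and hence from the finiteness of each $z^{d}$-coefficient; everything else is a direct reading-off of the two cited theorems.
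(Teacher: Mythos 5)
Your argument is correct, but it takes a slightly different route from the paper's. You obtain the corollary by composing two upstream results: the generating-function identity of Theorem \ref{thm:ExpansionCylindricH}(ii), $\langle v^{\lambda^{\vee}},\prod_{j}H(u_j)v_{\mu^{\vee}}\rangle=\sum_{d\geq 0}z^{d}h_{\lambda/d/\mu}(u)$, and the power-sum expansion \eqref{cylhe2p} of $h_{\lambda/d/\mu}$, then interchange the (degree-graded, hence coefficientwise finite) sums. The paper instead proves the corollary in one step: it applies the Hopf algebra homomorphism $\Phi_-$ of Lemma \ref{lem:Lambda2U} to the power-sum form of the Cauchy kernel, $\prod_{j}H^*(u_j)=\sum_{\nu}\z_\nu^{-1}P^*_\nu\,p_\nu(u)$ in $\End_{\R}\bV_k$, and takes matrix elements between symmetric tensors, where $\langle v^{\lambda},P^*_\nu v_\mu\rangle=z^{-d}\varphi_{\lambda/d/\mu}(\nu)$ by the product expansion \eqref{mMupNuCyl}. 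The two derivations rest on the same underlying facts (your route reaches \eqref{mMupNuCyl} indirectly through the proof of \eqref{cylhe2p}), so neither is circular; yours has the virtue of treating the corollary as pure bookkeeping on top of the two theorems stated immediately above it, while the paper's has the virtue of exhibiting directly which operator ($P^*_\nu$) produces the coefficient $\varphi_{\lambda/d/\mu}(\nu)$ -- a fact it then records as a consequence right after the corollary. One immaterial slip: the degree constraint $|\nu|=dn+|\lambda|-|\mu|$ alone makes each $z^d$-coefficient a finite sum; it does \emph{not} force $\ell(\nu)\leq k$, and indeed the sum in the corollary runs over all of $\cP^+$, with $\varphi_{\lambda/d/\mu}(\nu)$ possibly nonzero for partitions $\nu$ with more than $k$ parts.
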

\begin{proof}
The power series expansions follow from the identity
\begin{equation}\label{Cauchy3}
\prod_{j\geq 1}H(u_j)=
\prod_{i=1}^k\prod_{j\geq 1}(1-X_i u_j)^{-1}=\sum_{\lambda\in\cP^+}\z_\lambda^{-1}P_\lambda p_{\lambda}(u),
\end{equation}
and its analogue for $P^*_\lambda$, in $\End_{\R} \bV_k$. These latter identities are a direct consequence of Lemma \ref{lem:Lambda2U}. Taking matrix elements in the subspace of symmetric tensors we arrive at the desired equalities.
\end{proof}
Note that we allow for weights $\nu\in\cP^+$ in the matrix elements $\langle v^\lambda,P^*_\nu v_\mu\rangle$ and that this is consistent with the definition of $\varphi_{\lambda/d/\mu}(\nu)$ in terms of cylindric adjacent column strips. Namely, suppose that $\nu_1\geq n$ then
\begin{multline*}
z^{-d}\varphi_{\lambda/d/\mu}(\nu)=\langle v^\lambda,P^*_\nu v_\mu\rangle=\\
z^{-1}\langle v^\lambda,P^*_{\nu_1-n}P^*_{\nu_2}P^*_{\nu_3}\ldots v_\mu\rangle=
z^{-d}\varphi_{\lambda/(d-1)/\mu}(\nu_1-n,\nu_2,\nu_3,\ldots)\;,
\end{multline*}
where the last equality is the previously explained identity between weighted sums of CACPP which differ by shifting one of their CACS with $\tau$.

Furthermore, the above expansion into power sums implies the following equalities between matrix elements and coefficient functions, 
\begin{equation}
\langle v^\lambda,P_\nu v_\mu\rangle=z^{d}\varphi_{\lambda^{\vee}/d/\mu^{\vee}}(\nu)\quad\text{and}\quad
\langle v^\lambda,P^*_\nu v_\mu\rangle=z^{-d}\varphi_{\lambda/d/\mu}(\nu)\,.
\end{equation}
These formulae describe the image of $U'$ in $\End_{\R}\bV^+_k$ in purely combinatorial terms.

\subsection{Shifted action and cylindric Schur functions}
We now describe the connection between our construction and the cylindric Schur functions discussed by several authors in the literature; see e.g. \cite{postnikov2005affine, mcnamara2006cylindric, lam2006affine}. The new aspect in this article is their link to the {\em shifted action} of $\hat \rS_k$, the principal subalgebra and the proof that they form a positive subcoalgebra of $\Lambda$ together with the expansion \eqref{mcnamara_conj}.

Let $\rho=(k,\ldots,2,1)\in\cP_k^+$ be the staircase partition of length $k$, then for any $\lambda\in\salc$ the difference $\bar\lambda=\lambda-\rho$ is a partition whose Young diagram fits inside a bounding box of height $k$ and width $n-k$. Denote the set of these `boxed partitions' by 
\begin{equation}
\balc=\{\bar\lambda~|~n-k\geq\bar\lambda_1\geq\ldots,\geq \bar\lambda_k\geq 0\}\,.
\end{equation} 
To obtain the corresponding cylindric loops, observe that because of the shift by $\rho$ weights $\bar\lambda\in\cP_k$ are now identified with maps $\bar\lambda:\mb{Z}\to\mb{Z}$ in the set $\cP_{k,n-k}$, i.e. they satisfy the condition $\bar\lambda_{i+k}=\bar\lambda_{i}-n+k$. As before these maps describe lattice paths $\{(i,\bar\lambda_i)\}_{i\in\mb{Z}}$ on a cylinder $\mf{C}_{k,n-k}=\mb{Z}^2/(-k,n-k)\mb{Z}$ albeit with circumference $n-k$ instead of $n$. 

Denote by $\bar\rho:\mb{Z}\to\mb{Z}$ the map whose values on $[k]\subset\mb{Z}$ are fixed by $\rho\in\cP_k$ and which satisfies $\bar\rho_{i+k}=\bar\rho_i-k$.
\begin{lemma}
The map $\cP_{k,n-k}\times\hat \rS_k\to \cP_{k,n-k}$ given by
\begin{equation}\label{shifted_action}
(\lambda,\hat w)\mapsto\bar\lambda\odot \hat w=(\bar\lambda+\bar\rho)\circ\hat w-\bar\rho\;,
\end{equation}
where the plus and minus sign refer to the usual addition and subtraction of maps, defines a right action. The latter coincides with the known shifted level-$n$ action of $\hat \rS_k$ on $\cP_k$.
\end{lemma}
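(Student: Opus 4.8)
The plan is to recognise the operation $\odot$ as the transport of the ordinary level-$n$ action of Lemma~\ref{lem:Saction} along the $\rho$-shift, so that both the fact that $\odot$ lands in $\cP_{k,n-k}$ and the right-action axiom are inherited for free. Concretely, I would introduce the shift map $\Phi\colon\cP_{k,n-k}\to\cP_{k,n}$, $\Phi(\bar\lambda)=\bar\lambda+\bar\rho$ (ordinary addition of maps $\mb{Z}\to\mb{Z}$), and show it is a bijection intertwining the level-$(n-k)$ picture on the source with the level-$n$ picture on the target. By construction the asserted operation is then $\bar\lambda\odot\hat w=\Phi^{-1}\big(\Phi(\bar\lambda)\circ\hat w\big)$, where $\circ$ is the action from Lemma~\ref{lem:Saction}.

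Two short arithmetic checks underpin this. First, using that $\bar\rho$ has the staircase periodicity $\bar\rho_{i+k}=\bar\rho_i-k$ and that $\bar\lambda_{i+k}=\bar\lambda_i-(n-k)$ for $\bar\lambda\in\cP_{k,n-k}$, one finds $(\bar\lambda+\bar\rho)_{i+k}=(\bar\lambda+\bar\rho)_i-n$, so $\Phi(\bar\lambda)\in\cP_{k,n}$; the inverse is $\mu\mapsto\mu-\bar\rho$, whence $\Phi$ is a bijection. Second, invoking $\hat w(m+k)=\hat w(m)+k$ from \eqref{extS}, one checks $\big((\bar\lambda+\bar\rho)\circ\hat w\big)_{i+k}=(\bar\lambda+\bar\rho)_{\hat w(i)+k}=\big((\bar\lambda+\bar\rho)\circ\hat w\big)_i-n$, so $\Phi(\bar\lambda)\circ\hat w$ again lies in $\cP_{k,n}$ and $\Phi^{-1}$ may be applied; this is precisely the well-definedness of \eqref{shifted_action}. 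The right-action property is then purely formal: since $\circ$ is a right action on $\cP_{k,n}$ and $\Phi\circ\Phi^{-1}=\mathrm{id}$, one has $(\bar\lambda\odot\hat w_1)\odot\hat w_2=\Phi^{-1}\big(\Phi(\bar\lambda)\circ(\hat w_1\hat w_2)\big)=\bar\lambda\odot(\hat w_1\hat w_2)$.

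For the final clause, I would recall from the discussion following Lemma~\ref{lem:Saction} that under the identification $\bar\lambda\leftrightarrow(\bar\lambda_1,\dots,\bar\lambda_k)$ the operation $\circ$ on $\cP_{k,n}$ is exactly the classical level-$n$ action of $\hat\rS_k$ on $\cP_k$. Because $\bar\rho$ restricts to the staircase $\rho=(k,\dots,2,1)$ on $[k]$, the conjugation $\bar\lambda\mapsto\Phi^{-1}(\Phi(\bar\lambda)\circ\hat w)$ is by definition the $\rho$-shifted level-$n$ action, so the two coincide. The only genuine care needed --- the mild obstacle --- is the bookkeeping between the two periodicities ($n-k$ on the source, $n$ on the target) together with the verification that $\bar\rho_{i+k}=\bar\rho_i-k$ is the correct periodicity for the staircase; once this is pinned down everything reduces to transport of structure, and no recomputation on the generators of $\hat\rS_k$ is required.
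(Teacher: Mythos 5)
Your proposal is correct and follows essentially the same route as the paper: both verify that $\bar\lambda+\bar\rho$ has periodicity $n$ (hence lies in $\cP_{k,n}$), transport the level-$n$ action of Lemma \ref{lem:Saction} through this shift, and identify the restriction to $[k]$ with the classical shifted action. Your write-up merely makes the transport-of-structure bijection $\Phi$ explicit, which the paper leaves implicit.
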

\begin{proof}
First we note that $(\bar\lambda+\bar\rho)_{i+k}=\bar\lambda_{i+k}+\bar\rho_{i+k}=(\bar\lambda+\bar\rho)_i-n$ and, hence, $(\bar\lambda+\bar\rho)$ is in the set $\cP_{k,n}$. So, we can exploit the previous right level $n$ action on $\cP_{k,n}$ from Lemma \ref{lem:Saction}. By construction the resulting map $\bar\lambda\odot \hat w$ is then in $\cP_{k,n-k}$. Moreover, restricting each map to $[k]\subset\mb{Z}$, one proves that this gives the familiar shifted Weyl group action on $\cP_k$.
\end{proof}
Note that $\balc$ is a fundamental region with respect to the action \eqref{shifted_action}. The action of the shift operator $\tau$ on a boxed partition $\bar\lambda\in\balc$ consists of adding a {\em rim hook} or {\em ribbon} of length $n$ starting in row $k$ and ending in row one. Here a ribbon is a skew shape which does not contain a $2\times 2$ block of boxes and which is edgewise connected, i.e. all the boxes share a common edge with another box in the skew shape; see Figure \ref{fig:CylindricRibbonexample} for an example. The particular $n$-ribbon generated by $\tau$ is obtained by adding a horizontal strip of width $n-k$ and then a vertical strip of height $k$ (or vice versa) and has been called a {\em circular ribbon} in \cite{postnikov2005affine} as it `winds once around the cylinder' $\mf{C}_{k,n-k}$. Thus, repeated action with $\tau$ now corresponds to a shift of the cylindric loop $\bar\lambda[0]$ in the direction of the lattice vector $(1,1)$ in $\mb{Z}^2$. This coincides with the conventions used in \cite{postnikov2005affine, mcnamara2006cylindric}.

\begin{figure}
\centering
\includegraphics[width=1\textwidth]{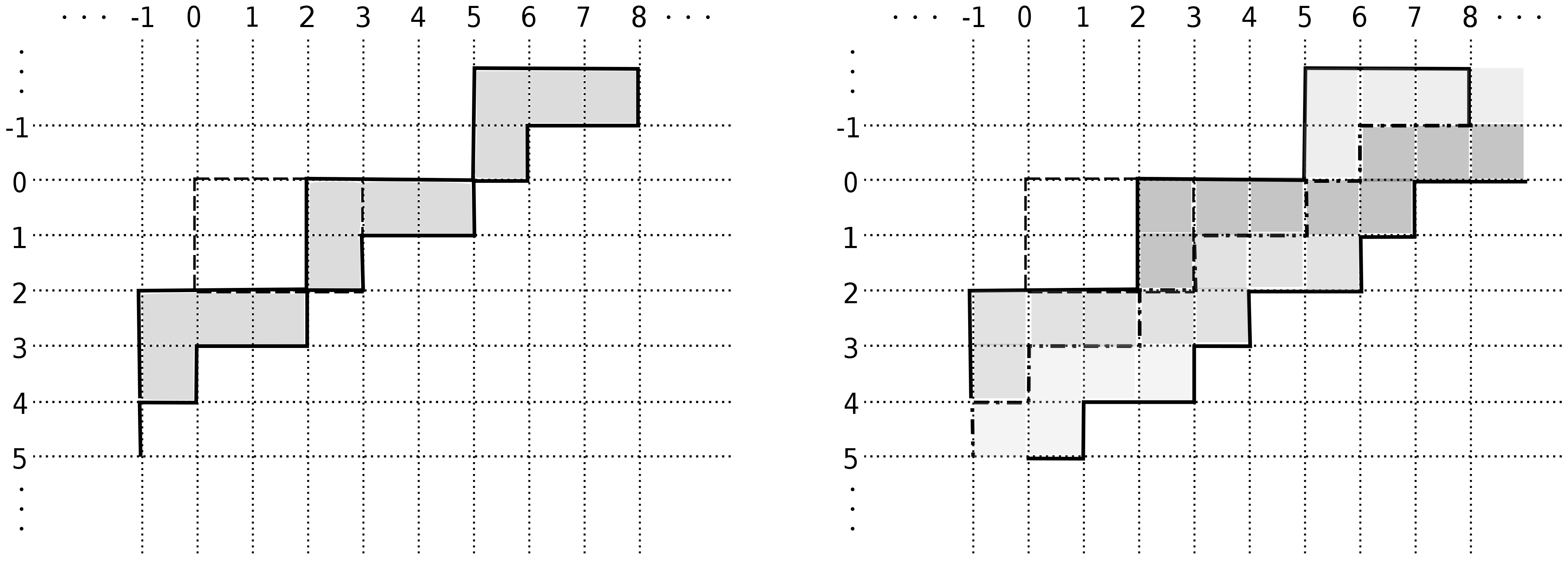}   
\caption{Set $n=5$ and $k=2$. Shown above are examples of cylindric ribbons for $\bar\lambda=(2,1)$, $\bar\mu=(2,2)$. On the left we have the cylindric skew shape $\bar\lambda/1/\bar\mu$ which is a ribbon of length 4 and height 2. On the right the cylindric ribbon $\bar\lambda/2/\bar\mu$ which has length 5+4=9 and height 2+2-1=3 because a circular $n$-ribbon has been added. Hence, it winds once around the cylinder. The resulting cylindric loop belongs to the partition $\bar\mu(3,9)$: starting in the shifted diagonals $3\mod\!\!n$ one consecutively adds 9 boxes, one in each subsequent diagonal as indicated by the shading. }\label{fig:CylindricRibbonexample}
\end{figure}

\begin{defi}
Define a {\em cylindric ribbon} as a cylindric skew shape $\bar\lambda/d/\bar\mu$ which is either a ribbon or a translation thereof, i.e. is obtained by adding additional circular $n$-ribbons by acting with $\tau$ on $\bar\lambda$. A {\em cylindric ribbon plane partition} is a cylindric reverse plane partition $\bar\pi$ such that each cylindric skew shape $\bar \pi^{-1}(i)$ is a cylindric ribbon.
\end{defi}
The shift by $\rho$ provides an obvious bijection between cylindric adjacent column strips $\lambda/d/\mu$ with $\lambda,\mu\in\salc$ and cylindric ribbons $\bar\lambda/d/\bar\mu$ and one obtains therefore the following analogue of Lemma \ref{lem:CACS}:
\begin{lemma}\label{lem:CR}
Let $\bar\lambda/d/\bar\mu$ be a cylindric ribbon with $r=(|\bar\lambda|-|\bar\mu|+nd)\not\in n\mb{N}$. Then there exists a unique $1\leq a\leq n$ such that $\bar\lambda\circ\tau^d=\bar\mu(a,r)$, where $\bar\mu(a,r)$ is the unique element in $\cP_{k,n-k}$ whose cylindric loop is obtained by consecutively adding one box $(i,j)$ per (shifted) diagonal $k+j-i$ in the cylindric loop of $\bar\mu$ starting in the diagonals $a\mod n$ and ending in the diagonals $(a+r-1)\mod n$.
\end{lemma}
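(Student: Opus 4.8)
The plan is to deduce the statement from Lemma~\ref{lem:CACS} by transporting it through the ``shift by $\rho$'' bijection announced in the paragraph preceding the lemma. First I would make this bijection precise as a map of cylinders. Writing $\bar\rho_i=k+1-i$ (the unique extension of $\rho$ to a map $\mb{Z}\to\mb{Z}$ with $\bar\rho_{i+k}=\bar\rho_i-k$, which is linear in $i$), the assignment $(i,j)\mapsto(i,\,j-\bar\rho_i)$ descends to a well-defined bijection $\mf{C}_{k,n}\to\mf{C}_{k,n-k}$: one checks directly that the generating relation $(i,j)\sim(i-k,j+n)$ of the first cylinder is sent to $(i,\,j-\bar\rho_i)\sim(i-k,\,(j-\bar\rho_i)+(n-k))$, the defining relation of the second. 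Under $\lambda\mapsto\bar\lambda=\lambda-\rho$ this carries $\salc$ onto $\balc$; moreover, since $\bar\lambda+\bar\rho=\lambda$, the identity $\overline{\lambda\circ\hat w}=\bar\lambda\odot\hat w$ shows that the map intertwines the level-$n$ action of Lemma~\ref{lem:Saction} with the shifted action \eqref{shifted_action}, and hence sends each cylindric loop $\lambda[d]=\lambda\circ\tau^d$ to the shifted cylindric loop $\bar\lambda[d]=\bar\lambda\odot\tau^d$. In particular a box $(i,j)$ sitting in column $j$ of $\lambda/d/\mu$ is sent to the box $(i,\,j-\bar\rho_i)$ lying on the shifted diagonal $k+(j-\bar\rho_i)-i=j-1$ of $\bar\lambda/d/\bar\mu$, so that columns correspond to shifted diagonals and adjacent columns to adjacent diagonals.

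Next I would verify that this bijection takes cylindric adjacent column strips to cylindric ribbons. A CACS has at most one box in each column and occupies a block of consecutive columns (possibly winding around $\mf{C}_{k,n}$), so its image has at most one box in each shifted diagonal and occupies a block of consecutive diagonals; an edgewise-connected cylindric skew shape with at most one box per diagonal is precisely a cylindric ribbon, and the winding of a CACS past $n$ columns corresponds exactly to the addition of a circular $n$-ribbon on $\mf{C}_{k,n-k}$ (cf.\ Figure~\ref{fig:CylindricRibbonexample}). Because $\lambda,\mu\in\salc$ give $|\lambda|-|\mu|=|\bar\lambda|-|\bar\mu|$, the integer $r=|\lambda|-|\mu|+nd=|\bar\lambda|-|\bar\mu|+nd$ and the hypothesis $r\notin n\mb{N}$ are preserved; the relevant label runs modulo $n$ on both sides, since columns are taken mod $n$ on $\mf{C}_{k,n}$ while the shifted diagonals $k+j-i$ are likewise mod $n$ on $\mf{C}_{k,n-k}$.

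Finally I would conclude by applying Lemma~\ref{lem:CACS} to the CACS $\lambda/d/\mu$: it supplies a unique $1\le a'\le n$ with $\lambda[d]=\mu(a',r)$, the loop built by adding boxes in the $r$ consecutive columns starting at $a'$. Transporting this loop equality and its construction through the bijection of the first paragraph replaces columns by shifted diagonals and the starting column $a'$ by the starting diagonal $a=a'-1\bmod n$, yielding a unique $1\le a\le n$ with $\bar\lambda[d]=\bar\mu(a,r)$, which is the asserted identity. The main obstacle I anticipate is the careful bookkeeping in the second paragraph: confirming that the staircase shift genuinely converts the ``adjacent column'' condition into the ``adjacent diagonal'' (ribbon) condition, including in the winding regime $r\ge n$ where one must match the circular $n$-ribbon of the shifted picture against the wrap-around of the column strip, all the while keeping the two cylinder periods $n$ and $n-k$ distinct even though the counting index remains mod $n$.
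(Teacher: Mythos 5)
Your proposal is correct and is exactly the argument the paper intends: the paper offers no written proof of Lemma \ref{lem:CR}, merely asserting that "the shift by $\rho$ provides an obvious bijection" between cylindric adjacent column strips and cylindric ribbons and that the lemma is "therefore" the analogue of Lemma \ref{lem:CACS}, and your write-up supplies precisely the details of that transport (the cylinder map $(i,j)\mapsto(i,j-\bar\rho_i)$ sending $\mf{C}_{k,n}$ to $\mf{C}_{k,n-k}$, column $j$ to shifted diagonal $j-1$, and the level-$n$ action to the shifted action, with the starting label shifting by one). Your careful handling of the winding regime and of the fact that $\bar\lambda[d]$ must be read via the shifted action $\odot$ is, if anything, more precise than the paper's own statement.
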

The case when $r=mn$ corresponds to $\bar\lambda=\bar \mu$ and adding $m$ circular $n$-ribbons; compare with our previous discussion of cylindric adjacent column strips. Define the {\em height} $\op{ht}(\bar\lambda/d/\bar\mu)$ of a cylindric ribbon $\bar\lambda/d/\bar\mu$ to be the number of rows $\bar\mu(a,r)$ occupies. Each circular ribbon has by definition height $k$. For $\bar\lambda,\bar\mu\in\balc$ set
\begin{equation}
\chi_{\bar\lambda/d/\bar\mu}(\nu)=\sum_{\bar\pi}\prod_{i\geq 0}(-1)^{\op{ht}(\bar\pi^{-1}(i))-1},
\end{equation}
where the sum runs over all cylindric ribbon plane partitions $\bar\pi$ of shape $\bar\lambda/d/\bar\mu$ and weight $\nu$.

\begin{lemma}
Let $\lambda,\mu\in\salc$. Then we have the following equality between matrix elements in the subspace of alternating tensors \eqref{alt_tensors} and weighted sums,
 \begin{equation}
 \langle v^{\bar\lambda}, P_\nu^* v_{\bar\mu}\rangle=z^{-d}(-1)^{d(k-1)}\chi_{\bar\lambda/d/\bar\mu}(\nu)\,,
 \end{equation}
 where both sides are identically zero unless $d=(|\mu|+|\nu|-|\lambda|)/n\in\mb{Z}_{\geq 0}$.
\end{lemma}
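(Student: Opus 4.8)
The plan is to compute the matrix element by letting $P^*_\nu$ act directly on the wedge basis $v_{\bar\mu}$ of $\V^-_k$ and reading off the coefficient of $v_{\bar\lambda}$, in close parallel with the symmetric-tensor computation that produced $\langle v^\lambda,P^*_\nu v_\mu\rangle=z^{-d}\varphi_{\lambda/d/\mu}(\nu)$. Recall from Lemma \ref{lem:P2X} that $P^*_\nu=\prod_i\Delta^{k-1}(P_{-\nu_i})$ with $\Delta^{k-1}(P_{-r})=\sum_{j=1}^k X_j^{-r}$, and that these operators commute. From the matrix \eqref{X} together with $X^n=z\,\mathbf 1$ one reads off $X v_a=v_{a-1}$, hence $X^{-r}v_a=v_{a+r}$ under the cyclic convention $v_{a+n}=z^{-1}v_a$. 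Thus applying a single factor $\sum_j X_j^{-r}$ to $v_{\bar\mu}=v_{\mu_k}\wedge\cdots\wedge v_{\mu_1}$ shifts exactly one tensor slot, $\mu_j\mapsto\mu_j+r$, producing $\sum_j v_{\mu_k}\wedge\cdots\wedge v_{\mu_j+r}\wedge\cdots\wedge v_{\mu_1}$.

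First I would analyse one such factor. Reordering the resulting wedge back into the strictly decreasing form representing a basis vector $v_{\bar\lambda}$ with $\lambda\in\salc$, together with the cyclic reduction $v_{a+n}=z^{-1}v_a$, reproduces precisely the cylindric Murnaghan--Nakayama rule: the nonzero contributions are indexed by the cylindric ribbons $\bar\lambda/d/\bar\mu$ of size $r$ described in Lemma \ref{lem:CR}, the power of $z^{-1}$ collected equals the winding degree $d$, and the sign incurred by the transpositions needed to restore the order equals $(-1)^{\op{ht}(\bar\lambda/d/\bar\mu)-1}$ times a winding sign $(-1)^{d(k-1)}$. I would verify this by tracking how moving $v_{\mu_j+r}$ past the intervening factors sweeps out the border strip, each crossed factor contributing one row to the strip and one transposition sign, exactly as in the classical rule, with the circular $n$-ribbon generated by $\tau$ accounting for the extra $(-1)^{k-1}$ and $z^{-1}$ per full revolution around $\mf{C}_{k,n-k}$.

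Next I would iterate over the parts of $\nu$. Since the operators $\Delta^{k-1}(P_{-\nu_i})$ commute and each contributes one cylindric ribbon, the composite $P^*_\nu v_{\bar\mu}$ expands as a signed sum over nested sequences of cylindric loops $(\bar\mu[0],\dots,\bar\lambda[d])$, i.e.\ over cylindric ribbon plane partitions $\bar\pi$ of shape $\bar\lambda/d/\bar\mu$ whose $i$th ribbon carries $\nu_i$ boxes, which is exactly the weight constraint $\op{wt}(\bar\pi)=\nu$. The individual ribbon signs multiply to $\prod_i(-1)^{\op{ht}(\bar\pi^{-1}(i))-1}$, the accumulated $z$-powers give $z^{-d}$ with $d=(|\mu|+|\nu|-|\lambda|)/n$, and the winding signs combine to the global $(-1)^{d(k-1)}$. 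Pairing with $v^{\bar\lambda}$ and invoking the definition of $\chi_{\bar\lambda/d/\bar\mu}(\nu)$ then yields the claimed identity, with the degree constraint forced by homogeneity of $p_\nu$ and $X^n=z\,\mathbf 1$.

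The main obstacle will be the sign bookkeeping of the second step. Showing that the antisymmetrisation sign produced by reordering the wedge coincides with the ribbon-height sign $(-1)^{\op{ht}-1}$ is the cylindric analogue of Murnaghan--Nakayama, and the genuinely new ingredient is the contribution of ribbons winding around the cylinder: one must check that each circular $n$-ribbon, of height $k$, contributes precisely $(-1)^{k-1}z^{-1}$, so that $d$ windings yield the overall factor $(-1)^{d(k-1)}z^{-d}$ on the right-hand side. This mirrors the sign $(-1)^{d(k-1)}$ already encountered for the Gromov--Witten invariants in Proposition \ref{prop:BVI}, and I would use Lemma \ref{lem:CR} both to organise the reduction to the alcove and to keep the sign accounting consistent under $\tau$.
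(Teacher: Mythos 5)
Your proposal is correct and follows essentially the same route as the paper: reduce to a single part $\nu=(r)$, let each summand of $\Delta^{k-1}(P_{-r})=\sum_j X_j^{-r}$ shift one wedge slot by $r$ under the cyclic convention $v_{a+n}=z^{-1}v_a$, identify the reordering sign with $(-1)^{\op{ht}-1}$, and reduce $r\geq n$ via $P^*_r=z^{-1}P^*_{r-n}$ together with the factor $(-1)^{k-1}$ per circular $n$-ribbon. The paper carries out the same computation, phrased via the matrix units $e_{ij}$ and $01$-words, and records the winding step as the recurrence $\chi_{\bar\lambda/d/\bar\mu}(r)=(-1)^{k-1}\chi_{\bar\lambda/(d-1)/\bar\mu}(r-n)$.
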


\begin{rema}
For $z=(-1)^{k-1}q^{-1}$ the above matrix elements encode for the special case $\nu=(r)$ with $r\leq n$ the Murnaghan-Nakayama rule for the quantum cohomology ring $qH^*(\op{Gr}(k,n))$ described in \cite[Theorem 2]{morrison2018two}.
\end{rema}

\begin{proof}
It suffices to prove the assertion for $\nu=(r)$ as the general case then follows from repeated application of the formula where $\nu$ has only one nonzero part.

Let us first assume $r<n$. Parametrise the partition $\mu\in\salc$ in terms of a $01$-word of length $n$ where the 1-letters are at the positions $\mu_i$ from left to right. The action of the Lie algebra element $e_{ij}$ with $i>j$ on $v_{\bar\mu}$ then yields the vector $(-1)^{\sum_{l=j}^im_l(\mu)-1}v_{\bar\mu(j,i-j)}$ provided $m_j(\mu)=1$. The minus sign results from the fact that we take matrix elements in the subspace of alternating tensors. Thus, $P^*_r$ adds all possible ribbons of length $r$ to $\bar\mu$ with sign factor $(-1)^{\op{ht}(\bar\lambda/d/\bar\mu)-1+d(k-1)}$ with $\bar\lambda\odot\tau^d=\bar\mu(j,r)$ and $d=0,1$. This proves the assertion for $r<n$.

For $r\geq n$ note that the cylindric ribbon winds around the cylinder at least one time. Each such winding is represented by the action of $\tau$, which adds a circular ribbon of length $n$ and height $k$, before the remaining ribbon of length $r-n$ is added. Thus, we have the recurrence relation
\[
\chi_{\bar\lambda/d/\bar\mu}(r)=(-1)^{k-1}\chi_{\bar\lambda/(d-1)/\bar\mu}(r-n)\,.
\]
Repeatedly employing that $P^*_r=z^{-1}P^*_{r-n}$ one then proves the assertion by reducing it to the previous case of ribbons shorter than $n$. 
\end{proof}
\begin{defi}
Let $\bar\lambda,\bar\mu\in\balc$ and $d\geq 0$. Denote by $\bar\lambda'$ and $\bar\mu'$ the conjugate partitions and %
define the following element in the ring of symmetric functions $\Lambda$,
\begin{equation}\label{cylSchur}
s_{\bar\lambda'/d/\bar\mu'}=\sum_{\nu\in\cP^+}\frac{\chi_{\bar\lambda/d/\bar\mu}(\nu)}{\z_\nu}\,\epsilon_\nu p_\nu\,.
\end{equation}
\end{defi}

The latter definition can be interpreted as a cylindric or affine version of the Murnaghan-Nakayama rule: if $d=0$ we recover the skew Schur function $s_{\bar\lambda'/0/\bar\mu'}=s_{\bar\lambda'/\bar\mu'}$ in which case it is known that its inverse image under the characteristic map \eqref{ch} is the character of the $S_m$-module (with $m=|\bar\lambda|-|\bar\mu|$) obtained by generalising Young's representation to the skew tableau $\bar\lambda'/\bar\mu'$. The character values are then known to be equal to $\chi_{\bar\lambda'/0/\bar\mu'}(\nu)$.

We have chosen to parametrise \eqref{cylSchur} in terms of the conjugate partitions, because we will identify the function \eqref{cylSchur} with the (conjugate version of the) known cylindric (skew) Schur function in the literature using level-rank duality. The following lemma will allow us to do this.
\begin{lemma}\label{lem:quantumKostka}
Let $\bar\lambda,\bar\mu\in\balc$ and $\alpha,\beta\in\cP^+$ with $\alpha_1\leq n-k$ and $\beta_1\leq k$. Setting $z=(-1)^{k-1}q^{-1}$ we have the following equalities,
\begin{eqnarray}
\langle v^{\bar\lambda},H^*_\alpha v_{\bar\mu}\rangle=
\overline{\langle v^{\bar\lambda^{\vee}},H_\alpha v_{\bar\mu^{\vee}}\rangle}&=&q^{d}K_{\bar\lambda/d/\bar\mu,\alpha},\\
\langle v^{\bar\lambda},E^*_\beta v_{\bar\mu}\rangle=
\overline{\langle v^{\bar\lambda^{\vee}},E_\beta v_{\bar\mu^{\vee}}\rangle}&=&q^{d}K'_{\bar\lambda/d/\bar\mu,\beta},
\end{eqnarray}
where $K_{\bar\lambda/d/\bar\mu,\alpha}$ and $K'_{\bar\lambda/d/\bar\mu,\beta}$ are respectively the number of column and row strict cylindric reverse plane partitions of shape $\bar\lambda/d/\bar\mu$ with weights $\alpha$ and $\beta$. 
\end{lemma}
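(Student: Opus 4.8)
The plan is to compute the matrix element by reducing it to an iterated single–strip Pieri rule and then reading the chain of intermediate shapes as a cylindric tableau. Since $H^*_\alpha=H^*_{\alpha_1}H^*_{\alpha_2}\cdots$ acts on the free module $\V^-_k$, and since $\{v_{\bar\sigma}\}_{\bar\sigma\in\balc}$ and $\{v^{\bar\sigma}\}_{\bar\sigma\in\balc}$ are dual bases under the natural pairing (so $\langle v^{\bar\sigma},v_{\bar\tau}\rangle=\delta_{\sigma\tau}$), I would insert the resolution of the identity $\op{Id}=\sum_{\bar\sigma\in\balc}v_{\bar\sigma}\langle v^{\bar\sigma},-\rangle$ between consecutive factors to get
\[
\langle v^{\bar\lambda},H^*_\alpha v_{\bar\mu}\rangle
=\sum_{\bar\mu=\bar\sigma^{(0)},\bar\sigma^{(1)},\dots,\bar\sigma^{(\ell)}=\bar\lambda}\
\prod_{i=1}^{\ell}\langle v^{\bar\sigma^{(i)}},H^*_{\alpha_i}v_{\bar\sigma^{(i-1)}}\rangle\,.
\]
Everything then reduces to the single–row factors, and the analogous decomposition handles $E^*_\beta$.

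The key ingredient is a cylindric (quantum) Pieri rule: for $1\le r\le n-k$ I claim $\langle v^{\bar\nu},H^*_r v_{\bar\mu}\rangle=q^{d}$ when $\bar\nu/d/\bar\mu$ is a cylindric horizontal strip with $r$ boxes, and $0$ otherwise, the bound $r\le n-k$ ensuring the addition really is a strip and forcing $d\in\{0,1\}$; the elementary analogue for $1\le r\le k$ replaces horizontal by vertical strips. I would derive this from the Murnaghan–Nakayama lemma proved immediately above: writing $H_r=\sum_{\rho\vdash r}\z_\rho^{-1}P_\rho$ gives $\langle v^{\bar\nu},H^*_r v_{\bar\mu}\rangle=z^{-d}(-1)^{d(k-1)}\sum_{\rho\vdash r}\z_\rho^{-1}\chi_{\bar\nu/d/\bar\mu}(\rho)$, and the classical single–row identity $\sum_{\rho\vdash r}\z_\rho^{-1}\chi_\Theta(\rho)=K_{\Theta,(r)}$ collapses the signed ribbon sum to $1$ precisely on horizontal strips. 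Substituting $z=(-1)^{k-1}q^{-1}$ makes the prefactor $z^{-d}(-1)^{d(k-1)}=(-1)^{2(k-1)d}q^{d}=q^{d}$, so the two signs cancel; for $E^*_r$ the extra factor $\epsilon_\rho$ in \eqref{defE&H} turns the collapse into the vertical–strip count $K'_{\Theta,(r)}$.

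With the Pieri rule in hand, each nonzero term of the inserted–identity sum is a chain of cylindric loops whose $i$-th step adds a cylindric horizontal strip of size $\alpha_i$ and winding $d_i$; by the sequence–of–loops description \eqref{SequenceCylLoops} this is exactly a column strict CRPP of shape $\bar\lambda/d/\bar\mu$ and weight $\alpha$, with $d=\sum_i d_i=(|\bar\mu|+|\alpha|-|\bar\lambda|)/n$ and total weight $\prod_i q^{d_i}=q^{d}$. Summing over chains yields $q^{d}K_{\bar\lambda/d/\bar\mu,\alpha}$, and the elementary case is identical with vertical strips and $K'$. The remaining equality $\langle v^{\bar\lambda},H^*_\alpha v_{\bar\mu}\rangle=\overline{\langle v^{\bar\lambda^\vee},H_\alpha v_{\bar\mu^\vee}\rangle}$ I would obtain exactly as in the symmetric case of Theorem \ref{thm:ExpansionCylindricH}(ii): Lemma \ref{lem:adjoint} gives $H^*_\alpha=\overline{\Pi(H_\alpha)}$, and the combined Dynkin automorphism $\Pi=\hat\Gamma\circ\Gamma$ sends the alternating basis vector $v_{\bar\mu}$ to a unit multiple of $v_{\bar\mu^\vee}$, in accordance with the $\cS$-matrix relations \eqref{modSfermii}, any scalar being absorbed by the conjugation since $|q|=1$.

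The main obstacle I anticipate is the bookkeeping for the winding term ($d=1$), the genuinely cylindric part of the Pieri rule: one must check both that the signed cylindric ribbon sum still collapses to the unit horizontal–strip indicator when the strip wraps once around $\mf{C}_{k,n-k}$, and that this winding contributes precisely one factor of $q$. Here the constraints $\alpha_1\le n-k$ and $\beta_1\le k$ are exactly what keep every elementary step a legitimate strip with $d\le 1$ (so that $K_{\Theta,(r)},K'_{\Theta,(r)}\in\{0,1\}$), and Lemmas \ref{lem:CACS} and \ref{lem:CR} identifying the added strip with $\bar\mu(a,r)$ are what make this transparent; the one convention to pin down carefully is conjugation, so that the factors $H^*_r$ count column strict rather than row strict fillings.
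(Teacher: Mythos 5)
Your overall architecture --- reduce to a single-strip Pieri rule by inserting the resolution of the identity over $\{v_{\bar\sigma}\}_{\bar\sigma\in\balc}$, then read the resulting chain of cylindric loops as a column (resp.\ row) strict CRPP --- is exactly the paper's strategy, and the sign bookkeeping $z^{-d}(-1)^{d(k-1)}=q^d$ is right. Where you diverge is in how the single-strip rule itself is established. The paper does not derive it: it invokes the known quantum Pieri rule of Bertram--Ciocan-Fontanine--Fulton for $qH^*(\op{Gr}(k,n))$ and transports it to $\V^-_k$ through the ring isomorphism of Corollary \ref{cor:Vminus}, so that $H^*_r v_{\bar\mu}=\sum_{\bar\lambda}q^d v_{\bar\lambda}$ over cylindric horizontal strips is a quotation rather than a computation. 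You instead propose to prove it internally, from the cylindric Murnaghan--Nakayama lemma established just before, via the collapse $\sum_{\rho\vdash r}\z_\rho^{-1}\chi_{\Theta}(\rho)=K_{\Theta,(r)}$. If carried out, that would make the lemma self-contained and would in effect re-derive the quantum Pieri rule; that is a genuine gain, but it is also where the unfinished work sits.

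The gap is precisely the step you flag at the end: the identity $\sum_{\rho\vdash r}\z_\rho^{-1}\chi_{\lambda/\mu}(\rho)=K_{\lambda/\mu,(r)}$ that you want to apply is a statement about \emph{planar} skew shapes, where $\chi_{\lambda/\mu}$ is an honest character of $S_r$ and the left side is the multiplicity of the trivial representation. For a winding shape $\bar\nu/1/\bar\mu$ the quantity $\chi_{\bar\nu/1/\bar\mu}(\rho)$ is defined only as a signed sum over cylindric ribbon decompositions, and it is not established anywhere in the paper that this is the character of an $S_r$-module or that the weighted signed sum over all $\rho\vdash r$ collapses to the cylindric horizontal-strip indicator. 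That collapse \emph{is} the content of the quantum Pieri rule (equivalently, of the Main Lemma of Bertram--Ciocan-Fontanine--Fulton on $n$-ribbon reduction), so asserting it without proof leaves the argument circular in spirit: you would be assuming the hard part of the very rule you are trying to derive. To close the gap you would need either to unroll the once-wrapped shape into a planar skew diagram and match ribbon decompositions across the seam (checking that exactly one factor $z^{-1}(-1)^{k-1}=q$ is produced per winding), or to fall back on the citation the paper uses. The remaining equality with $\overline{\langle v^{\bar\lambda^\vee},H_\alpha v_{\bar\mu^\vee}\rangle}$ via Lemma \ref{lem:adjoint} and \eqref{modSfermii} is fine.
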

The numbers $K_{\bar\lambda/d/\bar\mu,\alpha}$ and $K'_{\bar\lambda/d/\bar\mu,\beta}$ are known to be identical with the {\em quantum Kostka numbers} which describe the (repeated) multiplication of a Schubert class with Chern classes of respectively the quotient and canonical bundle; see \cite[Props 3.1 and 4.3]{bertram1999quantum} and \cite[Cor 4.2]{postnikov2005affine} for details. Moreover, due to level-rank duality $qH^*(\op{Gr}(k,n))\cong qH^*(\op{Gr}(n-k,n))$  \cite[Prop 4.1]{bertram1999quantum}, one has $K_{\bar\lambda/d/\bar\mu,\alpha}=K'_{\bar\lambda'/d/\bar\mu',\alpha}$, where $\bar\lambda',\bar\mu'$ are the conjugate partitions of $\bar\lambda,\bar\mu$.
\begin{proof}
We employ the mentioned results \cite[Props 3.1 and 4.3]{bertram1999quantum} to prove the assertion. Let $\{\sigma_{\bar\lambda}\}_{\bar\lambda\in\balc}$ denote the basis of Schubert classes in $qH^*(\op{Gr}(k,n))$. Then we have $\sigma_r \sigma_{\bar\mu}=\sum_{\bar\lambda\in\balc}q^{d}\sigma_{\bar\lambda}$, where the sum runs over all $\bar\lambda\in\balc$ such that $\bar\lambda/d/\bar\mu$ is a cylindric horizontal strip and $d=(r+|\bar\mu|-|\bar\lambda|)/n$. Using the ring isomorphism $qH^*(\op{Gr}(k,n))\cong\V_k^-$ with $z=(-1)^{k-1}q^{-1}$ from Corollary \ref{cor:Vminus}, it then follows that $H^*_rv_{\bar\mu}= \sum_{\bar\lambda}q^{d}v_{\bar\lambda}$, where the sum runs over the same partitions $\bar\lambda$ as before. The general case now follows by repeated action with $H^*_{\nu_1}$, $H^*_{\nu_2}$, etc. Similarly, one shows the identities for $K'_{\bar\lambda/d/\bar\mu,\beta}$.
\end{proof}

\begin{theorem}\label{thm:CylSchurExpansion}
Set $q=(-1)^{k-1}z^{-1}$. Then one has the formal power series expansions
\begin{eqnarray}
\langle v^{\bar\lambda},\prod_{j\geq 1}E^*(u_j)v_{\bar\mu}\rangle =
\overline{\langle v^{\bar\lambda^{\vee}},\prod_{j\geq 1}E(u_j)v_{\bar\mu^{\vee}}\rangle} 
 &=& \sum_{d\geq 0}q^{d} s_{\bar\lambda'/d/\bar\mu'}(u)\;,
\end{eqnarray}
where $\bar q=q^{-1}$ and $\bar u_j=u_j$. In particular, the function \eqref{cylSchur} equals the (conjugate) cylindric Schur function. That is, we have the identities
\begin{equation}\label{cylSchurmatch} 
s_{\bar\lambda/d/\bar\mu}=\sum_{\nu_1\leq n-k}K'_{\bar\lambda'/d/\bar\mu',\nu}m_\nu=\sum_{\nu_1\leq n-k}K_{\bar\lambda/d/\bar\mu,\nu}m_\nu\;,
\end{equation}
where $\nu\in\cP^+$ and we have used level-rank duality in the second equality. 
\end{theorem}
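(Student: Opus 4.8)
The plan is to compute the single generating series $\langle v^{\bar\lambda},\prod_{j\geq 1}E^*(u_j)v_{\bar\mu}\rangle$ in two different ways and then compare coefficients of the loop variable. \textbf{First}, expanding the exponential \eqref{E} factor by factor exactly as in \eqref{Cauchy3} gives the power-sum form $\prod_{j\geq 1}E^*(u_j)=\sum_{\nu\in\cP^+}\z_\nu^{-1}\epsilon_\nu P^*_\nu\,p_\nu(u)$ in $\End_\R\bV_k$. Taking matrix elements in the alternating tensors and inserting the preceding lemma, $\langle v^{\bar\lambda},P^*_\nu v_{\bar\mu}\rangle=z^{-d}(-1)^{d(k-1)}\chi_{\bar\lambda/d/\bar\mu}(\nu)$ with $nd=|\bar\mu|+|\nu|-|\bar\lambda|$, I would substitute $q=(-1)^{k-1}z^{-1}$ to simplify $z^{-d}(-1)^{d(k-1)}=q^d$. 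Grouping the resulting sum according to the value of $d$ and recognising the Murnaghan--Nakayama definition \eqref{cylSchur} then yields $\langle v^{\bar\lambda},\prod_{j\geq1}E^*(u_j)v_{\bar\mu}\rangle=\sum_{d\ge0}q^d s_{\bar\lambda'/d/\bar\mu'}(u)$, the middle--right equality of the power series statement. The barred expression on the left is obtained from this by invoking the adjoint relation of Lemma \ref{lem:adjoint} together with the $\vee$-symmetry of the matrix elements, exactly as in the proofs of Theorems \ref{thm:ExpansionCylindricH} and \ref{thm:ExpansionCylindricE}.

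\textbf{Second}, I would expand the very same series in the monomial basis. Collecting monomials in the $u_j$ gives $\prod_{j\geq1}E^*(u_j)=\sum_{\beta\in\cP^+}E^*_\beta\,m_\beta(u)$, the adjoint form of the dual Cauchy identity \eqref{Cauchy2}, where $E^*_\beta$ vanishes automatically once $\beta_1>k$ because $e_r(X_1,\dots,X_k)=0$ for $r>k$. Now Lemma \ref{lem:quantumKostka} evaluates $\langle v^{\bar\lambda},E^*_\beta v_{\bar\mu}\rangle=q^d K'_{\bar\lambda/d/\bar\mu,\beta}$, so the series also equals $\sum_{d\ge0}q^d\big(\sum_{\beta}K'_{\bar\lambda/d/\bar\mu,\beta}m_\beta(u)\big)$. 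Since $q$ is (up to a sign) the loop variable and both computations produce the same formal power series with coefficients in $\Lambda$, matching the coefficient of $q^d$ forces $s_{\bar\lambda'/d/\bar\mu'}=\sum_{\beta}K'_{\bar\lambda/d/\bar\mu,\beta}\,m_\beta$. This is the promised monomial expansion in terms of quantum Kostka numbers.

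\textbf{Finally}, to reach \eqref{cylSchurmatch} in the stated form I would relabel $\bar\lambda,\bar\mu$ by their conjugates and apply the level--rank duality $K'_{\bar\lambda'/d/\bar\mu',\nu}=K_{\bar\lambda/d/\bar\mu,\nu}$ recorded after Lemma \ref{lem:quantumKostka}; this converts the row-strict count into the column-strict quantum Kostka number and identifies \eqref{cylSchur} with the (conjugate) cylindric Schur function of the literature. \emph{The main obstacle} I anticipate is precisely this last bookkeeping step: one must verify that conjugation of the boxed partitions in $\balc$ interacts correctly with the two strictness conditions on cylindric tableaux, so that the weight bound migrates from $\beta_1\le k$ to $\nu_1\le n-k$ and the two forms of \eqref{cylSchurmatch} genuinely agree. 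The remaining ingredients --- the power-sum expansion of $\prod_j E^*(u_j)$ and the barred identity via the adjoint and $\vee$-symmetry --- are routine given the cited results, so the conceptual content lies entirely in the two-fold evaluation of one matrix element and in the level--rank reconciliation of the weight ranges.
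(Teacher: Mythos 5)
Your proposal is correct and follows essentially the same route as the paper: both evaluate $\langle v^{\bar\lambda},\prod_j E^*(u_j)v_{\bar\mu}\rangle$ twice via the two forms of the dual Cauchy identity --- once through the power-sum expansion combined with the Murnaghan--Nakayama matrix elements $\langle v^{\bar\lambda},P^*_\nu v_{\bar\mu}\rangle=z^{-d}(-1)^{d(k-1)}\chi_{\bar\lambda/d/\bar\mu}(\nu)$ to recognise \eqref{cylSchur}, and once through the monomial expansion with Lemma \ref{lem:quantumKostka} --- and then compare coefficients of $q^d$ before swapping conjugates, exchanging $k$ with $n-k$, and invoking level-rank duality. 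The final bookkeeping step you flag as the main obstacle is handled in the paper exactly as you describe, so there is no gap.
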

Recall that under the specialisation $u_i=0$ for $i>k$ the cylindric Schur functions becomes the toric Schur polynomial from \cite{postnikov2005affine} which has the expansion \cite[Thm 5.3]{postnikov2005affine},
\begin{equation}
s_{\bar\lambda/d/\bar\mu}(u_1,\ldots,u_k)=\sum_{\bar\nu\in\balc}C_{\bar\mu\bar\nu}^{\bar\lambda,d}s_{\bar\nu}(u_1,\ldots,u_k),
\end{equation}
where $\langle v^{\bar\lambda},S_{\bar\nu}^*v_{\bar\mu}\rangle=q^d C_{\bar\mu\bar\nu}^{\bar\lambda,d}$ are the Gromov-Witten invariants of $qH^*(\op{Gr}(k,n))$ and the sum runs over all $\bar\nu$ such that $nd=|\bar\mu|+|\bar\nu|-|\bar\lambda|$. Here we have used the equality $C_{\bar\mu\bar\nu}^{\bar\lambda,d}=C_{\bar\mu'\bar\nu'}^{\bar\lambda',d}=\langle v^{\bar\lambda'},S_{\nu'}^*v_{\bar\mu'}\rangle$ and, thus, the isomorphism $qH^*(\op{Gr}(k,n))\cong qH^*(\op{Gr}(n-k,n))$, which can for example be derived from \eqref{BVI_formula}, \eqref{BVI2}.
\begin{proof}
Consider the subspace $\bV^-_k$ and take matrix elements in the Cauchy identities
\begin{equation}\label{Cauchy4}
\prod_{j\geq 1}E^*(u_j)=
\prod_{i=1}^k\prod_{j\geq 1}(1+X^*_i u_j)=\sum_{\lambda\in\cP^+}z^{-1}_\nu\epsilon_\nu P_\nu^*p_\nu(u)=\sum_{\nu\in\cP^+}E^*_\nu m_{\nu}(u)\;.
\end{equation}
Then we obtain from the second equality the power series expansions and from the third one -- with help of Lemma \ref{lem:quantumKostka} -- the asserted equality between \eqref{cylSchur} and the cylindric Schur function. Namely, observe that in $\End_{\RR}\bV'_k$ we have $E^*_r=0$ for $r>k$. Hence, we arrive at
\[
\langle v^{\bar\lambda},\prod_{j\geq 1}E^*(u_j)v_{\bar\mu}\rangle =
\sum_{\nu_1\leq k}\langle v^{\bar\lambda},E^*_\nu v_{\bar\mu}\rangle m_{\nu}(u)=
\sum_{d\geq 0}q^d s_{\bar\lambda'/d/\bar\mu'}(u)\;.
\]
Swapping the partitions with their conjugates, $\bar\lambda\to\bar\lambda'$ and $\bar\mu\to\bar\mu'$, and exchanging $k$ with $n-k$ in the above equalities, we arrive at the asserted identity with the cylindric Schur function upon employing level-rank duality, $K_{\bar\lambda/d/\bar\mu,\alpha}=K'_{\bar\lambda'/d/\bar\mu',\alpha}$. 
\end{proof}

We recall the following fact proved in \cite[Main Lemma]{bertram1999quantum}: let $\nu\in\cP^+_k$, then the image of the Schur polynomial $s_\nu\in\Lambda_k$ under the projection $\Lambda_k\twoheadrightarrow qH^*(\op{Gr}(k,n))$ is given by
\begin{equation}\label{ribbon_red}
s_\nu\mapsto\left\{
\begin{array}{cl}
(-1)^{kr-\sum_{i=1}^r\op{ht}(\rho_i)}q^r s_{\dot\nu}, &\dot \nu_1\leq n-k\\
0, &\text{else}
\end{array}\right.\,,
\end{equation}
where $\dot\nu$ is the $n$-core of the partition $\nu$ and the $\rho_i$ are a sequence of $n$-ribbons removed from $\nu$ to obtain $\dot\nu$. The number $r$ of $n$-ribbons removed defines the so-called $n$-weight of $\nu$. Note that while the sequence of $n$-ribbons is not unique, the final result will not depend on the particular choice of the $\rho_i$, see e.g. \cite{james1981representation}. As in our previous discussion of the cylindric Schur function, our version of the quotient map \eqref{ribbon_red} is related to the one in \cite{bertram1999quantum} by level-rank duality, that is, we take the conjugate partitions instead in order to emphasise the similarity to the Murnaghan-Nakayama rule.
\begin{coro}\label{cor:CylSchur2Schur}
Let $\bar\lambda,\bar\mu\in\balc$ and $\nu\in\cP^+_k$. We have the following identity for matrix elements,
\begin{equation}
\langle v^{\bar\lambda},S^*_{\nu}v_{\bar\mu}\rangle=\left\{
\begin{array}{cl}
(-1)^{kr-\sum_{i=1}^r\op{ht}(\rho_i)}q^{d} C_{\bar\mu\dot\nu}^{\bar\lambda,d-r}, &\dot \nu_1\leq n-k\\
0, &\text{else}
\end{array}\right.\,, 
\end{equation} 
where $nd=|\bar\mu|+|\nu|-|\bar\lambda|$. In particular, setting $q=1$ we have the expansion
\begin{equation}\label{cylschur2schur}
s_{\bar\lambda'/d/\bar\mu'}=\sum_{\nu}\langle v^{\bar\lambda},S^*_{\nu}v_{\bar\mu}\rangle s_{\nu'}
\end{equation}
of cylindric Schur functions into Schur functions, where the sum runs over those $\nu\in\cP_k^+$ for which $|\nu|=|\bar\lambda|-|\bar\mu|+dn$.
\end{coro}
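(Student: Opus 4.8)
The plan is to prove the two assertions separately: the matrix-element formula follows by combining the reduction rule \eqref{ribbon_red} with Proposition \ref{prop:BVI}, and the Schur expansion \eqref{cylschur2schur} is then obtained by taking matrix elements in a Cauchy identity and comparing powers of $q$.

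For the first identity I would work inside $\V^-_k$ and use that, under the isomorphism $\V^-_k\cong qH^*(\op{Gr}(k,n))$ of Corollary \ref{cor:Vminus} (with $z=(-1)^{k-1}q^{-1}$), the operator $S^*_\nu$ acts as multiplication by the image of the Schur polynomial $s_\nu\in\Lambda_k$ in the quantum cohomology ring. The reduction rule \eqref{ribbon_red} then shows that this image vanishes unless the $n$-core $\dot\nu$ satisfies $\dot\nu_1\leq n-k$, in which case $S^*_\nu$ equals $(-1)^{kr-\sum_{i=1}^r\op{ht}(\rho_i)}q^r$ times $S^*_{\dot\nu}$ as an operator on $\V^-_k$, where $r$ is the $n$-weight of $\nu$. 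Applying Proposition \ref{prop:BVI} to the boxed partition $\dot\nu\in\balc$ gives $\langle v^{\bar\lambda},S^*_{\dot\nu}v_{\bar\mu}\rangle=q^{d'}C_{\bar\mu\dot\nu}^{\bar\lambda,d'}$ with $nd'=|\bar\mu|+|\dot\nu|-|\bar\lambda|$. The degree bookkeeping is then routine: since $|\nu|=|\dot\nu|+rn$, the constraint $nd=|\bar\mu|+|\nu|-|\bar\lambda|$ forces $d=d'+r$, so the two powers of $q$ combine to $q^{r}\cdot q^{d-r}=q^{d}$, yielding exactly the claimed formula.

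For the second identity I would take matrix elements $\langle v^{\bar\lambda},\cdot\,v_{\bar\mu}\rangle$ in the adjoint form of the Cauchy identity \eqref{Cauchy2}, namely
\[
\prod_{j\geq 1}E^*(u_j)=\sum_{\nu\in\cP^+_k}S^*_\nu\,s_{\nu'}(u),
\]
and identify the left-hand side using Theorem \ref{thm:CylSchurExpansion}, which equals $\sum_{d\geq 0}q^{d}\,s_{\bar\lambda'/d/\bar\mu'}(u)$. Comparing the two expansions produces
\[
\sum_{d\geq 0}q^{d}\,s_{\bar\lambda'/d/\bar\mu'}(u)=\sum_{\nu\in\cP^+_k}\langle v^{\bar\lambda},S^*_\nu v_{\bar\mu}\rangle\,s_{\nu'}(u).
\]
By the first part the matrix element $\langle v^{\bar\lambda},S^*_\nu v_{\bar\mu}\rangle$ is homogeneous in $q$ of degree $d=(|\bar\mu|+|\nu|-|\bar\lambda|)/n$, which depends only on $|\nu|$. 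Hence collecting the coefficient of $q^{d}$ on both sides and then setting $q=1$ isolates precisely \eqref{cylschur2schur}, the sum being restricted to those $\nu\in\cP^+_k$ with $|\nu|=|\bar\lambda|-|\bar\mu|+dn$.

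I expect the main obstacle to be the opening step of the first part: justifying that $S^*_\nu$ for an arbitrary $\nu\in\cP^+_k$ acts on $\V^-_k$ as multiplication by the image of $s_\nu$ in $qH^*(\op{Gr}(k,n))$ over the base ring $\RR$, so that the ring-theoretic reduction \eqref{ribbon_red} may legitimately be transported to an operator identity on $\V^-_k$. This requires upgrading the isomorphism of Corollary \ref{cor:Vminus} from the specialisation $q=1$ to the generic statement; this is available because all the operators $S^*_\nu$ and the pairing are $\RR$-linear and, by the first part, the matrix elements are single Laurent monomials in $z$. The remaining care lies in the sign and conjugation conventions—matching the prefactor $(-1)^{kr-\sum_{i=1}^r\op{ht}(\rho_i)}$ from \eqref{ribbon_red} and ensuring the conjugate labels $\bar\lambda',\bar\mu',\nu'$ propagate consistently through the level-rank duality already embedded in Theorem \ref{thm:CylSchurExpansion}—but these are bookkeeping rather than substantive difficulties.
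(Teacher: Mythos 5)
Your proposal is correct and follows essentially the same route as the paper's (very terse) proof: take matrix elements of the last equality in the Cauchy identity \eqref{Cauchy2} on alternating tensors, transport the reduction \eqref{ribbon_red} through the ring isomorphism of Corollary \ref{cor:Vminus}, and compare powers of $q$ using Proposition \ref{prop:BVI}. Your degree bookkeeping $d=d'+r$ and the remark about extending the isomorphism over $\RR$ are the right elaborations of what the paper leaves implicit.
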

\begin{proof}
Starting point is the last equality in the Cauchy identity \eqref{Cauchy2}. Taking matrix elements in the subspace of alternating tensors and using the ring isomorphism from Corollary \ref{cor:Vminus} the assertion follows from \eqref{ribbon_red}. 
\end{proof}

The expansion formula from Corollary \ref{cor:CylSchur2Schur} shows that in general cylindric Schur function fail to be Schur positive. This was already pointed out by McNamara \cite[Thm 6.5]{mcnamara2006cylindric}. 
McNamara conjectured \cite[Conjecture 7.3]{mcnamara2006cylindric} that a general (skew) cylindric Schur function $s_{\bar\lambda/d/\bar\mu}$  has non-negative expansion coefficients in (non-skew) cylindric Schur functions of the type $s_{\bar\lambda/d/\emptyset}$ with $\bar\lambda\in\balc$ and $d\geq 0$, which he showed to be linearly independent \cite[Prop. 7.1]{mcnamara2006cylindric}.  In order to facilitate the comparison we again employ level-rank duality and exchange $k$ with $n-k$ and partitions with their conjugates.

\begin{coro}\label{cor:nscylschur} 
Let $\cP_{\bar\lambda}(d)\subset\cP_{n-k}^+$ denote the subset of all partitions $\nu\in\cP_{n-k}^+$, whose $n$-core is $\dot\nu=\bar\lambda'$ and whose $n$-weight, the number of $n$-ribbons $\rho_i$ removed to obtain $\dot\nu$, equals $d$. Then
\begin{equation}\label{cylnsSchur2Schur}
s_{\bar\lambda/d/\emptyset}=\sum_{\nu\in\cP_{\bar\lambda}(d)}(-1)^{d(n-k)-\sum_{i=1}^d\op{ht}(\rho_i)} s_{\nu'}\,,
\end{equation}
and taking the Hall inner product \eqref{Hall} in $\Lambda$ we obtain
\begin{equation}
\langle s_{\bar\lambda/d/\emptyset},s_{\bar\mu/d'/\emptyset}\rangle =\delta_{\bar\lambda\bar\mu}\delta_{dd'}|\cP_{\bar\lambda}(d)|\,,
\end{equation}
showing that the cylindric (non-skew) Schur functions are orthogonal and, hence, linearly independent.
\end{coro}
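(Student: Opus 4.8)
The plan is to read off the claimed expansion \eqref{cylnsSchur2Schur} directly from Corollary \ref{cor:CylSchur2Schur} and then deduce both the orthogonality and the linear independence purely formally from the orthonormality of Schur functions under the Hall pairing \eqref{Hall}.

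First I would specialise Corollary \ref{cor:CylSchur2Schur} to $\bar\mu=\emptyset$ and $q=1$. The relevant matrix element $\langle v^{\bar\lambda},S^*_\nu v_\emptyset\rangle$ then involves the Gromov--Witten invariant $C_{\emptyset\dot\nu}^{\bar\lambda,d-r}$; by Corollary \ref{cor:GWsymm} the empty partition acts as the identity in $qH^*(\op{Gr}(k,n))$, so $C_{\emptyset\dot\nu}^{\bar\lambda,d-r}=\delta_{\bar\lambda\dot\nu}\delta_{d,r}$. This forces $r=d$ and $\dot\nu=\bar\lambda$, collapsing the sum in \eqref{cylschur2schur} to precisely those $\nu\in\cP^+_k$ whose $n$-core is $\bar\lambda$ and whose $n$-weight is $d$, each entering with coefficient $(-1)^{kd-\sum_{i=1}^d\op{ht}(\rho_i)}$. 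Applying level-rank duality $qH^*(\op{Gr}(k,n))\cong qH^*(\op{Gr}(n-k,n))$ — that is, exchanging $k\leftrightarrow n-k$ and replacing every partition by its conjugate, exactly as already done in Theorem \ref{thm:CylSchurExpansion} and \eqref{ribbon_red} — turns this into \eqref{cylnsSchur2Schur}, the index set becoming $\cP_{\bar\lambda}(d)\subset\cP^+_{n-k}$ and the sign becoming $(-1)^{d(n-k)-\sum_{i=1}^d\op{ht}(\rho_i)}$.

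For the orthogonality I would substitute \eqref{cylnsSchur2Schur} for both factors into the Hall inner product and use that the Schur functions are orthonormal, $\langle s_{\nu'},s_{\kappa'}\rangle=\delta_{\nu\kappa}$ (conjugation being a bijection on partitions). The pairing then reduces to a sum over $\nu\in\cP_{\bar\lambda}(d)\cap\cP_{\bar\mu}(d')$ of the product of the two sign factors. The decisive structural input is that every partition $\nu$ has a uniquely determined $n$-core and $n$-weight; hence the sets $\cP_{\bar\lambda}(d)$ for distinct pairs $(\bar\lambda,d)$ are pairwise disjoint, so the intersection is empty unless $\bar\lambda=\bar\mu$ and $d=d'$, which annihilates all off-diagonal terms. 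On the diagonal the two sign factors coincide and therefore square to $+1$, leaving exactly $|\cP_{\bar\lambda}(d)|$ surviving summands and yielding the stated value. Linear independence is then immediate, since the Gram matrix of the family is diagonal with entries $|\cP_{\bar\lambda}(d)|$, which are strictly positive whenever $\bar\lambda/d/\emptyset$ is a genuine (nonzero) cylindric shape.

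The one point requiring care — and the only real obstacle — is the well-definedness of the sign $(-1)^{\sum_{i=1}^d\op{ht}(\rho_i)}$: the sequence of $n$-ribbons $\rho_i$ removed to reach the $n$-core is not unique. I would invoke the classical fact, already noted after \eqref{ribbon_red} (cf. \cite{james1981representation}), that the parity of $\sum_i\op{ht}(\rho_i)$ is an invariant of $\nu$, so each coefficient in \eqref{cylnsSchur2Schur} is unambiguously $\pm 1$; this is exactly what guarantees that the squared signs equal $+1$ on the diagonal. Everything else is a formal manipulation of the already-established expansion and of the Hall pairing.
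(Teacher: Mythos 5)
Your proposal is correct and follows essentially the same route as the paper: specialise \eqref{cylschur2schur} to $\bar\mu=\emptyset$ (using $C_{\emptyset\dot\nu}^{\bar\lambda,d-r}=\delta_{\bar\lambda\dot\nu}\delta_{d,r}$ and level-rank duality), then compute the Hall pairing from the orthonormality of Schur functions and the uniqueness of the $n$-core and $n$-weight. The paper's proof is just a terser version of this, with the well-definedness of the sign $(-1)^{\sum_i\op{ht}(\rho_i)}$ already noted after \eqref{ribbon_red}.
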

\begin{proof}
The expansion formula is immediate upon setting $\bar\mu=\emptyset$ in \eqref{cylschur2schur}. The computation of the Hall inner product then follows by observing that the $n$-core of a partition is unique and that Schur functions are an orthonormal basis in $\Lambda$ with respect to the Hall inner product. 
\end{proof} 

A proof of McNamara's conjecture was recently put forward by Lee \cite[Theorem 1]{lee2019positivity} using the connection between cylindric Schur functions and affine Stanley symmetric functions pointed out by Lam in \cite{lam2006affine}. Here we give an alternative, somewhat shorter proof of McNamara's conjecture as a direct consequence of Theorem \ref{thm:CylSchurExpansion} and show that the expansion coefficients are given by the Gromov-Witten invariants $C_{\bar\mu\bar\nu}^{\bar\lambda,d}$.

\begin{coro}\label{cor:mcnamara_conj}
Let $\bar\lambda,\bar\mu\in\balc$ and $d\geq 0$. Then we have the expansion
\begin{equation}\label{mcnamara_conj}
s_{\bar\lambda/d/\bar\mu}=\sum_{d'=0}^d\sum_{\bar\nu}
C^{\bar\lambda,d'}_{\bar\mu\bar\nu}\,s_{\bar\nu/(d-d')/\emptyset}\,,
\end{equation}
where the sum runs over all $\bar\nu\in\balc$ such that $|\bar\nu|=|\bar\lambda|+nd'-|\bar\mu|$.
\end{coro}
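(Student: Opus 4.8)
The plan is to mirror the proof of Corollary~\ref{cor:cylh2cylh}, trading the symmetric-tensor module $\V^+_k$ for the alternating-tensor module $\V^-_k$ and the role of the unit $v_{n^k}$ for that of $v_\emptyset$. The starting point is the generating-function identity of Theorem~\ref{thm:CylSchurExpansion},
\[
\langle v^{\bar\lambda},\prod_{j\geq 1}E^*(u_j)v_{\bar\mu}\rangle=\sum_{d\geq 0}q^{d}\,s_{\bar\lambda'/d/\bar\mu'}(u),
\qquad q=(-1)^{k-1}z^{-1}.
\]
Since $C^{\bar\nu}_{\emptyset\bar\mu}(t)=\delta_{\mu\nu}$ by Corollary~\ref{cor:GWsymm} (equation \eqref{GWsymm}), the vector $v_\emptyset$ is the unit of the product $v_{\bar\lambda}v_{\bar\mu}=S^*_{\bar\lambda}v_{\bar\mu}$ on $\V^-_k$, and as this is an identity in $t$ it holds for generic $z$, so that $v_{\bar\mu}=S^*_{\bar\mu}v_\emptyset$.

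Next I would insert this factorisation and use that all the operators $E^*(u_j)$ and $S^*_{\bar\mu}$ lie in the commutative subalgebra $U'$, so they commute as endomorphisms of $\V^-_k$. Moving $S^*_{\bar\mu}$ to the left and inserting the resolution of the identity $\sum_{\bar\sigma\in\balc}v_{\bar\sigma}\langle v^{\bar\sigma},\,\cdot\,\rangle$ gives
\[
\sum_{d\geq 0}q^{d}s_{\bar\lambda'/d/\bar\mu'}(u)
=\sum_{\bar\sigma\in\balc}\langle v^{\bar\lambda},S^*_{\bar\mu}v_{\bar\sigma}\rangle\,
\langle v^{\bar\sigma},\prod_{j\geq 1}E^*(u_j)v_\emptyset\rangle.
\]
By Proposition~\ref{prop:BVI} together with $z=t^n$ and $q=(-1)^{k-1}z^{-1}$ one checks the bookkeeping identity $(-1)^{d'(k-1)}t^{-nd'}=q^{d'}$, whence $\langle v^{\bar\lambda},S^*_{\bar\mu}v_{\bar\sigma}\rangle=q^{d'}C^{\bar\lambda,d'}_{\bar\mu\bar\sigma}$ with $nd'=|\bar\mu|+|\bar\sigma|-|\bar\lambda|$; applying Theorem~\ref{thm:CylSchurExpansion} to the second factor with lower index $\emptyset$ yields $\sum_{d''\geq 0}q^{d''}s_{\bar\sigma'/d''/\emptyset}(u)$.

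Equating the coefficients of $q^{d}$ (so that $d=d'+d''$) then produces
\[
s_{\bar\lambda'/d/\bar\mu'}=\sum_{d'=0}^{d}\sum_{\bar\sigma\in\balc}C^{\bar\lambda,d'}_{\bar\mu\bar\sigma}\,s_{\bar\sigma'/(d-d')/\emptyset},
\]
and the final step is cosmetic: replace $\bar\lambda,\bar\mu$ by $\bar\lambda',\bar\mu'$ and set $\bar\nu=\bar\sigma'$, invoking the conjugation/level-rank symmetry $C^{\bar\lambda',d'}_{\bar\mu'\bar\sigma}=C^{\bar\lambda,d'}_{\bar\mu\bar\sigma'}$ recorded in Theorem~\ref{thm:CylSchurExpansion} to land on the stated identity, the degree constraint $|\bar\nu|=|\bar\lambda|+nd'-|\bar\mu|$ following from $nd'=|\bar\mu|+|\bar\sigma|-|\bar\lambda|$ and the invariance of $|\cdot|$ under conjugation.

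The computation itself is routine; the points needing care are (i) that the factorisation $v_{\bar\mu}=S^*_{\bar\mu}v_\emptyset$ and the commutativity exploited above are valid at \emph{generic} $q$ rather than only at the Frobenius specialisation $q=1$ of Corollary~\ref{cor:Vminus}---both rest on $U'$ being commutative (the $P_r$ commute in the loop algebra) together with the $t$-identity $C^{\bar\nu}_{\emptyset\bar\mu}(t)=\delta_{\mu\nu}$---and (ii) keeping the conjugation and level-rank bookkeeping straight, since it is $s_{\bar\lambda'/d/\bar\mu'}$ rather than $s_{\bar\lambda/d/\bar\mu}$ that appears naturally on the left. I expect (i) to be the only genuine obstacle; everything else parallels the proof of Corollary~\ref{cor:cylh2cylh} almost verbatim.
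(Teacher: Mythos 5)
Your proposal is correct and follows essentially the same route as the paper: both factorise $v_{\bar\mu}=S^*_{\bar\mu}v_{\emptyset}$ using that $v_\emptyset$ is the unit of the product on $\V^-_k$ (valid at generic $t$ by Corollary \ref{cor:GWsymm}), commute $S^*_{\bar\mu}$ past the generating functions within the commutative image of $U'$, insert the resolution of the identity over $\balc$, identify the resulting matrix elements with $q^{d'}C^{\bar\lambda,d'}_{\bar\mu\bar\nu}$ via Proposition \ref{prop:BVI}, compare powers of $q$, and finish with level-rank duality. The only cosmetic difference is that the paper first expands $\prod_j E^*(u_j)=\sum_\sigma S^*_\sigma s_{\sigma'}(u)$ via \eqref{Cauchy2} before inserting the identity, whereas you keep the generating function intact; the two bookkeepings are equivalent.
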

\begin{proof}
The proof is analogous to the one from Corollary \ref{cor:cylh2cylh}. Taking matrix elements in the subspace $\bV^-_k$ of alternating tensors in the last equality of the generalised Cauchy identity \eqref{Cauchy2} we obtain from Theorem \ref{thm:CylSchurExpansion},
\begin{eqnarray*}
\langle v^{\bar\lambda},\prod_{i\geq 0}E^*(u_i)v_{\bar\mu}\rangle &=& \sum_{d\geq 0}q^d s_{\bar\lambda'/d/\bar\mu'}(u)
=\sum_{\sigma\in\cP^+_k}\langle v^{\bar\lambda},S^*_{\sigma}v_{\bar\mu}\rangle s_{\sigma'}(u)\\
&=&\sum_{\sigma\in\cP^+_k}\langle v^{\bar\lambda},S^*_{\sigma}S^*_{\bar\mu}v_{\emptyset}\rangle s_{\sigma'}(u)
=\sum_{\bar\nu\in\balc}\langle v^{\bar\lambda},S^*_{\bar\mu}v_{\bar\nu}\rangle \,
\sum_{\sigma\in\cP^+_k}\langle v^{\bar\nu},S^*_{\sigma}v_{\emptyset}\rangle s_{\sigma'}(u)\\
&=&\sum_{\bar\nu\in\balc}\langle v^{\bar\lambda},S^*_{\bar\mu}v_{\bar\nu}\rangle \,\sum_{d''\geq 0} q^{d''} s_{\bar\nu'/d''/\emptyset}(u)\,. 
\end{eqnarray*}
Here we have used in the second line the product identity $v_{\bar\mu}=v_{\bar\mu}v_{\emptyset}=S^*_{\bar\mu}v_{\emptyset}$ in $\V^-_k$; compare with Corollary \ref{cor:Vminus}. Comparing powers in $q$ we obtain the asserted expansion upon employing once more level rank duality, $\langle v^{\bar\lambda},S^*_{\bar\mu}v_{\bar\nu}\rangle
=q^{d'}C^{\bar\lambda,d'}_{\bar\mu\bar\nu}=q^{d'}C^{\bar\lambda',d'}_{\bar\mu'\bar\nu'}$, where $d'$ is fixed by the relation stated in the corollary.
\end{proof}
By a completely analogous argument as in the proof of \eqref{DeltaCylh}, one derives the following coproduct formula for cylindric Schur functions in the Hopf algebra $\Lambda$:
\begin{coro}\label{cor:cylschurcoalg} 
Let $\bar\lambda,\bar\mu\in\balc$ and $d\in\mb{Z}_{\ge 0}$. Then
\begin{equation}\label{copcylschur}
\Delta(s_{\bar\lambda/d/\bar\mu})=\sum_{d=d_1+d_2}\sum_{\bar\nu\in\balc}s_{\bar\lambda/d_1/\bar\nu}\otimes s_{\bar\nu/d_2/\bar\mu}\,.
\end{equation}
In particular, specialising $\bar\mu=\emptyset$ it follows from \eqref{mcnamara_conj} that the subspace spanned by the cylindric non-skew Schur functions $\{s_{\bar\lambda/d/\emptyset}~|~\bar\lambda\in\balc,\;d\geq 0\}$ is a positive subcoalgebra of $\Lambda$.
\end{coro}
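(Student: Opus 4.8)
The plan is to mimic the derivation of the coproduct formula \eqref{DeltaCylh} for cylindric complete symmetric functions, now working in the subspace $\bV^-_k$ of alternating tensors and using the generating-function identity of Theorem \ref{thm:CylSchurExpansion} in place of Theorem \ref{thm:ExpansionCylindricH}. Concretely, I would take two independent families of commuting indeterminates $(u_j)_{j\geq 1}$ and $(v_j)_{j\geq 1}$ and insert the completeness relation $\sum_{\bar\nu\in\balc} v_{\bar\nu}\langle v^{\bar\nu},\,\cdot\,\rangle=\op{Id}$ between the two operator products. This resolution of the identity is legitimate precisely because $\{v_{\bar\nu}\}_{\bar\nu\in\balc}$ is a basis of $\bigwedge^kV$ with dual basis $\{v^{\bar\nu}\}$ satisfying $\langle v^{\bar\nu},v_{\bar\sigma}\rangle=\delta_{\bar\nu\bar\sigma}$; see \eqref{alt_tensors}. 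One then obtains
\begin{multline*}
\langle v^{\bar\lambda},\textstyle\prod_{j\geq 1}E^*(u_j)\,\prod_{j\geq 1}E^*(v_j)\,v_{\bar\mu}\rangle=\\
\sum_{\bar\nu\in\balc}\langle v^{\bar\lambda},\textstyle\prod_{j\geq 1}E^*(u_j)\,v_{\bar\nu}\rangle\,\langle v^{\bar\nu},\textstyle\prod_{j\geq 1}E^*(v_j)\,v_{\bar\mu}\rangle\,.
\end{multline*}

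The left-hand side is the generating series evaluated on the concatenated alphabet $(u)\cup(v)$, and since the coproduct $\Delta$ on $\Lambda$ is exactly the splitting of a symmetric function into two disjoint alphabets, Theorem \ref{thm:CylSchurExpansion} identifies it with $\sum_{d\geq 0} q^d\,\Delta(s_{\bar\lambda'/d/\bar\mu'})$. Applying the same theorem to each factor on the right and comparing coefficients of $q^d$ (with $d=d_1+d_2$) yields the asserted formula \eqref{copcylschur} after passing to conjugate partitions and relabelling the summation index $\bar\nu\mapsto\bar\nu'$. This last bookkeeping step uses that conjugation is an involutive bijection between the two boxed-partition sets together with level-rank duality, exactly as already employed in the proof of Theorem \ref{thm:CylSchurExpansion}.

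For the second assertion I would set $\bar\mu=\emptyset$ in \eqref{copcylschur}, giving $\Delta(s_{\bar\lambda/d/\emptyset})=\sum_{d_1+d_2=d}\sum_{\bar\nu\in\balc}s_{\bar\lambda/d_1/\bar\nu}\otimes s_{\bar\nu/d_2/\emptyset}$. The first tensor factor is a genuinely skew cylindric Schur function, so to show that the span of $\{s_{\bar\lambda/d/\emptyset}\}$ is closed under $\Delta$ I would re-expand it into non-skew cylindric Schur functions via Corollary \ref{cor:mcnamara_conj}, namely $s_{\bar\lambda/d_1/\bar\nu}=\sum_{d'=0}^{d_1}\sum_{\bar\sigma}C^{\bar\lambda,d'}_{\bar\nu\bar\sigma}\,s_{\bar\sigma/(d_1-d')/\emptyset}$. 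Substituting this exhibits $\Delta(s_{\bar\lambda/d/\emptyset})$ as a combination of tensors $s_{\bar\sigma/\cdot/\emptyset}\otimes s_{\bar\nu/\cdot/\emptyset}$ whose coefficients are sums of products of the Gromov-Witten invariants $C^{\bar\lambda,d'}_{\bar\nu\bar\sigma}\in\mb{Z}_{\geq 0}$. Since these are non-negative integers and the functions $\{s_{\bar\lambda/d/\emptyset}\}$ are linearly independent by Corollary \ref{cor:nscylschur}, this is precisely the statement that they span a positive subcoalgebra.

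The routine parts are the variable-splitting identification of $\Delta$ and the conjugation/level-rank relabelling; the one point needing genuine care is verifying that the middle completeness relation is valid in $\bV^-_k$ with the correct dual pairing. Beyond these formal manipulations the only substantive input is McNamara's conjecture (Corollary \ref{cor:mcnamara_conj}) and the non-negativity of the $C^{\bar\lambda,d'}_{\bar\nu\bar\sigma}$ from Proposition \ref{prop:BVI}, both of which are already established, so no new obstacle arises.
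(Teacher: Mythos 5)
Your proposal is correct and follows essentially the same route as the paper: the paper proves \eqref{copcylschur} "by a completely analogous argument as in the proof of \eqref{DeltaCylh}", i.e.\ by inserting the resolution of the identity over the basis $\{v_{\bar\nu}\}_{\bar\nu\in\balc}$ of alternating tensors into the generating-function identity of Theorem \ref{thm:CylSchurExpansion} with two alphabets and comparing powers of $q$, and then deduces positivity from \eqref{mcnamara_conj} exactly as you do. Your explicit handling of the conjugation/level-rank relabelling and the appeal to linear independence (Corollary \ref{cor:nscylschur}) are correct fillings-in of steps the paper leaves implicit.
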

 
Note that when setting $d=0$ the resulting finite-dimensional subspace is spanned by the Schur functions $s_{\bar\lambda}$ with $\bar\lambda\in\balc$ and we see from the coproduct formula \eqref{copcylschur} that only (non-cylindric)  skew Schur functions can occur. We thus recover the statement from the introduction, the resulting finite-dimensional coalgebra is isomorphic to $H^*(\op{Gr}(k,n))$ (viewed as a coalgebra).

\appendix 

\section{The ring of symmetric functions}\label{AppA}
This appendix first recalls some known results about the ring of symmetric functions and then proves some formulae for weighted sums over reverse plane partitions (as well as for the subclasses of them)  that we need for the generalisation to cylindric symmetric functions in the main text and which we were unable to find in the literature.

\subsection{Symmetric functions as Hopf algebra}

Recall the definition of the ring of symmetric functions $\Lambda=\mb{C}[p_1,p_2,\ldots]$, where $p_r=\sum_{i\geq 1}x_i^r$ are the power sums in some commuting indeterminates $x_i$. While we shall work over the complex numbers, we note that the power sums $\{p_\lambda\}_{\lambda\in\cP^+}$ with $p_\lambda= p_{\lambda_1} p_{\lambda_2} \cdots$ and $\cP^+$ the set of partitions, form a $\mb{Q}$-basis. The latter basis is distinguished as it provides a link with the Frobenius character map. Namely, recall that $\Lambda=\bigoplus_{m\geq 0}\Lambda^m$ with $\Lambda^m$ denoting the set of homogeneous symmetric functions of degree $m$, is a graded ring. Given a permutation $w\in S_m$ in the symmetric group on $m$ letters, denote by $\mu(w)$ the partition fixed by the cycle type of $w$. Then the map $\op{ch}:\op{Class}(S_m)\to \Lambda^m$ that sends a class function $f$ of the symmetric group $S_m$ to a homogeneous function of degree $m$, 
\begin{equation}\label{ch}
\op{ch}(f)=\sum_{w\in S_m}\frac{f(w)}{m!}\,p_{\mu(w)}=\sum_{\mu\vdash m}\frac{f(\mu)}{\z_\mu}\,p_\mu,
\end{equation}
is known as {\em characteristic map}. Here $f(\mu)$ denotes the value of the class function on the class fixed by the partition $\mu$ and the constant 
\begin{equation}\label{z}
\z_\lambda = \prod_{i \ge 1} i^{m_i(\lambda)} m_i(\lambda)!
\end{equation}
is related to the size of the conjugacy class in $S_m$ fixed by $\lambda\vdash m$.  Here $m_i(\lambda)=\lambda_i'-\lambda'_{i+1}$ denotes the multiplicity of $i$ in $\lambda$ and $\lambda'$ the conjugate partition.

The set of class functions, which can be identified with the centre of the group algebra of $S_m$, carries a natural inner product, $\langle f,g\rangle=(1/m!)\sum_{w\in S_m} f(w)\overline{g(w)}$. The latter induces the Hall inner product on $\Lambda$ by requiring that $\op{ch}$ is an isometry. In terms of the power sums the Hall inner product then reads,
\begin{equation}\label{Hall}
\langle p_\lambda,p_\mu\rangle=\delta_{\lambda\mu}\z_\lambda\,.
\end{equation}
Once the Hall inner product is in place, one can introduce a coproduct $\Delta: \Lambda \to \Lambda \otimes \Lambda$ via the relation 
\begin{equation}\label{cop}
\langle \Delta f, g \otimes h \rangle = \langle f, gh \rangle,\qquad f,g,h\in\Lambda,
\end{equation} 
compare with \cite[Ch.I.5, Ex. 25]{macdonald1998symmetric}. The power sums are primitive elements with respect to this  coproduct, $\Delta(p_r)=p_r\otimes 1+1\otimes p_r$. Endowing $\Lambda$ in addition with an antipode $\gamma:\Lambda\to\Lambda$, $\gamma(p_r)=-p_r$ and co-unit $\veps(p_r)=0$, the ring of symmetric functions becomes a (self-dual) Hopf algebra \cite{zelevinsky1981representations}. Note that the antipode is closely related to the involutive ring automorphism $\omega: \Lambda \to \Lambda$ with $\omega(p_r)=(-1)^{r-1}p_r$, which is also an isometry of the Hall inner product; see \cite[Ch.I.2 and I.4]{macdonald1998symmetric}.

\subsection{The different bases in the ring of symmetric functions}
Besides the power sums there are also the following $\mb{Z}$-bases of $\Lambda$ which we will need for our discussion:

\subsubsection{Monomial and forgotten symmetric functions}
Set $m_\lambda=\sum_{\alpha\sim\lambda} x^\alpha$ where the notation $\alpha\sim\lambda$ means that $\alpha$ is a {\em distinct} permutation of $\lambda\in\cP^+$. These are the {\em monomial symmetric functions}. Their images under the antipode, $f_\lambda=(-1)^{|\lambda|}\gamma(m_\lambda)$ are known as the {\em forgotten symmetric functions}.

\subsubsection{Complete and elementary symmetric functions}
The dual basis of $\{m_\lambda\}_{\lambda\in\cP^+}$ with respect to the Hall inner product \eqref{Hall} is known as the {\em complete symmetric functions} $\{h_\lambda\}_{\lambda\in\cP^+}$, i.e. $\langle m_\lambda,h_\mu\rangle =\delta_{\lambda\mu}$. Their explicit definition is $h_\lambda=h_{\lambda_1}h_{\lambda_2}\cdots$, where $h_r=\sum_{1\leq i_1\leq\cdots\leq i_r}x_{i_1}x_{i_2}\cdots x_{i_r}$. Applying the antipode one obtains the {\em elementary symmetric functions}, $\gamma(h_\lambda)=(-1)^{|\lambda|}e_\lambda$. Explicitly, $e_\lambda=e_{\lambda_1}e_{\lambda_2}\cdots$ with $e_r=\sum_{1< i_1<\cdots< i_r}x_{i_1}x_{i_2}\cdots x_{i_r}$.

\subsubsection{Schur functions}
Denote by $s_\lambda$ the image of the character of the Specht module of $S_m$ indexed by a partition $\lambda\vdash m$ under the characteristic map \eqref{ch}. This is the {\em Schur function}. The latter has the known determinant relations $s_\lambda=\det(h_{\lambda_i-i+j})_{1\leq i,j\leq\ell(\lambda)}=\det(e_{\lambda'_i-i+j})_{1\leq i,j\leq\ell(\lambda')}$, where $\lambda'$ is the conjugate partition of $\lambda$ and $\ell(\lambda)$ its length, i.e. the number of nonzero parts; see \cite[Ch.I.2]{macdonald1998symmetric} for further details.

\subsection{Generalised Pieri type rules and the coproduct}
We now employ the Hopf algebra structure on $\Lambda$ to define `skew complete symmetric functions' $h_{\lambda/\mu}$ and `skew elementary symmetric functions' $e_{\lambda/\mu}$ via the coproduct expansions \eqref{skewhe} from the introduction. It follows at once from \eqref{Hall} and \eqref{cop}  that both functions are determined by the three product formulae
\begin{align} \label{pierirule} 
m_\mu m_\nu=\sum_{\lambda}f_{\mu\nu}^{\lambda }m_{\lambda},\quad
 m_\mu  h_\nu=\sum_{\lambda} \theta_{\lambda/\mu}(\nu)m_\lambda, \quad
  m_\mu e_\nu=\sum_{\lambda} \psi_{\lambda/\mu}(\nu)m_\lambda \,,
\end{align}
which uniquely fix the coefficients. The following result is immediate; compare with the discussion of skew Schur functions in \cite[Ch. I]{macdonald1998symmetric}.

\begin{lemma} \label{lem:ExpansionHE}
 The functions $h_{\lambda/\mu}$ and $e_{\lambda/\mu}$ can be expanded as
 \begin{eqnarray}
   h_{\lambda/\mu} &=& \sum_{\nu}f_{\mu\nu}^\lambda h_\nu
 =\sum_{\nu}\theta_{\lambda/\mu}(\nu)m_\nu \label{skewh} \;,  \\
  e_{\lambda/\mu} &=& \sum_{\nu}f_{\mu\nu}^\lambda e_\nu
 =\sum_{\nu}\psi_{\lambda/\mu}(\nu)m_\nu \label{skewe} \;.
 \end{eqnarray}
\end{lemma}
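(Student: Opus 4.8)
The plan is to derive both expansions purely formally from the adjointness \eqref{cop} between coproduct and product, combined with the duality $\langle m_\lambda,h_\mu\rangle=\delta_{\lambda\mu}$. First I would establish a \emph{skewing identity} for the complete case: pairing the defining relation $\Delta h_\lambda=\sum_{\sigma\in\cP^+}h_{\lambda/\sigma}\otimes h_\sigma$ against $a\otimes m_\mu$ for arbitrary $a\in\Lambda$, the left-hand side collapses via $\langle h_\sigma,m_\mu\rangle=\delta_{\sigma\mu}$ while the right-hand side is evaluated by \eqref{cop}, giving
\[
\langle h_{\lambda/\mu},a\rangle=\langle h_\lambda,\,m_\mu\,a\rangle\qquad\text{for all }a\in\Lambda .
\]
This single identity reduces the lemma to reading off coordinates in the appropriate dual bases.

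Next I would extract the two expansions of $h_{\lambda/\mu}$. Since $\{h_\nu\}$ is dual to $\{m_\nu\}$, the coefficient of $h_\nu$ is $\langle h_{\lambda/\mu},m_\nu\rangle$; setting $a=m_\nu$ and substituting $m_\mu m_\nu=\sum_\sigma f^\sigma_{\mu\nu}m_\sigma$ from \eqref{pierirule} gives $\langle h_{\lambda/\mu},m_\nu\rangle=\sum_\sigma f^\sigma_{\mu\nu}\langle h_\lambda,m_\sigma\rangle=f^\lambda_{\mu\nu}$. Dually, the coefficient of $m_\nu$ is $\langle h_{\lambda/\mu},h_\nu\rangle$; setting $a=h_\nu$ and using $m_\mu h_\nu=\sum_\sigma\theta_{\sigma/\mu}(\nu)m_\sigma$ yields $\theta_{\lambda/\mu}(\nu)$. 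Together these prove \eqref{skewh}.

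For the elementary case I would transport the same argument through the involution $\omega$ (equivalently the antipode, since $\gamma=(-1)^m\omega$ on $\Lambda^m$), which is a ring isometry with $\omega(h_\nu)=e_\nu$ and $\omega(m_\nu)=f_\nu$; hence $\{e_\nu\}$ and $\{f_\nu\}$ form a dual pair. Applying $\omega$ to the coproduct relation produces the skewing identity $\langle e_{\lambda/\mu},a\rangle=\langle e_\lambda,\,f_\mu\,a\rangle$, and applying $\omega$ to the product rules in \eqref{pierirule} turns $m_\mu m_\nu=\sum_\sigma f^\sigma_{\mu\nu}m_\sigma$ into $f_\mu f_\nu=\sum_\sigma f^\sigma_{\mu\nu}f_\sigma$ and $m_\mu e_\nu=\sum_\sigma\psi_{\sigma/\mu}(\nu)m_\sigma$ into $f_\mu h_\nu=\sum_\sigma\psi_{\sigma/\mu}(\nu)f_\sigma$. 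Taking $a=f_\nu$ then gives $\langle e_{\lambda/\mu},f_\nu\rangle=f^\lambda_{\mu\nu}$ (coefficient of $e_\nu$), and taking $a=h_\nu$ gives $\langle e_{\lambda/\mu},h_\nu\rangle=\psi_{\lambda/\mu}(\nu)$ (coefficient of $m_\nu$), proving \eqref{skewe}.

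The argument is entirely formal, so there is no serious obstacle; the only points requiring care are bookkeeping. I must match each skew function to the correct dual pair ($\{m\}/\{h\}$ in the complete case, $\{f\}/\{e\}$ in the elementary case), check that $\omega$ is indeed a Hopf-algebra isometry compatible with \eqref{cop} so that the skewing identity survives the transport, and invoke commutativity of multiplication to identify $f^\lambda_{\mu\nu}=f^\lambda_{\nu\mu}$ so that the index order in \eqref{pierirule} is immaterial. The whole point is that \eqref{pierirule} exhibits $f^\lambda_{\mu\nu}$, $\theta_{\lambda/\mu}(\nu)$ and $\psi_{\lambda/\mu}(\nu)$ as structure constants for multiplication by $m_\mu$, which is exactly what makes the coproduct expansions collapse to a single coefficient in each case.
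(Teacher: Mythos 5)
Your argument is correct and is essentially the paper's own proof: both derive the skewing identities $\langle h_{\lambda/\mu},g\rangle=\langle h_\lambda,m_\mu g\rangle$ and $\langle e_{\lambda/\mu},g\rangle=\langle e_\lambda,f_\mu g\rangle$ from \eqref{cop}, then read off the coefficients in the dual pairs $\{h_\nu\}/\{m_\nu\}$ and $\{e_\nu\}/\{f_\nu\}$ using the product rules \eqref{pierirule}. The only difference is that you spell out the transport of the elementary case through $\omega$, which the paper compresses into ``the proof of \eqref{skewe} is analogous.''
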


\begin{proof}
 Let $g \in \Lambda$ then it follows from \eqref{cop} that $\langle h_{\lambda/\mu},g \rangle = \langle h_\lambda,m_\mu g \rangle$
 and $\langle e_{\lambda/\mu},g \rangle = \langle e_\lambda, f_\mu g \rangle$. Because 
 \[
 h_{\lambda/\mu}=\sum_{\nu} \langle h_{\lambda/\mu}, m_\nu \rangle  h_\nu=\sum_{\nu} \langle h_{\lambda/\mu}, h_\nu \rangle  m_\nu
 \]
one deduces, using \eqref{pierirule}, the first and second equality of \eqref{skewh} respectively. The proof of equation \eqref{skewe} is analogous.
\end{proof}

All of the expansion coefficients in \eqref{pierirule} are non-negative integers and have combinatorial expressions, which we know recall. For the product of monomial symmetric functions the expansion coefficients
$f_{\mu\nu}^\lambda$ in \eqref{pierirule} are given by the cardinality of the set 
\begin{equation}\label{Setf}
\{(\alpha,\beta)~|~\alpha+\beta=\lambda\},
\end{equation} 
where $\alpha\sim \mu$ and $\beta \sim \nu$ are distinct permutations of respectively $\mu$ and $\nu$; see e.g. \cite{butler1993nonnegative}. This also fixes the expansion coefficients for the second and third product in \eqref{pierirule}. 

Recall that $h_\lambda=\sum_{\mu}L_{\lambda\mu}m_\mu$, where $L_{\lambda\mu}$ 
is the number of $\mathbb{N}$-matrices $A=(a_{ij})_{i,j\geq1}$ 
which have row sums $\text{row}(A)=\lambda$ and column sums $\text{col}(A)=\mu$.
Similarly $e_\lambda=\sum_{\mu} M_{\lambda \mu} m_\mu$, where $M_{\lambda  \mu}$
is the number of $(0,1)$-matrices $A=(a_{ij})_{i,j \ge 1}$ with $\text{row}(A)=\lambda$
and $\text{col}(A)=\mu$; see e.g. \cite[Ch. 7.4, Prop 7.4.1 and Ch.7.5, Prop 7.5.1]{stanley_fomin_1999}. Then it follows at once that
\begin{equation}\label{skewThetaPsi}  
\theta_{\lambda/\mu}(\nu)= \sum_{\sigma}L_{\nu \sigma}  f_{ \sigma \mu}^\lambda \qquad\text{and}\qquad
\psi_{\lambda/\mu}(\nu) = \sum_\sigma M_{\nu \sigma}  f_{ \sigma \mu}^\lambda  \;.
\end{equation}
Here we are interested in alternative expressions using reverse plane partitions as the latter suggest a natural generalisation to cylindric reverse plane partitions.

\subsection{Weighted sums over reverse plane partitions}
Given two partitions $\lambda,\mu$ with $\mu\subset \lambda$ recall that a {\em reverse plane partition} (RPP) $\pi$ of skew shape $\lambda/\mu$ is a sequence $\{\lambda^{(i)}\}_{i=0}^l$ of partitions $\lambda^{(i)}$ with %
\[
\mu=\lambda^{(0)}\subset\lambda^{(1)}\subset\cdots\subset\lambda^{(l)}=\lambda \, .
\] %
As usual we refer to the vector $\op{wt}(\pi)=(|\lambda^{(1)}/\lambda^{(0)}|,|\lambda^{(2)}/\lambda^{(1)}|,\ldots)$ as the {\em weight} of $\pi$ and denote by $x^\pi$ the monomial $x_1^{\op{wt}_1(\pi)}x_2^{\op{wt}_2(\pi)}\cdots$ in the indeterminates $x_i$. 
Alternatively, we can think of a RPP as a map $\pi:\lambda/\mu\to\mathbb{N}$ which assigns to the squares $s=(x,y)\in \lambda^{(i)}/\lambda^{(i-1)}\subset \mathbb{Z}\times\mathbb{Z}$ the integer $i$. The result is a skew tableau whose entries are non-decreasing along each row from left to right and down each column. 

Special cases of reverse plane partitions that are {\em row} or {\em column strict} will be called {\em tableaux} and denoted by $T$. A row (column) strict RPP $T$ of skew shape $\lambda/\mu$ 
is a sequence $\{\lambda^{(i)}\}_{i=0}^l$ of partitions $\lambda^{(i)}$ such that $\lambda^{(i)}/\lambda^{(i-1)}$, for $i=1,\dots,l$, is a vertical (horizontal) strip (see for example \cite[Ch.I.1]{macdonald1998symmetric}). As in the case of general RPP, we will refer to the weight of $T$ as $\op{wt}(T)$ and denote by $x^T$
the monomial $x_1^{\op{wt}_1(T)} x_2^{\op{wt}_2(T)} \cdots$ in the indeterminates $x_i$.

We now introduce weighted sums over RPP in terms of binomial coefficients. 
For this purpose we generalise the notion of skew diagrams as given for example in \cite[Ch.I.1]{macdonald1998symmetric}. For two compositions $\alpha$ and $\beta$ 
 we write $\alpha \subset \beta$ if 
$\alpha_i \le \beta_i$ for all $i$, and we refer to the set $\beta/\alpha \subset \mathbb{Z} \times \mathbb{Z}$ as a (generalised) skew diagram. 

Given any pair of partitions $\lambda,\mu$ denote by $\theta_{\lambda/\mu}$ 
the cardinality of the set
\begin{equation} \label{SetTheta}
  \{ \alpha \sim\mu ~|~ \alpha \subset \lambda \} \;,
\end{equation}
where we recall that $\alpha \sim\mu$ indicates that $\alpha$ is a distinct permutation of $\mu$.

\begin{lemma} \label{lem:BinomialTheta}
The set \eqref{SetTheta} is non-empty if and only if $\lambda/\mu$ is a skew diagram, i.e. if $\mu\subset\lambda$. In the latter case its cardinality is given by 
\begin{equation}\label{theta} 
 \theta_{\lambda / \mu} = \prod_{i \ge 1} \binom{\lambda'_i - \mu'_{i+1}}{\mu'_i - \mu'_{i+1}}\;.
\end{equation}
\end{lemma}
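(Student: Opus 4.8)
The plan is to interpret a distinct permutation $\alpha\sim\mu$ with $\alpha\subset\lambda$ as an assignment of the parts of $\mu$ to the positions $1,2,\ldots$ (padding both partitions with zeros so that they occupy the same set of positions), and then to count such assignments by placing the parts greedily in decreasing order of their value. The key observation is that a position $j$ can accommodate the value $v$ precisely when $\lambda_j\ge v$, that is, when $j\le\lambda'_v$; thus the set of admissible positions for the value $v$ is the initial segment $\{1,\ldots,\lambda'_v\}$, and since $\lambda'$ is weakly decreasing these segments are nested, the admissible positions for a larger value being contained in those for a smaller one.

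First I would carry out the placement count. I place the $m_v(\mu)=\mu'_v-\mu'_{v+1}$ parts equal to $v$ one value at a time, from the largest value downwards. When I reach the value $v$, every part already placed has a strictly larger value $v'>v$ and therefore sits in a position $\le\lambda'_{v'}\le\lambda'_v$; hence all previously occupied positions lie inside $\{1,\ldots,\lambda'_v\}$, and their number is exactly $\sum_{v'>v}m_{v'}(\mu)=\mu'_{v+1}$. Consequently there remain $\lambda'_v-\mu'_{v+1}$ free admissible positions, out of which I must select $m_v(\mu)$, and this choice is unconstrained because the value $v$ is allowed anywhere in the segment. The resulting count $\binom{\lambda'_v-\mu'_{v+1}}{\mu'_v-\mu'_{v+1}}$ does not depend on which positions the larger values occupied, so the selections at successive values are independent and the total multiplies to $\prod_{i\ge1}\binom{\lambda'_i-\mu'_{i+1}}{\mu'_i-\mu'_{i+1}}$. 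The value $0$ (the zero parts) and the values exceeding $\mu_1$ contribute only trivial factors $\binom{\cdot}{\cdot}=1$, which is why the product may be taken over $i\ge1$ and is effectively finite.

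The non-emptiness criterion then falls out of the same computation: the cardinality $\theta_{\lambda/\mu}$ is positive if and only if every factor is a nonzero binomial coefficient, i.e.\ $\lambda'_i-\mu'_{i+1}\ge\mu'_i-\mu'_{i+1}$, equivalently $\mu'_i\le\lambda'_i$ for all $i$. Since conjugation reverses containment of Young diagrams, $\mu'\subset\lambda'$ is the same as $\mu\subset\lambda$, so the set \eqref{SetTheta} is non-empty exactly when $\lambda/\mu$ is a skew diagram, and in that case its cardinality is given by \eqref{theta}.

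I expect the only delicate point to be the independence-of-choices claim used to convert the greedy placement into a product, that is, the verification that at each value $v$ all positions occupied by larger values already lie in the initial segment $\{1,\ldots,\lambda'_v\}$, so that the number (though not the identity) of free positions is determined. This rests entirely on the monotonicity of $\lambda'$; once it is spelled out, the remainder is bookkeeping with the identities $m_i(\mu)=\mu'_i-\mu'_{i+1}$ and $\mu'_{v+1}=\#\{\,\text{parts of }\mu\text{ exceeding }v\,\}$.
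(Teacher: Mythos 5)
Your proof is correct and follows essentially the same route as the paper's: place the parts of $\mu$ greedily in decreasing order of value, observe that the admissible positions for the value $v$ form the initial segment $\{1,\dots,\lambda'_v\}$ of which exactly $\mu'_{v+1}$ are already occupied independently of the earlier choices, and read off the factor $\binom{\lambda'_v-\mu'_{v+1}}{\mu'_v-\mu'_{v+1}}$. The only cosmetic difference is that the paper establishes non-emptiness first by a direct argument (taking $\alpha=\mu$ for one direction and sorting for the other) and then assumes $\mu\subset\lambda$ for the count, whereas you deduce non-emptiness from the positivity of the product; both are valid.
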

Here and in the following we set a binomial coefficient equal to zero if one of its arguments is negative. It then follows that the right hand side of \eqref{theta} vanishes if $\mu\not\subset\lambda$ because $\mu\subset\lambda$ if and only $\mu'\subset\lambda'$, where $\mu',\lambda'$ are the conjugate partitions.
\begin{proof}
 Assume first that $\mu \subset \lambda$ then we have that  $ \alpha=\mu $ belongs to \eqref{SetTheta}. Conversely assume
 that \eqref{SetTheta} is non-empty, then there must exist $\alpha \sim \mu$ such that $\alpha \subset \lambda$. But since $\alpha\sim\mu$ and $\mu\in\cP^+$ this is only possible if $\mu\subset\lambda$. 
 
Thus, let $\mu \subset \lambda$ and set $\ell =\ell (\lambda')=\lambda_1$, the largest part of $\lambda$, which equals the length of $\lambda'$. Because $\mu_1=\ell (\mu')\leq \ell $, the $m_\ell (\mu)=\mu'_\ell $ parts of $\mu$ equal to $\ell $ (if they exist) must be among  the first $m_\ell (\lambda)=\lambda_\ell '$ parts of $\alpha$, thus there are 
$\binom{\lambda_\ell '}{\mu_\ell '}$ possible choices with $\binom{\lambda_\ell '}{\mu_\ell '}=1$ should $m_\ell (\mu)=0$. Continuing with the (possible) second largest part $\ell -1$, the  $m_{\ell -1}(\mu)=\mu'_{\ell -1}-\mu'_\ell $
parts of $\mu$ equal to $\ell -1$ must be among the first $m_{\ell -1}(\lambda)+m_\ell (\lambda)=\lambda_{\ell -1}'$
 parts of $\alpha$, and since we have already fixed the $\mu_\ell '$ parts of $\alpha$ equal to $\ell $, there are now only ${{\lambda'_{\ell -1}-\mu'_\ell } \choose {\mu'_{\ell -1}-\mu'_\ell }} $ possibilities. Proceeding along the same vein we eventually arrive at the desired cardinality of \eqref{SetTheta} after a trivial rewriting of the following product,
\[
 \prod_{i=1}^\ell  {{\lambda_{\ell +1-i}'-\mu'_{\ell +2-i}} \choose {\mu'_{\ell +1-i}-\mu'_{\ell +2-i}}}
 = \prod_{i \ge 1} {{\lambda_i' - \mu_{i+1}'} \choose {\mu_i'-\mu_{i+1}'}} \;.
\]
\end{proof}

The following result is probably known to experts but we were unable to find it in the literature.
\begin{lemma} \label{lem:SkewComplete}
The skew complete symmetric function defined in \eqref{skewhe} is the weighted sum
 \begin{equation} \label{skewh2RPP}
  h_{\lambda / \mu} (x)= \sum_{\pi} \theta_\pi \, x^\pi \;,
  \qquad \theta_\pi=\prod_{i\ge 1} \theta_{\lambda^{(i)}/\lambda^{(i-1)}} \;,
 \end{equation}
over all reverse plane partitions $\pi$ of shape $\lambda / \mu$.
In particular, the first coefficient in \eqref{skewThetaPsi} has the alternative expression
\begin{equation} \label{Theta2RPP}
 \theta_{\lambda/\mu}(\nu)=\sum_{\pi} \theta_\pi
\end{equation}
where the sum is over all reverse plane partitions $\pi$ of shape $\lambda/\mu$
and weight $\op{wt}(\pi)=\nu$.
\end{lemma}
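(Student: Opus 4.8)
The plan is to prove the particular identity \eqref{Theta2RPP} first and then deduce \eqref{skewh2RPP} from it together with the monomial expansion $h_{\lambda/\mu}=\sum_\nu\theta_{\lambda/\mu}(\nu)m_\nu$ in \eqref{skewh} of Lemma \ref{lem:ExpansionHE}. The starting point is the single-row case of the middle product rule in \eqref{pierirule}. Expanding $h_r=\sum_{|\beta|=r}x^\beta$ as a sum over all compositions $\beta$ of $r$ together with $m_\mu=\sum_{\alpha\sim\mu}x^\alpha$, the coefficient of the monomial $x^\lambda$ in $m_\mu h_r$ equals the number of pairs $(\alpha,\beta)$ with $\alpha\sim\mu$, $\beta\geq 0$, $|\beta|=r$ and $\alpha+\beta=\lambda$. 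Since $\beta=\lambda-\alpha$ is forced, this is precisely the number of $\alpha\sim\mu$ with $\alpha\subset\lambda$, i.e. the cardinality $\theta_{\lambda/\mu}$ of the set \eqref{SetTheta}. Hence $m_\mu h_r=\sum_{\lambda}\theta_{\lambda/\mu}\,m_\lambda$, the sum running over $\lambda\supset\mu$ with $|\lambda/\mu|=r$; in the notation of \eqref{pierirule} this reads $\theta_{\lambda/\mu}(r)=\theta_{\lambda/\mu}$. Note that this step uses only the definition of $\theta_{\lambda/\mu}$ as a cardinality, not the explicit binomial formula of Lemma \ref{lem:BinomialTheta}.

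Next I would iterate this single-row rule. Writing $h_\nu=h_{\nu_1}h_{\nu_2}\cdots h_{\nu_l}$ and applying the previous step $l$ times,
\[
m_\mu h_\nu=\sum_{\mu=\lambda^{(0)}\subset\lambda^{(1)}\subset\cdots\subset\lambda^{(l)}}\ \prod_{i=1}^l\theta_{\lambda^{(i)}/\lambda^{(i-1)}}\ m_{\lambda^{(l)}},
\]
where the chain is constrained by $|\lambda^{(i)}/\lambda^{(i-1)}|=\nu_i$. Each such chain is exactly a reverse plane partition $\pi$ of shape $\lambda^{(l)}/\mu$ with $\op{wt}(\pi)=\nu$, and the attached product is $\theta_\pi$. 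Extracting the coefficient of $m_\lambda$ therefore gives $\theta_{\lambda/\mu}(\nu)=\sum_{\pi}\theta_\pi$, summed over RPP of shape $\lambda/\mu$ and weight $\nu$, which is \eqref{Theta2RPP}. Because $h_{\nu_1}\cdots h_{\nu_l}$ is independent of the order of its factors, the same computation is valid for every composition $\nu$, and in particular shows that $\theta_{\lambda/\mu}(\beta)=\theta_{\lambda/\mu}(\sigma)$ whenever $\beta\sim\sigma$.

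Finally, \eqref{skewh2RPP} follows by combining \eqref{Theta2RPP} with the monomial expansion of $h_{\lambda/\mu}$. Using $m_\sigma=\sum_{\beta\sim\sigma}x^\beta$ and grouping reverse plane partitions according to their weight,
\[
h_{\lambda/\mu}=\sum_{\sigma}\theta_{\lambda/\mu}(\sigma)\,m_\sigma=\sum_{\sigma}\sum_{\beta\sim\sigma}\Big(\sum_{\pi:\,\op{wt}(\pi)=\beta}\theta_\pi\Big)x^\beta=\sum_{\pi}\theta_\pi\,x^\pi,
\]
where the outer sum over partitions $\sigma$ has been merged into a single sum over all RPP $\pi$ of shape $\lambda/\mu$. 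The only point requiring care is the symmetry of the coefficients noted above --- that the weighted count of RPP of a fixed shape depends only on the partition type of the weight --- since this is exactly what makes $\sum_\pi\theta_\pi x^\pi$ a symmetric function and reassembles the monomials into the $m_\sigma$. This, rather than any hard estimate, is the crux of the argument; it is supplied for free by the commutativity of the complete symmetric functions $h_r$, so that once the single-row rule is identified with the cardinality $\theta_{\lambda/\mu}$, all remaining steps are routine bookkeeping.
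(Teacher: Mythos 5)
Your proposal is correct and follows essentially the same route as the paper: establish the single-row rule $m_\mu h_r=\sum_\lambda \theta_{\lambda/\mu}m_\lambda$ by identifying the coefficient of $x^\lambda$ with the cardinality of the set \eqref{SetTheta}, iterate it over the parts of $\nu$ to obtain \eqref{Theta2RPP}, observe that $\theta_{\lambda/\mu}(\beta)$ depends only on the partition type of $\beta$, and then rewrite the monomial expansion of Lemma \ref{lem:ExpansionHE} to get \eqref{skewh2RPP}. Your write-up merely makes the iteration and the regrouping of monomials more explicit than the paper does.
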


\begin{proof}
First we show the validity of the product expansion
 \begin{equation*}
  m_\mu h_r = \sum_{\lambda} \theta_{\lambda/\mu} m_\lambda \;,
 \end{equation*}
 where the sum runs over all partitions $\lambda$ such that $\mu \subset \lambda$ and $|\lambda / \mu|=r$.  
  The coefficient of $m_\lambda$ in $m_\mu h_r$ equals the coefficient of  $x^\lambda$ 
in the same product. If $\alpha \sim \mu$ and $\alpha \subset \lambda$  then the monomial $x^{\lambda - \alpha}$ 
appears in $h_r$ provided that $|\lambda/\mu|=r$.
Thus, the coefficient of $x^\lambda$ in $m_\mu h_r$ equals the cardinality of the set \eqref{SetTheta},
which is the definition of $\theta_{\lambda/\mu}$. Applying this result to the product $m_\mu h_\nu$ and comparing with 
the second equality in \eqref{pierirule} one arrives at \eqref{Theta2RPP}. 

Note that this also implies that $\theta_{\lambda/\mu}(\nu)=\theta_{\lambda/\mu}(\beta)$ for $\beta \sim \nu$,
  where $\theta_{\lambda/\mu}(\beta)$ for $\beta$ a composition is defined analogously to \eqref{Theta2RPP}.
Hence, after rewriting the equality $h_{\lambda/\mu} = \sum_{\nu}\theta_{\lambda/\mu}(\nu)m_\nu$ proved in Lemma \ref{lem:ExpansionHE} in terms of monomials one obtains \eqref{skewh2RPP}.
\end{proof}
We now present a similar argument for skew elementary symmetric functions.
First we generalise the notion of a vertical strip from partitions 
to compositions by saying that for two compositions $\alpha$ and $\beta$, the generalised skew diagram $\beta/\alpha$ is a generalised vertical strip if $\beta_i-\alpha_i=0,1$ for all $i$. Given any pair of partitions $\lambda,\mu$ denote by $\psi_{\lambda/\mu}$ 
the cardinality of the set
\begin{equation} \label{SetPsi}
  \{ \alpha \sim \mu ~|~  \lambda/\alpha \text{ vertical strip} \} \;.
\end{equation}

\begin{lemma} \label{lem:CardinalityPsi}
The set \eqref{SetPsi} is non-empty if and only if $\lambda/\mu$ is a vertical strip. In the latter case its cardinality equals
\begin{equation} \label{Psi}
 \psi_{\lambda / \mu} = \prod_{i \ge 1} {{\lambda'_i - \lambda'_{i+1}} \choose {\lambda'_i - \mu'_i}} \;.
\end{equation}
\end{lemma}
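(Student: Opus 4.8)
The plan is to mirror the proof of Lemma \ref{lem:BinomialTheta}, replacing the one-sided containment $\alpha\subset\lambda$ by the two-sided vertical-strip condition $\lambda_i-\alpha_i\in\{0,1\}$. First I would record the easy half of the equivalence, exactly as in Lemma \ref{lem:BinomialTheta}: if $\lambda/\mu$ is a vertical strip then $\alpha=\mu$ itself lies in \eqref{SetPsi}, so the set is non-empty; the converse will fall out of the counting argument. The key structural observation is that every $\alpha$ in the set \eqref{SetPsi} is determined by recording, for each position $i$, whether $\alpha_i=\lambda_i$ or $\alpha_i=\lambda_i-1$.

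Concretely, I would encode $\alpha$ by the subset $S=\{i\mid \alpha_i=\lambda_i-1\}$ of positions, noting that a position with $\lambda_i=0$ can never lie in $S$ and that $S\mapsto\alpha$ is injective, so counting admissible $\alpha$ amounts to counting admissible $S$. For each value $v\ge 1$ set $s_v=|\{i\in S\mid \lambda_i=v\}|$. Since the multiset of parts of $\alpha$ must equal that of $\mu$, and the number of positions $i$ with $\alpha_i=v$ is $(m_v(\lambda)-s_v)+s_{v+1}$ (the positions with $\lambda_i=v$ left untouched, plus the positions with $\lambda_i=v+1$ that were lowered), equating with $m_v(\mu)$ yields the telescoping recurrence $s_v-s_{v+1}=m_v(\lambda)-m_v(\mu)$.

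Summing this recurrence from $v$ upward, using $s_v=0$ for $v\gg 0$ together with $\sum_{u\ge v}m_u(\lambda)=\lambda'_v$ and the analogous identity for $\mu$, forces the unique solution $s_v=\lambda'_v-\mu'_v$. Hence there is a single admissible value-profile $(s_v)$, and the number of subsets $S$ realizing it is $\prod_{v\ge 1}\binom{m_v(\lambda)}{s_v}$, since within each value class one freely chooses which $s_v$ of the $m_v(\lambda)$ positions to lower, and distinct choices give distinct $\alpha$. Substituting $m_v(\lambda)=\lambda'_v-\lambda'_{v+1}$ and $s_v=\lambda'_v-\mu'_v$ reproduces exactly \eqref{Psi}.

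Finally I would close the equivalence and the convention on negative arguments. The profile $(s_v)$ is realizable precisely when $0\le s_v\le m_v(\lambda)$, i.e. $\lambda'_{v+1}\le\mu'_v\le\lambda'_v$ for all $v$; the left inequality is $\mu\subset\lambda$ and the right is the interlacing condition, and jointly they say exactly that $\lambda/\mu$ is a vertical strip (equivalently $\lambda'/\mu'$ a horizontal strip). Thus \eqref{SetPsi} is non-empty iff $\lambda/\mu$ is a vertical strip, and otherwise \eqref{Psi} vanishes because some binomial coefficient acquires a negative lower argument, consistent with the stated convention. The main obstacle I anticipate is purely bookkeeping: keeping the conjugate-partition indices and the boundary term at $v=1$ straight so that the telescoping sum produces $\lambda'_v-\mu'_v$ cleanly, and checking that the realizability inequalities coincide on the nose with the vertical-strip condition rather than an off-by-one variant.
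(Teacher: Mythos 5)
Your argument is correct and is essentially the paper's own counting: both proofs group the positions by the value of $\lambda_i$ and make an independent binomial choice within each value class, yours by choosing which $s_v=\lambda'_v-\mu'_v$ positions to lower by one, the paper's by choosing (greedily from the largest value downward) which positions retain a part of $\mu$ equal to $v$ --- complementary choices within each class that give the same product of binomial coefficients. Your global encoding via the subset $S$ together with the telescoping recurrence is a clean way to organize the same bookkeeping, with the small added benefit that the non-emptiness criterion falls out directly from the realizability constraints $0\le s_v\le m_v(\lambda)$ rather than requiring a separate argument.
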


\begin{proof}
The proof of the first part is analogous to the one for the set \eqref{SetTheta}, so we skip this step and assume that $\lambda/\mu$ is a vertical strip.

In a similar fashion to the proof of Lemma \ref{lem:BinomialTheta} we count the number of distinct permutations $\alpha$ of $\mu$ such that
$\lambda/\alpha$ is a vertical strip. Set again $\lambda_1=\ell(\lambda')=\ell$. As before the $m_\ell(\mu)=\mu_\ell'$ parts of $\mu$ equal to $\ell$ must be among the first $m_\ell(\lambda)=\lambda_\ell'$ parts of $\alpha$ and there are $\binom{\lambda_\ell'} {\mu_\ell'}$ possible choices. Since $\lambda/\alpha$ is a vertical strip, the remaining  parts $\alpha_i$ with $\mu'_\ell\leq i\leq\lambda'_\ell$, must be equal to $\ell-1$. Hence, $\mu_{\ell-1}(\mu) - (\lambda_\ell'-\mu_\ell') =\mu_{\ell-1}'-\lambda_\ell'$ parts of $\mu$ equal to $\ell-1$ must be parts $\alpha_i$ with $\lambda_\ell'+1\leq i\leq\lambda'_\ell+m_{\ell-1}(\lambda)=\lambda'_{\ell-1}$, and there are ${{\lambda_{\ell-1}'-\lambda_\ell'} \choose {\mu_{\ell-1}'-\lambda_\ell'}}$ distinct ways of arranging these. The other $\alpha_i$, with $i$ in the range just stated, must be equal to $\ell-2$ and there are $\lambda_{\ell-1}'-\lambda_\ell'-(\mu_{\ell-1}'-\lambda_\ell')=\lambda'_{\ell-1}-\mu'_{\ell-1}$ of them. Continuing in the same manner, we eventually arrive at
\[
 \prod_{i=1}^{\ell} {{\lambda_{\ell+1-i}'-\lambda'_{\ell+2-i}} \choose {\mu_{\ell+1-i}'-\lambda'_{\ell+2-i}}}
 = \prod_{i=1}^\ell {{\lambda_{\ell+1-i}'-\lambda'_{\ell+2-i}} \choose {\lambda'_{\ell+1-i}-\mu'_{\ell+1-i}}}
 = \prod_{i \ge 1} {{\lambda_i' - \lambda_{i+1}'} \choose {\lambda_i'-\mu_i'}} \;,
\]
and this is equal to $\psi_{\lambda/\mu}$, completing the proof.
\end{proof}
We were unable to find the following result in the literature.
\begin{lemma}
The skew elementary symmetric function defined in \eqref{skewhe} is the weighted sum
 \begin{equation} \label{e2T}
  e_{\lambda / \mu} (x)= \sum_{T} \psi_T \, x^T \;,
  \qquad \psi_T=\prod_{i\ge 1} \psi_{\lambda^{(i)}/\lambda^{(i-1)}} \;,
 \end{equation}
over all row strict RPP $T$ of shape $\lambda / \mu$. In particular, the second coefficient in \eqref{skewThetaPsi} has the alternative expression
\begin{equation} \label{Psi2T}
 \psi_{\lambda/\mu}(\nu)=\sum_{T} \psi_T
\end{equation}
where the sum is over all row strict RPP $T$ of shape $\lambda/\mu$ and weight $\op{wt}(T)=\nu$.
\end{lemma}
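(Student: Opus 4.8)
The plan is to mirror the argument for skew complete symmetric functions in Lemma \ref{lem:SkewComplete}, replacing the role of $h_r$ by $e_r$ and reverse plane partitions by row strict ones. First I would establish the single-row product rule
\[
 m_\mu\, e_r = \sum_{\lambda} \psi_{\lambda/\mu}\, m_\lambda,
\]
where the sum runs over all partitions $\lambda$ such that $\lambda/\mu$ is a vertical strip with $|\lambda/\mu|=r$. As in the complete case, the coefficient of $m_\lambda$ on the left equals the coefficient of the monomial $x^\lambda$ in the same product. The essential difference is that $e_r=\sum_{i_1<\cdots<i_r} x_{i_1}\cdots x_{i_r}$ is the sum of all \emph{squarefree} monomials of degree $r$; hence for $\alpha\sim\mu$ with $\alpha\subset\lambda$ the monomial $x^{\lambda-\alpha}$ occurs in $e_r$ if and only if $\lambda-\alpha$ is a $0/1$-vector of total degree $r$, i.e. precisely when $\lambda/\alpha$ is a vertical strip of size $r$. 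The coefficient of $x^\lambda$ in $m_\mu e_r$ therefore equals the cardinality of the set \eqref{SetPsi}, which is the definition of $\psi_{\lambda/\mu}$ evaluated in Lemma \ref{lem:CardinalityPsi}.

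Next I would iterate this single-row rule. Writing $e_\nu=e_{\nu_1}e_{\nu_2}\cdots e_{\nu_l}$ and applying the rule successively produces
\[
 m_\mu\, e_\nu = \sum_{\mu=\lambda^{(0)}\subset\lambda^{(1)}\subset\cdots\subset\lambda^{(l)}} \Big(\prod_{i\ge 1}\psi_{\lambda^{(i)}/\lambda^{(i-1)}}\Big) m_{\lambda^{(l)}},
\]
where each $\lambda^{(i)}/\lambda^{(i-1)}$ is a vertical strip of size $\nu_i$. Comparing with the third product expansion in \eqref{pierirule} identifies $\psi_{\lambda/\mu}(\nu)$ with the weighted count of such chains, which are exactly the row strict RPP of shape $\lambda/\mu$ and weight $\nu$; this yields \eqref{Psi2T}. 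As in the complete case one notes that, since the $e_{\nu_i}$ commute, $\psi_{\lambda/\mu}(\nu)$ depends only on the multiset of parts of $\nu$, so the same weighted count defines $\psi_{\lambda/\mu}(\beta)$ for any composition $\beta\sim\nu$.

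Finally I would pass from \eqref{Psi2T} to the monomial expansion \eqref{e2T}. Starting from the identity $e_{\lambda/\mu}=\sum_\nu \psi_{\lambda/\mu}(\nu)m_\nu$ of Lemma \ref{lem:ExpansionHE} and expanding $m_\nu=\sum_{\beta\sim\nu}x^\beta$, the permutation invariance of $\psi_{\lambda/\mu}(\nu)$ gives $e_{\lambda/\mu}=\sum_\beta \psi_{\lambda/\mu}(\beta)x^\beta=\sum_T \psi_T\, x^T$, the sum running over all row strict RPP $T$ of shape $\lambda/\mu$. The only genuinely new point compared with the complete-symmetric case is the verification of the single-row rule, where the squarefree structure of $e_r$ forces the vertical-strip condition in \eqref{SetPsi}; everything else is a formal repetition of the argument in Lemma \ref{lem:SkewComplete}. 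I expect the bookkeeping needed to make the iteration and the permutation-invariance statement precise to be the only mildly delicate step.
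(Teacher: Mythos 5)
Your proposal is correct and follows essentially the same route as the paper: establish the single-row rule $m_\mu e_r=\sum_\lambda\psi_{\lambda/\mu}m_\lambda$ by matching the coefficient of $x^\lambda$ with the cardinality of the set \eqref{SetPsi} (the squarefree structure of $e_r$ forcing the vertical-strip condition), iterate to identify $\psi_{\lambda/\mu}(\nu)$ with the weighted count of row strict RPP via \eqref{pierirule}, and then convert the $m_\nu$-expansion of Lemma \ref{lem:ExpansionHE} into \eqref{e2T} using permutation invariance. No gaps.
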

\begin{proof}
 We first need to show that  $m_\mu e_r = \sum_{\lambda} \psi_{\lambda/\mu} m_\lambda$, where the sum runs over all partitions $\lambda$ such that $\lambda / \mu$ is a vertical strip and $|\lambda / \mu|=r$. The coefficient of $m_\lambda$ in $m_\mu e_r$ equals the coefficient of the monomial term $x^\lambda$ in the same product. If $\alpha \sim \mu$ and $\lambda / \alpha$ is a vertical strip,  then the monomial $x^{\lambda - \alpha}$ appears in $e_r$ provided $|\lambda/\mu|=r$. Thus the coefficient of $x^\lambda$ in $m_\mu e_r$ equals the cardinality of the set \eqref{SetPsi}, which by definition is $\psi_{\lambda/\mu}$. Applying this result to the product $m_\mu e_\nu$ and comparing with the third product expansion in \eqref{pierirule} the formula \eqref{Psi2T} follows. Note that this implies that $\psi_{\lambda/\mu}(\nu)=\psi_{\lambda/\mu}(\beta)$ for $\beta \sim \nu$, where $\psi_{\lambda/\mu}(\beta)$ for $\beta$ a composition is defined analogously to  \eqref{Psi2T}. Expanding $e_{\lambda/\mu} = \sum_{\nu}\psi_{\lambda/\mu}(\nu)m_\nu$, proved in Lemma \ref{lem:ExpansionHE} into monomials, using \eqref{Psi2T} one arrives at \eqref{e2T}.
\end{proof}

\begin{coro} \label{LMtableaux}
 The matrices $L$ and $M$ in \eqref{skewThetaPsi} have the following alternative expressions,
 \begin{equation}\label{LMtoThetaPsi}
  L_{\lambda \mu} = \sum_\pi \theta_\pi \qquad\text{and}\qquad 
  M_{\lambda \mu} = \sum_T \psi_T,
 \end{equation}
 where the first sum runs over all RPP $\pi$ of shape $\lambda$ and weight
 $\op{wt}(\pi)=\mu$, whereas the second sum runs over all row strict RPP $T$ of shape $\lambda$ and weight $\op{wt}(T)=\mu$.
\end{coro}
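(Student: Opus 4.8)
The plan is to derive both identities as the degenerate case $\mu=\emptyset$ of the weighted-sum formulae already established in Lemma~\ref{lem:SkewComplete} and its elementary-function counterpart \eqref{e2T}, once the non-skew functions $h_\lambda$ and $e_\lambda$ are identified with the skew functions $h_{\lambda/\emptyset}$ and $e_{\lambda/\emptyset}$.

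First I would record the harmless identifications $h_{\lambda/\emptyset}=h_\lambda$ and $e_{\lambda/\emptyset}=e_\lambda$. These follow immediately from the adjunction relations $\langle h_{\lambda/\mu},g\rangle=\langle h_\lambda,m_\mu g\rangle$ and $\langle e_{\lambda/\mu},g\rangle=\langle e_\lambda,f_\mu g\rangle$ used in the proof of Lemma~\ref{lem:ExpansionHE}: setting $\mu=\emptyset$ and using $m_\emptyset=f_\emptyset=1$ gives $\langle h_{\lambda/\emptyset},g\rangle=\langle h_\lambda,g\rangle$ and $\langle e_{\lambda/\emptyset},g\rangle=\langle e_\lambda,g\rangle$ for every $g\in\Lambda$, whence the claimed equalities by non-degeneracy of the Hall inner product.

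Next I would expand $h_\lambda=h_{\lambda/\emptyset}$ into monomial symmetric functions in two ways. By definition $h_\lambda=\sum_\mu L_{\lambda\mu}m_\mu$ (see Appendix~\ref{AppA}), while the monomial expansion $h_{\lambda/\mu}=\sum_\nu\theta_{\lambda/\mu}(\nu)m_\nu$ from \eqref{skewh} of Lemma~\ref{lem:ExpansionHE} specialises to $h_{\lambda/\emptyset}=\sum_\nu\theta_{\lambda/\emptyset}(\nu)m_\nu$. Comparing coefficients of $m_\mu$ yields $L_{\lambda\mu}=\theta_{\lambda/\emptyset}(\mu)$, and then \eqref{Theta2RPP} rewrites $\theta_{\lambda/\emptyset}(\mu)$ as $\sum_\pi\theta_\pi$ over all reverse plane partitions $\pi$ of shape $\lambda/\emptyset=\lambda$ with $\op{wt}(\pi)=\mu$, which is the asserted formula. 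The formula for $M_{\lambda\mu}$ follows by the identical argument, now using $e_\lambda=\sum_\mu M_{\lambda\mu}m_\mu$, the expansion $e_{\lambda/\mu}=\sum_\nu\psi_{\lambda/\mu}(\nu)m_\nu$ from \eqref{skewe}, and \eqref{Psi2T} to pass to the sum $\sum_T\psi_T$ over row strict RPP of shape $\lambda$ and weight $\mu$.

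There is essentially no real obstacle, since all of the combinatorics is already packaged in Lemma~\ref{lem:SkewComplete} and \eqref{e2T}; the only points deserving a remark are the identifications $h_{\lambda/\emptyset}=h_\lambda$, $e_{\lambda/\emptyset}=e_\lambda$ and the observation that a straight shape $\lambda$ is the degenerate skew shape $\lambda/\emptyset$, so those lemmas apply verbatim. As a consistency check one may note that \eqref{skewThetaPsi} together with $f^\lambda_{\sigma\emptyset}=\delta_{\lambda\sigma}$ gives $\theta_{\lambda/\emptyset}(\nu)=L_{\nu\lambda}$ and $\psi_{\lambda/\emptyset}(\nu)=M_{\nu\lambda}$; combined with $L_{\lambda\mu}=\theta_{\lambda/\emptyset}(\mu)$ this recovers the symmetries $L_{\lambda\mu}=L_{\mu\lambda}$ and $M_{\lambda\mu}=M_{\mu\lambda}$, which are in any case evident from transposing the counting matrices.
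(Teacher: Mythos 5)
Your proof is correct and follows essentially the same route as the paper: specialise to $\mu=\emptyset$ and let \eqref{Theta2RPP} and \eqref{Psi2T} do all the combinatorial work. The only (immaterial) difference is that the paper identifies $L_{\lambda\mu}=\theta_{\lambda/\emptyset}(\mu)$ via \eqref{skewThetaPsi} with $f^\lambda_{\sigma\emptyset}=\delta_{\lambda\sigma}$ together with the symmetry $L_{\lambda\mu}=L_{\mu\lambda}$, whereas you obtain it by comparing the monomial expansions of $h_\lambda$ and $h_{\lambda/\emptyset}$ after noting $h_{\lambda/\emptyset}=h_\lambda$ --- which, as your consistency check observes, recovers that symmetry rather than assuming it.
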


\begin{proof}
 Set $\mu=\emptyset$ in \eqref{skewThetaPsi}. Employing that $f_{\sigma \emptyset}^\lambda= \delta_{\lambda \sigma}$ we can apply  \eqref{Theta2RPP}, \eqref{Psi2T} together with $L_{\lambda \mu}=L_{\mu \lambda}$ and $M_{\lambda \mu}=M_{\mu \lambda}$ to arrive at the asserted formulae.
\end{proof}

\subsection{The expansion coefficients in terms of the symmetric group}

To motivate our construction of cylindric functions in the main text using the extended affine symmetric group $\hat \rS_k$ we reformulate here the results on the expansion coefficients in \eqref{pierirule} in terms of the symmetric group $\rS_k$. To this end we project onto $\Lambda_k=\mb{Z}[x_1,\ldots,x_k]^{\rS_k}$ by setting $x_i=0$ for $i>k$.

Recall that for a fixed weight $\mu$ we denote by $\rS_\mu\subset \rS_k$ its stabiliser group and that for any permutation $w\in \rS_k$ there exists a unique decomposition $w=w_\mu \circ w^\mu$ with $w_\mu\in \rS_\mu$ and $w^\mu$ a minimal length representative of the right coset $\rS_\mu w$. As before $\rS^\mu\subset \rS_k$ is the set of all minimal length coset representatives in $\rS_\mu\backslash \rS_k$.
\begin{lemma}
Let $\lambda,\mu,\nu\in\cP_k^+$. 
\begin{itemize}
\item[(i)] The expansion coefficient $f_{\mu\nu}^\lambda$ in \eqref{pierirule} can be expressed as the cardinality of the set
\begin{equation}\label{setf}
\{(w,w')\in \rS^\mu\times \rS^\nu~|~\mu\circ w+\nu\circ w' =\lambda\}\;.
\end{equation}
\item[(ii)] The specialisation of the weight factor $\theta_{\lambda/\mu}$ defined in \eqref{theta} to elements in $\cP_k^+$ equals the cardinality of the set
 \begin{equation} \label{setTheta}
   \{ w \in \rS^\mu~|~ \mu\circ w \le \lambda \}\,,
 \end{equation}
where $\mu\circ w\le \lambda$ is shorthand notation for $\mu_{w(i)} \le \lambda_i$ for all $i\in [k]$. 
\item[(iii)] The specialisation of the weight factor $\psi_{\lambda/\mu}$ defined in \eqref{Psi}
to elements in $\cP^+_k$ equals the cardinality
of the set 
\begin{equation} \label{setPsi}
 \{ w \in \rS^\mu ~|~  \lambda_i -(\mu \circ w)_i =0,1 \text{ for all } i \in [k]  \} \;.
\end{equation}
\end{itemize}
\end{lemma}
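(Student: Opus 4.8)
The plan is to reduce all three statements to a single bijection between the minimal length coset representatives $\rS^\mu$ and the distinct permutations of $\mu$, after which (i)--(iii) become verbatim translations of the set-conditions already recorded in \eqref{Setf}, \eqref{SetTheta}, \eqref{SetPsi}. First I would note that, since $\cP_k\times\rS_k\to\cP_k$ is a right action, one has $\mu\circ w=\mu\circ w'$ exactly when $\mu\circ(w(w')^{-1})=\mu$, i.e.\ $w(w')^{-1}\in\rS_\mu$, which is equivalent to the equality of right cosets $\rS_\mu w=\rS_\mu w'$. Hence $w\mapsto\mu\circ w$ is constant on each coset in $\rS_\mu\backslash\rS_k$ and separates distinct cosets; restricting it to the transversal $\rS^\mu$ therefore yields a bijection
\[
\rS^\mu \;\xrightarrow{\;\sim\;}\; \{\alpha\in\cP_k~|~\alpha\sim\mu\},\qquad w\mapsto \mu\circ w,
\]
onto the set of distinct rearrangements of $\mu$ (viewed as length-$k$ weights after padding $\mu$ with zeros), and similarly with $\nu$, $w'\in\rS^\nu$.

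With this bijection in hand, each claim is just a change of variables $\alpha=\mu\circ w$ (and $\beta=\nu\circ w'$). For (i), the set \eqref{Setf} computing $f^\lambda_{\mu\nu}$ consists of pairs $(\alpha,\beta)$ with $\alpha\sim\mu$, $\beta\sim\nu$, $\alpha+\beta=\lambda$; substituting turns the constraint into $\mu\circ w+\nu\circ w'=\lambda$ with $(w,w')\in\rS^\mu\times\rS^\nu$, which is exactly \eqref{setf}. For (ii), the set \eqref{SetTheta} defining $\theta_{\lambda/\mu}$ consists of $\alpha\sim\mu$ with $\alpha\subset\lambda$, i.e.\ $\alpha_i\le\lambda_i$ for all $i$; under the bijection this becomes $(\mu\circ w)_i=\mu_{w(i)}\le\lambda_i$ for all $i\in[k]$, which is the shorthand $\mu\circ w\le\lambda$ of \eqref{setTheta}. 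For (iii), the set \eqref{SetPsi} defining $\psi_{\lambda/\mu}$ consists of $\alpha\sim\mu$ for which $\lambda/\alpha$ is a generalised vertical strip, meaning $\lambda_i-\alpha_i\in\{0,1\}$ for all $i$; this reads $\lambda_i-(\mu\circ w)_i\in\{0,1\}$, i.e.\ \eqref{setPsi}. No further computation is required.

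The only point requiring a little care — and the main, rather mild, obstacle — is the compatibility with the projection $\Lambda\twoheadrightarrow\Lambda_k$ obtained by setting $x_i=0$ for $i>k$. Here I would observe that this projection annihilates every $m_\mu$ with $\ell(\mu)>k$ and leaves $\{m_\mu\}_{\mu\in\cP^+_k}$ as a basis of $\Lambda_k$, so that the permutations $\alpha\sim\mu$ occurring in each formula are precisely the length-$k$ rearrangements of $\mu$; this matches the count $|\rS^\mu|=k!/|\rS_\mu|=\binom{k}{m(\mu)}$ and guarantees that the bijection above is onto the relevant index set (and likewise for $\nu$). Granting this, (i)--(iii) follow immediately from the bijection together with the combinatorial descriptions \eqref{Setf}, \eqref{SetTheta}, \eqref{SetPsi} established in Lemma \ref{lem:BinomialTheta} and Lemma \ref{lem:CardinalityPsi}.
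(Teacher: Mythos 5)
Your proposal is correct and follows essentially the same route as the paper, whose entire proof is the observation that the distinct permutations of a fixed weight are labelled by the minimal length coset representatives $w\in\rS^\mu$, after which (i)--(iii) are immediate from \eqref{Setf}, \eqref{SetTheta} and \eqref{SetPsi}. You merely spell out the bijection $w\mapsto\mu\circ w$ and the change of variables in more detail than the paper does.
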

\begin{proof}
Noting that the the distinct permutations of a fixed weight are labelled by the minimal length representatives $w\in \rS^{\mu}$ the stated expressions are immediate consequences of \eqref{Setf}, \eqref{SetTheta} and \eqref{SetPsi}, respectively.
\end{proof}

\subsection{Adjacent column tableaux}

In this Section we will derive expansions of the symmetric functions $h_{\lambda/\mu}$ and $e_{\lambda/\mu}$ into power sums. In light of the definition \eqref{ch} these expansions amount to computing the inverse image of $h_{\lambda/\mu}$ and $e_{\lambda/\mu}$ under the characteristic map.

Consider the product expansion
\begin{equation}
 m_\mu p_\nu = \sum_{\lambda} \varphi_{\lambda/\mu} (\nu) m_\nu\,.
\end{equation}
The latter defines the coefficients $\varphi_{\lambda/\mu}(\nu)$ uniquely.
\begin{lemma}
The functions $h_{\lambda/\mu}$ and $e_{\lambda/\mu}$ have the expansions 
\begin{equation}\label{skewhe2p}
 h_{\lambda/\mu} = \sum_{\nu}\frac{\varphi_{\lambda/\mu}(\nu)}{\rm z_\nu}\, p_\nu \qquad\text{and}\qquad
 e_{\lambda/\mu} = \sum_{\nu}\frac{\varphi_{\lambda/\mu}(\nu)}{\rm z_\nu}\, \epsilon_\nu p_\nu\,,
\end{equation}
where $\epsilon_\lambda=(-1)^{|\lambda|-l(\lambda)}$.
\end{lemma}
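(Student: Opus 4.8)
The plan is to imitate the proof of Lemma~\ref{lem:ExpansionHE}, replacing the monomial basis by the power sums and exploiting that the latter are orthogonal for the Hall inner product. Since $\langle p_\nu,p_\mu\rangle=\delta_{\nu\mu}\z_\nu$ by \eqref{Hall}, every $f\in\Lambda$ has the power-sum expansion $f=\sum_\nu \z_\nu^{-1}\langle f,p_\nu\rangle\,p_\nu$ (the coefficients being real, no issue arises from conjugation in the pairing). Hence it suffices to compute the pairings $\langle h_{\lambda/\mu},p_\nu\rangle$ and $\langle e_{\lambda/\mu},p_\nu\rangle$.

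First I would treat $h_{\lambda/\mu}$. As recorded in the proof of Lemma~\ref{lem:ExpansionHE}, the defining coproduct relation \eqref{skewhe} together with \eqref{cop} gives $\langle h_{\lambda/\mu},g\rangle=\langle h_\lambda,m_\mu g\rangle$ for all $g\in\Lambda$. Taking $g=p_\nu$ and inserting the defining product expansion $m_\mu p_\nu=\sum_\kappa \varphi_{\kappa/\mu}(\nu)\,m_\kappa$ yields
\begin{equation*}
\langle h_{\lambda/\mu},p_\nu\rangle=\langle h_\lambda,m_\mu p_\nu\rangle=\sum_\kappa \varphi_{\kappa/\mu}(\nu)\,\langle h_\lambda,m_\kappa\rangle=\varphi_{\lambda/\mu}(\nu),
\end{equation*}
where the last step uses that $\{h_\kappa\}$ and $\{m_\kappa\}$ are dual bases, $\langle h_\lambda,m_\kappa\rangle=\delta_{\lambda\kappa}$. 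Substituting into the power-sum expansion above gives the first identity in \eqref{skewhe2p}.

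For the elementary case I would pass through the involution $\omega$. Checking on the generators $p_r$ shows that $\omega$ commutes with the coproduct, since both $\Delta\circ\omega$ and $(\omega\otimes\omega)\circ\Delta$ send $p_r$ to $(-1)^{r-1}(p_r\otimes 1+1\otimes p_r)$; as the $p_r$ generate $\Lambda$ this identity holds throughout. Noting $\omega(h_\mu)=e_\mu$ (which follows from $\gamma(h_\mu)=(-1)^{|\mu|}e_\mu$ and the fact that $\gamma$ and $\omega$ agree up to the sign $(-1)^{|\mu|}$ on the degree-$|\mu|$ part), applying $\omega\otimes\omega$ to $\Delta h_\lambda=\sum_\mu h_{\lambda/\mu}\otimes h_\mu$ and comparing with $\Delta e_\lambda=\sum_\mu e_{\lambda/\mu}\otimes e_\mu$ identifies $e_{\lambda/\mu}=\omega(h_{\lambda/\mu})$ by linear independence of the $e_\mu$. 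Since $\omega$ is a ring automorphism with $\omega(p_\nu)=\epsilon_\nu p_\nu$, applying $\omega$ to the expansion of $h_{\lambda/\mu}$ just proved produces the claimed sign and gives the second identity in \eqref{skewhe2p}.

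All ingredients are already in place, so the argument is essentially bookkeeping; the only point requiring a little care is the identification $e_{\lambda/\mu}=\omega(h_{\lambda/\mu})$ and the correct tracking of $\epsilon_\nu=(-1)^{|\nu|-\ell(\nu)}$ coming from $\omega(p_\nu)$. Alternatively, one can avoid $\omega$ and compute $\langle e_{\lambda/\mu},p_\nu\rangle=\langle e_\lambda,f_\mu p_\nu\rangle$ directly: writing $f_\mu=\omega(m_\mu)$ and using that $\omega$ is an isometry with $\omega(m_\mu)\,p_\nu=\epsilon_\nu\,\omega(m_\mu p_\nu)$ reduces the pairing once more to $\epsilon_\nu\,\langle h_\lambda,m_\mu p_\nu\rangle=\epsilon_\nu\,\varphi_{\lambda/\mu}(\nu)$, which yields the same conclusion.
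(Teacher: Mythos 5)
Your argument is correct and follows essentially the same route as the paper, which proves the lemma by mimicking Lemma \ref{lem:ExpansionHE}: extract the power-sum coefficients via the Hall inner product using $\langle p_\nu,p_\mu\rangle=\delta_{\nu\mu}\z_\nu$ and the adjointness $\langle h_{\lambda/\mu},g\rangle=\langle h_\lambda,m_\mu g\rangle$, then obtain the elementary case from the involution $\omega$ with $\omega(p_\nu)=\epsilon_\nu p_\nu$. Your explicit verification that $\omega$ commutes with $\Delta$ and hence that $e_{\lambda/\mu}=\omega(h_{\lambda/\mu})$ is exactly the bookkeeping the paper leaves implicit.
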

\begin{proof}
The proof is similar to the one of Lemma \ref{lem:ExpansionHE} exploiting the known inner product formula \eqref{Hall}
 and the action $\omega(p_\lambda)=\epsilon_\lambda p_\lambda$ of the involution $\omega$ defined earlier.
\end{proof}
Recall that $p_\lambda = \sum_{\mu} R_{\lambda \mu} m_\mu$, where $R_{\lambda \mu}$
equals the number of 
 ordered set partitions $ (B_1,\dots,B_{l(\mu)})$ of $[l(\lambda)]$ such that 
$
    \mu_j = \sum_{i \in B_j} \lambda_i
$; see e.g. \cite[Ch. 7.7, Prop 7.7.1]{stanley_fomin_1999}.
 It then follows that 
 \begin{equation} \label{varphiR}
 \varphi_{\lambda/\mu}(\nu) = \sum_{\sigma} R_{\nu \sigma}  f_{ \sigma \mu}^\lambda
 \end{equation}
 We now derive an alternative expression for $\varphi_{\lambda/\mu}(\nu)$
 involving a special subclass of semi-standard tableaux or column strict RPP. Recall that in this case the RPP consists of a sequence of horizontal strips.
 
 We call a column strict RPP $T:\lambda / \mu\to\mb{N}$ an {\em adjacent column tableau} (ACT) if the boxes of each horizontal strip $T^{-1}(i)$ are located in adjacent columns of the skew diagram $\lambda/\mu$. Let $T=\{\lambda^{(i)}\}_{i=0}^l$ be an ACT then we define $\varphi_T=\prod_{i\geq 1} m_{j_i}(\lambda^{(i)})$, where  $m_{j_i}(\lambda^{(i)})$ is the multiplicity of the part $j_i$ in $\lambda^{(i)}$ which contains the rightmost box of the horizontal strip $T^{-1}(i)$. As usual we define the weight $\op{wt}(T)$ as the vector whose components are the number of boxes in each horizontal strip, i.e. $\op{wt}_i(T)=|T^{-1}(i)|$.

\begin{exam}
Consider the partitions $\lambda=(5,5,3,2)$, $\mu=(3,2,1,1)$ and
the composition $\alpha=(2,2,3,1)$. There are four adjacent column tableaux of shape $\lambda / \mu$ and weight $\op{wt}(T)=\alpha$, namely
 \[
 T=  { \Yvcentermath1 \young(\emptybox\emptybox\emptybox11,\emptybox\emptybox233,\emptybox23,\emptybox4)}, \quad 
  { \Yvcentermath1 \young(\emptybox\emptybox\emptybox11,\emptybox\emptybox234,\emptybox23,\emptybox3) }, \quad 
   { \Yvcentermath1 \young(\emptybox\emptybox\emptybox22,\emptybox\emptybox133,\emptybox13,\emptybox4) }, \quad 
  { \Yvcentermath1  \young(\emptybox\emptybox\emptybox22,\emptybox\emptybox134,\emptybox13,\emptybox3)  }\;. 
 \]
One finds the coefficients $\varphi_{T}=2,2,4,4$ (from left to right).
\end{exam}

We call an ACT of weight $\op{wt}=(0,\ldots,0,r,0,\ldots)$ an adjacent horizontal strip of length $r$ and in this case simply write $\varphi_T=\varphi_{\lambda/\mu}$.

\begin{lemma}\label{lem:varphi2ACT}
We have the equality
\begin{equation} \label{varphiLambdaMuNu}
 \varphi_{\lambda/\mu}(\nu) = \sum_{T} \varphi_T \;, 
\end{equation}
where the sum is over all ACT of shape $\lambda / \mu$ and weight $\nu$.
\end{lemma}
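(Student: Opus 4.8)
The plan is to mirror the proofs of Lemma~\ref{lem:SkewComplete} and its elementary counterpart: first isolate the one-part product rule, then iterate and match it against the defining expansion $m_\mu p_\nu=\sum_\lambda \varphi_{\lambda/\mu}(\nu)\,m_\lambda$. Concretely, I would first prove the one-part rule
\[
m_\mu\, p_r=\sum_{\lambda}\varphi_{\lambda/\mu}\,m_\lambda,
\]
where the sum runs over all partitions $\lambda$ for which $\lambda/\mu$ is an adjacent horizontal strip of length $r$, and $\varphi_{\lambda/\mu}=m_c(\lambda)$ with $c$ the column index of the rightmost box of the strip. Granting this, the general statement follows by writing $p_\nu=p_{\nu_1}p_{\nu_2}\cdots p_{\nu_l}$ and expanding $m_\mu p_\nu$ from the left: each factor $p_{\nu_i}$ appends an adjacent horizontal strip of length $\nu_i$, so the coefficient of $m_\lambda$ becomes a sum over all chains $\mu=\lambda^{(0)}\subset\lambda^{(1)}\subset\cdots\subset\lambda^{(l)}=\lambda$ with $\lambda^{(i)}/\lambda^{(i-1)}$ an adjacent horizontal strip of length $\nu_i$, weighted by $\prod_i\varphi_{\lambda^{(i)}/\lambda^{(i-1)}}$. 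These chains are exactly the ACT $T$ of shape $\lambda/\mu$ and weight $\nu$, and the attached weight is $\varphi_T$; comparing with the definition of $\varphi_{\lambda/\mu}(\nu)$ then yields \eqref{varphiLambdaMuNu}.

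For the one-part rule the coefficient of $m_\lambda$ equals the coefficient of the monomial $x^\lambda$ in $m_\mu p_r=\sum_{\alpha\sim\mu}\sum_{j}x^{\alpha}x_j^{r}$, which is the number of indices $j$ such that the composition obtained from $\lambda$ by subtracting $r$ from its $j$-th part is a permutation of $\mu$. Passing to part-multiplicities, such a permutation exists precisely when $\mu$ arises from $\lambda$ by lowering a single part of some value $v$ to $v-r$; this value $v$ is uniquely determined when it exists, and the admissible indices $j$ are exactly those with $\lambda_j=v$, giving a count of $m_v(\lambda)$.

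It remains to translate this multiplicity statement into the column geometry of adjacent strips, which I expect to be the only delicate step. Setting $\delta_i=\lambda'_i-\mu'_i$ and using $m_i(\lambda)-m_i(\mu)=\delta_i-\delta_{i+1}$, the condition ``$\mu$ is obtained from $\lambda$ by lowering one part from $v$ to $v-r$'' is equivalent to $\delta_i=1$ for $v-r<i\le v$ and $\delta_i=0$ otherwise; that is, $\lambda/\mu$ is a horizontal strip occupying precisely the $r$ consecutive columns $v-r+1,\dots,v$. Hence the coefficient is nonzero exactly when $\lambda/\mu$ is an adjacent horizontal strip of length $r$, in which case its rightmost column is $c=v$ and the count equals $m_v(\lambda)=m_c(\lambda)=\varphi_{\lambda/\mu}$, establishing the one-part rule. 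The main obstacle is thus purely bookkeeping: confirming that the passage between the multiset description (which single part is lowered) and the conjugate/column description (which adjacent columns are filled) is a genuine equivalence in both directions, including the boundary cases $v-r=0$ and parts of $\mu$ that drop to zero. Once this is secured, the iteration and the final comparison are routine, exactly as in Lemma~\ref{lem:SkewComplete}.
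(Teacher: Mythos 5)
Your proposal is correct and follows essentially the same route as the paper: the paper likewise first establishes the one-part rule $m_\mu p_r=\sum_\lambda \varphi_{\lambda/\mu}\,m_\lambda$ by observing that $x^\lambda$ arises only from $\lambda=\mu(a,r)$ (removing a part $a-1$ from $\mu$ and adding $a-1+r$), with multiplicity $m_{a-1+r}(\mu)+1=m_{a-1+r}(\lambda)$ -- the same count you obtain as $m_v(\lambda)$ -- and then iterates over the parts of $\nu$ to identify the resulting chains with ACTs.
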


\begin{proof}
We first show the validity of the following product expansion
 \begin{equation} \label{plambdammu}
    m_\mu  p_r = \sum_{\lambda} \varphi_{\lambda/\mu} m_\lambda 
   \end{equation}
 where the sum runs over all partitions $\lambda$ such that $\lambda/\mu$ is an adjacent horizontal strip of length $|\lambda/\mu|=r$.

Since each monomial appearing in the product $m_\mu p_r $ is of the form $ x^\alpha x_l^r$ for some $l>0$ and $\alpha \sim \mu$,
 the monomial $x^\lambda$ appears in $m_\mu p_r$ only if $\lambda$ is obtained from $\mu$ by removing a part equal to $a-1$, for some $a \ge 1$, and then adding a part equal to $a-1+r$. In other words, $\lambda$ must be the partition $\mu(a,r)$ obtained by
 adding one box per column in the diagram of $\mu$, starting at column $a$ and ending at column $a+r-1$ for a total of $r$ boxes.
This is equivalent to removing a part equal to $a-1$ from $\mu$ (or removing no parts if $a=1$) and adding a part equal to $a-1+r$.
 
Therefore, the monomials in $m_\mu p_r$ equal to $x^{\lambda}$ must be of the form
$$
 x^{(\mu_1,\dots,\mu_{i-l-1},\mu_j,\mu_{i-l},\dots,\mu_{j-1},\mu_{j+1},\dots)} \, x_{i-l}^r 
$$
for $l=0,\dots,m_{a-1+r}(\mu)$, where $i$ and $j$ are respectively the smallest indices for which $\mu_i<a-1+r$ and
$\mu_j=a-1$. This implies that 
$
 m_\mu  p_r
 = \sum_{a} (m_{a-1+r}(\mu)+1) m_{\mu(a,r)}
$, where the sum is over all $a \ge 1$ such that $m_{a-1}(\mu) \neq 0$.  But according to the definition of $\varphi_{\lambda/\mu}$ this
equals \eqref{plambdammu}.
Equation \eqref{varphiLambdaMuNu} now follows by repeated application of \eqref{plambdammu} to the product $m_\mu p_\nu$,
which also implies that $\varphi_{\lambda/\mu}(\nu)=\varphi_{\lambda/\mu}(\beta)$ for $\beta \sim \nu$.
\end{proof}

\begin{coro} \label{TransitionPMcol}
The matrix $R$ in \eqref{varphiR} has the following alternative expression
 \begin{equation}
  R_{\lambda \mu} = \sum_{T} \varphi_T
 \end{equation}
where the sum is over all $ACT$ of shape $\mu$ and weight $\lambda$.
\end{coro}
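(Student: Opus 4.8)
The plan is to derive this identity by exactly the specialisation argument used for the matrices $L$ and $M$ in Corollary \ref{LMtableaux}, now applied to the transition matrix $R$ between power sums and monomial symmetric functions. The starting point is the relation \eqref{varphiR}, namely $\varphi_{\lambda/\mu}(\nu) = \sum_{\sigma} R_{\nu\sigma}\, f_{\sigma\mu}^\lambda$, which couples the coefficients $\varphi_{\lambda/\mu}(\nu)$ to the entries $R_{\nu\sigma}$ and the monomial product coefficients $f_{\sigma\mu}^\lambda$. I would specialise this relation to $\mu=\emptyset$.

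First I would observe that $f_{\sigma\emptyset}^\lambda=\delta_{\lambda\sigma}$. This is immediate from the combinatorial description \eqref{Setf} of $f_{\sigma\nu}^\lambda$ as the cardinality of the set of pairs $(\alpha,\beta)$ with $\alpha+\beta=\lambda$, $\alpha\sim\sigma$ and $\beta\sim\nu$: taking $\nu=\emptyset$ forces $\beta=0$ and hence $\alpha=\lambda$, so the set is non-empty precisely when $\lambda\sim\sigma$, and since both are partitions this means $\lambda=\sigma$, in which case it is a singleton. Substituting this into \eqref{varphiR} collapses the sum over $\sigma$ and yields $\varphi_{\lambda/\emptyset}(\nu)=R_{\nu\lambda}$.

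Next I would invoke Lemma \ref{lem:varphi2ACT}, which provides the combinatorial expression $\varphi_{\lambda/\mu}(\nu)=\sum_{T}\varphi_{T}$, the sum ranging over all adjacent column tableaux of shape $\lambda/\mu$ and weight $\nu$. Setting $\mu=\emptyset$ turns the skew shape $\lambda/\emptyset$ into the straight shape $\lambda$, so $\varphi_{\lambda/\emptyset}(\nu)=\sum_{T}\varphi_{T}$ is the weighted sum over all ACT of shape $\lambda$ and weight $\nu$. Comparing the two expressions for $\varphi_{\lambda/\emptyset}(\nu)$ then gives $R_{\nu\lambda}=\sum_{T}\varphi_{T}$ over ACT of shape $\lambda$ and weight $\nu$, which is the assertion after renaming the free indices ($\nu\to\lambda$, $\lambda\to\mu$).

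The argument is essentially index bookkeeping, so I do not anticipate a genuine obstacle; the only point requiring care is the orientation of the two indices of $R$. Unlike $L$ and $M$, the matrix $R$ is not symmetric, so — in contrast to the proof of Corollary \ref{LMtableaux}, which appealed to $L_{\lambda\mu}=L_{\mu\lambda}$ and $M_{\lambda\mu}=M_{\mu\lambda}$ — no such symmetry is available, and crucially none is needed. The specialisation delivers $R_{\nu\lambda}$ directly, with the weight in the first slot and the shape in the second, which is already the orientation demanded by the statement; one must simply keep straight which index records the shape of the tableau and which records its weight.
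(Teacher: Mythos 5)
Your proposal is correct and is exactly the argument the paper intends: specialise \eqref{varphiR} at $\mu=\emptyset$ using $f_{\sigma\emptyset}^\lambda=\delta_{\lambda\sigma}$ and compare with Lemma \ref{lem:varphi2ACT}, mirroring the proof of Corollary \ref{LMtableaux}. You also correctly observe the one subtlety the paper flags after the statement, namely that $R$ is not symmetric so, unlike for $L$ and $M$, no transposition step is available or needed since the specialisation already produces the weight index in the first slot.
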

Note the difference between this and Corollary \ref{LMtableaux}, since in general $R_{\lambda \mu} \neq R_{\mu \lambda}$.

\section{The irreducible representations of $\GS$}
As already explained in the main text the finite-dimensional irreducible representations of $\GS$ are induced by the irreducibles of the normal subgroup $\mc{N}$ and their associated stabiliser groups in $\GS$ \cite{osima1954representations}. It will be convenient to identify the irreducible representations of $\cN$ with weights in the set \eqref{alcove} via $\lambda:y^\mu\mapsto\zeta^{(\lambda,\mu)}$, where $\zeta$ is a fixed primitive root of unity of order $n$. Given $\lambda\in\alc$ its associated stabiliser group in $\GS$ is defined as
\begin{equation*}
G_\lambda=\{g\in \GS~|~(g\cdot y ^{\mu%
}\cdot g^{-1})^{\lambda}= y^{(\mu, \lambda)%
},~\forall y ^{\mu}\in \mathcal{N}\}\;.
\end{equation*}%
The latter is canonically isomorphic to the Young subgroup of the weight $\lambda$, i.e. 
$\rS_{\lambda}=S%
_{m_{n}(\lambda)}\times \cdots \times \rS_{m_{1}(\lambda)}\subset \rS_{k}$ where $m_i(\lambda)$ is the multiplicity of the part $i$ in $\lambda$; see \cite{osima1954representations} for details. %
Thus, the irreducible representations of the stabiliser group $G_\lambda$ are of the form $\bigotimes_{i=1}^{n}\mathbb{S}_{\lambda ^{(i)}}$, where $\lambda^{(i)}\vdash m_i(\lambda)$ are partitions and $\mathbb{S}_{\lambda ^{(i)}}$ denotes the corresponding Specht module of the symmetric group $\rS_{m_i(\lambda)}$. The basis elements in  $\bigotimes_{i=1}^{n}\mathbb{S}_{\lambda ^{(i)}}$ are tensor products of Young tableaux,  $T=T^{(1)}\otimes \cdots \otimes T^{(n)}$, which define a map $T%
:\boldsymbol{\lambda} \rightarrow \lbrack k]$ with $\bs{\lambda}=(\lambda^{(1)},\ldots,\lambda^{(n)})$ such that the numbers assigned to the boxes in
each Young diagram $\lambda ^{(i)}$ are strictly increasing in rows (left to right) and columns (top to bottom). We call such a map $T$ a $n$-tableau and $\bs{\lambda}$ is called a $n$-multipartition. Conversely, given a $n$-tableau $T:\boldsymbol{\lambda} \rightarrow \lbrack k]$, let $\lambda(T)\in\alc$ be the unique intersection point of $\alc$ with the $\rS_k$-orbit of the weight $p=p(T)\in\cP_k$ defined by setting $p_j(T)=i$ if $T^{-1}(j)\in\lambda^{(i)}$. Explicitly, the $\cN$-action on a $n$-tableau $T$ then reads
\begin{equation} \label{actionzeta}
y ^{\mu}\cdot T= \zeta^{(\mu, p(T))} T,\qquad\mu\in\cP_k
\end{equation}%
For completeness, we also recall the definition of the action of the elementary transpositions $\sigma_i$ on $n$-tableaux; compare with \cite{pushkarev1999representation}. Introduce the numbers
\begin{equation}
t_{i}=\left\{ 
\begin{array}{cc}
\frac{1}{c_{T}(i+1)-c_{T}(i)}~, & \text{if }%
T^{-1}(i),T^{-1}(i+1)\in \lambda ^{(r)}, r\in[n] \\ 
0~, & \text{else}%
\end{array}%
\right. \;,
\end{equation}
where $c_{T}(i)=b-a$ is the\emph{\ content} of the box $(a,b)\in\lambda^{(r)}$ in $%
T$ containing the entry $i$. Denote by $T_{(i,i+1)}$ the $n$-tableau which 
is obtained from $T$ by swapping the entries $i$ and $i+1$ if the result is another $n$-tableau,
otherwise set $T_{(i,i+1)}=0$. Then
\begin{equation}
T\cdot\sigma_i=t_{i}T+\sqrt{1-t_{i}^{2}}~%
T_{(i,i+1)}\;.
\end{equation}%

Thus, in summary the irreducible representations of $\GS$ are in bijection with $n$-multipartitions $\boldsymbol{\lambda}=(\lambda^{(1)},\ldots ,\lambda ^{(n)})$ satisfying 
\begin{equation}
|\boldsymbol{\lambda}|=\sum_{i=1}^n|\lambda^{(i)}|=k\;.
\end{equation}
We denote the resulting module by $\mL(\boldsymbol{\lambda})$. These are all irreducible modules of 
$\GS$ and, furthermore, they have dimension
\begin{equation}\label{dimL}
\dim \mL(\boldsymbol{\lambda})=\frac{k!}{%
|\lambda^{(1)}|!\cdots |\lambda^{(n)}|!}\,\prod_{i=1}^{n}f_{\lambda^{(i)}},
\end{equation}%
where $f_\mu$ is the number of standard tableaux of shape $\mu$,
$$f_{\mu}=\frac{|\mu|!}{\prod_{(i,j)\in\mu}h_\mu(i,j)},\qquad h_\mu(i,j)=\mu_i+\mu'_j-i-j+1\,.$$
If $\mu=\emptyset$ we set $f_\emptyset=1$.

\end{document}